\theoremstyle{plain}
\newtheorem{theorem}{Theorem}[section]
\newtheorem{lemma}[theorem]{Lemma}
\newtheorem{claim}[theorem]{Claim}
\newtheorem{proposition}[theorem]{Proposition}
\newtheorem{conj}[theorem]{Conjecture}
\newtheorem{corollary}[theorem]{Corollary}
\theoremstyle{definition}
\newtheorem{definition}[theorem]{Definition}
\theoremstyle{remark}
\newtheorem{remark}[theorem]{Remark}
\newtheorem{example}[theorem]{Example}
\def\bbA{\mathbb{A}}
\def\bbC{\mathbb{C}}
\def\bbI{\mathbb{I}}
\def\bbN{\mathbb{N}}
\def\bbP{\mathbb{P}}
\def\bbQ{\mathbb{Q}}
\def\bbX{\mathbb{X}}
\def\bbZ{\mathbb{Z}}
\def\scrP{\mathscr{P}}
\def\scrW{\mathscr{W}}
\def\frakg{\mathfrak{g}}
\def\frake{\mathfrak{e}}
\def\fraks{\mathfrak{s}}
\def\frakl{\mathfrak{l}}
\def\frakB{\mathfrak{B}}
\def\frakL{\mathfrak{L}}
\def\frakM{\mathfrak{M}}
\def\frakS{\mathfrak{S}}
\def\calA{\mathcal{A}}
\def\calB{\mathcal{B}}
\def\calC{\mathcal{C}}
\def\calE{\mathcal{E}}
\def\calF{\mathcal{F}}
\def\calI{\mathcal{I}}
\def\calL{\mathcal{L}}
\def\calM{\mathcal{M}}
\def\calO{\mathcal{O}}
\def\calV{\mathcal{V}}
\def\calW{\mathcal{W}}
\def\frakL{\mathfrak{L}}
\def\frakg{\mathfrak{g}}
\def\frakh{\mathfrak{h}}
\def\frakl{\mathfrak{l}}
\def\frakn{\mathfrak{n}}
\def\fraksl{\mathfrak{sl}}
\def\bfh{\mathbf{h}}
\def\bfk{\mathbf{k}}
\def\bfh{\mathbf{l}}
\def\bfx{\mathbf{x}}
\def\bfh{\mathbf{h}}
\def\bfY{\mathbf{Y}}
\def\bfZ{\mathbf{Z}}
\def\vep{\varepsilon}
\def\RL{R^\Lambda}
\def\simto{\overset{\sim}\to}
\def\d{{\operatorname{d}\nolimits}}
\def\ch{{\operatorname{ch}\nolimits}}
\def\H{\operatorname{H}\nolimits}
\def\GL{\operatorname{GL}\nolimits}
\def\diag{\operatorname{diag}\nolimits}
\def\Hu{\operatorname{\underline\H}\nolimits}
\def\Ru{{\underline R}}
\def\bfhu{{{\underline\bfh}}}
\def\bfkg{{{\underline\bfk}^\Lambda}}
\def\bfku{{{\underline\bfk}}}
\def\k{{\operatorname{k}\nolimits}}
\def\Mat{{\operatorname{Mat}\nolimits}}
\def\CF{\operatorname{CF}\nolimits}
\def\Z{\operatorname{Z}\nolimits}
\def\W{\operatorname{W}\nolimits}
\def\Wu{\operatorname{\underline\W}\nolimits}
\def\top{\operatorname{top}\nolimits}
\def\soc{\operatorname{soc}\nolimits}
\def\grdim{{\operatorname{grdim}\nolimits}}
\def\min{{\operatorname{min}\nolimits}}
\def\Ker{{\operatorname{Ker}\nolimits}}
\def\add{\operatorname{add}\nolimits}
\def\Hom{\operatorname{Hom}\nolimits}
\def\Tr{\operatorname{Tr}\nolimits}
\def\End{\operatorname{End}\nolimits}
\def\Res{\operatorname{Res}\nolimits}
\def\Ind{\operatorname{Ind}\nolimits}
\def\Ext{\operatorname{Ext}\nolimits}
\def\Ob{\operatorname{Ob}\nolimits}
\def\id{\operatorname{id}\nolimits}
\def\hw{{\operatorname{hw}\nolimits}}
\def\cyc{\operatorname{cyc}\nolimits}
\def\Tr{\operatorname{tr}\nolimits}
\def\Mod{\operatorname{-Mod}\nolimits}
\def\mod{\operatorname{-mod}\nolimits}
\def\grmod{\operatorname{-grmod}\nolimits}
\def\proj{\operatorname{-proj}\nolimits}
\def\grproj{\operatorname{-grproj}\nolimits}
\def\height{\operatorname{ht}\nolimits}
\def\ad{\operatorname{ad}\nolimits}
\author{P. Shan, M. Varagnolo, E. Vasserot}
\email{peng.shan@math.u-psud.fr}
\email{michela.varagnolo@u-cergy.fr}
\email{vasserot@math.jussieu.fr}
\title
[]
{On the center of quiver-Hecke algebras}
\begin{document}
\begin{abstract}
We compute the equivariant cohomology ring of the moduli space of framed instantons over the affine plane.
It is a Rees algebra associated with the center of cyclotomic degenerate affine Hecke algebras of type $A$.
We also give some related results on the center of quiver Hecke algebras and cohomology of quiver varieties.
\end{abstract}
\thanks{This research was partially supported by the ANR grant number ANR-10-BLAN-0110 and
ANR-13-BS01-0001-01.}

\maketitle

\setcounter{tocdepth}{3}

\tableofcontents

\section{Introduction}

A few years ago, the cohomology ring of the Hilbert scheme of points on $\bbC^2$ was computed in \cite{LS}, \cite{V01},
motivated by some conjectures of Chen and Ruan on orbifold cohomology rings, 
which were later proved in \cite{FG}.
One of the main motivation of the present work is to compute a larger class of cohomology ring of quiver varieties.
More precisely, for each pair of positive integers $r,n$, one can consider the moduli space $\frakM(r,n)$ of framed 
instantons with second Chern class $n$ on $\bbP^2$.
This is a smooth quasi-projective variety (over $\bbC$) which can be viewed as a quiver variety attached to the Jordan 
quiver. An $(r+2)$-dimension torus acts naturally on $\frakM(r,n)$ and it contains an $(r+1)$-dimensional subtorus whose 
action preserves the symplectic form.
One of our goal is to compute the equivariant cohomology ring of $\frakM(r,n)$ with respect to this subtorus. 
Since $\frakM(r,n)$ is equivariantly formal, one can easily recover the usual cohomology ring from the equivariant one. 
For $r=1$, i.e., the case of the Hilbert scheme, it can be easily deduced from \cite{V01}
that the equivariant cohomology ring we are interested in is the Rees algebra associated with the center of the group 
algebra of the symmetric group with respect to the age filtration.
Here we obtain a similar description for arbitrary $r$ with the group algebra of the symmetric group replaced by a 
level $r$ cyclotomic quotient of the degenerate affine Hecke algebra of type $A$.

This result has been conjectured soon after \cite{V01} was written.
However, its proof requires two new ingredients which were introduced only very recently.
An important tool used in \cite{V01} is Nakajima's action of an Heisenberg algebra on the cohomology spaces of the 
Hilbert scheme.
A similar action on the cohomology of $\frakM(r)=\bigsqcup_{n\geqslant 0}\frakM(r,n)$ was introduced by Baranovsky a 
few years ago, but it is insufficient to compute the cohomology ring.
What we need in fact is the action of (a degenerate version of) a new algebra $\scrW$ which is much bigger than the 
Heisenberg algebra. This action was
introduced recently in \cite{SV} to give a proof
of the AGT conjecture for pure $N=2$ gauge theories for the group $U(n)$. 
Here, we define a similar action of $\scrW$ on the center (or the concenter) of 
cyclotomic quotients of degenerate affine Hecke algebras. 
Then we compare it with the representation of $\scrW$ on the cohomology of $\frakM(r)$ to obtain the desired 
isomorphism. To do this, we use categorical representation theory.

Categorical representations of Kac-Moody algebras have captured a lot of interest recently since the work of Khovanov-Lauda  \cite{KL09}, \cite{KL2}, \cite{KL3} and Rouquier \cite{R08}.
It has been observed recently by Beliakova-Habiro-Guliyev-Lauda-Zivkovic \cite{BHLZ14}, \cite{BGHL14}, that a categorical representation gives rise to some interesting structures on the center (or cocenter) of the underlying categories and not only on their Grothendieck group.
More precisely, Khovanov-Lauda define
a 2-Kac-Moody algebra in \cite{KL3} which is a 2-category satisfying certain axioms. Their idea is that the trace of this 2-category has a natural structure of an associative algebra $L\frakg$ which should be some kind of loop algebra over the underlying Kac-Moody algebra $\frakg$. The $\fraks\frakl_2$-case has been worked out in \cite{BHLZ14},
and the $\fraks\frakl_n$-one in \cite{BGHL14}. Naturally, the center (or cocenter) of $2$-representations of the $2$-Kac-Moody algebra
gives rise to representations of $L\frakg$.

In this paper we first use a similar idea to investigate the center and cocenter of cyclotomic quiver-Hecke algebras 
associated with a Kac-Moody algebra $\frakg$ of arbitrary type. The category of projective modules over these algebras 
provide minimal categorical representations of $\frakg$, in the sense of Rouquier \cite{R08}.
We compute the representation of $L\frakg$ on the center (or cocenter) of these minimal categorical representations. 
When $\frakg$ is symmetric of finite type, we identify these $L\frakg$-modules with the 
\emph{local and global Weyl modules}, 
which can be realized in the (equivariant) Borel-Moore homology spaces of quiver varieties by \cite{N02}.

Then, in order to compute the cohomology ring of $\frakM(r,n),$
we consider another situation where $\frakg$ is replaced by an Heisenberg algebra. 
On the categorical level this corresponds to the \emph{Heisenberg categorifications}
which have also been studied recently. But once again, instead of considering a $2$-Heisenberg algebra, we focus on 
the particular categorical representation given by the 
module category of degenerate affine Hecke algebras of type $A$.
The analog of $L\frakg$ in this case is the algebra $\scrW$ mentioned above.
Probably one can generalize both situations (the Kac-Moody one and the Heisenberg one) 
using \cite{KKP}, where
categorifications of some generalized (Borcherds-)Kac-Moody algebras are considered. 
Here, we do not go further in this direction.

Another motivation for this work is to compare the 2-Kac-Moody algebras in \cite{KL3} and \cite{R08}.
The definition of Rouquier contains less axioms than the definition of Khovanov-Lauda.
By \cite{K} the module category of cyclotomic quiver-Hecke algebras admits a representation 
of $\frakg$ in the sense of Rouquier.
By \cite{W1} it should also admits a representation of $\frakg$ in Khovanov-Lauda's sense, which is stronger.
According to Cautis-Lauda \cite{CL}, to prove this it is enough to check that the center of the category
is positively graded with a one-dimensional degree zero component. 
These two conditions are difficult to check for minimal categorical representations.
Using the representation of $L\frakg$ we prove that the first condition holds. The second one is more subtle.
It is equivalent to the indecomposability of the weight subcategories of the minimal categorical representations.
In other words, each of these categories should have a single block.
This is well-known in type A and in affine type A by the work of Brundan and Lyle-Mathas in \cite{B08}, \cite{LM}.
We can prove it in some new cases, using the fact that quiver varieties are connected 
(proved by Crawley-Boevey).
But the general case is still unknown.
Note that a third way to prove that minimal categorical representations are indeed representations
of Khovanov-Lauda's 2-Kac-Moody algebra is by a  direct computation, see Remark \ref{rk:KL action} in the appendix.

Now, let us describe more precisely the structure and the main results of the paper.
Fix a symmetrizable Kac-Moody algebra $\frakg$ and a dominant integral weight $\Lambda$ of $\frakg$.

In Section 2 we give some generalities on the centers and cocenters of linear categories.
In Section 3 we introduce the cyclotomic quiver Hecke algebra
of type $\frakg$ and level $\Lambda$ over a field $\k$ (of any characteristic). 
It is a symmetric algebra which decomposes as a direct sum 
$R^\Lambda=\oplus_{\alpha\in Q_+}R^\Lambda(\alpha)$, where $\alpha$ runs
over the positive part of the root lattice of $\frakg$. 
To $\frakg$ we can attach another Lie algebra, $L\frakg$, given by generators and relations. 
It coincides with the loop algebra of $\frakg$ in finite types $ADE$. The first result is the following.

\medskip

{\bf Theorem 1.} {\sl Assume that $\frakg$ is symmetric and that the condition \eqref{Q} is satisfied. Then,

$(a)$  there is a $\bbZ$-graded representation of $L\frakg$ on $\Tr(R^\Lambda)$,

$(b)$   if $\frakg$ is of finite type then $\Tr(R^\Lambda)$ is isomorphic, as a $\bbZ$-graded $L\frakg$-module, to the Weyl module
with highest weight $\Lambda$.
}

\medskip

Note that there are two different notions of Weyl modules for loop Lie algebras used in the literature 
(the local and the global ones).
Both versions can indeed be recovered, see Theorem \label{cor:weyl} below for more details.
Note also that the proof of part $(b)$ involves the geometrical incarnation, given by Nakajima,  
of Weyl modules of $L\frakg$ via the equivariant cohomology of a quiver  variety $\frakM(\Lambda)$ attached to 
$\Lambda.$

\medskip

{\bf Theorem 2.} {\sl For any $\alpha\in Q_+$ the following hold

$(a)$ the trace and the center of $R^\Lambda(\alpha)$ are positively graded,

$(b)$  if $\frakg$ is symmetric of finite type, the dimension of the degree zero subspace of $\Z(R^\Lambda(\alpha))$
is one dimensional.
}

\medskip

The proof uses a reduction to $\fraks\frakl_2$.
Part $(b)$ relies on the geometrical interpretation of Weyl modules. It also uses the non-degenerate pairing between the trace $\Tr(R^\Lambda(\alpha))$
and the center $\Z(R^\Lambda(\alpha))$ of $R^\Lambda(\alpha)$ given by the symmetrizing form.

\smallskip

Finally, in Section 4 we focus on  the Jordan quiver. In this case, instead of the cyclotomic quiver Hecke algebra, we 
consider a level $r$ cyclotomic quotient $R^r(n)$ of the degenerate affine Hecke algebra of $\frakS_n$ defined over 
$\k[\hbar]$. Let $R^r(n)_1$ be its specialization at $\hbar=1$. The center of $R^r(n)_1$ has a natural filtration defined in 
terms of Jucy-Murphy elements. Let $\operatorname{Rees}(\Z(R^r(n)_1))$ be the corresponding Rees algebra.  
Set $R^r=\bigoplus_{n\in\bbN}R^r(n)$ and  let $\Tr(R^r)'$ be a localization of $\Tr(R^r).$ Consider the equivariant 
cohomology   $H_G^*(\frakM(r,n),\k)$   
of the quiver variety $\frakM(r,n)$ relatively to an $(r+1)$-dimensional torus $G$ with coefficient in $\k$. 
Assume that the field $\k$ is of characteristic zero.

\medskip

{\bf Theorem 3.} {\sl  The following hold 

(a) there is a level $r$ representation of $\scrW$ in $\Tr(R^r)'$, 

(b) there is a $\bbZ$-graded algebra isomorphism $\operatorname{Rees}(\Z(R^r(n)_1))\simeq H_G^*(\frakM(r,n),\k).$}

\medskip

The proof of this theorem uses  the representation of $\scrW$ on a localization 
$H_G^*(\frakM(r),\k)'$ introduced in \cite{SV}.

After our paper appeared on the arXiv, A. Lauda informed us that there is some overlap 
between our results and his ongoing projects with collaborators.

\medskip

\section{Generalities}

Let $\k$ be a commutative noetherian ring.

\subsection{The center and the trace of a category}

\subsubsection{Categories}\label{ss:categories}
All categories are assumed to be small.
A \emph{$\k$-linear} category is a
category enriched over the tensor category of $\k$-modules,
a \emph{$\k$-category} is an additive $\k$-linear category.
For any $\k$-linear category $\calC$ and any $\k$-algebra $\k'$, let $\calC':=\k'\otimes_\k\calC$ be the
$\k'$-linear category whose objects are the same as those of $\calC$, but whose morphism spaces are given by
$$\Hom_{\calC'}(a,b)=\k'\otimes_\k\Hom_\calC(a,b)\qquad\forall a,b\in\calC.$$
We denote the identity of an object $a$ by $1_a$ or by $1$ if no confusion is possible.
All the functors $F$ on $\calC$ are assumed to be additive and/or $\k$-linear.
An additive and $\k$-linear functor is called a $\k$-functor.
Let $\End(F)$ be the endomorphism ring of $F$.
We may denote the identity element in $\End(F)$ by $F,1_F$ or $1$, and
the identity functor of $\calC$ by $1_\calC$ or $1$.
The center of $\calC$ is defined as $\Z(\calC)=\End(1_\calC)$.
A composition of functors $E$ and $F$ is written as $EF$
while a composition of morphisms of
functors $y$ and $x$ is written as $y\circ x$.

An additive category $\calC$ will be always equipped with its \emph{trivial} exact structure, i.e., 
the admissible exact sequences
are the split short exact sequences. Therefore, a Serre subcategory $\calI\subset\calC$ is a full additive 
subcategory  which
is stable under taking direct summands, and the quotient additive category 
$\calB=\calC/\calI$ is such that
$$\Hom_{\calB}(a,b)=\Hom_{\calC}(a,b)\,\Big/\sum_{c\in\calI}\Hom_{\calC}(c,b)\circ\Hom_{\calC}(a,c),
\qquad\forall a,b\in\calC.$$
A \emph{short exact sequence} of additive categories is a sequence of functors which is equivalent to
a sequence $0\to\calI\to\calC\to\calB\to 0$ as above.

Fix an integer $\ell$.
By an \emph{$(\ell\bbZ)$-graded} $\k$-category we'll mean a $\k$-category $\calC$ equipped with a strict
$\k$-automorphism $[\ell]$ which we call \emph{shift of the grading}. 
Unless specified otherwise, a functor $F$ of $(\ell\bbZ)$-graded $\k$-categories is 
always assumed to be graded, i.e., it is a $\k$-functor $F$ with an
isomorphism $F\circ[\ell]\simeq [\ell]\circ F$. For each integer $k\in\bbN\cap(\ell\bbZ)$
we'll abbreviate $[k]=[\ell]\circ[\ell]\circ\dots\circ[\ell]$ ($|k/\ell|$ times) and $[-k]=[k]^{-1}$.

Let $\calC/\ell\bbZ$ be the category enriched over the tensor category of $(\ell\bbZ)$-graded
$\k$-modules whose objects are the same as those of $\calC$, 
but whose morphism spaces are given by
$$\Hom_{\calC/\ell\bbZ}(a,b)=\bigoplus_{k\in\ell\bbZ}\Hom_\calC(a,b[k]).$$
Note that the center $\Z(\calC/\ell\bbZ)$ is a graded ring whose degree $k$-component is equal to
$\Hom(1,[k])$.

Given a $\bbZ$-graded $\k$-module $M$ let $M^d=\{x\in M\,;\,\deg(x)=d\}$ for each $d\in\bbZ$.
For any integer $\ell$, the \emph{$\ell$-twist} of $M$ is the $(\ell\bbZ)$-graded
$\k$-module $M^{[\ell]}$ such that $(M^{[\ell]})^{d}=M^{d/\ell}$ if $\ell|d$ and 0 else.
Then, for each $\bbZ$-graded $\k$-category $\calC$ there is a canonical $(\ell\bbZ)$-graded $\k$-category
$\calC^{[\ell]}$ called the \emph{$\ell$-twist} of $\calC$ such that $\calC^{[\ell]}=\calC$ as a $\k$-category
and the shift of the grading $[\ell]$ in $\calC^{[\ell]}$ is the same as the shift of the grading $[1]$ in $\calC$.
We have
$$\Hom_{\calC^{[\ell]}/\ell\bbZ}(a,b)=\Hom_{\calC/\bbZ}(a,b)^{[\ell]},\qquad\forall a,b.$$


Finally, for any category $\calC$ we denote by $\calC^c$ the idempotent completion.

\medskip

\subsubsection{Trace and center}
Let $\calC$ be a  $\k$-linear category and
$HH_*(\calC)$ be the Hochschild homology of $\calC$, see \cite[sec.~3.1]{Ke98a}.
It is a $\bbZ$-graded $\k$-module.
We set $\Tr(\calC)=HH_0(\calC)$ and $\CF(\calC)=\Hom_\k(\Tr(\calC),\k)$. 
We call $\Tr(\calC)$ the \emph{cocenter} or the \emph{trace} of $\calC$
and $\CF(\calC)$ the set of \emph{central forms on $\calC$}.
Recall that 
$$\Tr(\calC)=\Big(\bigoplus_{a\in\Ob(\calC)}\End_\calC(a)\Big)\Big/\sum_{f,g}\k\,[f,g]\quad \text{for\ any} \ f:a\to b,\ g:b\to a.$$
For any morphism $f$ in $\calC$, let $\Tr(f)$ denote its image in $\Tr(\calC)$.

\smallskip

Now, let $A$ be any $\k$-algebra. Unless specified otherwise, all algebras are assumed to be unital.
Let $\Z(A)$ be the center of $A$ and $HH_*(A)$ be its Hochschild homology.
Define $\Tr(A)$ and $\CF(A)$ as above, i.e., $\Tr(A)=A/[A,A]$ where $[A,A]\subset A$ is the 
$\k$-submodule spanned by the commutators of elements of $A$.
For any element $a\in A$ let $\Tr(a)$ denote its image $a+[A,A]$ in $\Tr(A)$.
Let $A$-mod and $A$-proj be the categories of finitely generated modules 
and finitely generated projective modules. 
For any commutative $\k$-algebra $R$ and any $\k$-module $M$
we abbreviate $RM=R\otimes_\k M$.
The following is well-known.
\smallskip

\begin{proposition}\label{prop:trace} 
Let $A,B$ be $\k$-algebras and $\calB,\calC$ be $\k$-linear categories.


(a) If $\calB\subset\calC$ is full and  any object of $\calC$ is isomorphic to a direct summand of a direct sum of objects
of $\calB$, then the embedding $\calB\subset\calC$
yields an isomorphism $\Tr(\calB)\to\Tr(\calC)$.

(b) If $\calC=A\mod$ or $A\proj$ then $\Z(A)=\Z(\calC)$.
If $\calC=A\proj$ then $\Tr(A)=\Tr(\calC).$ 

(c) For any commutative $\k$-algebra $R$ we have $\Tr(RA)=R\Tr(A)$.

(d) We have
$\Tr(A\otimes_\k B)=\Tr(A)\otimes_\k\Tr(B)$ and $\Z(A\otimes_\k B)=\Z(A)\otimes_\k\Z(B)$.


(e) $\Z(\calC)$ acts on $\Tr(\calC)$ via the map
$\Z(\calC)\to\End_\k(\Tr(\calC)),$ $a\mapsto(\Tr(a')\mapsto \Tr(aa'))$.

(f) A short exact sequence of $\k$-categories $0\to\calI\to\calC\to\calB\to 0$
yields an exact sequence of $\k$-linear maps
$\Tr(\calI)\to\Tr(\calC)\to\Tr(\calB)\to0.$

\qed
\end{proposition}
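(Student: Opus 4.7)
The plan is to dispatch each of the six assertions separately, since none is deep in isolation. Parts (a)--(c) serve as the main building blocks while (d)--(f) are direct consequences or standard Hochschild-theoretic calculations. In every case the work reduces to checking that the defining relation $\Tr(fg)=\Tr(gf)$ interacts correctly with the construction at hand.

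For (a), I would show in two steps that the inclusion $\calB\subset\calC$ induces an isomorphism on traces: first that closure under finite direct sums preserves $\Tr$, then that idempotent completion does too. An endomorphism of $a_1\oplus\cdots\oplus a_n$ is a matrix $(f_{ij})$ of morphisms in $\calB$; the commutator relations applied to matrix units kill the off-diagonal entries in $\Tr$, while each diagonal entry $f_{ii}$ contributes its own trace class. The idempotent completion step is analogous, replacing $f$ by $e\circ f\circ e$. Fullness of $\calB\subset\calC$ is what guarantees injectivity, since any commutator relation in $\calC$ between morphisms of $\calB$ is already one in $\calB$. Part (b) then follows: $A$ is a progenerator of $A\proj$ with $\End_A(A)\cong A^{\op}$, and since $[A^{\op},A^{\op}]=[A,A]$ as $\k$-submodules one obtains $\Tr(A\proj)=\Tr(A)$ via (a); a natural endomorphism of $1_\calC$ is determined by its value on $A$, which by naturality with respect to left multiplications lies in $\Z(A)$.

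Part (c) is immediate: $[RA,RA]=R[A,A]$ because $R$ is central, and base change commutes with quotients. Part (d) is a direct $HH_0$-K\"unneth computation; the bilinear map $(a+[A,A])\otimes(b+[B,B])\mapsto a\otimes b+[A\otimes B,A\otimes B]$ is well-defined and admits the obvious inverse, while for centers an element of $\Z(A\otimes B)$ must commute with all $a\otimes 1$ and $1\otimes b$, forcing the identification with $\Z(A)\otimes\Z(B)$. For (e), the key check is well-definedness of the action: given $\alpha\in\Z(\calC)$ and a defining relation $\Tr(fg)=\Tr(gf)$ with $f:a\to b$, $g:b\to a$, naturality yields $\alpha_b\circ f=f\circ\alpha_a$ and hence
\[
\Tr(\alpha_b\circ fg)=\Tr\bigl(f\circ(\alpha_a\circ g)\bigr)=\Tr\bigl((\alpha_a\circ g)\circ f\bigr)=\Tr(\alpha_a\circ gf),
\]
so the action descends to the quotient.

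For (f), surjectivity of $\Tr(\calC)\to\Tr(\calB)$ is immediate since $\calB$ and $\calC$ share the same objects. The composition $\Tr(\calI)\to\Tr(\calC)\to\Tr(\calB)$ vanishes because any $h\in\End_\calC(c)$ with $c\in\calI$ lies in the ideal defining the quotient. Conversely, a class $\Tr(h)\in\Tr(\calC)$ killed by the second map corresponds, modulo commutators in $\calC$, to an endomorphism $p\circ q$ with $q:a\to c$, $p:c\to a$, and $c\in\calI$; applying the commutator relation gives $\Tr(pq)=\Tr(qp)$ with $qp\in\End_\calC(c)$, so $\Tr(h)$ lies in the image of $\Tr(\calI)$. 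The main technical obstacle throughout is precisely this bookkeeping in (a) and (f): verifying that commutator relations behave correctly under direct sums, idempotent completion, and Serre quotients. Everything else is routine.
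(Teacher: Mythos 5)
Your proof is correct. The paper gives no argument for (a)--(e), merely calling them well-known, and your treatments are the standard ones. For part (f), however, you take a genuinely different route. The paper specializes to $\calC=A\proj$ with $\calI=eAe\proj$ for an idempotent $e\in A$, so that the exactness in the middle becomes the algebra identity $aeb = ebae + [ae,eb]$, and the claim follows by identifying $\ker j$ with $\text{im}\,i$ inside $A/[A,A]$. You instead argue purely categorically: lift the $\calB$-commutators presenting $\Tr_\calB(\bar h)=0$ back to $\calC$, observe that the discrepancy $h-\sum_i[f_i,g_i]$ is a sum of compositions $p_j\circ q_j$ factoring through objects $c_j\in\calI$, and rotate each $p_jq_j$ to $q_jp_j\in\End_\calC(c_j)=\End_\calI(c_j)$, using fullness of $\calI$. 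Your argument is more general, requiring neither $\calC\simeq A\proj$ nor that $\calI$ be cut out by an idempotent, while the paper's reduction buys a one-line commutator computation in the only setting it will later use.

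One imprecision in your sketch of (a): the assertion that fullness gives injectivity ``since any commutator relation in $\calC$ between morphisms of $\calB$ is already one in $\calB$'' does not directly apply, because the commutators witnessing that a $\calB$-class dies in $\Tr(\calC)$ generally run through intermediate objects not in $\calB$. The cleanest way to complete your two-step plan is to construct the inverse map $\Tr(\calC)\to\Tr(\calB)$ explicitly: for $h\in\End_\calC(a)$, choose $\iota:a\to\bigoplus_j b_j$ and $\pi:\bigoplus_j b_j\to a$ with $\pi\iota=1_a$ and $b_j\in\calB$, send $\Tr(h)$ to $\sum_j\Tr(\pi_j\iota h\pi\iota_j)$, and verify well-definedness --- it is here, in checking independence of choices and invariance under $\calC$-commutators, that fullness is actually used, since it guarantees the matrix entries $\pi_j\iota h\pi\iota_k$ and the various intertwiners are morphisms of $\calB$.
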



\smallskip

For a future use, let us give some details on part $(f)$. 
Assume that $\calC=\calC^c$.
For any object $X$ let $\add(X)\subset\calC$ 
be the smallest $\k$-subcategory  
containing $X$ which is closed under taking direct summands.
Then, the functor $\Hom_\calC(X,\bullet)$ yields an equivalence 
$\add(X)\to\End_\calC(X)^\text{op}\proj.$
In particular, if $\calC$ has a finite number of indecomposable objects
$X_1,X_2,\dots X_n$ (up to isomorphisms) and $X=\bigoplus_{i=0}^dX_i$, then we have an equivalence
$\calC\simeq\End_\calC(X)^\text{op}\proj$.

Now, assume that $\calC=A\proj,$ where $A$ is a finitely generated $\k$-algebra.
Given a Serre $\k$-subcategory $\calI\subset\calC$, there is an idempotent $e\in A$ such that
$\calI=eAe\proj$ and the functor $\calI\to\calC$ is given by $M\mapsto Ae\otimes_{eAe} M$.
Set $\calB=\calC/\calI$. Then, we have $\calB^c=B\proj$ where $B=A/AeA$ and the composed functor
$\calC\to\calB\to\calB^c$ is given by $M\mapsto B\otimes_A M.$
We must prove that taking the trace we get an exact sequence of $\k$-modules
$\Tr(\calI)\to\Tr(\calC)\to\Tr(\calB)\to 0.$
Equivalently, we must check that the following complex is exact
$$\xymatrix{eAe/[eAe,eAe]\ar[r]^-{i}&A/[A,A]\ar[r]^-{j}&B/[B,B]\ar[r]&0.}$$
Note that $\ker j=(AeA+[A,A])/[A,A]$ and
$\text{im}\, i=(eAe+[A,A])/[A,A].$  Since $aeb=ebae+[ae,eb]$ for all $a,b\in A$, we deduce that
$\ker j=\text{im}\, i$, proving the claim.

\medskip

\subsubsection{Operators on the trace}

\begin{definition}\label{df:trace}
Given a functor $F:\calC\to\calC'$ between two $\k$-categories and a morphism of functors $x\in\End(F)$,
the \emph{trace} of $F$ on $x$ is the linear map
$$\Tr_F(x): \Tr(\calC)\to\Tr(\calC'),\quad \Tr(f)\mapsto \Tr(x(a)\circ F(f))$$
where $f\in\End(a)$ and $x(a)\circ F(f)\in\End(F(a)).$
\end{definition}

\smallskip
Note that $x(a)\circ F(f)=F(f)\circ x(a)$ by functoriality. Below are some basic properties of the trace map, whose proofs are standard and are left to the reader.

\smallskip

\begin{lemma}\label{lem:traceop}
(a) For each $F_1$, $F_2:\calC\to\calC'$, $x\in \End(F_1\oplus F_2)$, we have 
$\Tr_{F_1\oplus F_2}(x)=\Tr_{F_1}(x_{11})+\Tr_{F_2}(x_{22})$
where $x_{11}\in\End(F_1)$, $x_{22}\in\End(F_2)$ are the diagonal coordinates of $x$.

(b) For two morphisms $\rho: F_1\to F_2$, $\psi: F_2\to F_1$, we have $\Tr_{F_1}(\psi\circ\rho)=\Tr_{F_2}(\rho\circ\psi)$. In particular,
if $\rho:F_1\to F_2$ is an isomorphism of functors, then for any $x\in\End(F_1)$ we have
$\Tr_{F_2}(\rho\circ x\circ\rho^{-1})=\Tr_{F_1}(x)$.

(c) For each $F:\calC\to\calC'$, $G:\calC'\to\calC''$, $x\in\End(F)$ and $y\in\End(G),$ we have
$\Tr_{GF}(yx)=\Tr_G(y)\circ\Tr_F(x).$
\qed

\end{lemma}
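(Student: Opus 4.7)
The plan is to verify each part by unwinding the definition of $\Tr_F$ and using the cyclicity of the trace on $\calC'$ (or $\calC''$), which is built into $\Tr$. In every case, given $f \in \End_\calC(a)$, one must compute the representative endomorphism of the target object directly, then apply the relations that define $\Tr$. No new ideas are required; the main thing to be careful about is that, in each statement, the trace maps on the left and right hand sides may have different target categories, so one uses cyclicity in the appropriate $\Tr(\calC')$ or $\Tr(\calC'')$.

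For part (a), I would introduce the canonical injections $\iota_i \colon F_i(a) \hookrightarrow F(a)$ and projections $\pi_i \colon F(a) \twoheadrightarrow F_i(a)$ attached to the decomposition $F = F_1 \oplus F_2$. Then $\id_{F(a)} = \iota_1 \pi_1 + \iota_2 \pi_2$, and for any $\phi \in \End(F(a))$ we have $\Tr(\phi) = \sum_i \Tr(\iota_i \pi_i \phi) = \sum_i \Tr(\pi_i \phi \iota_i)$ by cyclicity. Applied to $\phi = x(a) \circ F(f)$, and using that $F(f) = F_1(f) \oplus F_2(f)$ commutes with the $\iota_i, \pi_i$ in the obvious way while $\pi_i \circ x(a) \circ \iota_i = x_{ii}(a)$, one obtains $\Tr_{F_1 \oplus F_2}(x)(\Tr(f)) = \Tr(x_{11}(a) \circ F_1(f)) + \Tr(x_{22}(a) \circ F_2(f))$, which is the claim.

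For part (b), the naturality of $\rho \colon F_1 \to F_2$ gives $\rho(a) \circ F_1(f) = F_2(f) \circ \rho(a)$. Hence
$$\Tr_{F_1}(\psi \circ \rho)(\Tr(f)) = \Tr\bigl(\psi(a) \circ \rho(a) \circ F_1(f)\bigr) = \Tr\bigl(\psi(a) \circ F_2(f) \circ \rho(a)\bigr) = \Tr\bigl(\rho(a) \circ \psi(a) \circ F_2(f)\bigr),$$
where the last equality uses cyclicity in $\Tr(\calC')$. This is exactly $\Tr_{F_2}(\rho \circ \psi)(\Tr(f))$. The final assertion on isomorphisms follows by taking $\psi = \rho^{-1} \circ x \circ \rho^{-1} \cdot \rho$ --- more precisely, if $\rho$ is an isomorphism, apply the identity to $\psi = x \circ \rho^{-1}$ (a morphism $F_2 \to F_1$) and to $\rho$, giving $\Tr_{F_1}(x) = \Tr_{F_1}((x \rho^{-1}) \circ \rho) = \Tr_{F_2}(\rho \circ x \circ \rho^{-1})$.

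For part (c), the key identity is the interchange law: for $x \in \End(F)$ and $y \in \End(G)$, the horizontal composite $yx \in \End(GF)$ satisfies $(yx)(a) = y(F(a)) \circ G(x(a))$. Therefore
$$\Tr_{GF}(yx)(\Tr(f)) = \Tr\bigl(y(F(a)) \circ G(x(a)) \circ GF(f)\bigr) = \Tr\bigl(y(F(a)) \circ G(x(a) \circ F(f))\bigr),$$
and setting $g = x(a) \circ F(f) \in \End_{\calC'}(F(a))$, this is $\Tr_G(y)(\Tr(g)) = \Tr_G(y)(\Tr_F(x)(\Tr(f)))$. The only subtle point to record is well-definedness of each $\Tr_F$ --- i.e., that if $f = hg - gh$ for $g\colon a \to b$, $h\colon b \to a$, then the resulting element in $\Tr(\calC')$ vanishes --- which is a one-line verification using naturality of $x$ and cyclicity in $\Tr(\calC')$; this verification is in fact the same computation as in (b) above, so no additional work is needed.
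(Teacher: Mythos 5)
Your proof is correct: the paper explicitly leaves this lemma to the reader as a standard verification, and your argument---unwinding the definition of $\Tr_F$, using additivity of $\Tr$ on $\calC'$ and the relation $\Tr(uv)=\Tr(vu)$ for (a) and (b), the interchange law $(yx)(a)=y(F(a))\circ G(x(a))$ for (c), and naturality of $x$ for well-definedness---is precisely that standard verification. Nothing to add.
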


\medskip

\subsubsection{Adjunction}\label{sss:adjunction}

Given two $\k$-categories $\calC_1$, $\calC_2$, a
\emph{pair of adjoint functors} $(E, F)$ from $\calC_1$ to $\calC_2$ is the datum 
$(E,F,\eta_E,\vep_E)$ of
functors $E:\calC_1\to\calC_2$, $F\!:\calC_2\to\calC_1$ and morphisms of functors 
$\eta_E: 1_{\calC_1}\to FE$ and $\varepsilon_E: EF\to 1_{\calC_2},$ 
called \emph{unit} and \emph{counit}, such that $(\varepsilon_E E)\circ(E\eta_E)=E$ and 
$(F\varepsilon_E)\circ(\eta_EF)=F$. 

A \emph{pair of biadjoint functors} $\calC_1\to\calC_2$ is the datum 
$(E,F,\eta_E,\vep_E,\eta_F,\vep_F)$ of functors 
$E:\calC_1\to\calC_2$, $F:\calC_2\to\calC_1,$ 
morphisms of functors $\eta_E:1_{\calC_1}\to FE$, 
$\varepsilon_E: EF\to 1_{\calC_2}$ such that $(E,F,\eta_E,\vep_E)$
and $(F,E,\eta_F,\vep_F)$ are adjoint pairs. 

\begin{example} Given two pairs of adjoint functors $(E,F)$, $(E',F')$
from $\calC_1$ to $\calC_2$,
the direct sum $(E\oplus E', F\oplus F')$ is an adjoint pair such that
\begin{align*}
&\eta_{E\oplus E'}=(\eta_{E},0,0,\eta_{E'}): 1_{\calC_1}\to F E\oplus F E'\oplus F' E\oplus F'E',\\
&\varepsilon_{E\oplus E'}=\varepsilon_{E}+\varepsilon_{E'}: E F\oplus E F'\oplus E'F\oplus E'F'\to 1_{\calC_2}.
\end{align*}
If $E:\calC_1\to\calC_2$ and $E':\calC_2\to\calC_3$, then
$(E'E, FF')$ is an adjoint pair such that
$\eta_{E'E}=(F\eta_{E'}E)\circ\eta_E$ and $\varepsilon_{E'E}=\varepsilon_{E'}\circ(E'\varepsilon_EF').$
\end{example}

Suppose $(E,F)$, $(E', F')$ are two pairs of adjoint functors from $\calC_1$ to $\calC_2$. 
For any morphism $x: E\to E'$, the \emph{left transpose} ${}^\vee x: F'\to F$ 
is the composition of the chain of morphisms
$$\xymatrix{F'\ar[r]^-{\eta_{E}F'} &FEF'\ar[r]^{FxF'} &
F E'F'\ar[r]^-{F\varepsilon_{E'}} &F.}$$
For any morphism $y: F'\to F$, the \emph{right transpose} $y^\vee: E\to E'$ 
is the composition 
$$\xymatrix{E\ar[r]^-{E\eta_{E'}} &EF'E'\ar[r]^-{Ey E'} &EFE'\ar[r]^-{\varepsilon_{E}E'} &E'.}$$

\medskip

\subsubsection{Operators on the center}

Let $\calC_1$, $\calC_2$ be two $\k$-categories, and $(E,F,\eta_E,\vep_E,\eta_F,\vep_F)$ a pair of biadjoint functors $\calC_1\to\calC_2$. The isomorphisms $1_{\calC_2}E=E=E1_{\calC_1}$ yield a canonical 
$(\Z(\calC_1),\Z(\calC_2))$-bimodule structure on $\End(E)$.
Let $\Z(\calC_2)\to\End(E)$, $z\mapsto zE$ and $\Z(\calC_1)\to\End(E)$, $z\mapsto Ez$ 
denote the corresponding $\k$-algebra homomorphisms.

\begin{definition}[\cite{B}]\label{df:center}
For each $x\in\End(E)$ we define a map 
$$Z_E(x):\Z(\calC_2)\to Z(\calC_1)$$
by sending an element $z\in \Z(\calC_2)$ to the composed morphism
$$\xymatrix{1_{\calC_1}\ar[r]^{\eta_E} &F1_{\calC_2}E \ar[r]^{Fzx} &F1_{\calC_2}E\ar[r]^{\varepsilon_F} &1_{\calC_1}.}$$
We define $Z_F(x): Z(\calC_1)\to Z(\calC_2)$ for each $x\in\End(F)$ in the same manner with the role of $E$ and $F$ exchanged.
\end{definition}

\smallskip

\smallskip

The proof of the following proposition is standard and is left to the reader.

\smallskip

\begin{proposition} \label{prop:bernstein}
Let $(E,F,\eta_E,\vep_E,\eta_F,\vep_F)$, $(E',F',\eta_{E'},\vep_{E'},\eta_{F'},\vep_{F'})$
be two pairs of biadjoint functors. Let $x\in\End(E)$, $x'\in\End(E')$. Then, we have

(a)  $Z_E(x):\Z(\calC_2)\to \Z(\calC_1)$ is $\k$-linear,

(b) 
$Z_{E'E}(x\circ\phi)=Z_{E'}(x')\circ Z_E(x)$
and
$Z_{E\oplus E'}(x\oplus x)=Z_E(x)+Z_{E'}(x')$,

(c) the map $Z_E:\End(E)\to \Hom_\k(\Z(\calC_2),\Z(\calC_1))$ is
$(\Z(\calC_1),\Z(\calC_2))$-bilinear,

(d) let $\rho: E\to E'$ be an isomorphism with
$\rho^\vee={}^\vee\rho,$
then $Z_{E'}(\rho\circ x\circ\rho^{-1})=Z_E(x)$.

\qed
\end{proposition}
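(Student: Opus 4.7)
All four statements are formal consequences of Definition \ref{df:center}, the naturality of horizontal composition, and the triangle (zigzag) identities for the adjunctions. They parallel the properties of the trace operator established in Lemma \ref{lem:traceop}, so my plan is to read them off by direct string-diagrammatic computation and to isolate the one genuinely non-formal ingredient, which enters only in part (d).

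Part (a) is immediate: for fixed $\eta_E,\vep_F,x$, the assignment $z\mapsto \vep_F\circ(Fzx)\circ\eta_E$ is a composition in a $\k$-linear category of a morphism depending $\k$-linearly on $z$ with two fixed morphisms, hence is $\k$-linear. For part (c) I would unwind the bimodule structure on $\End(E)$: for $z_1\in\Z(\calC_1)$, $z_2\in\Z(\calC_2)$ one has $(z_1x)(a)=(Ez_1)(a)\circ x(a)$ and $(xz_2)(a)=x(a)\circ(z_2E)(a)$. Substituting into the defining composition and using that elements of $\Z(\calC_i)=\End(1_{\calC_i})$ commute with any morphism by naturality, $z_1$ slides out past $\vep_F\circ F(-)$ to land in $\Z(\calC_1)$, while $z_2$ slides into the argument via $\eta_E$. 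The two halves of part (b) are equally mechanical: for direct sums, the formulas for $\eta_{E\oplus E'}$ and $\vep_{F\oplus F'}$ from the Example preceding the proposition are block-diagonal, so the cross terms vanish and the composition splits as $Z_E(x)+Z_{E'}(x')$; for the composite, substituting $\eta_{E'E}=(F\eta_{E'}E)\circ\eta_E$ and the analogous formula for $\vep_{FF'}$ into $Z_{E'E}(x'x)(z)$ and using the triangle identity for $(E',F')$ to eliminate the inserted $\vep_{E'}\eta_{E'}$ pair reduces the expression to $Z_E(x)(Z_{E'}(x')(z))$.

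Part (d) is the only step that uses a hypothesis beyond the bare definitions, so it is where I would be most careful. The plan is to write out $Z_{E'}(\rho\circ x\circ \rho^{-1})(z)$ using the adjunction data $(\eta_{E'},\vep_{F'})$ of the pair $(E',F')$, and then transport the two $\rho$'s across the two zigzags. The morphism $\rho^{-1}$ sitting next to $\eta_{E'}$ can be pushed through using the definition of the left transpose ${}^\vee\rho: F'\to F$, while the $\rho$ sitting next to $\vep_{F'}$ can be pushed through using the definition of the right transpose $\rho^\vee: F'\to F$ arising from the second adjunction $(F,E)$. The hypothesis $\rho^\vee={}^\vee\rho$ then lets me replace both transports by the same morphism $F'\to F$, and the resulting composition involves $\eta_E$, $\vep_F$, and the original $x$ in exactly the pattern needed to recognize $Z_E(x)(z)$ after a final application of the triangle identity. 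The main obstacle, and the thing I expect to have to draw carefully, is the bookkeeping of which of the four pieces of biadjunction data $(\eta_E,\vep_E,\eta_F,\vep_F)$ is being invoked at each step — this is exactly the same diagrammatic issue as in Lemma \ref{lem:traceop}, but with one extra $1_{\calC_i}$-strand because the output of $Z_E$ is a morphism of identity functors rather than a scalar.
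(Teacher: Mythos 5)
Your plan is the standard unwinding of Definition \ref{df:center} via the interchange law and the explicit formulas for composite units and counits; that is exactly the argument the paper leaves to the reader, and parts (a), (c), and the direct-sum half of (b) go through as you describe.

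One correction on mechanism: neither the composite half of (b) nor part (d) actually uses a triangle identity. For the composite, after substituting $\eta_{E'E}=(F\eta_{E'}E)\circ\eta_E$ and $\vep_{FF'}=\vep_F\circ(F\vep_{F'}E)$ into the definition of $Z_{E'E}(x'x)(z)$, there is no $\vep_{E'}\eta_{E'}$ zigzag to cancel: the factor $\eta_{E'}$ is the unit of the adjunction $(E',F')$ while the factor $\vep_{F'}$ is the counit of the \emph{other} adjunction $(F',E')$ in the biadjoint quadruple, so they are not a zigzag pair at all. By the interchange law alone they assemble, together with $z$ and $x'$, into the inner expression $\vep_{F'}\circ(F'zx')\circ\eta_{E'}=Z_{E'}(x')(z)\in\Z(\calC_2)$, which then sits in the $z$-slot of $Z_E(x)$, giving $Z_E(x)\bigl(Z_{E'}(x')(z)\bigr)$. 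Likewise in (d): transporting $\rho^{-1}$ through $\eta_{E'}$ uses the mate identity $(F'\rho^{-1})\circ\eta_{E'}=\bigl(({}^\vee\rho)^{-1}E\bigr)\circ\eta_E$, and transporting $\rho$ through $\vep_{F'}$ uses $\vep_{F'}\circ(F'\rho)=\vep_F\circ(\rho^\vee E)$; with $\rho^\vee={}^\vee\rho$ the remaining expression is $\vep_F\circ({}^\vee\rho\,E)\circ(F'zx)\circ\bigl(({}^\vee\rho)^{-1}E\bigr)\circ\eta_E$, which collapses to $\vep_F\circ(Fzx)\circ\eta_E$ by interchange, with no further triangle identity to apply. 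These are mislabelings of the tool rather than gaps that would stop the proof, but if you go hunting for a $\vep\eta$ zigzag to cancel in (b) you will not find one, so it is worth being aware of.
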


\smallskip

\medskip

\subsection{Symmetric algebras}

Let $A,B,C$ be $\k$-algebras.

\subsubsection{Kernels}
There is an equivalence of categories between the category of 
$(A,B)$-bimodules and 
the categories of functors from $B\Mod$ to $A\Mod$. It associates
an $(A,B)$-bimodule $K$ with the functor 
$\Phi_K: B\Mod\to A\Mod$ given by $N\mapsto K\otimes_B N$. 
We say that $K$ is the kernel of $\Phi_K$. Since $\Phi_K(B)=K$, the kernel is uniquely determined by the functor $\Phi_K$. 
For two $(A,B)$-bimodule $K$, $K'$ we have $\Hom_{A,B}(K,K')\simeq \Hom(\Phi_K,\Phi_{K'})$ given by $f\mapsto f\otimes_B\id.$

\medskip

\subsubsection{Induction and restriction} 
We'll call \emph{$B$-algebra} a $\k$-algebra $A$ with a $\k$-algebra homomorphism $i:B\to A$.
We consider the restriction and induction functors
$$\Res^A_B: A\Mod\to B\Mod, \quad \Ind^A_B=A\otimes_B-: B\Mod\to A\Mod.$$
The pair $(\Ind^A_B, \Res^A_B)$ is adjoint with the co-unit $\varepsilon: \Ind^A_B\Res^A_B\to 1$ 
represented by the $(A,A)$-bimodule homomorphism $\mu: A\otimes_BA\to A$ given by the multiplication,
and the unit $\eta: 1\to\Res^A_B\Ind^A_B$ represented by the morphism $i$, which is $(B,B)$-bilinear. 
Let $A^B$ be the centralizer of $B$ in $A$. For any $f\in A^B$ we set
\begin{equation}\label{eq:mu}
\mu_f: A\otimes_BA\to A,\quad a\otimes a'=afa'.
\end{equation}

\medskip

\subsubsection{Frobenius and symmetrizing forms} We refer to \cite{R08} for more details on this section.

Let $A$ be a $B$-algebra that is projective and finite as $B$-module.
A morphism of $(B,B)$-bimodules $t: A\to B$ is called a \emph{Frobenius form} 
if the morphism of $(A,B)$-bimodules
$\hat t: A\to \Hom_B(A,B),$ $a\mapsto (a'\mapsto t(a'a))$
is an isomorphism. If such a form exist, we say that $A$ is a \emph{Frobenius $B$-algebra}.
If we have $t(aa')=t(a'a)$ for each $a\in A$, $a'\in A^B$ then $t$ is called a \emph{symmetrizing form}
and $A$ a \emph{symmetric $B$-algebra}.

Given $t:A\to B$ a Frobenius form, the composition of the isomorphism 
$A\otimes_BA\simto\Hom_B(A,B)\otimes A$ given by 
$a\otimes a'\mapsto \hat t(a)\otimes a'$ and the canonical isomorphism 
$\Hom_B(A,B)\otimes_B A\simto \End_B(A)$ 
yields an isomorphism $A\otimes_BA\simto\End_B(A)$. The preimage of the identity under this map is the 
\emph{Casimir element} $\pi\in (A\otimes_BA)^A$. We have
$(t\otimes 1)(\pi)=(1\otimes t)(\pi)=1.$

There is a bijection between the set of Frobenius forms and the set of adjunctions 
$(\Res^A_B,\Ind^A_B)$ given as follows. Given a 
Frobenius form $t: A\to B$, the counit $\hat\varepsilon: \Res^A_B\Ind^A_B\to 1_B$ is represented by the
$(B,B)$-linear map $t: A\to B$ and the unit
$\hat\eta: 1_A\to \Ind^A_B\Res^A_B$ is represented by the unique $(A,A)$-linear map 
$\hat\eta: A\to A\otimes_BA$ 
such that $\hat\eta(1_A)=\pi$. This yields an adjunction for $(\Res^A_B,\Ind^A_B)$. 
Conversely, if $\hat\varepsilon$ and $\hat\eta$ are counit and unit for $(\Res^A_B,\Ind^A_B)$, 
then the $(B,B)$-linear map $t:A\to B$ which represents $\hat\varepsilon$ is a Frobenius form. 

\smallskip

Recall that $\Tr(A)$ is a $\Z(A)$-module.
We equip $\CF(A)$ with the dual $\Z(A)$-action.
Let us recall a few basic facts.

\begin{proposition} Let $A,B,C$ be $\k$-algebras which are projective and finite as $\k$-modules. 

(a) If $t:A\to B$ and $t': B\to C$ are symmetrizing forms then 
$t'\circ t:A\to C$ is again a symmetrizing form.

(b) A symmetrizing form $t:A\to\k$ induces a nondegenerate 
$\Z(A)$-bilinear form $t:\Z(A)\times\Tr(A)\to\k$
such that $t(a,a')=t(aa')$.

(c)
If $t:A\to\k$ is a symmetrizing form then $\Tr(A)$ is a faithful $\Z(A)$-module and $\CF(A)$
is a free $\Z(A)$-module of rank 1 generated by the obvious map $t:\Tr(A)\to\k$.
\end{proposition}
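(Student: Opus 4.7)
The plan is to handle the three parts in order, using the Frobenius isomorphism $\widehat t:A\simto\Hom_B(A,B)$ as the main tool and reading off $\Z(A)$ and $\Tr(A)$ from the appropriate commutativity constraints.

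For (a), I would first check the Frobenius property of $t'\circ t$. Tensor-hom adjunction gives a chain of $(A,C)$-bimodule isomorphisms
\[
\Hom_C(A,C)\simeq\Hom_B(A,\Hom_C(B,C))\simeq\Hom_B(A,B)\simeq A,
\]
using the Frobenius forms $t'$ and $t$; a direct check identifies the resulting map with $\widehat{t'\circ t}$. For the symmetrizing property, I would use the adjunction/Casimir viewpoint: composing the adjunctions $(\Res^A_B,\Ind^A_B)$ and $(\Res^B_C,\Ind^B_C)$ associated to $t$ and $t'$ yields an adjunction $(\Res^A_C,\Ind^A_C)$ whose counit is represented by $t'\circ t$; the corresponding Casimir $\pi_{t'\circ t}\in A\otimes_C A$ is the image of $\pi_t$ under $A\otimes_B A\to A\otimes_C A$ coupled with the action of $\pi_{t'}$, and its $A$-centrality (which encodes symmetry) follows from those of $\pi_t$ and $\pi_{t'}$.

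For (b), the identity $t(ab)=t(ba)$ forces $t$ to vanish on $[A,A]$, so the formula $t(z,\Tr(a))=t(za)$ for $z\in\Z(A)$ is well-defined on $\Z(A)\times\Tr(A)$, and $\Z(A)$-bilinearity is immediate from associativity. For non-degeneracy, I would verify that the Frobenius isomorphism $\widehat t:A\simto\Hom_\k(A,\k)$ restricts to an isomorphism $\Z(A)\simto\CF(A)$. The direct computation
\[
\widehat t(z)(ab)=t(zab)=t(bza)=t(zba)=\widehat t(z)(ba),\qquad z\in\Z(A),
\]
shows $\widehat t(\Z(A))\subseteq\CF(A)$; conversely, if $\widehat t(x)\in\CF(A)$ then $t((xy-yx)b)=0$ for all $y,b\in A$, so non-degeneracy of $t$ on $A$ forces $x\in\Z(A)$. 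Dualizing yields the non-degeneracy of the induced pairing $\Z(A)\times\Tr(A)\to\k$.

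Part (c) follows formally from (b): non-degeneracy on the $\Tr(A)$-factor is exactly faithfulness of $\Tr(A)$ as a $\Z(A)$-module, while the isomorphism $\widehat t:\Z(A)\simto\CF(A)$ of (b) is $\Z(A)$-linear and sends $1\in\Z(A)$ to the form $\Tr(a)\mapsto t(a)$; hence $\CF(A)$ is a free $\Z(A)$-module of rank $1$ generated by $t$. The only genuine subtlety lies in (a): since $A^C$ strictly contains $A^B$ in general, a naive element chase of the symmetry identity breaks down, and the argument has to go through the Casimir/adjunction formulation rather than a direct computation with $t(aa')$ and $t(a'a)$.
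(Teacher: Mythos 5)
Your proposal is essentially correct and gives a self-contained proof where the paper simply cites references: the paper proves (a) by citing \cite[lem.~2.10]{R08}, declares (b) obvious, and for (c) gives a short faithfulness argument before citing \cite[lem.~2.5]{Broue} for the rank-one freeness. Your route for (b)--(c), showing that the Frobenius isomorphism $\widehat t\colon A\simto\Hom_\k(A,\k)$ restricts to an isomorphism $\Z(A)\simto\CF(A)$ and then reading off both the non-degeneracy of the pairing and the rank-one freeness of $\CF(A)$, is a clean unified argument that buys more than the paper states (it actually identifies $\CF(A)$ with $\Z(A)$ rather than merely producing a generator), and the faithfulness of $\Tr(A)$ drops out exactly as in the paper's argument.

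For (a) your overall strategy (reduce Frobenius transitivity to tensor-hom adjunction, treat symmetry via the Casimir and composed adjunction) is the right one and parallels Rouquier's proof, and your observation that a naive element chase fails because $A^C$ may strictly contain $A^B$ is the genuine subtlety. However, the sentence ``its $A$-centrality (which encodes symmetry)'' is imprecise: the Casimir of \emph{any} Frobenius form already lies in $(A\otimes_B A)^A$, so $A$-centrality is not what distinguishes symmetrizing forms from merely Frobenius ones. The symmetry of $t'\circ t$ should instead be extracted either by verifying $t'(t(aa'))=t'(t(a'a))$ for $a'\in A^C$ directly, using $t'$-symmetry on $B$ to commute $t(\cdot)$ past elements of $B$ and $t$-symmetry on $A^B\subset A^C$, together with the $(B,B)$-bilinearity of $t$; or, cleaner, by characterizing symmetric $B$-algebras as those for which $\widehat t$ is an $(A,A)$-bimodule isomorphism $A\simto\Hom_B(A,B)$ (not just $(A,B)$-bimodule) and noting that your adjunction chain is then visibly $(A,A)$-bilinear. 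Either way this step needs a bit more detail before it is airtight; the rest of your argument is complete.
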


\begin{proof}
Part $(a)$ is proved in \cite[lem.~2.10]{R08}.
Part $(b)$ is obvious. For $(c)$, assume that $t:A\to\k$ is a symmetrizing form.
If $a,b\in \Z(A)$ have the same image in $\End_\k(\Tr(A))$, then for each $a'\in \Tr(A)$ we have
$t(aa'-ba')=0$, from which we deduce that $a=b$ by part $(b)$. 
For the second claim see, e.g., \cite[lem.2.5]{Broue}.
\end{proof}

\medskip

\section{The center of quiver-Hecke algebras}

\subsection{Quiver Hecke algebras}\label{sec:QH}

Assume that $\bfk=\bigoplus_{n\in\bbN}\bfk^n$ is noetherian and $\bbN$-graded 
and that $\bfk^0$ is a field. 
We may abbreviate $\k=\bfk^0$ and 
we'll identify $\k$ with the quotient $\bfk/\bfk^{>0}$ without mentionning it explicitly.

\subsubsection{Cartan datum}
A \emph{Cartan datum} consists of a finite-rank free abelian group $P$ called the \emph{weight lattice}
whose dual lattice, called the \emph{co-weight lattice}, is denoted $P^\vee$, of a finite set of vectors
$\Phi=\{\alpha_1,\dots,\alpha_n\}\subset P$ called \emph{simple roots} and of a finite set of vectors 
$\Phi^\vee=\{\alpha^\vee_1,\dots,\alpha^\vee_n\}\subset P^\vee$
called \emph{simple coroots}. 
Let $Q_+=\bbN\Phi\subset P$ be the semigroup generated by the simple roots and
$P_+\subset P$ be the subset of dominant weights, i.e., the set of weights $\Lambda$
such that
$\Lambda_i=\langle\alpha^\vee_i,\Lambda\rangle\geqslant 0$ for all $i\in I$. 
We'll call \emph{Bruhat order} the partial order on $P$ such that $\lambda\leqslant\mu$ whenever 
$\mu-\lambda\in Q_+$.

Set $I=\{1,\dots,n\}$ and let $\langle\bullet,\bullet\rangle$ be the canonical pairing on $P^\vee\times P$. 
The $I\times I$ matrix $A$ with entries $a_{ij}=\langle\alpha_i^\vee,\alpha_j\rangle$ is assumed to be a 
\emph{generalized Cartan matrix}.
We'll assume that the Cartan datum is non-degenerate, i.e., the simple roots are linearly independent,
and symmetrizable, i.e., 
there exist non-zero integers $d_i$ such that $d_i\,a_{ij}=d_j\,a_{ji}$ for all $i,j$.
The integers $d_i$ are unique up to an overall common factor. They can be assumed positive and 
mutually prime.

Let $(\bullet|\bullet)$ be the symmetric bilinear form on 
$\frakh^*=\bbQ\otimes_\bbZ P$ given by $(\alpha_i|\alpha_j)=d_ia_{ij}$.
Let $\frakg$ be the symmetrizable Kac-Moody algebra over $\k$ 
associated with the generalized Cartan matrix $A$ and the lattice of integral weights $P$.
Let $\frakh,\frakn^+\subset\frakg$ be the Cartan subalgebra and the maximal nilpotent subalgebra
spanned by the positive root vectors of $\frakg$.
For any dominant weight $\Lambda\in P_+$, let $V(\Lambda)$ be the corresponding
integrable simple $\frakg$-module. For each $\lambda\in P$ let 
$V(\Lambda)_\lambda\subseteq V(\Lambda)$ be the weight subspace of weight $\lambda$.

\medskip

\subsubsection{Quiver Hecke algebras}Fix an element $c_{i,j,p,q}\in\bfk$ 
for each $i,j\in I$, $p,q\in\bbN$ such that $\deg(c_{i,j,p,q})=-2d_i(a_{ij}+p)-2d_jq$ 
and $c_{i,j,-a_{ij},0}$ is invertible.
Fix a matrix $Q=(Q_{ij})_{i,j\in I}$ with entries in $\bfk[u,v]$ such that
$$Q_{ij}(u,v)=Q_{ji}(v,u),\quad Q_{ii}(u,v)=0,\quad
Q_{ij}(u,v)=\sum_{p,q\geqslant 0} c_{i,j,p,q}\,u^pv^q\ \text{if}\ i\neq j.$$

\smallskip

\begin{definition}\label{relKLR}
The \emph{quiver Hecke algebra} (or QHA) of 
rank $n\geqslant 0$ associated with $A$ and $Q$
is the $\bfk$-algebra $R(n;Q,\bfk)$ generated by $e(\nu),$ $x_k$, $\tau_l$ with 
$\nu\in I^n$, $k,l\in[1,n]$, $l\neq n,$ satisfying the
following defining relations
\begin{flalign*}
(a)\ & e(\nu)\,e(\nu')=\delta_{\nu,\nu'}\,e(\nu),\ \ \sum_{\nu}e(\nu)=1,
&\\
(b)\ &x_k\,x_l=x_l\,x_k,\ \ x_k\,e(\nu)=e(\nu)\,x_k,
&\\
(c)\ &\tau_l\,e(\nu)=e(s_l(\nu))\,\tau_l, \ \ \tau_k\,\tau_l=\tau_l\,\tau_k \ \text{if}\ |k-l|>1,
&\\
(d)\ &\tau_l^2\,e(\nu)=Q_{\nu_l,\nu_{l+1}}(x_l,x_{l+1})\,e(\nu),
&\\
(e)\ &(\tau_kx_l-x_{s_k(l)}\tau_k)\,e(\nu)=\delta_{\nu_k,\nu_{k+1}}\,(\delta_{l,k+1}-\delta_{l,k})\,e(\nu),
&\\
(f)\ &(\tau_{k+1}\,\tau_k\,\tau_{k+1}-\tau_k\,\tau_{k+1}\,\tau_k)\,e(\nu)=
\delta_{\nu_k,\nu_{k+2}}\,\partial_{k,k+2}Q_{\nu_k,\nu_{k+1}}(x_k,x_{k+1})\,e(\nu),
\end{flalign*}
where $\partial_{k,l}$ is the Demazure operator on
$\bfk[x_1,x_2,\dots,x_n]$ which is defined by
$$\partial_{k,l}(f)=(f-(k,l)(f)/(x_{k}-x_l).$$

\end{definition}

\smallskip

The algebra $R(n;Q,\bfk)$ is free as a $\bfk$-module. It
admits a $\bbZ$-grading given by $$\deg (e(\nu))=0,\quad 
\deg (x_ke(\nu))=2d_{\nu_k},\quad \deg (\tau_ke(\nu))=-d_{\nu_k}\,a_{\nu_k,\nu_{k+1}}.$$
For $\alpha\in Q_+$ such that $\height(\alpha)=n$, we set
$$I^\alpha=\{\nu=(\nu_1,\dots ,\nu_n)\in I^n\,;\,\alpha_{\nu_1}+\cdots +\alpha_{\nu_n}=\alpha\}.$$
The idempotent
$e(\alpha)=\sum_{\nu\in I^\alpha}e(\nu)$ is central in $R(n;Q,\bfk)$. 
Given $\nu\in I^n$, $\nu'\in I^m$ we write $\nu\nu'\in I^{n+m}$ for their concatenation.
Set
$e(\alpha,\nu')=\sum_{\nu\in I^\alpha}e(\nu\nu')$ and $e(n,\nu')=\sum_{\nu\in I^n}e(\nu\nu').$
The quiver Hecke algebra of rank $\alpha$ is the algebra 
$$R(\alpha;Q,\bfk)=e(\alpha)\,R(n;Q,\bfk)\,e(\alpha).$$

\medskip

\subsubsection{Cyclotomic quiver Hecke algebras}\label{sec:CQHA}
Given a dominant weight $\Lambda\in P_+$ 
we set
$$I_\Lambda=\{(i,p)\,;\,i\in I,\,p=1,\dots,\Lambda_i\}.$$
For a future use, let 
\begin{equation}\label{canonical-map}I_\Lambda\to I,\qquad t\mapsto i_t\end{equation} 
denote the canonical map such that $(i,p)\mapsto i$.
Fix a family of commuting formal variable $\{c_t\,;\,t\in I_\Lambda\}$.
Let $\bfkg$ be the $\bbN$-graded ring given by
$$\bfkg=\k[c_{t}\,;\,t\in I_\Lambda],\qquad\deg(c_{ip})=2pd_i.$$
We'll abbreviate $\bfku=\bfkg$ and we'll write $c_{i0}=1$.

Now, fix a $\bbN$-graded $\bfku$-algebra $\bfk$.
Let $c_t$ denote both the element in $\bfku$ above and its image in $\bfk$ by the 
canonical map $\bfku\to\bfk$ (which is homogeneous of degree 0).
Then, set 
\begin{equation} a^\Lambda_i(u)=\sum_{p=0}^{\Lambda_i}c_{ip}\,u^{\Lambda_i-p}\in\bfk[u].
\end{equation}
The monic polynomial $a^\Lambda_i(u)$ is called the $i$-th \emph{cyclotomic polynomial}
associated with $\bfk$.

For each $\alpha\in Q_+$ and $1\leqslant k\leqslant \height(\alpha)$, we set
\begin{equation}
a^{\Lambda}_\alpha(x_k)=\sum_{\nu\in I^\alpha}a^\Lambda_{\nu_k}(x_k)e(\nu).
\end{equation}
Note that $a^{\Lambda}_\alpha(x_k)e(\nu)$ is a homogeneous element of 
$R(\alpha;Q,\bfk)$ with degree $2d_{\nu_k}\Lambda_{\nu_k}$.

\smallskip

\begin{definition} The \emph{cyclotomic quiver Hecke algebra} (or CQHA)  of rank $\alpha$ and level 
$\Lambda$ is the quotient $R^\Lambda(\alpha; Q,\bfk)$ of
$R(\alpha;Q,\bfk)$ by the two-sided ideal generated by $a^{\Lambda}_\alpha(x_1).$
\end{definition}

\smallskip

To simplify notation, we write 
$R(\alpha)=R(\alpha;\bfk)=R(\alpha; Q,\bfk)$ and 
$R^\Lambda(\alpha)=R^\Lambda(\alpha;\bfk)=R^\Lambda(\alpha; Q,\bfk)$.
We may also write $R=\bigoplus_\alpha R(\alpha)$,  $R(\bfk)=\bigoplus_\alpha R(\alpha;\bfk)$, $R^\Lambda=\bigoplus_\alpha R^\Lambda(\alpha)$, etc.
The following is proved in \cite[cor.~4.4, thm.~4.5]{KK}.

\smallskip

\begin{proposition}\label{prop:projective}
The $\bfk$-algebra $R^\Lambda(\alpha;\bfk)$ is free of finite type as a $\bfk$-module.
\qed
\end{proposition}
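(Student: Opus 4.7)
The plan is to follow the strategy of Kang--Kashiwara \cite{KK} and proceed by induction on $n = \height(\alpha)$, the base case $\alpha = 0$ being trivial since $R^\Lambda(0) = \bfk$. Finite generation of $R^\Lambda(\alpha)$ over $\bfk$ is the easy half: the non-cyclotomic algebra $R(\alpha; Q, \bfk)$ is $\bfk$-free on the PBW-type basis $\{\tau_w x^{\mathbf a} e(\nu)\}$, while the defining cyclotomic relation $a^\Lambda_{\nu_1}(x_1) e(\nu) = 0$ is monic of degree $\Lambda_{\nu_1}$ in $x_1$. Propagating this relation by means of the commutation rule between $\tau_k$ and $x_l$ produces a monic polynomial annihilating each $x_k\,e(\nu)$ modulo terms of lower multidegree, which bounds the exponent vector $\mathbf a$ and yields finite generation over $\bfk$.

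For freeness I would construct the induction and restriction functors
\[
F_i : R^\Lambda(\alpha - \alpha_i)\mod \to R^\Lambda(\alpha)\mod, \qquad E_i : R^\Lambda(\alpha)\mod \to R^\Lambda(\alpha - \alpha_i)\mod
\]
attached to the idempotent $e(\alpha-\alpha_i, i)$, prove their biadjunction up to a grading shift, and establish the key short exact sequence on $R^\Lambda(\alpha)\mod$ relating $F_j E_i$ and $E_i F_j$, whose $\delta_{ij}$-weighted identity cokernel has graded multiplicity $\Lambda_i - \langle \alpha_i^\vee, \alpha\rangle$. Granting this sequence together with the inductive hypothesis for $R^\Lambda(\alpha - \alpha_i)$, one concludes that $R^\Lambda(\alpha) e(\alpha - \alpha_i, i)$ is a finitely generated projective right $R^\Lambda(\alpha - \alpha_i)$-module of the predicted rank. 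Summing over choices of $i \in I$ and residues $\nu \in I^\alpha$ then gives freeness of $R^\Lambda(\alpha)$ over $\bfk$ with the expected finite total rank.

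The hard part will be the construction and verification of this exact sequence. The natural morphism $F_j E_i \to E_i F_j$ is built from the intertwiner $\tau_n$, but proving its injectivity after passage to the cyclotomic quotient requires a Mackey-style filtration of $\Res \circ \Ind$ on the non-cyclotomic algebra $R(\alpha)$ and a careful analysis of how the cyclotomic ideal generated by $a^\Lambda_\alpha(x_1)$ propagates to the rightmost strand after a sequence of $\tau$'s. Showing that the bottom piece of the filtration survives modulo the cyclotomic ideal --- equivalently, that a certain ``determinant'' built out of intertwiners is injective --- is the delicate combinatorial core of the argument, and is precisely where the inductive freeness at height $n-1$ becomes indispensable, since it is needed to identify this bottom piece with a free module over $R^\Lambda(\alpha - \alpha_i)$ of known rank.
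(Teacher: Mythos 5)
Your sketch is exactly the strategy of Kang--Kashiwara, which is what the paper itself invokes: the paper gives no proof of its own here but simply cites [KK, Cor.~4.4, Thm.~4.5], and your outline (induction on $\height(\alpha)$, PBW basis plus propagation of the monic cyclotomic relation for finite generation, then the $(F_i,E_i)$ biadjunction and the Mackey-type exact sequence comparing $F_jE_i$ with $E_iF_j$ to deduce that $R^\Lambda(\alpha)e(\alpha-\alpha_i,i)$ is finite projective over $R^\Lambda(\alpha-\alpha_i)$) is precisely the argument of those results. So the approach matches.
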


\smallskip

\begin{remark}
A morphism of $\bbN$-graded 
$\bfku$-algebras $\bfk\to\bfh$ yields canonical graded $\bfh$-algebra isomorphisms
$\bfh\otimes_\bfk R(\alpha;\bfk)\to R(\alpha;\bfh)$ and
$\bfh\otimes_\bfk R^\Lambda(\alpha;\bfk)\to R^\Lambda(\alpha;\bfh)$.
\end{remark}

\smallskip

\begin{example}\label{ex:base-change}
$(a)$ 
Set
$\Ru^\Lambda(\alpha)=R^\Lambda(\alpha;\bfku).$ 
We call $\Ru^\Lambda(\alpha)$ 
the \emph{global} (or \emph{universal}) CQHA.

\smallskip

$(b)$ If $\bfk=\k$ then $a^\Lambda_i(u)=u^{\Lambda_i}$ for each $i$.
We call $R^\Lambda(\alpha;\k)$ 
the \emph{local} (or \emph{restricted}) CQHA.

\smallskip

$(c)$ 
For each $i\in I$ we fix an element $c_i\in\bfk$ of degree $2d_i$. 
Let $\bfk'$ denote the new $\bfku$-algebra structure on $\bfk$ such that the corresponding
cyclotomic polynomial is
$a^\Lambda_i(u-c_i)$.
Set $Q'_{ij}(u,v)=Q_{ij}(u-c_i,v-c_j).$
Then, the assignment 
$e(\nu), x_ke(\nu), \tau_le(\nu)\mapsto e(\nu), (x_k+c_{\nu_k})e(\nu), \tau_le(\nu)$
extends uniquely to a $\bfk$-algebra isomorphism
$R^\Lambda(\alpha; Q,\bfk)\simto R^\Lambda(\alpha; Q',\bfk').$
In particular, fix $i\in I$ and assume that $\Lambda=\omega_i$ is the $i$-th fundamental weight.
Assume also that condition \eqref{Q} below is satisfied.
Set $a^\Lambda_i(u)=u+c_i.$ 
Then, we have a $\bfk$-algebra isomorphism
\begin{equation}R^{\omega_i}(\alpha; Q,\bfk)\simeq \bfk\otimes_\k R^{\omega_i}(\alpha;Q,\k).\end{equation}
\end{example}

\smallskip

\begin{definition} For each $k\in[1,n-1]$ 
the $k$-th \emph{intertwiner operator} is the element $\varphi_k\in R^\Lambda(n)$ defined by
$\varphi_ke(\nu)=\tau_ke(\nu)$ if $\nu_k\neq\nu_{k+1}$ and by the following formulas 
if $\nu_k=\nu_{k+1}$
$$\begin{aligned}
\varphi_ke(\nu)&=(x_k\tau_k-\tau_kx_k)e(\nu)=(\tau_kx_{k+1}-x_{k+1}\tau_k)e(\nu)\\
&=((x_k-x_{k+1})\tau_k+1)e(\nu)=(\tau_k(x_{k+1}-x_k)-1)e(\nu).
\end{aligned}$$
\end{definition}

\smallskip

We have, see \cite[sec.~5.1]{K} for details,
\begin{itemize}
\item
$x_{s_k(\ell)}\,\varphi_k\,e(\nu)=\varphi_k\,x_\ell\,e(\nu)$,
\item
$\{\varphi_k\}$ satisfies the braid relations,
\item
if $w\in\frakS_n$ satisfies $w(k+1)=w(k)+1$ then $\varphi_w\,\tau_k=\tau_{w(k)}\,\varphi_w$,
\item 
$\varphi_k^2\,e(\nu)=e(\nu)$ if $\nu_k=\nu_{k+1}$ and $\varphi_k^2\,e(\nu)=Q_{\nu_k,\nu_{k+1}}(x_k,x_{k+1})\,e(\nu)$ if $\nu_k\neq\nu_{k+1}$.
\end{itemize}

\medskip

\subsubsection{Induction and restriction}\label{ss:indres}
Let $i\in I$ and $\alpha\in Q_+$ of height $n$.
Set $\lambda=\Lambda-\alpha$ and $\lambda_i=\langle\alpha_i^\vee,\lambda\rangle$. 

We have a $\bbZ$-graded $\bfk$-algebra embedding 
$\iota_i: R(\alpha)\hookrightarrow R(\alpha+\alpha_i)$
given by $e(\nu),x_k,\tau_l\mapsto e(\nu,i),x_k,\tau_l$ for each $\nu\in I^\alpha$
with $1\leqslant k\leqslant n$ and $1\leqslant l\leqslant n-1$.
It induces a $\bbZ$-graded $\bfk$-algebra homomorphism
$\iota_i: R^\Lambda(\alpha)\to R^\Lambda(\alpha+\alpha_i).$

The restriction and induction functors form an adjoint pair $(F'_i,E'_i)$ with
$$\begin{aligned}
&E'_i: R^\Lambda(\alpha+\alpha_i)\grmod\to R^\Lambda(\alpha)\grmod,\quad N\mapsto e(\alpha,i)N,\\
&F'_i: R^\Lambda(\alpha)\grmod\to R^\Lambda(\alpha+\alpha_i)\grmod,\quad 
M\mapsto R^\Lambda(\alpha+\alpha_i)e(\alpha,i)\otimes_{R^\Lambda(\alpha)}M.\\
\end{aligned}$$
The counit $\varepsilon'_{i,\lambda}:F'_iE'_i1_\lambda\to 1_\lambda$ 
and the unit $\eta'_{i,\lambda}: 1_\lambda\to E'_iF'_i1_\lambda$ are
represented respectively by the multiplication map $\mu$ and the map $\iota_i$
$$\varepsilon'_{i,\lambda}:R^\Lambda(\alpha)e(\alpha-\alpha_i,i)
\otimes_{R^\Lambda(\alpha-\alpha_i)}e(\alpha-\alpha_i,i)R^\Lambda(\alpha)
\to R^\Lambda(\alpha),$$
$$\eta'_{i,\lambda}:  R^\Lambda(\alpha)\to e(\alpha,i)R^\Lambda(\alpha+\alpha_i)e(\alpha,i).$$

Finally, let $\sigma'_{ij,\lambda}:F'_iE'_j1_\lambda\to E'_jF'_i1_\lambda$ 
be the morphism represented by the linear map
$$\begin{aligned}
\RL(\alpha-\alpha_j+\alpha_i)e(\alpha-\alpha_j,i)\otimes_{\RL(\alpha-\alpha_j)}
e(\alpha-\alpha_j,j)\RL(\alpha)&\to e(\alpha-\alpha_j+\alpha_i,j)
\RL(\alpha+\alpha_i)e(\alpha,i),\\ x\otimes y&\mapsto x\tau_n y.
\end{aligned}$$
For $j=i$, the element
$\tau_n\in \RL(\alpha+\alpha_i)$ centralizes the subalgebra 
$e(\alpha-\alpha_i,i^2)\RL(\alpha+\alpha_i)e(\alpha-\alpha_i,i^2)$, so we have
$\sigma'_{ii,\lambda}=\mu_{\tau_n}$, see \eqref{eq:mu}.

\smallskip
\begin{theorem}[\cite{KK}]\label{thm:KK}
For each $\alpha\in Q_+$ of height $n$, we have

(a) if $\lambda_i\geqslant 0$, then the following
morphism of endofunctors on $R^\Lambda(\alpha)\Mod$ is an isomorphism
$$
\rho'_{i,\lambda}=\sigma'_{ii,\lambda}+\sum_{k=0}^{\lambda_i-1}(E'_ix^k)\circ\eta'_{i,\lambda}: 
F'_iE'_i1_\lambda\oplus\bigoplus_{k=0}^{\lambda_i-1}\bfk x^k\otimes 1_\lambda\to E'_iF'_i 1_\lambda,$$

(b) if $\lambda_i\leqslant 0$,  then the following morphism of endofunctors on $R^\Lambda(\alpha)\Mod$ 
is an isomorphism
$$
\rho'_{i,\lambda}=\big(\sigma'_{ii,\lambda},\,
\vep'_{i,\lambda}\circ(F'_ix^0),\dots ,\,\vep'_{i,\lambda}\circ(F'_ix^{-\lambda_i-1})\big): 
F'_iE'_i1_\lambda\to E'_iF'_i 1_\lambda\oplus
\bigoplus_{k=0}^{-\lambda_i-1}\bfk (x^{-1})^k\otimes 1_\lambda.$$
\end{theorem}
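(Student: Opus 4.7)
The plan is to prove both statements at the level of bimodule kernels. By Section \ref{ss:indres}, $F'_iE'_i 1_\lambda$ is represented by the $(R^\Lambda(\alpha), R^\Lambda(\alpha))$-bimodule $P = R^\Lambda(\alpha) e(\alpha-\alpha_i,i) \otimes_{R^\Lambda(\alpha-\alpha_i)} e(\alpha-\alpha_i,i) R^\Lambda(\alpha)$, and $E'_iF'_i 1_\lambda$ is represented by $Q = e(\alpha,i) R^\Lambda(\alpha+\alpha_i) e(\alpha,i)$, with left action via $\iota_i$. Under these identifications $\sigma'_{ii,\lambda} = \mu_{\tau_n}: P\to Q$ sends $x\otimes y \mapsto x\tau_n y$, and $(E'_i x^k)\circ \eta'_{i,\lambda}: R^\Lambda(\alpha) \to Q$ sends $a\mapsto \iota_i(a)\,x_{n+1}^k$. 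The goal is to show that, assembled, these give the advertised bimodule isomorphism.

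First, I would establish the analogue in the free (non-cyclotomic) algebra $R(\alpha+\alpha_i)$. Using a PBW-type basis of $R(\alpha+\alpha_i)$ over $R(\alpha)$ indexed by left coset representatives of $\frakS_n$ in $\frakS_{n+1}$, and sorting according to whether a representative contains $s_n$, one obtains a short exact sequence of $(R(\alpha), R(\alpha))$-bimodules
\[
0 \to \widetilde P \xrightarrow{\mu_{\tau_n}} \widetilde Q \to R(\alpha) \otimes_\k \bfk[x_{n+1}] \to 0,
\]
in which the third term collects words with no $s_n$, weighted by arbitrary powers of $x_{n+1}$, and the right-module splitting is $x_{n+1}^k \mapsto \iota_i(\cdot)\,x_{n+1}^k$. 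The intertwiner relations for $\{\varphi_k\}$ and the quadratic relation $\tau_n^2 e(\nu) = Q_{\nu_n,i}(x_n,x_{n+1})e(\nu)$ make this decomposition canonical.

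Next, I would descend to the cyclotomic quotient. Imposing $a^\Lambda_\alpha(x_1) = 0$ in $R^\Lambda(\alpha+\alpha_i)$ propagates through the intertwiner formulas from Section \ref{sec:CQHA} to a monic relation on the last variable $x_{n+1}$. A commutation argument using $(\tau_k x_l - x_{s_k(l)}\tau_k)\,e(\nu) = \delta_{\nu_k,\nu_{k+1}}(\delta_{l,k+1}-\delta_{l,k})\,e(\nu)$ and the quadratic relations shows that, modulo the cyclotomic ideal, $x_{n+1}$ satisfies a monic polynomial of degree exactly $\lambda_i = \Lambda_i - \langle \alpha_i^\vee, \alpha\rangle$ (when nonnegative). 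Consequently the third term of the exact sequence descends to $\bigoplus_{k=0}^{\lambda_i-1} R^\Lambda(\alpha)\,x_{n+1}^k$ as a bimodule, and the splitting survives; checking that $\rho'_{i,\lambda}$ is the section obtained in this way proves (a). For (b) the same mechanism runs in reverse: when $\lambda_i\leqs 0$, the cyclotomic relation forces $\mu_{\tau_n}$ to be injective with the extra summands sitting on the source side, split using the counit $\vep'_{i,\lambda}$ in place of $\eta'_{i,\lambda}$. Freeness (Proposition \ref{prop:projective}) and a rank comparison then pin down both injectivity and surjectivity.

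The main obstacle is the cyclotomic reduction: establishing that the monic relation on $x_{n+1}$ inherited from $a^\Lambda_\alpha(x_1) = 0$ has degree exactly $\lambda_i$ with unit leading coefficient. This is the technical core of \cite{KK} and requires an intricate induction on $n$: one must track both the leading term and the lower-order corrections as $a^\Lambda_i(x_1)$ is commuted rightward through the braid generators $\tau_k$, and the exact degree $\lambda_i$ emerges only after summing the weight contributions over all $\nu\in I^\alpha$ weighted by the multiplicity of $i$ in $\nu$. Everything else in the argument is then essentially formal consequence of the adjunction setup of Section \ref{sss:adjunction}.
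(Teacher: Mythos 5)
The paper gives no independent proof of this theorem; it is simply cited from \cite{KK} (Kang--Kashiwara). Your sketch correctly identifies the two main ingredients of the Kang--Kashiwara argument: first, the Mackey/PBW decomposition of $e(\alpha,i)R(\alpha+\alpha_i)e(\alpha,i)$ as a bimodule over $R(\alpha)$ in the non-cyclotomic setting, yielding a short exact sequence whose third term is $R(\alpha)\otimes_\k\bfk[x_{n+1}]$; and second, the descent to the cyclotomic quotient governed by a monic relation on $x_{n+1}$ of degree $\lambda_i$. You also correctly flag the degree computation as the technical heart, so the overall route matches.

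There is, however, one concrete slip in your treatment of part $(b)$. You write that for $\lambda_i\leqslant 0$ ``the cyclotomic relation forces $\mu_{\tau_n}$ to be injective.'' This is backwards. When $\lambda_i\leqslant 0$ the map $\rho'_{i,\lambda}$ goes $F'_iE'_i1_\lambda\to E'_iF'_i1_\lambda\oplus\bigoplus_{k=0}^{-\lambda_i-1}1_\lambda$, and $\sigma'_{ii,\lambda}=\mu_{\tau_n}$ is obtained by composing the isomorphism $\rho'_{i,\lambda}$ with the projection onto the $E'_iF'_i1_\lambda$ summand; since the other summands of the target cannot be reached through the first component, what is forced is the \emph{surjectivity} of $\mu_{\tau_n}$, not its injectivity. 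Dually, the kernel of $\mu_{\tau_n}$ must map isomorphically onto $\bigoplus_{k=0}^{-\lambda_i-1}R^\Lambda(\alpha)$ via the counit components $\vep'_{i,\lambda}\circ(F'_ix^k)$. This is the correct sense in which ``the mechanism runs in reverse'' from part $(a)$, where $\mu_{\tau_n}$ is injective and the cokernel is detected through the unit. A related caution: the concluding ``freeness plus rank comparison'' step is not as light as you suggest, because a priori one does not know the rank of $e(\alpha,i)R^\Lambda(\alpha+\alpha_i)e(\alpha,i)$ over $\bfk$; Kang--Kashiwara establish surjectivity first by a filtration argument and then deduce the dimension count rather than presuppose it.
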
 

\smallskip

The theorem can  be rephrased as follows

\smallskip

\begin{itemize}
\item assume $\lambda_i\geqslant 0$: for any $z\in e(\alpha,i)\RL(\alpha+\alpha_i)e(\alpha,i)$ 
there are unique elements 
$\pi(z)\in \RL(\alpha)e(\alpha-\alpha_i,i)\otimes_{\RL(\alpha-\alpha_i)}e(\alpha-\alpha_i,i)\RL(\alpha)$ 
and $p_k(z)\in\RL(\alpha)$ such that
\begin{equation}\label{eq:lambdapos-bis}
z=\mu_{\tau_n}(\pi(z))+\sum_{k=0}^{\lambda_i-1}p_k(z)\,x^k_{n+1}.
\end{equation}

\medskip
\item  assume $\lambda_i\leqslant 0$: for any $z\in e(\alpha,i)\RL(\alpha+\alpha_i)e(\alpha,i)$
and any $z_0,\dots , z_{-\lambda_i-1}\in\RL(\alpha)$, there is a unique element 
$y\in \RL(\alpha)e(\alpha-\alpha_i,i)\otimes_{\RL(\alpha-\alpha_i)}e(\alpha-\alpha_i,i)\RL(\alpha)$ such that 
\begin{equation}\label{eq:adj-bim-neg}
\mu_{\tau_n}(y)=z,\quad 
\mu_{x^k_{n}}(y)=z_k,\quad \forall k\in[0,-\lambda_i-1].
\end{equation}
\end{itemize}

\smallskip

For a future use, let us introduce the following notation.
Assume that $\lambda_i\leqslant 0$ and that $z\in e(\alpha,i)\RL(\alpha+\alpha_i)e(\alpha,i)$.
For each $\ell\in [0,-\lambda_i-1]$ let
\begin{equation}\label{rk:centralpi}
\tilde z, \tilde\pi_\ell\in \RL(\alpha)e(\alpha-\alpha_i,i)\otimes_{\RL(\alpha-\alpha_i)}
e(\alpha-\alpha_i,i)\RL(\alpha)
\end{equation}
be the unique elements such that 
$$\mu_{\tau_n}(\tilde z)=z,\quad \mu_{x_n^k}(\tilde z)=0,\quad 
\mu_{\tau_n}(\tilde\pi_\ell)=0,\quad\mu_{x_n^k}(\tilde\pi_\ell)=\delta_{k,\ell}.$$

\smallskip

\begin{theorem}[\cite{K}]\label{thm:Kbiadjoint}
The pair $(E'_i,F'_i)$ is adjoint with the counit $\hat\vep'_{i,\lambda}: E'_iF'_i1_\lambda\to 1_\lambda$ and the unit 
$\hat\eta'_{i,\lambda}: 1_\lambda\to F'_iE'_i1_\lambda$ represented by the morphisms
$$\begin{aligned}
&\hat\vep'_{i,\lambda}: e(\alpha,i)\RL(\alpha+\alpha_i)e(\alpha,i)\to\RL(\alpha)\\
&\hat\eta'_{i,\lambda}: \RL(\alpha)\to \RL(\alpha)e(\alpha-\alpha_i,i)\otimes_{\RL(\alpha-\alpha_i)}
e(\alpha-\alpha_i,i)\RL(\alpha)\end{aligned}$$ 
such that
\begin{itemize}
\item
$\hat\vep'_{i,\lambda}(z)=p_{\lambda_i-1}(z)$ if $\lambda_i> 0$ and
$\mu_{x_n^{-\lambda_i}}(\tilde z)$ if $\lambda_i\leqslant 0,$
\item
$\hat\eta'_{i,\lambda}(1)=-\pi(x^{\lambda_i}_{n+1})$ if $\lambda_i\geqslant 0$ and
$\tilde \pi_{-\lambda_i-1}$ if $\lambda_i< 0.$
\end{itemize}

\qed
\end{theorem}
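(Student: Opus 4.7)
The plan is to verify the two zigzag identities characterizing an adjunction $(E'_i,F'_i)$, using the decomposition isomorphism $\rho'_{i,\lambda}$ from Theorem~\ref{thm:KK}. Since that isomorphism expresses the endofunctor $E'_iF'_i1_\lambda$ (resp.\ $F'_iE'_i1_\lambda$) as a direct sum in which the complementary summands are free $\bfk$-modules of finite rank with explicit generators, any morphism of bimodules out of (or into) such a decomposition is uniquely determined by its components. This is the structural fact that will make the verification tractable.

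First I would check that the formulas for $\hat\vep'_{i,\lambda}$ and $\hat\eta'_{i,\lambda}$ are well defined and compatible with the bimodule structures. For $\lambda_i\geqs 0$, the element $\pi(x_{n+1}^{\lambda_i})$ is obtained by applying the decomposition \eqref{eq:lambdapos-bis} to $z=x_{n+1}^{\lambda_i}\in e(\alpha,i)\RL(\alpha+\alpha_i)e(\alpha,i)$, and the map $z\mapsto p_{\lambda_i-1}(z)$ is $\RL(\alpha)$-bilinear thanks to the uniqueness in that decomposition. For $\lambda_i\leqs 0$, the assignments $z\mapsto \tilde z$ and $z\mapsto\tilde\pi_\ell$ from \eqref{rk:centralpi} are $\RL(\alpha)$-bilinear for the same reason, so in particular $\mu_{x_n^{-\lambda_i}}(\tilde z)$ defines a bimodule morphism. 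Hence in both regimes $\hat\vep'_{i,\lambda}$ and $\hat\eta'_{i,\lambda}$ yield genuine morphisms of $(\RL(\alpha),\RL(\alpha))$-bimodules.

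Next I would verify the two zigzag identities $(\hat\vep'_{i,\lambda}E'_i)\circ(E'_i\hat\eta'_{i,\lambda})=1_{E'_i}$ and $(F'_i\hat\vep'_{i,\lambda})\circ(\hat\eta'_{i,\lambda}F'_i)=1_{F'_i}$. In the case $\lambda_i>0$, the first identity reduces, after unwinding the bimodule identifications, to an explicit calculation using the defining relation $\mu_{\tau_n}(\pi(z))+\sum_{k=0}^{\lambda_i-1}p_k(z)\,x_{n+1}^k=z$ from \eqref{eq:lambdapos-bis} applied to $z=x_{n+1}^{\lambda_i}$. The case $\lambda_i\leqs 0$ is handled symmetrically via \eqref{eq:adj-bim-neg}, using that $\{\tilde z,\tilde\pi_\ell\}$ form coordinates dual to $\mu_{\tau_n}$ and $\mu_{x_n^k}$. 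The second zigzag identity is checked by applying the same argument on the other side of the bimodule tensor product.

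The main obstacle is the careful bookkeeping of the bimodule actions and of the signs in both regimes: the sign $-1$ in $\hat\eta'_{i,\lambda}(1)=-\pi(x_{n+1}^{\lambda_i})$ for $\lambda_i\geqs 0$ is not cosmetic but is forced by the requirement that the composition through $\rho'_{i,\lambda}$ and its inverse produces $+1_{E'_i}$ rather than $-1_{E'_i}$, and tracking this sign through the explicit form of $\rho'_{i,\lambda}$ is the only delicate point. Once this verification is done, the theorem follows as a direct corollary of Theorem~\ref{thm:KK}.
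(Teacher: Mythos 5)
This statement carries a \texttt{\textbackslash qed} with no proof: the paper simply cites Kashiwara's \emph{Biadjointness in Khovanov--Lauda--Rouquier algebras} \cite{K}. So your proposal should be measured against that reference rather than against text in the paper. Your high-level plan, constructing the unit and counit from the decomposition of Theorem~\ref{thm:KK} and checking the two triangle identities, is the same plan Kashiwara executes, and the observation that bimodule morphisms out of the decomposition are pinned down by their components is correct and useful. But two things in the sketch would not survive contact with the actual computation.

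First, the two natural transformations appearing in one triangle identity live at \emph{different} weights, and your case split does not respect this. The identity $(\hat\vep'_{i,\lambda}E'_i)\circ(E'_i\hat\eta'_{i,\mu})=1_{E'_i}$ uses the counit at weight $\lambda=\Lambda-\alpha$ but the unit at weight $\mu=\lambda-\alpha_i$, so $\mu_i=\lambda_i-2$. The dichotomy ``$\lambda_i>0$ vs.\ $\lambda_i\leqslant 0$'' therefore does not partition the verification cleanly: when $\lambda_i=1$ the counit formula $p_{\lambda_i-1}$ comes from the positive regime of Theorem~\ref{thm:KK}, while the unit formula $\tilde\pi_{-\mu_i-1}$ at weight $\mu$ comes from the negative regime, and the two sides of the decomposition do not simply ``handle each other symmetrically.'' One must treat (at least) $\lambda_i\geqslant 2$, $\lambda_i=1$, and $\lambda_i\leqslant 0$ separately, and in the boundary case the cancellation is not a formal consequence of the decomposition. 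Second, the step ``reduces to an explicit calculation using the defining relation applied to $z=x_{n+1}^{\lambda_i}$'' names only the starting point. Unwinding the composition produces quantities of the type $\pi(x_{n+1}^{r})$, $p_k(x_{n+1}^{r})$, $\tilde z$, $\tilde\pi_\ell$ that must be related to each other by recursive identities of exactly the sort established, at considerable length, in Lemma~\ref{lem:recrec}; the grading check in Corollary~\ref{cor:vepdeg} is also needed to see that the candidate maps land where they should. Calling the theorem ``a direct corollary of Theorem~\ref{thm:KK}'' overstates it: the decomposition is the raw material, and a nontrivial computation with the algebra generators (in particular the intertwiners $\varphi_k$) is what actually closes the argument.
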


We abbreviate $\vep'_i=\vep'_{i,\lambda}$, $\eta'_i=\eta'_{i,\lambda},$
$\hat\vep'_i=\hat\vep'_{i,\lambda}$, $\hat\eta'_i=\hat\eta'_{i,\lambda}$, etc, 
when $\lambda$ is clear from the context.

\smallskip

\begin{corollary} \label{cor:vepdeg}
The linear maps $\vep'_i$, $\eta'_i$ are homogeneous of degree zero.
The linear maps $\hat\vep'_i$, $\hat\eta'_i$ are homogeneous of degree 
$2d_i(1-\lambda_i)$, $2d_i(1+\lambda_i)$ respectively.
The linear map $\sigma'_{ij}$ is homogeneous of degree $-d_ia_{ij}.$
\qed
\end{corollary}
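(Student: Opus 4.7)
The proof will be a direct tracing of degrees through the explicit bimodule formulas in Theorems \ref{thm:KK} and \ref{thm:Kbiadjoint}, using only the grading data $\deg(x_k e(\nu)) = 2d_{\nu_k}$ and $\deg(\tau_l e(\nu)) = -d_{\nu_l} a_{\nu_l, \nu_{l+1}}$ recorded after Definition \ref{relKLR}.

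I would first dispose of the easy cases. The counit $\vep'_i$ is represented by the multiplication map $\mu$ and the unit $\eta'_i$ by the inclusion $\iota_i$ from \S\ref{ss:indres}; both are visibly homogeneous of degree zero. For $\sigma'_{ij,\lambda}$, the prescription $x \otimes y \mapsto x\tau_n y$ factors $\sigma'_{ij,\lambda}$ as the multiplication $\mu$ followed by right multiplication by $\tau_n$ at a pair of strands with labels $(j,i)$. Hence its degree equals $\deg(\tau_n e(\cdots, j, i)) = -d_j a_{ji} = -d_i a_{ij}$, where the last equality uses symmetrizability.

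The substantive computation is the degree of $\hat\vep'_i$ and $\hat\eta'_i$, treated by the sign of $\lambda_i$. For $\lambda_i > 0$, formula \eqref{eq:lambdapos-bis} is a homogeneous decomposition, and since $\deg(x^k_{n+1} e(\cdots,i)) = 2kd_i$ one reads off $\deg p_k(z) = \deg z - 2kd_i$. Applied with $k = \lambda_i - 1$ this gives $\deg \hat\vep'_i(z) = \deg z + 2d_i(1 - \lambda_i)$. For $\lambda_i \leqslant 0$, the relation $\mu_{\tau_n}(\tilde z) = z$ with $\deg(\tau_n e(\cdots, i, i)) = -2d_i$ forces $\deg \tilde z = \deg z + 2d_i$, hence $\deg \hat\vep'_i(z) = \deg \mu_{x_n^{-\lambda_i}}(\tilde z) = \deg z + 2d_i - 2\lambda_i d_i$, the same value.

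Dually, for $\hat\eta'_i(1)$: when $\lambda_i \geqslant 0$, the element $\pi(x_{n+1}^{\lambda_i})$ has degree $\deg(x_{n+1}^{\lambda_i}) + 2d_i = 2d_i(1 + \lambda_i)$; when $\lambda_i < 0$, the normalization $\mu_{x_n^{-\lambda_i - 1}}(\tilde\pi_{-\lambda_i - 1}) = 1$ forces $\deg \tilde\pi_{-\lambda_i - 1} = -2(-\lambda_i - 1)d_i = 2d_i(1 + \lambda_i)$. No real obstacle arises; the only bookkeeping care needed is that the tensor grading on $\RL(\alpha) e(\alpha-\alpha_i,i) \otimes_{\RL(\alpha-\alpha_i)} e(\alpha-\alpha_i,i) \RL(\alpha)$ is additive in the two factors, and that one must track which strand label $\tau_n$ and $x_n$ sit on when reading the degrees.
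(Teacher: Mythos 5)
Your proof is correct, and since the paper leaves the corollary unproved (it is stated with a bare \qed after Theorem \ref{thm:Kbiadjoint}), the direct degree-tracing you carry out is exactly the intended argument. The only cosmetic imprecision is the phrase ``factors $\sigma'_{ij,\lambda}$ as the multiplication $\mu$ followed by right multiplication by $\tau_n$''---the map $x\otimes y\mapsto x\tau_n y$ is insertion of $\tau_n$ in the middle rather than a post-composition---but this does not affect the degree count, and the identification of the strand labels $(j,i)$ at positions $n,n+1$ and the use of symmetrizability $d_j a_{ji}=d_i a_{ij}$ are exactly right.
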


\medskip

\subsubsection{The symmetrizing form}
For each $\alpha\in Q_+$ we set 
$$d_{\Lambda,\alpha}=(\Lambda|\Lambda)-(\Lambda-\alpha|\Lambda-\alpha).$$
We'll need the following result from \cite[rem.~3.19]{W1}.

\smallskip

\begin{proposition} \label{prop:symmetrizing-form}
The $\bfk$-algebra $R^\Lambda(\alpha)$ is symmetric and admits a symmetrizing form 
$t_{\Lambda,\alpha}$ which is homogeneous of degree $-d_{\Lambda,\alpha}$.
\qed
\end{proposition}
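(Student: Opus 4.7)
My plan is to use induction on $\height(\alpha)$, producing $t_{\Lambda, \alpha}$ from $t_{\Lambda, \alpha - \alpha_i}$ by composing with a relative trace read off from the Kang-Kashiwara biadjunction of Theorem \ref{thm:Kbiadjoint}. The base case $\alpha = 0$ is immediate: $R^\Lambda(0) = \bfk$ is symmetric with $t_{\Lambda, 0} = \operatorname{id}_\bfk$, of degree $0 = -d_{\Lambda, 0}$.

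For the induction step, I fix some $i \in I$ with $\beta := \alpha - \alpha_i \in Q_+$, and set $\lambda = \Lambda - \alpha$, $\lambda_i = \langle\alpha_i^\vee, \lambda\rangle$. By the composition lemma for symmetrizing forms recalled at the end of Section 2.2 (coming from \cite[lem.~2.10]{R08}), it suffices to construct a relative symmetrizing form
\[
\tau_i \colon R^\Lambda(\alpha) \to R^\Lambda(\beta),
\]
that is, a form making $R^\Lambda(\alpha)$ into a symmetric algebra over $R^\Lambda(\beta)$ via the homomorphism $\iota_i$, of degree $-(d_{\Lambda, \alpha} - d_{\Lambda, \beta})$. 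Then $t_{\Lambda, \alpha} := t_{\Lambda, \beta} \circ \tau_i$ will have the required degree, since a direct calculation with the bilinear form and using $(\alpha_i | \alpha_i) = 2d_i$ gives
\[
d_{\Lambda, \alpha} - d_{\Lambda, \beta} = 2(\alpha_i | \lambda) + (\alpha_i | \alpha_i) = 2d_i(\lambda_i + 1),
\]
matching the degree of the counit $\hat\vep'_{i, \lambda}$ recorded in Corollary \ref{cor:vepdeg}.

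To construct $\tau_i$, I translate the biadjointness of $(E'_i, F'_i)$ into bimodule language via kernels: the $(R^\Lambda(\alpha), R^\Lambda(\beta))$-bimodule $P_i = \RL(\alpha) e(\beta, i)$ representing $F'_i$ is finitely generated projective on both sides (Proposition \ref{prop:projective} applied at both levels), and the biadjunction, combined with the grading shifts of Corollary \ref{cor:vepdeg}, amounts to the graded bimodule isomorphism
\[
\Hom_{R^\Lambda(\alpha)}(P_i, R^\Lambda(\alpha)) \simeq e(\beta, i) R^\Lambda(\alpha)\bigl\langle -2 d_i (\lambda_i + 1)\bigr\rangle,
\]
together with a matched identification on the other side. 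This is exactly the data making $R^\Lambda(\alpha)$ a twisted Frobenius extension of $R^\Lambda(\beta)$ via $\iota_i$, and standard Frobenius-extension reconstruction yields $\tau_i$: on the diagonal corner $e(\beta, i) R^\Lambda(\alpha) e(\beta, i)$ it is essentially the explicit counit $\hat\vep'_{i, \lambda}$ of Theorem \ref{thm:Kbiadjoint}, and globally it is defined by decomposing $e(\alpha) = \sum_{j \in I} e(\alpha - \alpha_j, j)$ and summing the analogous partial counits for each last-letter index $j$, with zero contribution on the off-diagonal components $e(\alpha - \alpha_j, j) R^\Lambda(\alpha) e(\alpha - \alpha_k, k)$ with $j \neq k$.

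The main obstacle I expect is verifying that this prescription actually defines a \emph{symmetrizing} form, i.e.\ $\tau_i(xy) = \tau_i(yx)$ on all of $R^\Lambda(\alpha)$ rather than only block-by-block. Within each diagonal corner this is built into the biadjunction, but gluing across corners requires the partial counits indexed by different $j$ to be mutually compatible. I expect this coherence to reduce to the explicit formulas \eqref{eq:lambdapos-bis} and \eqref{eq:adj-bim-neg} for $\hat\eta'_i$ and $\hat\vep'_i$ in terms of the multiplication map $\mu$, the intertwiner $\tau_n$, and powers of $x_{n+1}$, combined with the commutation identities between $\tau_n$ and the idempotents $e(\nu)$ encoded in the defining relations of Section 3.1.2. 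Once coherence is established, independence of the auxiliary choice of $i$ is automatic: any two resulting symmetrizing forms on $R^\Lambda(\alpha)$ differ by an invertible degree-zero central element, hence by a scalar in $\bfk^0 = \k$, which is fixed by evaluation on a single distinguished idempotent $e(\nu)$.
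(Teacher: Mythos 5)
Your overall strategy --- induct on $\height(\alpha)$ and build the form by composing the inductive $t_{\Lambda,\beta}$ with the counit $\hat\vep'_i$ from the Kang--Kashiwara biadjunction --- is the same route the paper takes (Appendix~\ref{app:symmetrizing}, Section~\ref{sec:A1}), and your degree bookkeeping is correct. But there are two genuine gaps.

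First, you are missing the normalization constants. The paper does not set $t_{\Lambda,\alpha}\big(e(\nu)\,\bullet\,e(\nu)\big)=\hat\vep_\nu$; it sets it to $r_\nu\,\hat\vep_\nu$, where $r_\nu=\prod_{k<l}r_{\nu_k,\nu_l}$ and $r_{ij}=c_{i,j,-a_{ij},0}$ for $i\neq j$ (Definition~\ref{def:symmetrizing} and~\eqref{rij}). This is not cosmetic. The engine that drives the symmetry argument is Lemma~\ref{lem:relationsI}$(a)$: exchanging the order in which you peel off an $i$-counit and a $j$-counit produces exactly the factor $c_{i,j,-a_{ij},0}$, i.e.\ $\bbX_{i,j,\lambda}=c_{i,j,-a_{ij},0}\,\bbI_{i,j,\lambda}$. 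Without the compensating $r_\nu$, the resulting linear functional is genuinely \emph{not} symmetric when $\alpha$ involves at least two colors and $c_{i,j,-a_{ij},0}\neq 1$ --- which is the generic situation, since the axioms only force these scalars to be invertible, not to equal $1$. Your ``standard Frobenius-extension reconstruction'' framing gives you no handle on these twists, and the way you plan to fix the remaining scalar freedom (``by evaluation on a single distinguished idempotent $e(\nu)$'') cannot detect a $\nu$-dependent discrepancy.

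Second, the step you flag as the ``main obstacle'' --- that the glued functional actually satisfies $t_\alpha(zw)=t_\alpha(wz)$ across blocks with different last letters --- is the entire technical content of the proposition, and it is not established by the commutation identities you cite. What is actually needed is Lemma~\ref{lem:relationsI} in full, including part $(b)$ with its correction term in bubbles $\sum_{g_1+g_2+g_3=-\lambda'_i-1}x^{g_1}B^{g_2}_{+i,\lambda'}x^{g_3}$ when $i=j$ and $\lambda'_i<0$, plus a separate case analysis for $i=j$, $\lambda_i>0$ where $z,w$ need not lie in the image of $\sigma_{ii}$ (the paper handles this by splitting off terms $z_kx_n^k$ and checking they contribute via commutators in $\RL(\alpha-\alpha_i)$). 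Saying you ``expect this coherence to reduce to the explicit formulas \eqref{eq:lambdapos-bis}, \eqref{eq:adj-bim-neg}'' is a correct diagnosis of \emph{where} the work lies but not a substitute for doing it. Finally, a minor framing issue: your $\tau_i\colon R^\Lambda(\alpha)\to R^\Lambda(\beta)$ with $\beta=\alpha-\alpha_i$ fixed cannot absorb the ``partial counits for each last-letter index $j$'' that you then sum over, since for $j\neq i$ these land in $R^\Lambda(\alpha-\alpha_j)\neq R^\Lambda(\beta)$; the paper avoids this by defining $t_\alpha$ directly on each block $e(\nu)\RL(\alpha)e(\nu)$ by composing counits along the full sequence $\nu$, never passing through a single intermediate $R^\Lambda(\beta)$.
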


\smallskip

The definition of $t_{\Lambda,\alpha}$ is given in Definition \ref{def:symmetrizing}.
We'll abbreviate $t_\alpha=t_{\Lambda,\alpha}$ and $t_\Lambda=\sum_\alpha t_\alpha$.
Since we have not found any proof of the proposition in the literature, we have given one in Appendix \ref{app:symmetrizing}.

%

\medskip

\subsection{Categorical representations}
Let $\bfk$ be an $\bbN$-graded commutative ring as in Section \ref{sec:QH}.
Write $\frakg_\bfk=\bfk\otimes_\k\frakg$. Fix an integer $\ell$.

\subsubsection{Definition}
For each $\lambda\in P$, let $\calC_\lambda$ be an $(\ell\bbZ)$-graded $\k$-category.
Set $\calC=\bigoplus_\lambda\calC_\lambda$ and denote by $1_\lambda$ the obvious
functor $1_\lambda :\calC\to\calC_\lambda.$
For each $i,j\in I$, $\lambda\in P$ we fix
\begin{itemize}
\item a $\bbZ$-graded $\k$-algebra homomorphism 
$\bfk^{[\ell]}\to\Z(\calC/\ell\bbZ)$, 
\item a functor
$1_{\lambda-\alpha_i}F_i=F_i1_\lambda$ 
with a right adjoint 
$1_\lambda E_i[\ell d_i(1-\lambda_i)]=E_i1_{\lambda-\alpha_i}[\ell d_i(1-\lambda_i)],$
\item morphisms of functors $x_i1_\lambda:F_i1_\lambda\to F_i1_\lambda[2\ell d_i]$ and 
$\tau_{ij}1_\lambda:F_iF_j1_\lambda\to F_jF_i1_\lambda[-\ell d_ia_{ij}].$
\end{itemize}
Thus $\calC/\ell\bbZ$  is a $\bfk$-category and the functors
$F_i1_\lambda$, $E_i1_\lambda$ are $\bfk$-linear.
Let 
$$\vep_i1_\lambda:F_iE_i1_\lambda\to 1_\lambda[\ell d_i(1+\lambda_i)],\qquad 
\eta_i1_\lambda:1_\lambda\to E_iF_i1_\lambda[\ell d_i(1-\lambda_i)]$$ 
be the counit and the unit of the adjoint pair $(1_\lambda F_i,\,E_i1_\lambda[-\ell d_i(1+\lambda_i)])$.
We'll abbreviate 
$$E_i=\bigoplus_\lambda E_i1_\lambda,\quad F_i=\bigoplus_\lambda F_i1_\lambda,\quad 
F_\alpha=\bigoplus_{\nu\in I^\alpha}F_\nu,\quad\text{etc},$$ 
where $F_\nu=F_{\nu_1}F_{\nu_2}\dots F_{\nu_n}$ for $\nu=(\nu_1,\nu_2,\dots,\nu_n)$.
Next, we define the following morphisms
\begin{itemize}
\item
$\sigma_{ij}=(E_jF_i\vep_j)\circ(E_j\tau_{ji}E_i)\circ(\eta_jF_iE_j):F_iE_j\to E_jF_i,$

\item 
$\rho_i1_\lambda=\sigma_{ii}1_\lambda+
\sum_{l=0}^{-\lambda_i-1}(\vep_i1_\lambda)\circ(x_i^lE_i1_\lambda):
F_iE_i1_\lambda
\to E_iF_i1_\lambda\oplus\bigoplus_{l=0}^{-\lambda_i-1}
1_\lambda[\ell d_i(1+2l+\lambda_i)]$
if $\lambda_i\leqslant 0$,

\item 
$\rho_i1_\lambda=
\sigma_{ii}1_\lambda+\sum_{l=0}^{\lambda_i-1}(E_ix_i^l1_\lambda)\circ(\eta_i1_\lambda):
F_iE_i1_\lambda
\oplus\bigoplus_{l=0}^{\lambda_i-1} 1_\lambda[\ell d_i(1+2l-\lambda_i)]\to E_iF_i1_\lambda$
if $\lambda_i\geqslant 0$.

\end{itemize}

\smallskip

\begin{definition}
A \emph{categorical  representation} of $\frakg_\bfk$ of degree $\ell$ in $\calC$ is a tuple
$\calC_\lambda,$ $E_i,$ $F_i,$ $\vep_i,$ $\eta_i,$ $x_i,$ $\tau_{ij}$ as above
such that the following hold
\begin{itemize}
\item the assignment $e(\nu)\mapsto 1_{F_\nu},$
$x_ke(\nu)\mapsto x_{\nu_k}1_{F_\nu},$ $\tau_le(\nu)\mapsto\tau_{\nu_l,\nu_{l+1}}1_{F_\nu}$ for each $\nu\in I^\alpha$
defines a $\bbZ$-graded $\bfk^{[\ell]}$-algebra homomorphism
$R(\alpha;\bfk)^{[\ell]}\to\End_{\calC/\ell \bbZ}(F_\alpha)$,
\item the morphisms $\rho_i1_\lambda$, $\sigma_{ij},$ $i\neq j$, are 
isomorphisms.
\end{itemize}
Morphisms of categorical representations are defined in the obvious way.
\end{definition}

\smallskip

We'll call the map $R(\alpha;\bfk)^{[\ell]}\to\End_{\calC/\ell \bbZ}(F_\alpha)$ the \emph{canonical homomorphism} associated with the categorical 
representation of $\frakg_\bfk$ in $\calC$.

Unless specified otherwise, a categorical representation will be of degree 1.
Note that, given a categorical representation of $\frakg_\bfk$ in $\calC$,
there is a canonical categorical representation of $\frakg_\bfk$ of degree $\ell$ in $\calC^{[\ell]}$
called its \emph{$\ell$-twist} such that the $\bbZ$-graded $\bfk^{[\ell]}$-algebra homomorphism
$$R(\alpha;\bfk)^{[\ell]}\to\End_{\calC^{[\ell]}/\ell\bbZ}(F_\alpha)=\End_{\calC/\bbZ}(F_\alpha)^{[\ell]}$$
is equal to the homomorphism $R(\alpha;\bfk)\to\End_{\calC/\bbZ}(F_\alpha)$
associated with the $\frakg_\bfk$-action on $\calC$.

We'll also use the following definitions 
\begin{itemize}
\item $\calC$ is \emph{integrable} if $E_i$, $F_i$ are locally nilpotent for all $i$, 
\item $\calC$ is \emph{bounded above} if the set of weights of $\calC$ is contained
in a finite union of cones of type $\mu-Q_+$ with  $\mu\in P,$
\item the \emph{highest weight subcategory} $\calC^\hw\subset\calC$ is the full subcategory given by
$$\calC^\hw=\{M\in \calC\,;\,E_i(M)=0,\,\forall i\in I\}.$$
\end{itemize}

\smallskip

\begin{remark}
$(a)$ Taking the left transpose of the morphisms of functors 
$$x_i1_\lambda:F_i1_\lambda\to F_i1_\lambda[2\ell d_i],\quad \tau_{ij}1_\lambda:F_iF_j1_\lambda\to F_jF_i1_\lambda[-\ell d_ia_{ij}]$$ 
we get the morphisms of functors
$$1_\lambda {}^\vee x_i:1_\lambda E_i\to 1_\lambda E_i[2\ell d_i],\quad 1_\lambda{}^\vee \tau_{ij}1_\lambda:1_\lambda E_iE_j\to 1_\lambda E_jE_i[-\ell d_ia_{ij}].$$
We'll abbreviate  $x_i={}^\vee x_i$ and $\tau_{ij}={}^\vee \tau_{ij}$.

\smallskip

$(b)$ Forgetting the grading at each place we define as above a categorical  representation of $\frakg_\bfk$ in a (not graded) $\k$-category $\calC$.

\smallskip

$(c)$ For each short exact sequence of $\bbZ$-graded $\k$-categories $0\to\calI\to\calC\to\calB\to 0$
such that $\calI\to\calC$ is a morphism of categorical representations of $\frakg_\bfk$,
there is a unique categorical representation of $\frakg_\bfk$ on $\calB$ such that
$\calC\to\calB$ is morphism of categorical representations.

\smallskip

$(d)$ Given a categorical representation of $\frakg_\bfk$ on $\calC$, there is a unique categorical representation of
$\frakg_\bfk$ on $\calC^c$ such that the canonical fully faithful functor $\calC\to\calC^c$ is a morphism of categorical representations.
Recall that the objects of the idempotent completion $\calC^c$ are the pairs $(M,e)$ where $M$ is an object of $\calC$
and $e$ is an idempotent of $\End_\calC(M)$, and that 
$\Hom_{\calC^c}\big((M,e),(N,f)\big)=f\Hom_{\calC}(M,N)\,e$.
Then, we have $F_i(M,e)=(F_i(M),F_i(e))$, $E_i(M,e)=(E_i(M),E_i(e))$
and $x_i1_\lambda$, $\tau_{ij}1_\lambda$ are defined in a similar way.

\end{remark}

\medskip

\subsubsection{The minimal categorical representation}
Fix a dominant weight $\Lambda\in P_+$ and an $\bbN$-graded $\bfku$-algebra $\bfk$.
Given $\alpha\in Q_+$ we write $\lambda=\Lambda-\alpha$.
Recall that we abbreviate $R^\Lambda(\alpha)=R^\Lambda(\alpha;\bfk)$.
Let $\bfk\calA^{\Lambda}_{\lambda}=R^\Lambda(\alpha)\grmod$ 
be the $\bbZ$-graded abelian $\k$-category 
consisting of the finitely generated $\bbZ$-graded 
$R^\Lambda(\alpha)$-modules and
let $\bfk\calV^{\Lambda}_{\lambda}=R^\Lambda(\alpha)\grproj$ be the full subcategory
formed by the projective $\bbZ$-graded modules.
When  there is no confusion we'll abbreviate 
$\calA^{\Lambda}_{\lambda}=\bfk\calA^{\Lambda}_{\lambda}$ and 
$\calV^{\Lambda}_{\lambda}=\bfk\calA^{\Lambda}_{\lambda}$.
Let  $\calA^{\Lambda}$, $\calV^{\Lambda}$ be the categories 
$$\calA^{\Lambda}=\bigoplus_\lambda\calA^{\Lambda}_{\lambda},\qquad
\calV^{\Lambda}=\bigoplus_\lambda \calV^{\Lambda}_{\lambda}.$$
Fix an integer $\ell$. Let $\calV^{\Lambda,[\ell]}=(\calV^\Lambda)^{[\ell]}$ be the $\ell$-twist of $\calV^\Lambda$ and
$R^\Lambda(\alpha)^{[\ell]}$ be the $\ell$-twist of $R^\Lambda(\alpha)$. Thus,  $R^\Lambda(\alpha)^{[\ell]}$ is a $(\ell\bbZ)$-graded $\bfk^{[\ell]}$-algebra and
$\calV^{\Lambda,[\ell]}_{\lambda}$ is the category of finitely generated projective $(\ell\bbZ)$-graded modules, i.e., 
$$\calV^{\Lambda,[\ell]}_{\lambda}=R^\Lambda(\alpha)^{[\ell]}\grproj.$$

\smallskip

\begin{definition} The \emph{minimal categorical representation} of $\frakg_\bfk$ of highest weight $\Lambda$
and degree $\ell$ is the representation on
$\calV^{\Lambda,[\ell]}$ given by
\begin{itemize}
\item $E_i1_\lambda=E'_i[-\ell d_i(1+\lambda_i)]$, 
\item $F_i1_\lambda=F'_i$, 
\item $\vep_i1_\lambda=\vep'_{i,\lambda}$ and $\eta_i1_\lambda=\eta'_{i,\lambda}$,
\item $x_i1_\lambda\in\Hom(F_i1_\lambda,F_i1_\lambda[2\ell d_i])$ 
is represented by the right multiplication by
$x_{n+1}$ on $R^\Lambda(\alpha+\alpha_i)e(\alpha,i),$
\item $\tau_{ij}1_\lambda\in\Hom(F_iF_j1_\lambda,F_jF_i1_\lambda[-\ell d_ia_{ij}])$
is represented by the right multiplication by
$\tau_{n+1}$ on $R^\Lambda(\alpha+\alpha_i+\alpha_j)e(\alpha,ji)$.
\end{itemize}
\end{definition}

\smallskip

The category $\calA^{\Lambda,[\ell]}_\lambda$ is Krull-Schmidt 
with a finite number of indecomposable projective objects.
The category $\calV^{\Lambda,[\ell]}/(\ell\bbZ)$ is the category of $(\ell\bbZ)$-graded finitely generated projective $R^{\Lambda,[\ell]}$-modules
with morphisms which are not necessarily homogeneous. We'll call it the category of all
\emph{$(\ell\bbZ)$-gradable} projective modules.

\smallskip

\begin{example}\label{ex:sl2}
We'll abbreviate $\frake=\fraks\frakl_2$.
Assume that $\frakg=\frake,$ $\Lambda=k\omega_1$ and $\alpha=n\alpha_1$
with $k,n\geqslant 0$.
In this case we write $\calV^k=\calV^\Lambda$ and
$\calV^k_{k-2n}=\calV^\Lambda_\lambda.$
Consider the polynomial ring $\Z^k=\k[c_1,\dots,c_k]$ with $\deg(c_p)=2p$ for all $p$.
Let $\Hu^k_n$ be the \emph{global cyclotomic affine nil Hecke algebra} of rank $n$ and level $k$, i.e., 
the $\bbZ$-graded $\Z^k$-algebra denoted by $H_{n,k}$ in \cite[sec.~4.3.2]{R12}.
Note that $\Hu_0^k=\Z^k$ and $\bfku=\Z^k$.
Given an $\bbN$-graded $\Z^k$-algebra $\bfk$, 
the cyclotomic quiver Hecke algebra $R^\Lambda(\alpha;\bfk)$ is isomorphic to
$\bfk\otimes_{\Z^k}\Hu^k_n$ as a $\bbZ$-graded $\bfk$-algebra by \cite[lem.~4.27]{R12}. 
In particular, we have $\Ru^\Lambda(\alpha)=\Hu^k_n$.
For each integer $\ell$, we abbreviate $\Z^{k,[\ell]}=(\Z^k)^{[\ell]}$ and $\Hu^{k,[\ell]}_n=(\Hu^{k}_n)^{[\ell]}$.
We have
\begin{equation}\label{sl2}
\calV^{k,[\ell]}=\bigoplus_{n\geqslant 0}\big(\bfk^{[\ell]}\otimes_{\Z^{k,[\ell]}}\Hu^{k,[\ell]}_n\big)\grproj.
\end{equation}
We'll also identify  $R^\Lambda(\alpha;\k)$ with the \emph{local cyclotomic affine nil Hecke algebra} 
of rank $n$ and level $k$,
which is the quotient $\H^k_n=\k\otimes_{\Z^k} \Hu^k_n$ of $\Hu^k_n$ by the ideal $(c_1,\dots ,c_k)$.
\end{example}

\medskip

\subsubsection{Factorization}\label{sec:factorization}
Fix a dominant weight $\Lambda\in P_+$.
Recall the map $I_\Lambda\to I$ introduced in \eqref{canonical-map}.
For any $t\in I_\Lambda$ we abbreviate $\omega_t=\omega_{i_t}$ and $d_t=d_{i_t}$.
Set $\bfhu=\k[y_t\,;\,t\in I_\Lambda]$ where $y_{t}$ is a formal variable of degree $\deg(y_t)=2d_t$.
The graded ring $\bfhu$ has a natural structure of $\bbN$-graded $\bfku$-algebra such that 
the element $c_{ip}\in\bfhu$ is given by $c_{ip}=e_p(y_{i1},\dots,y_{i\Lambda_i}).$ The corresponding cyclotomic polynomials are 
\begin{equation}\label{a1}a^\Lambda_i(u)=\prod_{p=1}^{\Lambda_i}(u+y_{ip}),\quad\forall i\in I.
\end{equation}
Let $\bfhu'$ be the fraction field of $\bfhu$.
We'll consider the condition
\begin{equation}\label{Q}
Q_{ij}(u,v)=r_{ij}\,(u-v)^{-a_{ij}}\ \text{for some}\ r_{ij}\ \text{such that} \ r_{ij}=(-1)^{a_{ij}}\,r_{ji}\ \text{for all}\ i\neq j.
\end{equation}

\smallskip

\begin{theorem}\label{thm:factorization}
If the condition \eqref{Q} is satisfied, then the following hold

(a) for each integer $n>0$, there is an $\bfhu'$-algebra isomorphism 
$$R^\Lambda(n,\bfhu')\to\bigoplus_{(n_t)}\Mat_{\frakS_n/\prod_t\frakS_{n_t}}\big(
\bigotimes_{t\in I_\Lambda}R^{\omega_t}(n_t,\bfhu')\big),$$
where $(n_t)$ runs over the set of $I_\Lambda$-tuples of non-negative integers with sum $n$,

(b)
there is an isomorphism $\bfhu'\otimes_\bfku\calV^\Lambda\to\bfhu'\otimes_\bfhu\bigotimes_{t\in I_\Lambda}\calV^{\omega_t}$
of (not graded) categorical $\frakg_{\bfhu'}$-representations taking the functor
$F_\alpha$ to $\bigoplus_{(\alpha_t)}\bigotimes_tF_{\alpha_t}$,
the sum being over all $I_\Lambda$-tuples $(\alpha_t)$ with sum $\alpha$.
The canonical homomorphism
$\bigotimes_{t\in I_\Lambda}R(\alpha_t;\bfhu')\to\End\big(\bigotimes_{t\in I_\Lambda}F_{\alpha_t}\big)$
is the composition of the inclusion $\bigotimes_{t\in I_\Lambda}R(\alpha_t;\bfhu')\subset R(\alpha;\bfhu')$ underlying $(a)$
and of the canonical homomorphism
$R(\alpha;\bfhu')\to\End(F_\alpha).$
\end{theorem}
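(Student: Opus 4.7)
The key input is that under condition \eqref{Q} the cyclotomic polynomials $a^\Lambda_i(u)=\prod_p(u+y_{ip})$ of \eqref{a1} factor over $\bfhu'$ into pairwise distinct linear factors, and $Q_{ij}(u,v)=r_{ij}(u-v)^{-a_{ij}}$ evaluated at any pair $(-y_t,-y_s)$ with $t\neq s$ becomes a unit in $\bfhu'$. Together these two features yield a generic diagonalization of $(x_1,\dots,x_n)$ in $R^\Lambda(n,\bfhu')$ and a generic invertibility of the intertwiners $\varphi_k$, making possible a matrix-algebra decomposition of Dipper--Mathas / Rouquier type.

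For $(a)$, I would proceed as follows. For each $\underline t=(t_1,\dots,t_n)\in I_\Lambda^n$ set $\nu(\underline t)=(i_{t_1},\dots,i_{t_n})$ and construct inductively an orthogonal idempotent $e(\underline t)\in e(\nu(\underline t))\,R^\Lambda(n,\bfhu')\,e(\nu(\underline t))$ that projects onto the joint generalized eigenspace of $(x_1,\dots,x_n)$ with eigenvalues $(-y_{t_1},\dots,-y_{t_n})$: for $x_1$ this is the partial-fraction projector associated with the factorization of $a^\Lambda_{\nu_1}$, and the spectral information is propagated to $x_k$ for $k\geqslant 2$ via the identity $\tau_{k-1}^2=Q_{\nu_{k-1},\nu_k}(x_{k-1},x_k)$ of Definition \ref{relKLR}$(d)$ combined with relation $(e)$. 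These idempotents are mutually orthogonal and sum to $1$. If $\underline t,\underline t'$ lie in the same $\frakS_n$-orbit, then $\varphi_k$ (suitably composed) implements the transposition $s_k$; on the relevant eigenspace, $\varphi_k^2$ evaluates to the unit $r_{\nu_k\nu_{k+1}}(y_{t_{k+1}}-y_{t_k})^{-a_{\nu_k\nu_{k+1}}}\in\bfhu'$, so $\varphi_k$ is invertible and provides matrix units, identifying each $\frakS_n$-orbit, parametrized by $(n_t)$ with $n_t=|\{k:t_k=t\}|$, with a matrix block of size $\frakS_n/\prod_t\frakS_{n_t}$. Fixing a sorted orbit representative and applying the shift of $x$-variables of Example \ref{ex:base-change}$(c)$, the diagonal corner on each $t$-block has cyclotomic polynomial reduced to $u$, so it is a level-$\omega_t$ cyclotomic QHA; distinct blocks commute because the crossings $\tau_k$ between them are invertible, yielding $\bigotimes_{t\in I_\Lambda}R^{\omega_t}(n_t,\bfhu')$.

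For $(b)$, passing to finitely generated graded projective modules and collapsing the Morita matrix factor turns the algebra decomposition of $(a)$ into the claimed equivalence $\bfhu'\otimes_\bfku\calV^\Lambda_\lambda\simeq\bigoplus_{(\alpha_t)}\bigotimes_t\bfhu'\otimes_\bfhu\calV^{\omega_t}_{\omega_t-\alpha_t}$. The functors $F_i$ are induction along the embeddings $\iota_i\colon R^\Lambda(\alpha)\hookrightarrow R^\Lambda(\alpha+\alpha_i)$ of Section \ref{ss:indres}, and these embeddings are compatible with the idempotent decomposition, since appending a strand at position $n+1$ of color $i$ corresponds exactly to choosing a decoration $t_{n+1}\in I_\Lambda$ with $i_{t_{n+1}}=i$; this yields the decomposition $F_\alpha\simeq\bigoplus_{(\alpha_t)}\bigotimes_t F_{\alpha_t}$, in which each $F_i$ acts in only one tensor factor. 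The canonical KLR homomorphism is given by right multiplication by $x_k,\tau_l$, which manifestly preserves the decomposition.

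The subtle point I expect to be the main obstacle is the precise identification in the diagonal corner of part $(a)$: one must check that all QHA relations survive the idempotent projection and reorganize cleanly into the tensor product of level-$\omega_t$ QHA relations. In particular, relation $(f)$ of Definition \ref{relKLR}, involving $\partial_{k,k+2}Q_{\nu_k,\nu_{k+1}}$, must be verified against this identification. This is exactly where the strong form of condition \eqref{Q} is indispensable, since it forces $Q_{ij}$ to be the homogeneous monomial $r_{ij}(u-v)^{-a_{ij}}$ so that the Demazure operator takes a clean explicit form that decouples across the tensor factors.
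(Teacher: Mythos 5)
Your overall strategy mirrors the paper's: refine the idempotents to $e(\tilde\nu)$ indexed by $\tilde\nu\in I_\Lambda^n$ via the generalized eigenspaces of $(x_1,\dots,x_n)$, observe that condition \eqref{Q} (through the single-root form of $Q_{ij}$ and propagation via $\tau_k^2=Q_{\nu_k,\nu_{k+1}}(x_k,x_{k+1})$) confines the spectrum to $\{-y_{ip}\}$, use invertibility of the intertwiners $\varphi_k$ between distinct labels (Lemma \ref{varphi-invertible}) to get the matrix block decomposition along $\frakS_n$-orbits, and identify the diagonal corner with $\bigotimes_t R^{\omega_t}(n_t,\bfhu')$. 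So the decomposition machinery is essentially the same as the paper's, and your handling of part (b) via Morita invariance of traces of $F_i$ matches the paper's brief remark that (b) follows from (a).

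The genuine gap is at the diagonal corner identification, and it is not where you expect it to be. You flag as the ``main obstacle'' the verification that the QHA relations, in particular relation (f), survive the idempotent projection. But that check gives only a well-defined \emph{surjection}
$\bigotimes_t R^{\omega_t}(n_t,\bfhu')\to e(\tilde\alpha^+)\,R^\Lambda(\alpha,\bfhu')\,e(\tilde\alpha^+)$, and surjectivity is anyway immediate from the explicit generating set for the corner. What you never address is \emph{injectivity}, which is where the real work sits in the paper's argument: one assembles the orthogonality and matrix-block facts into a $\bfhu'$-algebra isomorphism $\bigoplus_{\tilde\alpha}\Mat_{\tilde I^{\tilde\alpha}}\big(e(\tilde\alpha^+)\,R^\Lambda(\alpha,\bfhu')\,e(\tilde\alpha^+)\big)\simto R^\Lambda(\alpha,\bfhu')$, then checks that the assignment
$$x_l\,e(\tilde\nu)\mapsto E_{\tilde\nu,\tilde\nu}(x_{w^{-1}(l)}),\quad \varphi_k\,e(\tilde\nu)\mapsto E_{s_k(\tilde\nu),\tilde\nu}(1),\quad \tau_h\,e(\tilde\nu)\mapsto E_{\tilde\nu,\tilde\nu}(\tau_{w^{-1}(h)})$$
extends to a homomorphism $R^\Lambda(\alpha,\bfhu')\to\bigoplus_{\tilde\alpha}\Mat_{\tilde I^{\tilde\alpha}}\big(\bigotimes_t R^{\omega_t}(n_t,\bfhu')\big)$, and composes the two to obtain an explicit left inverse on each corner. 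Your proposal offers neither this nor an alternative (e.g.\ a rank count over $\bfhu'$). Separately, a small bookkeeping error: the factor $R^{\omega_t}(n_t,\bfhu')$ in the statement carries cyclotomic polynomial $u+y_t$, and the paper verifies this relation directly on the corner using products of intertwiners; the shift of Example \ref{ex:base-change}(c), which would normalize the cyclotomic polynomial to $u$, is applied only \emph{after} Theorem \ref{thm:factorization} (in the proof of Theorem \ref{cor:weyl}), so invoking it inside the proof of part (a) is premature.
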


\begin{proof} Let $n=\height(\alpha)$. 
Fix $M\in R^\Lambda(\alpha;\bfhu')\mod$ and $g(u)\in\bfhu'[u]$.
From \cite[p.~715-716]{KK} we get
$$g(x_a)e(\nu)M=0\,\Rightarrow\, Q_{\nu_a,\nu_{a+1}}(x_a,x_{a+1})g(x_{a+1})e(s_a(\nu))M=0, \quad\forall a\in [1,n),\quad\forall\nu\in I^n.$$
Set $Q(u,v)=\prod_{i\neq j}Q_{ij}(u,v)$. We deduce that
\begin{equation}\label{rel-spec}
g(x_a)e(\nu)M=0\,\Rightarrow\, Q(x_a,x_{a+1})g(x_{a+1})e(s_a(\nu))M=0, \quad\forall a\in [1,n).
\end{equation}
Now, assume that the polynomial $Q(u,v)\in\bfhu'[u,v]$ has the following form 
\begin{equation}\label{S}
Q(u,v)=r\prod_{\lambda\in S}(u-v-\lambda)
\end{equation} for some finite family $S$ of elements of $\bfhu'$ and some element $r\in\bfhu'$.
Let $\text{sp}_{e(\nu)M}(x_a)\subset\bfhu'$ be the set of $\lambda\in\bfhu'$ such that the operator $x_a-\lambda\id\in\End_{\bfhu'}(e(\nu)M)$ is not invertible. 
Since $x_a,$ $x_{a+1}$ commute with each other, from \eqref{rel-spec}, \eqref{S} we deduce that 
$$\text{sp}_{e(\nu)M}(x_{a+1})\subseteq\text{sp}_{e(s_a(\nu))M}(x_a)-S\cup\{0\}.$$
Next, recall that $g(x_1)(e(\nu)M)=0$ if $g(u)=a_{\nu_1}^\Lambda(u).$ We deduce that
$$\text{sp}_{e(\nu)M}(x_a)\subseteq\{-y_{\nu_a,p}\,;\,p=1,\dots,\Lambda_a\}-\bbN S,\qquad\forall a\in[1,n].$$
Assume further that the condition \eqref{Q} holds. Then we have $S=\{0\}$, hence
\begin{equation}\label{spectrum}\text{sp}_{e(\nu)M}(x_{a})\subseteq\{-y_{\nu_a,p}\,;\,p=1,\dots,\Lambda_a\},\qquad\forall a\in[1,n].
\end{equation}

In the rest of the proof we write $\tilde I=I_\Lambda$ to simplify the notation.
For each $n$-tuple $\tilde\nu\in \tilde I^n$ set
$$M_{\tilde\nu}=\{m\in M\,;\,(x_k+y_{\tilde\nu_k})^Dm=0,\,\forall k\in[1,n],\,\forall D\gg 0\}.$$
Considering the decomposition of the regular module, we deduce that there is a complete collection of orthogonal idempotents 
$\{e(\tilde\nu)\,;\,\tilde\nu\in \tilde I^n\}$ in $R^\Lambda(\alpha,\bfhu')$  such that
$e(\tilde\nu)M=M_{\tilde\nu}$.
The map $\tilde I\to I$ in \eqref{canonical-map} yields, in the obvious way, a map
\begin{equation} \label{canonical-map2}\tilde I^n\to I^n,\quad\tilde\nu\mapsto\nu.
\end{equation}
The following properties are immediate
\begin{itemize}
\item $e(\tilde\nu)\,e(\nu')=e(\nu')\,e(\tilde\nu)=\delta_{\nu,\nu'}\,e(\tilde\nu)$ for each $\nu'\in I^n,$ 
\item $x_l\,e(\tilde\nu)=e(\tilde\nu)\,x_l$,
\item $\varphi_k\,e(\tilde\nu)=e(s_k(\tilde\nu))\,\varphi_k$,
\item $\tau_k\,e(\tilde\nu)=e(\tilde\nu)\,\tau_k$ if $\tilde\nu_{k}=\tilde\nu_{k+1}$,
\end{itemize}
where $k$, $l$, $\mu,$ $\nu$ run over the sets $[1,n)$, $[1,n]$, $\tilde I^n$ and $I^n$ respectively.
In particular, the idempotents $e(\tilde\nu)$ with $\tilde\nu\in\tilde I^n$ refine the idempotents $e(\nu)$ with $\nu\in I^n$.

\smallskip

\begin{lemma}\label{varphi-invertible}
For each $M\in R^\Lambda(\alpha;\bfhu')\mod$, the map
$\varphi_k\,e(\tilde\nu):e(\tilde\nu)M\to e(s_k(\tilde\nu))M$
is invertible whenever $\tilde\nu_{k}\neq\tilde\nu_{k+1}$.
\end{lemma}

\begin{proof}
The lemma is an immediate consequence of the following relations, for each $\nu\in I^n$,
$$\varphi_r^2\,e(\nu)=1\ \text{if}\ \nu_k=\nu_{k+1},\quad \varphi_k^2\,e(\nu)=Q_{\nu_k,\nu_{k+1}}(x_k,x_{k+1})\,e(\nu)\ \text{if}\ \nu_k\neq\nu_{k+1}.$$

\end{proof}

\smallskip

Set $\tilde Q_{+}=\bbN \tilde I$. Fix an element $\tilde\alpha=\sum_{t\in I_\Lambda}a_t\cdot t$ in $\tilde Q_+$.
Set $\height(\tilde\alpha)=\sum_ta_t$, and assume that $\height(\tilde\alpha)=n$.
The map $\tilde I\to I$ in \eqref{canonical-map} yields a map $\tilde Q_+\to Q_+$.
We'll also assume that $\tilde\alpha$ maps to $\alpha$, i.e., that $\sum_{t\in I_\Lambda}a_t\cdot i_t=\alpha$.

Now, consider the set $\tilde I^{\tilde\alpha}=\{\tilde\nu\in \tilde I^n\,;\,\sum_k\tilde\nu_{k}=\tilde\alpha\}.$
We'll say that two $n$-tuples $\tilde\nu,\tilde\nu'\in \tilde I^n$ are equivalent, and we write $\tilde\nu\sim\tilde\nu'$, if 
we have $\tilde\nu,\tilde\nu'\in\tilde I^{\tilde\alpha}$ for some $\tilde\alpha\in\tilde Q_+$.
Then, we define the idempotent $e(\tilde\alpha)\in R^\Lambda(\alpha,\bfhu')$ by 
$e(\tilde\alpha)=\sum_{\tilde\nu\in \tilde I^{\tilde\alpha}}e(\tilde\nu).$

Next, fix a total order on $\tilde I$ and set
$\tilde I^n_+=\{\mu\in \tilde I^n\,;\,i<j\Rightarrow\mu_i\leqslant \mu_j\}.$
For any tuple $\tilde\nu\in \tilde I^n$ there is a unique element $\tilde\nu^+\in\tilde I^n_+$
such that $\tilde\nu\sim\tilde\nu^+$. 
Let $\tilde\alpha^+$ be the unique element in $\tilde I^n_+\cap\tilde I^{\tilde\alpha}$.
The part $(a)$ of the theorem is a consequence of the following lemma.

\smallskip

\begin{lemma} The following hold

(a) $e(\tilde\nu)\,R^\Lambda(\alpha,\bfhu')\,e(\tilde\nu')=0$ unless $\tilde\nu\sim\tilde\nu'$,

(b) $e(\tilde\alpha)\,R^\Lambda(\alpha,\bfhu')\,e(\tilde\alpha)\simeq
\Mat_{\tilde I^{\tilde\alpha}}\big(e(\tilde\alpha^+)\,R^\Lambda(\alpha,\bfhu')\,e(\tilde\alpha^+)\big)$ as $\bfhu'$-algebras,

(c) $e(\tilde\alpha^+)\,R^\Lambda(\alpha,\bfhu')\,e(\tilde\alpha^+)\simeq\bigotimes_{t\in \tilde I}R^{\omega_t}(a_t,\bfhu')$ as $\bfhu'$-algebras.
\end{lemma}

\begin{proof}
Let $\tilde I^\alpha\subset\tilde I^n$ be the inverse image of $I^\alpha$ by the map \eqref{canonical-map2}.
It is not difficult to prove that
\begin{itemize}
\item
the $\bfhu'$-algebra $R^\Lambda(\alpha,\bfhu')$ is generated by the set of elements 
\begin{equation}\label{generators1}
\{\tau_h\,e(\tilde\nu),\,
\varphi_k\,e(\tilde\nu),\, 
x_l\,e(\tilde\nu)\,;\,h,k\in[1,n),\,l\in[1,n]\,,\tilde\nu\in \tilde I^{\alpha},\,\tilde\nu_{h}=\tilde\nu_{h+1},\,\tilde\nu_{k}\neq\tilde\nu_{k+1}\},
\end{equation}
\item
the $\bfhu'$-algebra
$e(\tilde\alpha^+)\,R^\Lambda(\alpha,\bfhu')\,e(\tilde\alpha^+)$
is generated by the subset
\begin{equation}\label{generators2}
\{\tau_h\,e(\tilde\alpha^+),\,
x_l\,e(\tilde\alpha^+)\,;\,h\in[1,n),\, l\in[1,n],\,
\tilde\alpha^+_h=\tilde\alpha^+_{h+1}\}.
\end{equation}
\end{itemize}
Part $(a)$ is immediate using the generators in \eqref{generators1}. 
Let us concentrate on part $(b)$.
For each tuple $\tilde\nu\in \tilde I^{\tilde\alpha},$ we fix a sequence of simple reflections $s_{l_1},s_{l_2},\dots,s_{l_j}$ in $\frakS_n$ such that
\begin{itemize}
\item
$\tilde\nu=s_{l_1}s_{l_2}\cdots s_{l_j}(\tilde\alpha^+)$,
\item
the $l_h$-th and $(l_h+1)$-th entries of $s_{l_{h+1}}s_{l_{h+2}}\cdots s_{l_j}(\tilde\alpha^+)$
are different for each $h\in[1,j]$.
\end{itemize}
In particular $w=s_{l_1}s_{l_2}\cdots s_{l_j}$ is a reduced decomposition and $w$ is minimal in his left coset in $\frakS_n$ relatively to the stabilizer of
$\tilde\alpha^+$. Hence, we have
\begin{equation}\label{varphi0}\varphi_{l_1}\varphi_{l_2}\cdots\varphi_{l_j}=\varphi_w=\tau_{l_1}\tau_{l_2}\cdots\tau_{l_j}
\end{equation}
and the element
\begin{align}\label{varphi}\pi_{\tilde\nu}=e(\tilde\nu)\,\varphi_{w}\,e(\tilde\alpha^+)\in e(\tilde\nu)R^\Lambda(\alpha,\bfhu')e(\tilde\alpha^+)
\end{align}
depends only on $\tilde\nu$ and not on the choice of $l_1,l_2,\dots,l_j$.
Further, it is invertible by Lemma \ref{varphi-invertible}.
We deduce that there is an $\bfhu'$-algebra isomorphism 
\begin{equation}\label{isom3}
\begin{split}
\begin{aligned}
\Mat_{\tilde I^{\tilde\alpha}}\big(e(\tilde\alpha^+)\,R^\Lambda(\alpha,\bfhu')\,e(\tilde\alpha^+)\big)&\to
e(\tilde\alpha)\,R^\Lambda(\alpha,\bfhu')\,e(\tilde\alpha),\\
E_{\tilde\nu,\,\tilde\nu'}(m)&\mapsto\pi_{\tilde\nu}\, m \,\pi_{\tilde\nu'}^{-1}
\end{aligned}
\end{split}
\end{equation}
Here $E_{\tilde\nu,\,\tilde\nu'}(m)$ is the elementary matrix with a $m$ at the spot $(\tilde\nu,\tilde\nu')$ and 0's elsewhere.
Part $(b)$ is proved.

To prove $(c)$ we must check that the map
$$\bigotimes_{t\in \tilde I}R(a_t,\bfhu')\to e(\tilde\alpha^+)\,R(\alpha,\bfhu')\,e(\tilde\alpha^+),\quad 
a_1\otimes a_2\otimes\cdots\mapsto e(\tilde\alpha^+)\,a_1a_2\cdots\,e(\tilde\alpha^+)$$
factors to an $\bfhu'$-algebra isomorphism
\begin{equation}\label{isom1}\bigotimes_{t\in \tilde I}R^{\omega_t}(a_t,\bfhu')\simto e(\tilde\alpha^+)\,R^\Lambda(\alpha,\bfhu')\,e(\tilde\alpha^+).\end{equation}
To do that, in order to simplify the notation, we'll assume that 
$$\sharp \tilde I=3,\quad\Lambda=2\omega_i+\omega_j,\quad i\neq j\in I.$$
The proof of the general case is very similar.
Write $\tilde I=\{a,b,c\}$ with $a<b<c$ such that the map \eqref{canonical-map} takes $a,b,c$
to $i,i,j$ respectively.
We have $\tilde\alpha^+=(a^{n_a}b^{n_b}c^{n_c})$ with $n_a+n_b+n_c=n$.
To simplify we'll also assume that $n_a,n_b,n_c>0$. 
We have
$$a_i^{\Lambda}(u)=(u+y_a)(u+y_b),\quad a_j^{\Lambda}(u)=u+y_c,\quad a_k^{\Lambda}(u)=1,\quad\forall k\neq i,j.$$

First, we must prove that the following relations hold in $e(\tilde\alpha^+)\,R^\Lambda(n,\bfhu')\,e(\tilde\alpha^+)$
\begin{itemize}
\item
$(x_1+y_a)e(\tilde\alpha^+)=0$ if $\alpha^+_1=i,$ and $e(\tilde\alpha^+)=0$ else,
\item
$(x_{1+n_a}+y_b)e(\tilde\alpha^+)=0$ if $\alpha^+_{1+n_a}=i$, and $e(\tilde\alpha^+)=0$ else,
\item
$(x_{1+n_a+n_b}+y_c)e(\tilde\alpha^+)=0$ if $\alpha^+_{1+n_a}=j,$ and $e(\tilde\alpha^+)=0$ else.
\end{itemize}
The first one is obvious, because 
$$(x_1+y_a)(x_1+y_b)e(\alpha^+)=0\ \text{if}\ \alpha^+_1=i,\quad (x_1+y_c)e(\alpha^+)=0\ \text{if}\ \alpha^+_1=j,\quad e(\alpha^+)=0\ \text{else}$$
and $(x_1+y_b)e(\tilde\alpha^+)$, $(x_1+y_c)e(\tilde\alpha^+)$ are invertible in $e(\tilde\alpha^+)\,R^\Lambda(\alpha,\bfhu')\,e(\tilde\alpha^+)$.
To prove the second relation, note that
$$\varphi_1\varphi_2\cdots\varphi_{n_a}(x_{1+n_a}+y_b)e(\tilde\alpha^+)=(x_{1}+y_b)e(\tilde\mu)\varphi_1\varphi_2\cdots\varphi_{n_a}.$$
where $\tilde\mu=s_1s_2\cdots s_{n_a}(\tilde\alpha^+)$.
Since $\mu_1=\alpha^+_{1+n_a}$, we deduce that
$$\varphi_1\varphi_2\cdots\varphi_{n_a}(x_{1+n_a}+y_b)e(\tilde\alpha^+)=0\ \text{if}\ \alpha^+_{1+n_a}=i,\ \text{and}\ 
\varphi_1\varphi_2\cdots\varphi_{n_a}e(\tilde\alpha^+)=0\ \text{else}.$$
Further, by Lemma \ref{varphi-invertible} the operator 
$$\varphi_1\varphi_2\cdots\varphi_{n_a}e(\tilde\alpha^+):e(\tilde\alpha^+)R^\Lambda(\alpha,\bfhu')\,e(\tilde\alpha^+)\to
e(\tilde\mu)R^\Lambda(\alpha,\bfhu')\,e(\tilde\alpha^+)$$ is invertible.
Thus, we have
$$(x_{1+n_a}+y_b)e(\tilde\alpha^+)=0\ \text{if}\ \alpha^+_{1+n_a}=i,\ \text{and}\ 
e(\tilde\alpha^+)=0\ \text{else},$$
proving the second relation.
The third one is proved in a similar way, using the product of intertwiners $\varphi_{1}\varphi_2\cdots\varphi_{n_a+n_b}$ instead of
$\varphi_{1}\varphi_2\cdots\varphi_{n_a}$.

The relations above imply that the homomorphism \eqref{isom1} is well-defined. We must check that it is invertible.
The surjectivity is immediate using the set of generators of  $e(\tilde\alpha^+)\,R^\Lambda(\alpha,\bfhu')\,e(\tilde\alpha^+)$ in \eqref{generators2}.
Let us concentrate on the injectivity.
It is enough to construct a left inverse to \eqref{isom1}.
To do that, recall that $(a)$, $(b)$ yields an isomorphism
\begin{equation}\label{isom4}
\bigoplus_{\tilde\alpha}\Mat_{\tilde I^{\tilde\alpha}}\big(e(\tilde\alpha^+)\,R^\Lambda(\alpha,\bfhu')\,e(\tilde\alpha^+)\big)\simto
R^\Lambda(\alpha,\bfhu')
\end{equation}
where the sum is over the set of all $\tilde\alpha\in\tilde Q_+$ which map to $\alpha$ by \eqref{canonical-map}.

\smallskip

\begin{claim}
(a) Fix $\tilde\nu\in \tilde I^{\tilde\alpha}$ and $w\in\frakS_n$ as in \eqref{varphi}. 
For each $h,k\in[1,n),$ $l\in[1,n]$ such that $\tilde\nu_{h}=\tilde\nu_{h+1}$ and $\tilde\nu_{k}\neq\tilde\nu_{k+1},$
the map \eqref{isom4} satisfies the following relations
\begin{align*}
E_{\tilde\nu,\tilde\nu}(x_{w^{-1}(l)}\,e(\tilde\alpha^+))\mapsto x_l\,e(\tilde\nu),\quad
E_{s_k(\tilde\nu),\tilde\nu}(e(\tilde\alpha^+))\mapsto\varphi_k\,e(\tilde\nu),\quad
E_{\tilde\nu,\tilde\nu}(\tau_{w^{-1}(h)}\,e(\tilde\alpha^+))\mapsto \tau_h\,e(\tilde\nu).
\end{align*}

(b) The assignment
\begin{align*}
x_l\,e(\tilde\nu)\mapsto E_{\tilde\nu,\tilde\nu}(x_{w^{-1}(l)}),\quad
\varphi_k\,e(\tilde\nu)\mapsto E_{s_k(\tilde\nu),\tilde\nu}(1),\quad
\tau_h\,e(\tilde\nu)\mapsto E_{\tilde\nu,\tilde\nu}(\tau_{w^{-1}(h)}),
\end{align*}
where $h,k,l,\tilde\nu$ are as above,
yields an $\bfhu'$-algebra homomorphism
\begin{equation}\label{isom5}
R^\Lambda(\alpha,\bfhu')\to\bigoplus_{\tilde\alpha}\Mat_{\tilde I^{\tilde\alpha}}\big(
\bigotimes_{t\in \tilde I}R^{\omega_t}(a_t,\bfhu')\big).
\end{equation}
\end{claim}

\begin{proof} Part $(a)$ follows from \eqref{isom3} and from the following computations
\begin{align*}
x_l\,e(\tilde\nu)&=x_l\,\pi_{\tilde\nu}\,e(\tilde\alpha^+)\,\pi_{\tilde\nu}^{-1}\\
&=\pi_{\tilde\nu}\,x_{w^{-1}(l)}\,e(\tilde\alpha^+)\,\pi_{\tilde\nu}^{-1}\\
\varphi_k\,e(\tilde\nu)&=\varphi_k\,\pi_{\tilde\nu}\,e(\tilde\alpha^+)\,\pi_{\tilde\nu}^{-1}\\
&=\varphi_k\,e(\tilde\nu)\,\varphi_w\,e(\tilde\alpha^+)\,\pi_{\tilde\nu}^{-1}\\
&=e(s_k(\tilde\nu))\,\varphi_k\,\varphi_w\,e(\tilde\alpha^+)\,\pi_{\tilde\nu}^{-1}\\
&=\pi_{s_k(\tilde\nu)}\,e(\tilde\alpha^+)\,\pi_{\tilde\nu}^{-1}\\
\tau_h\,e(\tilde\nu)&=\tau_h\,\pi_{\tilde\nu}\,e(\tilde\alpha^+)\,\pi_{\tilde\nu}^{-1}\\
&=\tau_h\,e(\tilde\nu)\,\varphi_w\,e(\tilde\alpha^+)\,\pi_{\tilde\nu}^{-1}\\
&=e(\tilde\nu)\,\tau_h\,\varphi_w\,e(\tilde\alpha^+)\,\pi_{\tilde\nu}^{-1}\\
&=e(\tilde\nu)\,\varphi_w\,\tau_{w^{-1}(h)}\,e(\tilde\alpha^+)\,\pi_{\tilde\nu}^{-1}\\
&=\pi_{\tilde\nu}\,\tau_{w^{-1}(h)}\,e(\tilde\alpha^+)\,\pi_{\tilde\nu}^{-1}.
\end{align*}
Here, we have used the equality $w^{-1}(h+1)=w^{-1}(h)+1$, which follows from the definition of $w$ in \eqref{varphi0},
and the equalities $s_h(\tilde\nu)=\tilde\nu$ and $s_{w^{-1}(h)}(\tilde\alpha^+)=\tilde\alpha^+,$ which follow from the identities $\tilde\nu_h=\tilde\nu_{h+1}$
and $\tilde\nu=w(\tilde\alpha^+)$.

Now, let us concentrate on $(b)$.
Since the elements $x_l\,e(\tilde\nu),$ $\varphi_k\,e(\tilde\nu),$ $\tau_h\,e(\tilde\nu)$ above generate $R^\Lambda(\alpha,\bfhu')$
by \eqref{generators1},  it is enough to check that the defining relations of $R^\Lambda(\alpha,\bfhu')$ given in Section \ref{sec:CQHA}
are satisfied. This is obvious.
Note that the element $\tau_{w^{-1}(h)}$ belongs indeed to $\bigotimes_{t\in \tilde I}R^{\omega_t}(a_t,\bfhu')$ because
$s_{w^{-1}(h)}(\tilde\alpha^+)=\tilde\alpha^+.$ 

\end{proof}

\smallskip

Composing \eqref{isom4} and \eqref{isom5}, we get an $\bfhu'$-algebra homomorphism
$$\bigoplus_{\tilde\alpha}\Mat_{\tilde I^{\tilde\alpha}}\big(e(\tilde\alpha^+)\,R^\Lambda(\alpha,\bfhu')\,e(\tilde\alpha^+)\big)
\to\bigoplus_{\tilde\alpha}\Mat_{\tilde I^{\tilde\alpha}}\big(\bigotimes_{t\in \tilde I}R^{\omega_t}(a_t,\bfhu')\big).$$
For each $\tilde\alpha$ it restricts to an $\bfhu'$-algebra homomorphism
$$e(\tilde\alpha^+)\,R^\Lambda(\alpha,\bfhu')\,e(\tilde\alpha^+)
\to\bigotimes_{t\in \tilde I}R^{\omega_t}(a_t,\bfhu')$$
which is a left inverse to \eqref{isom1}.

\end{proof}

\smallskip

Finally, the part $(b)$ of the theorem follows easily from the isomorphism in $(a)$.

\end{proof}

\medskip

\subsubsection{The isotypic filtration}
Recall that $\frake=\mathfrak{sl}_2$.
A \emph{$(P\times\bbZ)$-graded} $\k$-category $\calC$ is a direct sum of categories of the form
$\calC=\bigoplus_{\lambda\in P}\calC_\lambda$ where each $\calC_\lambda$ 
is a $\bbZ$-graded $\k$-category.
We'll call $\lambda$ the \emph{P-degree} of $\calC_\lambda$.

Given $i\in I$ there is an $\fraks\frakl_2$-triple $\frake_i\subset\frakg$. In this section, we study the restriction of a 
categorical $\frakg_\bfk$-representation to such an $\fraks\frakl_2$-triple. 
To simplify the notation, in this section we fix an element $i\in I$ and identify $\frake=\frake_i$.

By a \emph{$(P\times\bbZ)$-graded} categorical $\frake_\bfk$-representation  on $\calC$
we'll mean a representation such that for each $\lambda\in P$ the
$\bfk$-subcategory $\calC_\lambda$ has the weight 
$\lambda_i$ relatively to the $\frake$-action.
In particular, if $\calC$ is an integrable categorical representation of $\frakg_\bfk$, restricting the 
$\frakg$-action on $\calC$ to $\frake_i$ yields
an integrable $P\times\bbZ$-graded categorical $\frake$-representation on $\calC$ of degree $d_i$. 

Now, fix an integer $k\in\bbN$.
Given a $(P\times\bbZ)$-graded $\k$-category $\calM$ such that 
$\calM_\mu=0$ whenever $\mu_i\neq k$ and
a $\bbZ$-graded
$\k$-algebra homomorphism $\Z^{k,[d_i]}\to\Z(\calM/\bbZ)$,
we equip the tensor product $\calV^{k,[d_i]}\otimes_{\Z^{k,[d_i]}}\calM$ with the
$(P\times\bbZ)$-graded categorical $\frake$-representation of degree $d_i$ such that
\begin{itemize}
\item
$\frake$ acts on the left factor,
\item the summand $\calV^{k,[d_i]}_n\otimes_{\Z^{k,[d_i]}}\calM_\mu$
has the $P$-degree $\mu-n\alpha_i$ for each $n\in\bbN$.
\end{itemize}

\smallskip

\begin{proposition}\label{prop:isotypic}
Fix an integrable  categorical representation of $\frakg_\bfk$ on $\calC$ of degree 1
which is bounded above. For each $i\in I$ there is a decreasing filtration 
$\dots\subseteq\calC_{\geqslant_1}\subseteq\calC_{\geqslant 0}=\calC$ of $\calC$
by full $(P\times\bbZ)$-graded integrable categorical
$\frake$-representations of degree $d_i$ which are closed under taking direct summands and such that 
\begin{itemize}
\item for each $k\geqslant 0$ there is a $P\times\bbZ$-graded $\k$-category $\calM_k$ with a $\bbZ$-graded 
$\k$-algebra homomorphism $\bfk\otimes_\k\Z^{k,[d_i]}\to\Z(\calM_k/\bbZ)$ and 
a $\bfk$-linear equivalence of $P\times\bbZ$-graded categorical $\frake_{\Z^k}$-representations 
$(\calC_{\geqslant k}/\calC_{>k})^c\simeq(\calV^{k,[d_i]}\otimes_{\Z^{k,[d_i]}}\calM_k)^c$ of degree $d_i$,
\item for each $\lambda\in P$ we have
$\calC_{\geqslant k}\cap\calC_\lambda=0$ for $k$ large enough.
\end{itemize}
\end{proposition}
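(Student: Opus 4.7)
The plan is to reduce the problem to the case of an integrable categorical $\frake$-representation and then to exploit the universality of the minimal representation $\calV^k$. Fix $i\in I$ and restrict the $\frakg_\bfk$-action on $\calC$ to the $\fraks\frakl_2$-triple $\frake=\frake_i$; this yields an integrable, bounded-above $P\times\bbZ$-graded categorical $\frake$-representation of degree $d_i$, and I will work entirely with this restricted structure.

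First I would introduce the highest weight stratum $\calC^{\hw,k}\subseteq\calC^{\hw}$ consisting of those $M\in\calC_\mu$ with $\mu_i=k$, and set $\calC_{\geqslant k}\subseteq\calC$ to be the smallest full $\bfk$-subcategory containing $\bigcup_{j\geqslant k}\calC^{\hw,j}$ and closed under direct summands and under the endofunctors $E_i$ and $F_i$. This is automatically a $P\times\bbZ$-graded integrable $\frake$-subrepresentation, and the inclusion $\calC_{\geqslant k+1}\subseteq\calC_{\geqslant k}$ is immediate. For the termination, every object of $\calC_{\geqslant k}\cap\calC_\lambda$ is, up to direct summands, of the form $F_i^{\,n}M$ with $M\in\calC^{\hw,j}$ lying in weight $\lambda+n\alpha_i$ (so $j=\lambda_i+2n$); the bounded-above assumption restricts $\lambda+n\alpha_i$ to finitely many values, whence $\calC_{\geqslant N}\cap\calC_\lambda=0$ for $N$ sufficiently large.

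Next I would construct the multiplicity category $\calM_k$ together with its $\bfk\otimes_\k\Z^{k,[d_i]}$-structure. Let $\calM_k$ be the full subcategory of $(\calC_{\geqslant k}/\calC_{>k})^c$ generated under direct summands by the images of objects of $\calC^{\hw,k}$; it inherits the $P\times\bbZ$-grading and satisfies $(\calM_k)_\mu=0$ when $\mu_i\neq k$. For $M\in\calC^{\hw,k}$ and $n\geqslant 0$, the canonical homomorphism of the $\frake_\bfk$-representation factors through the level-$k$ cyclotomic nil Hecke algebra $\Hu^{k,[d_i]}_n$ (using Example \ref{ex:sl2}, together with $E_iM=0$ and an induction on $n$ via Theorem \ref{thm:KK} to produce the cyclotomic relation on the variable $x_1$). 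The case $n=0$ gives the required graded algebra homomorphism $\bfk\otimes_\k\Z^{k,[d_i]}\to\Z(\calM_k/\bbZ)$. I would then define
$$\Phi_k: \calV^{k,[d_i]}\otimes_{\Z^{k,[d_i]}}\calM_k\longrightarrow (\calC_{\geqslant k}/\calC_{>k})^c,\qquad \Hu^{k,[d_i]}_n\otimes M\longmapsto F_i^{\,n}M,$$
extending to morphisms via the $\Hu^{k,[d_i]}_n$-action on $F_i^{\,n}$; by construction $\Phi_k$ intertwines the $\frake$-actions and respects the $P\times\bbZ$-grading.

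The hard part will be showing that $\Phi_k$ is an equivalence. Essential surjectivity follows from the very definition of $\calC_{\geqslant k}$: every indecomposable object of $(\calC_{\geqslant k}/\calC_{>k})^c$ is a direct summand of $F_i^{\,n}M$ for some $M\in\calC^{\hw,j}$ with $j\geqslant k$, and only the stratum $j=k$ survives modulo $\calC_{>k}$. For full faithfulness, the biadjunction of $(E_i,F_i)$ rewrites $\Hom(F_i^{\,m}M,F_i^{\,n}N)$ as $\Hom(M,E_i^{\,m}F_i^{\,n}N)$, and the isomorphism $\rho_i$ from the definition of a categorical representation allows one to iteratively commute $E_i$ past $F_i$ at the cost of corrections expressible through the cyclotomic nil Hecke generators $x_i$ and $\tau_{ii}$; since $E_iN\equiv 0$ modulo $\calC_{>k}$, all error terms vanish in the quotient and the $\Hom$-space collapses precisely to the value predicted by the left-hand side of $\Phi_k$. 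This rigidity-type computation, carried out on the generators of $\Hu^{k,[d_i]}_{m+n}$, is the technical heart of the proof.
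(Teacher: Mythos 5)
Your proof attempt takes a genuinely different route from the paper. The paper's own argument is a short reduction: decompose $\calC$ along cosets of $\bbZ\alpha_i$ in $P$ so that each piece is an integrable $\frake$-representation supported on a single $\alpha_i$-string, and then invoke Rouquier's structure theorem for integrable $\fraks\frakl_2$-$2$-representations, namely \cite[thm.~4.22]{R12}, which already supplies the isotypic filtration and the equivalence $(\calC_{\geqslant k}/\calC_{>k})^c\simeq(\calV^{k,[d_i]}\otimes_{\Z^{k,[d_i]}}\calM_k)^c$. You, instead, attempt to construct the filtration by hand, defining $\calC_{\geqslant k}$ as the subcategory generated under $E_i$, $F_i$ and direct summands by the highest weight objects of $\frake$-weight $\geqslant k$, building the functor $\Phi_k$, and then proposing to prove it is an equivalence directly.

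The problem is that the two hardest points in this direct approach are precisely what Rouquier's theorem establishes, and you do not actually prove them. First, you need $\calC_{\geqslant 0}=\calC$: that every object of $\calC$ is, up to direct summands, generated from highest weight objects by applying $F_i$. This is a genuine structure result about integrable $\fraks\frakl_2$-categorifications (it uses local nilpotence of $E_i$ and the boundedness in an essential way) and is not a formal consequence of your definition of $\calC_{\geqslant k}$; you treat it as obvious. Second, the full faithfulness of $\Phi_k$ — commuting $E_i$ past $F_i^n$ using $\rho_i$, tracking the correction terms in terms of the nil Hecke generators, and showing that all error terms vanish modulo $\calC_{>k}$ — is exactly the ``rigidity computation'' at the technical heart of \cite[thm.~4.22]{R12}, and you explicitly defer it. You also need to check that the passage to the quotient $\calC_{\geqslant k}/\calC_{>k}$ and then to the idempotent completion is compatible with the $\frake$-action in the sense required (that $\calC_{>k}\to\calC_{\geqslant k}$ is a morphism of categorical representations, so the quotient inherits one); this is glossed over. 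In short, your outline is a reasonable sketch of what a from-scratch proof of Rouquier's theorem would look like, but as written it is not a proof: it replaces one citation with two unproven assertions that together constitute essentially the full content of the cited result.
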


\begin{proof}
Since $\calC$ is bounded above,
by a standard argument
we may assume that the set of weights of $\calC$ is 
contained in a  cone $\mu-Q_+$ for some $\mu\in P,$
see e.g., \cite[lem.~2.1.10]{Ku}.
Next, for each coset $\pi\in P/\bbZ\alpha_i$, the $\frakg$-action on $\calC$
yields a $P\times\bbZ$-graded categorical $\frake$-representation on 
$\calC_\pi=\bigoplus_{\lambda\in\pi}\calC_\lambda$
of degree $d_i$. 
Since  $\calC$ decomposes as the direct sum of $\k$-categories
$\calC=\bigoplus_\pi\calC_\pi$, it is enough to define a filtration of $\calC_\pi$ satisfying the properties 
above for each $\pi$.
Note that $\pi\cap(\mu-Q_+)$ is a cone of the form $\nu-\bbN\alpha_i$ for some weight $\nu\in\mu-Q_+$.
Thus the claim follows from \cite[thm.~4.22]{R12}, applied to the 
$\frake$-representation on 
$\calC_\pi$.

\end{proof}

\smallskip

We call $(\calC_{\geqslant k})$ the  \emph{$i$-th isotypic filtration} of $\calC$. 
For each $k\in\bbN$, $\lambda\in P$ let $\calC_{\geqslant k,\,\lambda}\subset\calC_{\geqslant k}$
be the weight $\lambda$ subcategory given by
$\calC_{\geqslant k,\,\lambda}=\calC_{\geqslant k}\cap\calC_\lambda.$
We also define
\begin{equation}\label{iso}\calC_k=(\calV^{k,[d_i]}\otimes_{\Z^{k,[d_i]}}\calM_k)^c,\qquad
\calC_{k,\,\lambda}=\bigoplus_{n,\mu}
(\calV^{k,[d_i]}_{k-2n}\otimes_{\Z^{k,[d_i]}}\calM_{k,\mu})^c,
\end{equation}
where the sum runs over all $\mu\in P$ and $n\in\bbN$ 
such that $\lambda=\mu-n\alpha_i$.
Recall that we have assumed that $\calM_{k,\mu}=0$ whenever $\mu_i\neq k$.
Note also that $\calC_k^\hw=\calM_k^c$, and that $\calC_k^\hw$ is the full subcategory
of $(\calC/\calC_{>k})^c$ consisting of the objects $M$ such that $E_i(M)=0$.

\medskip

\subsubsection{The loop operators} \label{sec:loop-operators}
Consider a categorical representation of $\frakg_\bfk$ of degree $\ell$
on a $\bbZ$-graded $\k$-category $\calC$. 
For each $i\in I$ and $r\in\bbN$, we define $\bfk$-linear operators 
$x_{ir}^\pm$ on the $\bbZ$-graded $\bfk$-module $\Tr(\calC/\bbZ)$ such that the maps
$$x_{ir}^+:  \Tr(\calC_\lambda/\bbZ)\to\Tr(\calC_{\lambda-\alpha_i}/\bbZ),\qquad
x_{ir}^-:  \Tr(\calC_\lambda/\bbZ)\to\Tr(\calC_{\lambda+\alpha_i}/\bbZ), $$
are given by
$$x_{ir}^+=\Tr_{E_i}(x_i^r),\quad
x_{ir}^-=\Tr_{F_i}(x_i^r),$$
see Definition \ref{df:trace} for the notation.
The map $x^\pm_{ir}$ is homogeneous of degree $2r\ell d_i$.
Let us quote the following fact for future use.

\smallskip

\begin{proposition}\label{lem:functoriality} 
The maps $x_{ir}^\pm$ are functorial, i.e., a morphism $\calC\to\calC'$ 
of categorical representations of $\frakg_\bfk$ yields a $\bfk$-linear map
$\Tr(\calC/\bbZ)\to\Tr(\calC'/\bbZ)$ which intertwines the operators $x_{ir}^\pm$
on $\Tr(\calC/\bbZ)$ and on $\Tr(\calC'/\bbZ)$.

\qed
\end{proposition}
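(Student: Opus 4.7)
The plan is first to unpack the notion of morphism of categorical representations and then to reduce the claim to the cyclicity property of the trace recorded in Lemma~\ref{lem:traceop}(b). Although the excerpt declares this notion ``obvious'', I would fix the standard meaning: a morphism $\Phi\colon\calC\to\calC'$ is a $\bfk$-linear graded functor $\Phi=\bigoplus_\lambda\Phi_\lambda$ with $\Phi_\lambda\colon\calC_\lambda\to\calC'_\lambda$, equipped with isomorphisms of graded functors
\begin{equation*}
\alpha_i^E\colon\Phi\circ E_i\simto E_i'\circ\Phi,\qquad
\alpha_i^F\colon\Phi\circ F_i\simto F_i'\circ\Phi,
\end{equation*}
which intertwine the units $\eta_i$, counits $\vep_i,\hat\vep_i$, and the distinguished $2$-morphisms $x_i,\tau_{ij}$. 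In particular
\begin{equation*}
\alpha_i^F\circ(\Phi\,x_i^r)=(x_i'^{\,r}\,\Phi)\circ\alpha_i^F,
\end{equation*}
and similarly for $\alpha_i^E$.

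Second, I would produce the induced map on cocenters by the usual functoriality of Hochschild homology, namely
\begin{equation*}
\Tr(\Phi)\colon\Tr(\calC/\bbZ)\to\Tr(\calC'/\bbZ),\qquad \Tr(f)\mapsto\Tr(\Phi(f)),
\end{equation*}
and note that it is automatically $\bbZ$-graded and respects the weight decomposition.

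Third, I would evaluate both $\Tr(\Phi)\circ x_{ir}^-$ and $x_{ir}^-\circ\Tr(\Phi)$ on an arbitrary generator $\Tr(f)$, $f\in\End_\calC(a)$ with $a\in\calC_\lambda$. Using $x_{ir}^-=\Tr_{F_i}(x_i^r)$ from Definition~\ref{df:trace}, the first composition gives $\Tr\bigl(\Phi(x_i^r(a))\circ\Phi F_i(f)\bigr)$; conjugating the argument of $\Tr$ by the natural isomorphism $\alpha_i^F(a)$ (legitimate by the isomorphism version of Lemma~\ref{lem:traceop}(b)) and then applying the compatibility of $\alpha_i^F$ with $x_i^r$, I get $\Tr\bigl(x_i'^{\,r}(\Phi(a))\circ F_i'(\Phi(f))\bigr)$, which is precisely $x_{ir}^-\bigl(\Tr(\Phi)(\Tr(f))\bigr)$. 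The same argument with the pair $(F_i,\alpha_i^F)$ replaced by $(E_i,\alpha_i^E)$ settles the case of $x_{ir}^+$.

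The main obstacle is not a computational one but a definitional one: one must commit to a precise meaning of ``morphism of categorical representations'' in which the coherence isomorphisms $\alpha_i^E,\alpha_i^F$ are required to intertwine the $2$-morphisms $x_i$ and $\tau_{ij}$ (and the adjunction data). Once this is made explicit, the statement is a direct instance of Lemma~\ref{lem:traceop}(b,c) applied to a natural isomorphism of $\bfk$-functors.
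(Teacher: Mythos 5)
The paper gives no proof of this proposition at all --- it carries only the \qed box, and the preceding definition block similarly declares that morphisms of categorical representations ``are defined in the obvious way.'' So there is no proof in the text against which to compare, and the task is to judge your proposal on its own terms. Your proof is correct, and it is what the authors surely had in mind. You correctly identify that the only genuine issue is definitional: one must commit to the convention that a morphism $\Phi$ of categorical representations carries natural isomorphisms $\alpha_i^E$, $\alpha_i^F$ intertwining the unit/counit data and the $2$-morphisms $x_i$, $\tau_{ij}$. Once that is fixed, your three-step reduction --- push $\Phi$ inside the trace, conjugate by $\alpha_i^F(a)$ using the cyclicity of $\Tr$ in the form of Lemma~\ref{lem:traceop}(b), then invoke naturality and the $x_i$-compatibility of $\alpha_i^F$ --- is exactly the routine argument the \qed is hiding. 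One cosmetic remark: it is cleaner to note explicitly that Lemma~\ref{lem:traceop}(b) is being used at the level of endomorphisms of objects rather than of functors, i.e., for $f\in\End(a)$ and an isomorphism $u\colon a\to b$ one has $\Tr(ufu^{-1})=\Tr(f)$ in $\Tr(\calC)$, which is immediate from the relation $\Tr(gh)=\Tr(hg)$ defining $\Tr$; your phrasing ``the isomorphism version of Lemma~\ref{lem:traceop}(b)'' gestures at this but could be made precise.
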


\medskip

\subsubsection{The loop operators on the trace of the minimal categorical representation}
Fix a dominant weight $\Lambda\in P_+$.
For $\alpha\in Q_+$ we write $\lambda=\Lambda-\alpha$.
Recall the base ring $\bfku$ from Section \ref{sec:CQHA}.
Let $\bfk$ be an $\bbN$-graded $\bfku$-algebra.
In this section we consider the particular case of the minimal categorical representation $\calV^\Lambda$.

\begin{definition}
Let $L\frakg$ be the $\bbN$-graded Lie $\k$-algebra generated by elements
$x_{ir}^\pm$, $h_{ir}$ with $i\in I$, $r\in\bbN$ satisfying the following relations
\begin{itemize}
\item[$(a)$] $[h_{ir},h_{js}]=0$,
\item[$(b)$] $[x_{ir}^+,x_{js}^-]=\delta_{ij}\,h_{i,r+s}$,
\item[$(c)$] $[h_{ir},x_{js}^\pm]=\pm a_{ij}\,x^\pm_{j,r+s}$,
\item[$(d)$] $\sum_{p=0}^{m}(-1)^p(\begin{smallmatrix}m\cr p
\end{smallmatrix})[x_{i,r+p}^\pm,x_{j,s+m-p}^\pm]=0$ with $i\neq j$ and $m=-a_{ij}$,
\item[$(e)$] $[x^\pm_{i,r},x^\pm_{i,s}]=0,$
\item[$(f)$] $[x^\pm_{i,r_{1}},[x^\pm_{i,r_{2}},\dots [x^\pm_{i,r_{m}},x^\pm_{j,r_0}]\dots]]=0$\ with $i\neq j$, $r_p\in\bbN$ and $m=1-a_{ij}$.
\end{itemize}
The grading is given by $\deg(x_{ir}^\pm)=\deg(h_{ir})=2r$.
\end{definition}

\smallskip

There is a unique Lie $\k$-algebra anti-involution $\varpi$ of $L\frakg$ such that
$$\varpi(h_{ir})=h_{ir},\qquad \varpi(x_{ir}^\pm)=x_{ir}^\mp.$$

\smallskip

We define the \emph{$\ell$-twist} of $L\frakg$ to be the Lie $\k$-subalgebra $L\frakg^{[\ell]}\subset L\frakg$
generated by the set of elements $\{x_{i,r\ell}^\pm,\,h_{i,r\ell}\,;\,i\in I\,,r\in\bbN\}$. We write
$L\frakg^{[\ell]}_\bfk=\bfk^{[\ell]}\otimes_\k L\frakg^{[\ell]}.$

\smallskip

\begin{theorem}\label{thm:action} If $\frakg$ is symmetric and \eqref{Q} is satisfied,
then the operators $x^\pm_{i,\ell r}$with  $i\in I$, $r\in\bbN$ define a $\bbZ$-graded representation of
$L\frakg_\bfk^{[\ell]}$ on $\Tr(\calV^{\Lambda,[\ell]}/\bbZ).$
\end{theorem}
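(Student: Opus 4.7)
The plan is to verify the defining relations (a)--(f) of $L\frakg$ one at a time for the operators $x^\pm_{i,\ell r}$ on $\Tr(\calV^{\Lambda,[\ell]}/\bbZ)$, exploiting two reductions. First, by Proposition \ref{lem:functoriality} and the base-change property $\Tr(RA)=R\Tr(A)$ from Proposition \ref{prop:trace}(c), combined with the freeness of $R^\Lambda(\alpha;\bfk)$ over $\bfk$ in Proposition \ref{prop:projective}, we may localize to the fraction field $\bfhu'$ of $\bfhu$ without losing any relation. Second, over $\bfhu'$ the factorization Theorem \ref{thm:factorization} identifies $\calV^\Lambda$ with a tensor product $\bigotimes_t\calV^{\omega_t}$ of rank--one minimal categorifications, reducing many single--index relations to computations inside the cyclotomic nil Hecke algebras of Example \ref{ex:sl2}.

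I would handle the ``linear'' relations first. For (b) with $i\neq j$, namely $[x^+_{ir},x^-_{js}]=0$, I apply the isomorphism $\sigma_{ij}\colon F_iE_j\simto E_jF_i$ provided by the categorical representation together with Lemma \ref{lem:traceop}(b)--(c): the two traces $\Tr_{E_jF_i}(x_i^r\,x_j^s)$ and $\Tr_{F_iE_j}(x_j^s\,x_i^r)$ become equal after conjugation by $\sigma_{ij}$ and use of the KLR commutation $x_k\tau_k = \tau_k x_{s_k(k)}$ for distinct labels. Relation (e) $[x^\pm_{ir},x^\pm_{is}]=0$ reduces by functoriality to the rank-one case, where on $F_iF_i$ one has the nil-Hecke identity $\tau_1^2 e(ii)=0$ (because $Q_{ii}=0$), so $x_1^r x_2^s$ and $x_1^s x_2^r$ differ by a commutator with $\tau_1$ and have equal trace.

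The central computation is the $i=j$ case of (b), which will serve as the \emph{definition} of $h_{i,r+s}$ as a $\bfk$-linear operator on $\Tr(\calV^{\Lambda,[\ell]}/\bbZ)$. Using the Rouquier-type isomorphism $\rho'_{i,\lambda}$ from Theorem \ref{thm:KK}, the difference $E_iF_i1_\lambda - F_iE_i1_\lambda$ is a direct sum of grading--shifted copies of $1_\lambda$; applying Lemma \ref{lem:traceop}(a) to $\Tr_{E_iF_i}(x_i^{r+s}) - \Tr_{F_iE_i}(x_i^{r+s})$ expresses this difference as a sum of ``bubble'' trace operators, each of which is $\Z(R^\Lambda)$-valued and acts on $\Tr(R^\Lambda)$ through the $\Z(\calC)$-module structure of Proposition \ref{prop:trace}(e). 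Reading off a generating series in $r,s$ then yields the formula for $h_{i,r+s}$, and the relations (a) and (c) follow by combining this formula with the identities already established in Group~1 and with Proposition \ref{prop:bernstein}(c) (bilinearity over the centers).

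The Serre relations (d) and (f) for $i\neq j$ will be the main obstacle. Under assumption \eqref{Q} the defining relations of the QHA collapse to $\tau_k^2 e(\nu) = r_{ij}(x_k-x_{k+1})^{-a_{ij}} e(\nu)$ and to a simple form of the braid relation, so the categorical incarnation of (d) amounts to an algebraic identity among symmetric polynomials in two variables, which one obtains by expanding the binomial $\sum_p (-1)^p\binom{m}{p}$ and repeatedly swapping adjacent $F_i$, $F_j$ factors inside the trace by means of $\sigma_{ij}$. The $(1-a_{ij})$-fold nested commutator (f) is handled by induction on the nesting depth using the same mechanism, with each step costing a transposition under biadjunction (Proposition \ref{prop:bernstein}(d)). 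The delicate bookkeeping is the careful tracking of grading shifts and of the bubble contributions in (b) across the two cases $\lambda_i\geqslant 0$ and $\lambda_i\leqslant 0$, where the explicit formulas of Theorem \ref{thm:Kbiadjoint} take different forms; matching them into a single uniform action of $L\frakg^{[\ell]}_\bfk$ is the step I expect to require the most care.
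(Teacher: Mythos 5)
Your proposed first reduction --- localizing to the fraction field $\bfhu'$ --- has a genuine gap. Proposition \ref{prop:trace}(c) gives $\Tr(R^\Lambda(\bfhu')) = \bfhu'\otimes_\bfku\Tr(\Ru^\Lambda)$, so verifying a relation $\phi=0$ over $\bfhu'$ only shows $\phi=0$ over $\bfku$ if the natural map $\Tr(\Ru^\Lambda)\to\bfhu'\otimes_\bfku\Tr(\Ru^\Lambda)$ is injective, i.e., if $\Tr(\Ru^\Lambda)$ is a torsion-free $\bfku$-module. Freeness of $R^\Lambda(\alpha;\bfk)$ over $\bfk$ from Proposition \ref{prop:projective} does not give this: the cocenter $A/[A,A]$ of a free $\bfku$-algebra is a quotient by a submodule and can perfectly well have $\bfku$-torsion. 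Torsion-freeness of $\Tr(\Ru^\Lambda)$ is only established, and then only in finite type, \emph{after} Theorem \ref{thm:action} in the course of proving Theorem \ref{cor:weyl} (which actually uses Theorem \ref{thm:action}), so relying on it here would be circular. A second issue with this reduction: Theorem \ref{thm:factorization}(b) only gives an equivalence of \emph{non-graded} categorical representations, whereas the statement to prove concerns the $\bbZ$-graded action of $L\frakg^{[\ell]}_\bfk$, so passing through the factorization risks discarding exactly the structure you need. Finally, even accepting both reductions, the factors $\calV^{\omega_t}$ are not cyclotomic nil Hecke categories in general (they are so only when restricted to an $\fraks\frakl_2$-triple), so the claimed reduction of the Serre relations $(d),(f)$ to symmetric-polynomial identities in two variables does not go through.

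The paper avoids all of this by working directly over the arbitrary base $\bfk$ and doing explicit algebra in $R^\Lambda(\alpha)$. It first proves a more general statement, Proposition \ref{prop:B5}, which does not require condition \eqref{Q} at all; the condition is only used at the very end to rewrite relations $(c),(d)$ of that proposition into the defining relations of $L\frakg$. The key inputs are the bubble identities from Lemma \ref{lem:recrec} and Lemma \ref{lem:A10}, and, for the Serre relation $(f)$, not an induction with the intertwiner $\sigma_{ij}$ but Khovanov--Lauda's divided-power isomorphism $\bigoplus_a E_i^{(2a)}E_jE_i^{(m-2a)}\simeq\bigoplus_a E_i^{(2a+1)}E_jE_i^{(m-2a-1)}$ together with a commutation argument for the symmetrized morphisms $\Xi_a$; this is the one ingredient your sketch of $(f)$ omits, and the nested-commutator induction you suggest is unlikely to close without it. Your treatment of $(e)$ should also be amended: in the rank-one case $\tau_1^2=0$, so $\tau_1$ is not invertible and one cannot conjugate; the paper instead conjugates by the intertwiner $\varphi_1=(x_1-x_2)\tau_1+1$, which satisfies $\varphi_1^2 e(ii)=e(ii)$, to exchange $x_1^r x_2^s$ and $x_2^r x_1^s$ inside the trace. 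Your account of $(b)$ with $i=j$ via $\rho'_{i,\lambda}$ and bubbles is in the right spirit and matches the paper's Appendix B computation, as does the $\sigma_{ij}$ argument for $(b)$ with $i\neq j$.
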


\begin{proof} It is enough to set $\ell=1$. 
The proof is similar to a proof in \cite{BHLZ14}, \cite{BGHL14}.
However, our setting differs from loc. cit. and cannot be reduced to it, because, in our case, $\frakg$ may have any symmetric type
and because we do not require that all axioms of a strong categorical representation to be satisfied by $\calV^\Lambda$.
We have written
an independent proof in Appendix \ref{app:symmetrizing}.

\end{proof}

\smallskip

\begin{remark}
$(a)$ If $\frakg$ is not symmetric or \eqref{Q} is not satisfied, then a more general version of the theorem is given in Proposition \ref{prop:B5} below.

\smallskip

$(b)$ Assume that $\frakg$ is symmetric.
If $\frakg$ is \emph{simply laced}, then the relations $(d)$ and $(e)$ are equivalent to the following :
the bracket $[x_{i,r}^\pm,x_{j,s}^\pm]$  depends only on the sum $r+s$ and not on the integers $r$ or $s$.
If $\frakg$ is of finite type then $L\frakg$ is the \emph{current algebra} $\frakg[t]$ with $\deg(t)=2$.
In general $L\frakg$ is \emph{bigger} than $\frakg[t]$. For instance if $\frakg$ is not of type $A_1^{(1)}$ it contains
the center of the universal central extension of $\frakg[t]$, which is infinite dimensional. 
See \cite[sec.~3]{MRY}, \cite[sec.~13]{VV98} and \cite[sec.~1.3]{E} for more details.
\end{remark}

\medskip

\subsubsection{The loop operators on the center.}
Since the functors $E_i$, $F_i$ on $\calV^\Lambda$ are biadjoint, we also have operators 
$$Z^+_{ir}: Z(\calV^\Lambda_\lambda/\bbZ)\to Z(\calV^\Lambda_{\lambda+\alpha_i}/\bbZ),\quad Z^-_{ir}: Z(\calV^\Lambda_\lambda/\bbZ)\to Z(\calV^\Lambda_{\lambda-\alpha_i}/\bbZ)$$
defined by
$$Z_{ir}^+=Z_{F_i}(x_i^r),\quad Z_{ir}^-=Z_{E_i}(x_i^r),$$
see Definition \ref{df:center} for the notation.

The proposition below gives some more explicite description of the operators $x_{ir}^\pm$ and $Z^\pm_{ir}$ in terms of the algebras $\RL(\alpha)$.
Fix $v_k$, $v^\vee_k$ such that
$$\hat\eta'_{i,\lambda}(1)=\sum_kv_k^\vee\otimes v_k,\quad
v_k^\vee\in\RL(\alpha)e(\alpha-\alpha_i,i),\quad v_k\in e(\alpha-\alpha_i,i)\RL(\alpha).$$

\begin{proposition}\label{prop:loop-minimal}
For each $\alpha\in Q_+$ of height $n$ and each $f\in R^\Lambda(\alpha)$, $g\in \Z(R^\Lambda(\alpha)),$
we have 

(a) $\Tr(\calV^\Lambda/\bbZ)=\Tr(R^\Lambda)$,
$\Z(\calV^\Lambda/\bbZ)=\Z(R^\Lambda)$ as a $\bbZ$-graded $\bfk$-module and $\bfk$-algebra respectively,

(b) $x^-_{ir}(\Tr(f))=\Tr(e(\alpha,i)\,x_{n+1}^r\,f)\in \Tr(R^\Lambda(\alpha+\alpha_i))$, 

(c) $x^+_{ir}(\Tr(f))=\sum_k\hat\vep'_{i,\lambda+\alpha_i}(x_n^rv_kfv_k^\vee)\in\Tr(\RL(\alpha-\alpha_i))$,

(d) $Z_{ir}^+(g)=\hat\vep'_{i,\lambda+\alpha_i}(x_{n}^r\,g\,e(\alpha-\alpha_i,i))
\in \Z(R^\Lambda(\alpha-\alpha_i))$,

(e) $Z_{ir}^-(g)=\mu((1\otimes x_{n+1}^r\,g)\,\hat\eta'_{i,\lambda-\alpha_i}(1))
\in \Z(R^\Lambda(\alpha+\alpha_i)).$
\end{proposition}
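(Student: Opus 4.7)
My plan is to handle the four parts in order. Part $(a)$ is immediate from Proposition \ref{prop:trace}$(b)$, applied weight by weight: it identifies $\Tr(\calV^\Lambda_\lambda/\bbZ)$ with $\Tr(R^\Lambda(\alpha))$ and likewise for centers.

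For part $(b)$, I would unwind Definition \ref{df:trace} at the regular module $a=R^\Lambda(\alpha)$, whose endomorphism ring is $R^\Lambda(\alpha)^{\op}$ acting by right multiplication. Under the identification $F_i(a)\simeq R^\Lambda(\alpha+\alpha_i)\,e(\alpha,i)$, the endomorphisms $F_i(f)$ and $x_i^r(a)$ become right multiplications (by $\iota_i(f)$ and $x_{n+1}^r$ respectively), which commute since $x_{n+1}$ centralises $\iota_i(R^\Lambda(\alpha))$. Because $F_i(a)$ is the direct summand of the regular $R^\Lambda(\alpha+\alpha_i)$-module cut out by the idempotent $e(\alpha,i)$, its trace in $\Tr(R^\Lambda(\alpha+\alpha_i))$ is represented by the corresponding element $e(\alpha,i)\,x_{n+1}^r\,\iota_i(f)$, giving $(b)$.

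Part $(c)$ is the main obstacle, since $E_i(a)=e(\alpha-\alpha_i,i)R^\Lambda(\alpha)$ is not canonically a summand of the regular $B$-module with $B=R^\Lambda(\alpha-\alpha_i)$. The plan is to invoke the biadjunction of Theorem \ref{thm:Kbiadjoint}: evaluating the snake identity $(\hat\vep'_{i,\lambda+\alpha_i}E_i)\circ(E_i\hat\eta'_{i,\lambda})=1_{E_i}$ at the regular module, together with $\hat\eta'_{i,\lambda}(1)=\sum_kv_k^\vee\otimes v_k$, yields the explicit decomposition $m=\sum_k\hat\vep'_{i,\lambda+\alpha_i}(m\,v_k^\vee)\,v_k$ for every $m\in E_i(a)$. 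This realises $E_i(a)$ as a direct summand of a finitely generated free $B$-module via the section $m\mapsto(\hat\vep'_{i,\lambda+\alpha_i}(m\,v_k^\vee))_k$ and retraction $(b_k)\mapsto\sum_kb_kv_k$. Since $\hat\vep'_{i,\lambda+\alpha_i}$ is left $B$-linear, the trace of any $\varphi\in\End_B(E_i(a))$ becomes the matrix trace $\sum_k\hat\vep'_{i,\lambda+\alpha_i}(\varphi(v_k)\,v_k^\vee)\in\Tr(B)$. Applied to $\varphi=x_i^r(a)\circ E_i(f)$, whose action on $m\in E_i(a)$ is $m\mapsto x_n^r\,m\,f$ (where $x_i^r$ on $E_i$ is left multiplication by $x_n^r$, obtained by dualising its description on $F_i$ via the left transpose), this yields the claimed formula.

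Parts $(d)$ and $(e)$ are a direct unpacking of Definition \ref{df:center} at the regular module, using the bimodule descriptions of Section \ref{ss:indres} and Theorem \ref{thm:Kbiadjoint}. For $(d)$, the composition defining $Z_{F_i}(x_i^r)(g)$ at $a=R^\Lambda(\alpha-\alpha_i)$ sends $1$ under $\eta'_{i,\lambda+\alpha_i}$ to $e(\alpha-\alpha_i,i)$, then $E_i\circ g\circ x_i^r$ multiplies on the right by $x_n^rg$ (using centrality of $g$ to commute it freely past the other factors), and finally $\hat\vep'_{i,\lambda+\alpha_i}$ produces the displayed expression. For $(e)$, a parallel computation uses $\hat\eta'_{i,\lambda-\alpha_i}(1)$, inserts $x_{n+1}^rg$ on the right tensor factor, and applies the multiplication counit $\vep'_{i,\lambda-\alpha_i}=\mu$. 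The main bookkeeping throughout concerns tracking left versus right actions, the distinct descriptions of $x_i$ on $E_i$ versus on $F_i$, and verifying at each step that the relevant idempotents, centralisers, and central elements commute so that products land in the intended bimodules.
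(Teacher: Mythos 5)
Your proposal is correct and follows essentially the same route as the paper's own discussion: parts $(a)$, $(b)$, $(d)$, $(e)$ by direct unwinding at the regular module, and part $(c)$ by using the snake identity $(\hat\vep'_iE_i')\circ(E_i'\hat\eta'_i)=E_i'$ to realise $E_i'(a)$ as a direct summand of a free $R^\Lambda(\alpha-\alpha_i)$-module via the maps $m\mapsto(\hat\vep'_i(mv_k^\vee))_k$ and $(b_k)\mapsto\sum_kb_kv_k$, then reading off the matrix trace.
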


The proof is standard and left to the reader. We only make a few comments.

First, the element $f$ in $(b)$ is 
viewed as an element of $R^\Lambda(\alpha+\alpha_i)$ via the 
map $\iota_i:R^\Lambda(\alpha)\to R^\Lambda(\alpha+\alpha_i).$
Hence $e(\alpha,i)x_{n+1}^rf$ belongs to $R^\Lambda(\alpha+\alpha_i)$ and 
$x^-_{ir}(f)$ to $\Tr(R^\Lambda(\alpha+\alpha_i)).$

Next, the equality $(c)$ should be interpreted in the following way:
$f$ is identified with the $\RL(\alpha)$-module endomorphism of
$\RL(\alpha)$ given by $m\mapsto mf,$ and
$E_i$ is represented by the bimodule $e(\alpha-\alpha_i,i)\RL(\alpha)$ which
is a projective $\RL(\alpha-\alpha_i)$-module by \cite[thm.~4.5]{KK}. 
Then $x^+_{ir}(\Tr(f))$ is the class in $\Tr(\RL(\alpha-\alpha_i)\proj)\simeq\Tr(\RL(\alpha-\alpha_i))$
of the endomorphism
$$\varphi:e(\alpha-\alpha_i,i)\RL(\alpha)\to e(\alpha-\alpha_i,i)\RL(\alpha),\quad m\mapsto x_n^rmf.$$
Explicitly, 
since $(\hat\vep'_iE_i)\circ(E_i\hat\eta'_i)=E_i$, we have
$m=\sum_k\hat\vep'_i(mv_k^\vee)v_k$.
Hence there is a surjective $\RL(\alpha-\alpha_i)$-module morphism
$$p:\bigoplus_k\RL(\alpha-\alpha_i)\to e(\alpha-\alpha_i,i)\RL(\alpha),\quad 
(m_k)_k\mapsto\sum_km_k v_k,$$
with a splitting $i$ given by 
$$i:e(\alpha-\alpha_i,i)\RL(\alpha)\to\bigoplus_k\RL(\alpha-\alpha_i),\quad m\mapsto(\hat\vep'_i(mv_k^\vee))_k,$$
i.e., we have $p\circ i=1$. Consider the endomorphism $\tilde\varphi$ given by 
$$\tilde\varphi=i\circ\varphi\circ p: \bigoplus_k\RL(\alpha-\alpha_i)\to \bigoplus_k\RL(\alpha-\alpha_i),
\quad (m_k)_k\mapsto 
\big(\hat\vep'_i(x_n^r\sum_lm_lv_lfv_k^\vee)\big)_k.$$
Since $m_l\in \RL(\alpha-\alpha_i)$, it commutes with $x_n^r$.
Since $\hat\vep'_i$ is $\RL(\alpha-\alpha_i)$-linear, we deduce that 
$\hat\vep'_i(x_n^r\sum_lm_lv_lfv_k^\vee)=\sum_lm_l\hat\vep'_i(x_n^rv_lfv_k^\vee)$. 
So $\tilde\varphi$ is the right multiplication by the matrix $(\hat\vep'_i(x_n^rv_lfv_k^\vee))_{l,k}$.
Therefore we have 
$$\begin{aligned}
\Tr(\varphi)=\Tr(\varphi\circ p\circ i)=\Tr(\tilde\varphi)
=\sum_k\hat\vep'_i(x_n^rv_kfv_k^\vee).
\end{aligned}$$
We obtain
$x^+_{ir}(\Tr(f))=\sum_k\hat\vep'_i(x_n^rv_kfv_k^\vee).$

Finally, in parts $(d)$, $(e)$ we have $Z_{ir}^+(g)=Z_{F_i}(x_{n}^rg)$
and $Z_{ir}^-(g)=Z_{F_i}(x_{n+1}^rg)$.
Note that $e(\alpha-\alpha_i,i)=\eta'_{i,\lambda+\alpha_i}(1)$ and
$\mu=\vep'_{i,\lambda-\alpha_i}$.

\qed
\smallskip

Let $r(\alpha,i)$ be as in \eqref{tr}. The following proposition relates the operator $x_{ir}^\pm$ on $\Tr(\calV^\Lambda/\bbZ)$ with the transpose of the operator
$Z^\mp_{ir}$ on $\Z(\calV^\Lambda/\bbZ)$ under the pairing given by the symmetrizing form $t_\Lambda$ in Proposition \ref{prop:symmetrizing-form}
\begin{equation}\label{pairing1}
\Z(\calV^\Lambda/\bbZ)\times\Tr(\calV^\Lambda/\bbZ)\to\bfk,\quad (a,b)\mapsto t_{\Lambda}(ab).
\end{equation}

\smallskip

\begin{proposition}\label{prop:transpose}
Let $f\in \Tr(R^\Lambda(\alpha))$, $g\in \Z(R^\Lambda(\alpha+\alpha_i))$ and
$h\in \Z(R^\Lambda(\alpha-\alpha_i))$. We have

$(a)$ $t_{\alpha+\alpha_i}(g\,x_{ir}^-(f))=r(\alpha,i)\,t_\alpha(Z^+_{ir}(g)\,f),$

$(b)$ $t_{\alpha-\alpha_i}(h\,x_{ir}^+(f))=r(\alpha-\alpha_i,i)^{-1}t_{\alpha}(Z^-_{ir}(h)\,f).$

\end{proposition}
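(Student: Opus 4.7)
The plan is to reduce both identities to a single compatibility statement between the symmetrizing forms $t_\alpha$ and $t_{\alpha+\alpha_i}$ via the counit $\hat\varepsilon'_{i,\lambda}$. Specifically, I would first isolate the following key identity, which I expect to be extractable from the explicit construction of $t_{\Lambda,\alpha}$ in Appendix \ref{app:symmetrizing}: for every $A \in e(\alpha,i)\,R^\Lambda(\alpha+\alpha_i)\,e(\alpha,i)$,
$$
t_{\alpha+\alpha_i}(A) \;=\; r(\alpha,i)\,t_\alpha\bigl(\hat\varepsilon'_{i,\lambda}(A)\bigr). \qquad(\ast)
$$
In categorical terms $(\ast)$ says that $\hat\varepsilon'_{i,\lambda}$ is the Frobenius form of the extension $R^\Lambda(\alpha)\hookrightarrow e(\alpha,i)R^\Lambda(\alpha+\alpha_i)e(\alpha,i)$, normalized against the ambient symmetrizing forms by $r(\alpha,i)$. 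Once $(\ast)$ is in hand, both $(a)$ and $(b)$ are formal.

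For $(a)$, plug the formulas of Proposition \ref{prop:loop-minimal}(b,d) into both sides. Using centrality of $g$ and the fact that $e(\alpha,i)$ commutes with $x_{n+1}$ and $g$, the left-hand side becomes $t_{\alpha+\alpha_i}(A)$ with $A := x_{n+1}^r g\,\iota_i(f)\,e(\alpha,i) \in e(\alpha,i) R^\Lambda(\alpha+\alpha_i) e(\alpha,i)$. Using that $\hat\varepsilon'_{i,\lambda}$ is $(R^\Lambda(\alpha),R^\Lambda(\alpha))$-bilinear (viewing the target $R^\Lambda(\alpha)$ as acting on the source via $\iota_i$), we can absorb the factor $f$ inside the counit, so that the right-hand side becomes $r(\alpha,i)\,t_\alpha\bigl(\hat\varepsilon'_{i,\lambda}(A)\bigr)$. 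Equality then follows from $(\ast)$.

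For $(b)$, write $\hat\eta'_{i,\lambda}(1)=\sum_k v_k^\vee\otimes v_k$, and use Proposition \ref{prop:loop-minimal}(c,e). The left-hand side is
$$
\sum_k t_{\alpha-\alpha_i}\bigl(\hat\varepsilon'_{i,\lambda+\alpha_i}(\iota_i(h)\,x_n^r\,v_k f v_k^\vee)\bigr)
$$
by bilinearity, and applying $(\ast)$ with $\alpha$ replaced by $\alpha-\alpha_i$ turns this into $r(\alpha-\alpha_i,i)^{-1}\sum_k t_\alpha(\iota_i(h)\,x_n^r v_k f v_k^\vee)$. For the right-hand side, the formula for $Z^-_{ir}$ gives $r(\alpha-\alpha_i,i)^{-1}\sum_k t_\alpha(v_k^\vee x_n^r\,\iota_i(h)\,v_k\,f)$, which, after cyclic rotation by $v_k^\vee$, equals $r(\alpha-\alpha_i,i)^{-1}\sum_k t_\alpha(x_n^r\,\iota_i(h)\,v_k f v_k^\vee)$. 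The two expressions then agree because $x_n$ commutes with $\iota_i(R^\Lambda(\alpha-\alpha_i))$ (a direct check from Definition \ref{relKLR}(b),(c),(e)), hence with $\iota_i(h)$ in particular.

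The hard part is really $(\ast)$. The cyclic and bilinearity manipulations above are bookkeeping, but verifying $(\ast)$ requires an explicit description of $t_{\Lambda,\alpha}$ at the level of the basis from the Mackey-style decomposition of $R^\Lambda(\alpha+\alpha_i)$ over $R^\Lambda(\alpha)$ (coming from Theorem \ref{thm:KK}), together with a careful tracking of the degree shift $d_{\Lambda,\alpha+\alpha_i}-d_{\Lambda,\alpha}$ from Proposition \ref{prop:symmetrizing-form}; the scalar $r(\alpha,i)$ records precisely this normalization and will surface there. I would therefore place the proof of $(\ast)$ in the appendix alongside the construction of $t_{\Lambda,\alpha}$, and present the main body of the proof as the above formal manipulation.
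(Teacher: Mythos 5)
Your proposal matches the paper's proof essentially step for step: the key identity $(\ast)$ is precisely equation \eqref{eq:trec} in Appendix~\ref{app:symmetrizing}, which follows immediately from Definition~\ref{def:symmetrizing} since $r_{\nu i}=r(\alpha,i)\,r_\nu$ and $\hat\vep_{\nu i}=\hat\vep_\nu\circ\hat\vep_i$. Given that, your manipulations for $(a)$ (absorb $f$ into $\hat\vep'_i$ by $R^\Lambda(\alpha)$-bilinearity) and $(b)$ (move $h$ inside $\hat\vep'_i$, apply \eqref{eq:trec}, then rotate $v_k^\vee$ cyclically and commute $\iota_i(h)$ with $x_n$) are exactly the computation the paper carries out.
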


\begin{proof} Write $f=\Tr(f')$ with $f'\in R^\Lambda(\alpha)$. 
Write $\hat\eta'_i(1)=\sum_kv_k^\vee\otimes v_k$ as above.
By \eqref{eq:trec} we have
\begin{align*}
t_{\alpha+\alpha_i}(g\,x_{ir}^-(f))
&=t_{\alpha+\alpha_i}(g\,e(\alpha,i)\,x_{n+1}^r\,f')\\
&=r(\alpha,i)\,t_{\alpha}\,\hat\vep'_{i,\lambda}(e(\alpha,i)\,x_{n+1}^r\,g\,f')\\
&=r(\alpha,i)\,t_\alpha(Z^+_{ir}(g)\,f)\\
t_{\alpha-\alpha_i}(h\,x_{ir}^+(f))
&=t_{\alpha-\alpha_i}(\sum_kh\hat\vep'_i(x_n^rv_kf'v_k^\vee))\\
&=t_{\alpha-\alpha_i}(\sum_k\hat\vep'_i(hx_n^rv_kf'v_k^\vee))\\
&=r(\alpha-\alpha_i,i)^{-1}t_\alpha(\sum_khx_n^rv_kf'v_k^\vee)\\
&=r(\alpha-\alpha_i,i)^{-1}t_\alpha(\sum_kv_k^\vee h x_n^rv_kf')\\
&=r(\alpha-\alpha_i,i)^{-1}t_\alpha(\mu((1\otimes x_{n}^r\,h)\,\hat\eta'_{i,\lambda}(1))f')\\
&=r(\alpha-\alpha_i,i)^{-1}t_\alpha(Z^-_{ir}(h)\, f).
\end{align*}
\end{proof}

\medskip

\subsubsection{The loop operators and the isotypic filtration}\label{sec:loop-iso}
In this section we consider an integrable categorical representation of $\frakg_\bfk$ of degree 1 
on a $\bbZ$-graded $\k$-category $\calC$ which is bounded above.
As a preparation for Section \ref{ss:cyclic}, we study the behavior of the loop 
operators on $\Tr(\calC/\bbZ)$ with respect to the isotypic filtration.

Given $i\in I$ and $k\geqslant 0$, the $i$-th isotypic filtration of $\calC$
yields $P\times\bbZ$-graded integrable $\frake_{\Z^k}$-categorical 
representations $\calC_{\geqslant k}$, $\calC_k$ of degree $d_i$ such that
$\calC_k=(\calC_{\geqslant k}/\calC_{>k})^c.$
By Proposition \ref{prop:trace}$(f)$, Theorem \ref{thm:action} and Proposition \ref{lem:functoriality} 
it also yields an exact sequence of $\bbZ$-graded $\bfk\otimes_k L\frake^{[d_i]}_{\Z^k}$-modules
\begin{equation}\label{exact1}
\Tr(\calC_{>k}/\bbZ)\to
\Tr(\calC_{\geqslant k}/\bbZ)\to
\Tr(\calC_{k}/\bbZ)\to 0.
\end{equation}
Let $h_k:\calC_{\geqslant k}\to \calC$ be the canonical inclusion. 
Consider the 
$\bbZ$-graded $\bfk\otimes_\k L\frake_{\Z^k}^{[d_i]}$-submodule
$\Tr(\calC/\bbZ)_{\geqslant k}$ of $\Tr(\calC/\bbZ)$ given by
$\Tr(\calC/\bbZ)_{\geqslant k}=\Tr(h_k)(\Tr(\calC_{\geqslant k}/\bbZ)).$
Since $\Tr(\calC/\bbZ)_{> k}\subseteq\Tr(\calC/\bbZ)_{\geqslant k},$
we have an exact sequence 
\begin{equation}\label{exact2}
0\to\Tr(\calC/\bbZ)_{>k}\to
\Tr(\calC/\bbZ)_{\geqslant k}\to
\Tr(\calC/\bbZ)_{k}\to 0
\end{equation}
and the map 
$\Tr(h_k)$
factors to a surjective $\bbZ$-graded $\bfk\otimes_\k L\frake_{\Z^k}^{[d_i]}$-module homomorphism
\begin{equation}\label{quotient}
\Tr(\calC_{k}/\bbZ)=\Tr(\calV^{k,[d_i]}\otimes_{\Z^{k,[d_i]}}\calM_k/\bbZ)\to\Tr(\calC/\bbZ)_{k}.
\end{equation}

Now, assume that $\calM_{k,\mu}^c=B_{k,\mu}\grproj$ for some
$\bbZ$-graded $\bfk\otimes_\k\Z^{k,[d_i]}$-algebra $B_{k,\mu}$ and set
$B_k=\bigoplus_\mu B_{k,\mu}$.
From \eqref{sl2}, \eqref{iso} we deduce that
\begin{equation}
\calC_k=\bigoplus_{n\in\bbN}
(\Hu^{k,[d_i]}_n\otimes_{\Z^{k,[d_i]}} B_k)\grproj.
\end{equation}
This yields a $\bbZ$-graded $\bfk$-vector space isomorphism
\begin{equation}\label{trck}\Tr(\calC_{k}/\bbZ)\simeq
\bigoplus_{n\in\bbN}\Tr(\Hu^{k,[d_i]}_n)\otimes_{\Z^{k,[d_i]}}\Tr(B_k)
\end{equation}
and the $L\frake_{\Z^k}^{[d_i]}$-action on $\Tr(\calC_k/\bbZ)$ is given by explicit formulas as in Section 
\ref{sec:sl2} below.

Finally, for a future use we'll write 
$$\Tr(\calC_\lambda)_{\geqslant k}=\Tr(\calC_\lambda)\cap\Tr(\calC)_{\geqslant k}$$
and we define 
$\Tr(\calC_\lambda)_{k}$ in the obvious way.

\smallskip

\begin{remark}
We conjecture that the left arrow in sequence \eqref{exact1} is injective if $\calC=\calV^\Lambda$.
Given an idempotent $e$ of a finite dimensional algebra $A$ such that $AeA$ is a stratifying ideal of $A$
in the sense of Cline, Parshall and Scott, we have
a long exact sequence by \cite[thm.~3.1]{Ke98}
$$\dots\to HH_{1}(B)\to HH_0(eAe)\to HH_0(A)\to HH_0(B)\to 0$$
where $B=A/AeA$. Now, set $\frakg=\fraks\frakl_3$ and let 
$\Lambda=\theta=\alpha_1+\alpha_2$ be the highest positive root.
Then $A=R^\Lambda(\theta)$ has a primitive idempotent $e$ such that
$\calC_{>0}=eAe\proj$, $\calC=A\proj$ and $\calC_0=B\proj$.
In this case, the left arrow in \eqref{exact1} is indeed injective, 
although the ideal $AeA$ is not a stratifying ideal of $A$.
\end{remark}

\medskip

\subsubsection{The $\fraks\frakl_2$-case} \label{sec:sl2}
We'll use the same notation as in Example \ref{ex:sl2}.
Thus, we have $\Lambda=k\omega_1$, $\alpha=n\alpha_1$ and
$d_{k,n}=2n(k-n)$.
Let $\bfk$ be a $\Z^{k,[\ell]}$-algebra. 
Recall \eqref{sl2} yields
$$\calV^{k,[\ell]}=\bigoplus_{n\geqslant 0}(\bfk\otimes_{\Z^{k,[\ell]}}\Hu^{k,[\ell]}_n)\grproj.$$
For a future use, let us quote the following.

\smallskip

\begin{proposition}\label{prop:sl2}
(a) The $\bfk\otimes_\k L\frake_{\Z^k}^{[\ell]}$-module 
$\Tr(\calV^{k,[\ell]})$ is generated by $\Tr(\calV^{k,[\ell]}_k)$.

(b) We have $\Tr(\Hu^{k,[\ell]}_n)=V\otimes_\k\Z^{k,[\ell]}$ as a $\bbZ$-graded $\Z^{k,[\ell]}$-module, 
with $V^d=0$ if $d\notin[0,\ell\,d_{k,n}]$
and $V^{\ell\,d_{k,n}}=\k$.
\end{proposition}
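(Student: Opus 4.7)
The strategy is to prove (b) first, then use it to establish (a).

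For (b), I would invoke the well-known Morita-type identification (going back to Lauda and re-derived in \cite{R12}) giving a graded $\Z^k$-algebra isomorphism
$$\Hu^k_n \;\cong\; \Mat_{n!}\bigl(H^*_T(\mathrm{Gr}(n,k),\k)\bigr),$$
where $T \cong (\k^\times)^k$ is the maximal torus of $GL_k$ acting on $\mathrm{Gr}(n,k)$ and $H^*_T(\mathrm{pt},\k) = \Z^k$. Since the trace of a matrix algebra $\Mat_N(R)$ over a commutative ring is canonically $R$, this gives $\Tr(\Hu^k_n) \cong H^*_T(\mathrm{Gr}(n,k),\k)$. The Grassmannian is equivariantly formal (its Schubert decomposition is $T$-invariant with all cells of even real dimension), so $H^*_T(\mathrm{Gr}(n,k),\k) \cong H^*(\mathrm{Gr}(n,k),\k) \otimes_\k \Z^k$ as a graded $\Z^k$-module. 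Set $V = H^*(\mathrm{Gr}(n,k),\k)$. The usual Schubert-cell computation gives $V^d = 0$ for $d \notin [0, 2n(k-n)]$ and $V^{2n(k-n)} = \k$ (the top cell is unique); since $2n(k-n) = d_{k,n}$, applying the $\ell$-twist rescales degrees by $\ell$ and gives the asserted structure.

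For (a), by Proposition \ref{prop:loop-minimal}(b), the operator $x^-_{1,r}$ acts on $\Tr(\Hu^k_n)$ via $\Tr(f) \mapsto \Tr(x_{n+1}^r\,\iota_1(f))$. Starting from $1 \in \bfk = \Tr(\calV^{k,[\ell]}_k)$ and iterating, one produces, in $\Tr(\Hu^{k,[\ell]}_n)$, all classes of the form $\Tr(x_n^{r_n} \cdots x_1^{r_1})$ with $r_1,\dots,r_n \in \ell\bbN$. Under the identification of (b), the images of the $x_i$ in $\Tr(\Hu^k_n) = H^*_T(\mathrm{Gr}(n,k),\k)$ are the Chern roots of the tautological subbundle, and the Schubert basis of $H^*_T(\mathrm{Gr}(n,k),\k)$ is expressed by Schur polynomials in these Chern roots. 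Combining these iterated classes with the $\bfk$-action, which contributes multiplication by the equivariant parameters $c_1,\dots,c_k$, should span $\Tr(\Hu^{k,[\ell]}_n)$, yielding the generation statement.

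The main obstacle is to make this generation work under the $\ell$-twist restriction, where only $x^-_{1,r}$ for $r\in\ell\bbN$ is admissible. In particular, Chern-root monomials such as $\Tr(x_1)$ (immediately accessible for $\ell=1$) must be recovered indirectly from the combined $\bfk$-action on iterates $\Tr(x_n^{\ell s_n}\cdots x_1^{\ell s_1})$. I expect this combinatorial step to be the heart of the proof, likely handled by base-changing to $\bfhu$ from Theorem \ref{thm:factorization}, where the cyclotomic polynomial splits and the problem becomes more tractable, followed by a descent back to arbitrary $\bfk$ using the freeness statement of Proposition \ref{prop:projective}.
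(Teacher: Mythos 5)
For part (b), your route is valid and genuinely different from the paper's. The paper invokes that $\Hu^k_n$ is a \emph{symmetric} $\Z^k$-algebra, obtains $\Tr(\Hu^k_n)\simeq\Z(\Hu^k_n)^*$ (dual as a $\Z^k$-module, shifted by the degree $-d_{k,n}$ of the form), and identifies $\Z(\Hu^k_n)$ with $\k[y_1,\dots,y_k]^{S_n\times S_{k-n}}$. Since this partially-invariant ring is $H^*_T(\mathrm{Gr}(n,k),\k)$ and is Poincar\'e self-dual over $\Z^k$, the two answers coincide. Your route — Morita equivalence $\Hu^k_n\cong\End_{\Z(\Hu^k_n)}(P)$ with $P$ a graded-free module of rank $n!$, then the degree-zero trace map $\Tr(\End(P))\simeq\Z(\Hu^k_n)$, then equivariant formality — is cleaner in that it exhibits $\Tr\cong H^*_T(\mathrm{Gr}(n,k),\k)$ directly without routing through the dual-and-shift step. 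Both give the asserted $V$.

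For part (a), the ``main obstacle'' you identify is not real: it comes from a misreading of the $\ell$-twist. On the $\ell$-twisted category $\calV^{k,[\ell]}$, the endomorphism $x$ of $F_i$ has degree $2\ell$ rather than $2$, so the categorical operator $\Tr_{F_i}(x^r)$ has degree $2r\ell$. It therefore realizes the \emph{Lie algebra} generator $x^-_{i,\,r\ell}$ of $L\frake^{[\ell]}$ — and this for \emph{every} $r\in\bbN$, not just $r\in\ell\bbN$. In other words, the full family $\{\Tr_{F_i}(x^r):r\in\bbN\}$ consists precisely of the images of the generators of $L\frake^{[\ell]}$, so the set of admissible operators is independent of $\ell$. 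That is exactly the content of the paper's opening sentence ``it is enough to set $\ell=1$.'' There is consequently nothing to recover indirectly, and the detour you propose (factorization over $\bfhu'$, then descent) is not needed. The paper instead computes the explicit Bernstein-type operators $Z^\pm_{ir}$ on $\Z(\Hu^k_n)$, transposes them via the self-dual form $\tau^k_n$, and shows $x^-_{i\lambda}(f)=f\cdot h_\lambda(y_1,\dots,y_n)$ for $f\in\Z^k$; the generation then follows since $\Z(\Hu^k_n)=\sum_{\lambda}\Z^k\cdot h_\lambda(y_1,\dots,y_n)$. Modulo this correction, your sketch (producing $\Tr(x_n^{r_n}\cdots x_1^{r_1})$ and spanning by Chern-root monomials) is the right idea, but you still need to supply the explicit spanning argument that the paper carries out.
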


\begin{proof}
It is enough to set $\ell=1$ and $\bfk=\Z^k$.
Then, we have 
$$\calV^k=\bigoplus_{n=0}^k\calV^k_{k-2n},\qquad
\calV^k_{k-2n}=\Hu^k_n\grproj.$$
Fix formal variables $y_1,\dots,y_k$ of degree 2 such that $c_p=e_p(y_1,\dots,y_k)$
for $p=1,2,\dots,k$
and $$\Z^k=\k[y_1,y_2,\dots,y_k]^{S_k}.$$
We have $\Z^k$-linear isomorphisms, see e.g., \cite{R08},
$$\Z(\Hu^k_n)\simeq 
\begin{cases}\k[y_1,y_2,\dots,y_k]^{S_n\times S_{k-n}}&\ \text{if}\ n\in[0,k],\\
0&\ \text{else}.\end{cases}$$ 
Since the $\Z^k$-algebra $\Hu^{k}_n$ is symmetric, we have $\Tr(\Hu^{k}_n)\simeq \Z(\Hu^k_n)^*$ 
where $(\bullet)^*$ is the dual as a $\Z^k$-module. This proves part $(b)$.

By Proposition \ref{prop:transpose}, under the isomorphism
$\Tr(\Hu^{k}_n)\simeq \Z(\Hu^k_n)^*$ the operators $x^+_{ir},$ $x^-_{ir}$ on 
$\bigoplus_n\Tr(\Hu^{k}_n)$ are identified with the transpose of the operators
$Z_{ir}^-,$ $Z_{ir}^+$ on $\bigoplus_n\Z(\Hu^k_n).$
For each $p=1,\dots,k-1$, let $\partial_{s_p}$ be the Demazure operator on
$\k[y_1,y_2,\dots,y_k]$
associated with the simple reflection 
$s_p=(p,p+1)$, which is defined by
$\partial_{s_p}(f)=(f-s_p(f))/(y_{p+1}-y_p)$.
The formulas in \cite{R08}, \cite{R12} for the symmetrizing form
of $\Hu_n^k$ imply that the 
Bernstein operators are given by the following explicit formulas for all $f\in \Z(\Hu^k_n)$
$$Z_{ir}^-(f)=\partial_{s_1}\circ\dots\circ\partial_{s_n}
\Big(y_{n+1}^r\, f \prod_{p=n+2}^k(y_{n+1}-y_p)\Big)
\in \Z(\Hu^k_{n+1}),$$
$$Z_{ir}^+(f)=\partial_{s_{k-1}}\circ\dots\circ\partial_{s_n}
\Big(y_{n}^r\, f \prod_{p=1}^{n-1}(y_{p}-y_n)\Big)
\in \Z(\Hu^k_{n-1}).$$

Now, the $\Z^k$-algebra $\Z(\Hu^k_n)$ is equipped with the symmetrizing form
$\partial_{w_n}:\Z(\Hu^k_n)\to \Z^k$, where $w_n\in S_k$
is the unique element of minimal length in the longest coset in $S_k/S_n\times S_{k-n}$.
It yields a non-degenerate bilinear form 
$$\tau^k_n:\Z(\Hu^k_n)\times \Z(\Hu^k_n)\to \Z^k,\quad 
(a,b)\mapsto \partial_{w_n}(ab).$$
The bilinear form $\tau^k_n$ identifies $\Z(\Hu^{k}_n)^*$ with $\Z(\Hu^k_n)$ as a $\Z^k$-module.


Finally, taking the transpose of $Z_{ir}^-$, $Z_{ir}^+$ with respect to $\tau^k_n,$ we get
the following formulas for the operators $x^+_{ir},$ $x^-_{ir}$, which are viewed as
$\Z^k$-linear operators on $\bigoplus_n\Z(\Hu^k_n)$ :
$$x^+_{ir}(f)=\sum_{p=n}^k(p,n)(y_{n}^rf)\in \Z(\Hu^k_{n-1}),$$
$$x^-_{ir}(f)=\sum_{p=1}^{n+1}(p,n+1)(y_{n+1}^rf)\in \Z(\Hu^k_{n+1}).$$

We can now prove part $(a)$ of the proposition.
For each $n\geqslant 0$, let 
$$\Lambda^+(n)=
\{\lambda\in\bbN^n\,;\,\lambda=(\lambda_1\geqslant\lambda_2\geqslant\cdots\geqslant\lambda_n)\}$$
be the set of dominant weights. We abbreviate
$x_{i\lambda}^-=x_{i\lambda_1}^-x_{i\lambda_2}^-\cdots x_{i\lambda_n}^-$.
We must check that 
\begin{equation}\label{surjectivity}
\sum_{\lambda\in\Lambda^+(n)}x_{i\lambda}^-(\Z^k)=\Z(\Hu^k_n).
\end{equation} 
This follows from the equality 
$\Z(\Hu^k_n)=\sum_{\lambda\in\Lambda^+(n)}\Z^k\cdot h_\lambda(y_1,y_2,\dots,y_n)$
and the formula
$$x_{i\lambda}^-(f)=f\cdot h_\lambda(y_1,y_2,\dots,y_n),\quad
\forall f\in \Z^k.$$
\end{proof}

\medskip

\subsection{The center and the cocenter of the minimal categorical representation}\label{ss:cyclic}

Let $\frakg$ be the symmetrizable Kac-Moody algebra over $\k$ associated with the Cartan datum
$(P,P^\vee,\Phi,\Phi^\vee)$.
Fix a dominant weight $\Lambda\in P_+$.
Let $\bfk$ be an $\bbN$-graded $\bfku$-algebra as in Section \ref{sec:QH}.
We equip $\Tr(R^\Lambda)$, $\Z(R^\Lambda)$ with the
$\bbZ$-gradings such that for each $d\in\bbZ$ we have
\begin{align*}
&\Tr(R^\Lambda)^d=\{\Tr(x)\,;\,x\in R^{\Lambda,d}\},\\
&\Z(R^\Lambda)^d=R^{\Lambda,d}\cap \Z(R^\Lambda).
\end{align*}

\smallskip

\subsubsection{The grading of the cocenter}\label{sec:grading}
In this section we set $\bfk=\k$.
For $\alpha\in Q_+$ we write $\lambda=\Lambda-\alpha$ and
$R^\Lambda(\alpha)=R^\Lambda(\alpha;\k)$.

\smallskip

\begin{theorem}\label{thm:grading-trace}
Assume that $\bfk=\k$. Then, for each $\alpha\in Q_+$ we have

(a) if $\Tr(R^\Lambda(\alpha))^d\neq 0$, then $d\in[0, d_{\Lambda,\alpha}]$,

(b) if $\Tr(R^\Lambda(\alpha))^{d_{\Lambda,\alpha}}=0$, then $V(\Lambda)_\lambda=0,$

(c) $\dim\Tr(R^\Lambda(\alpha))^0=\dim V(\Lambda)_\lambda.$
\end{theorem}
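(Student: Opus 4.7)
The main tool is the non-degenerate pairing $(a,b) \mapsto t_\alpha(ab)$ between $\Z(R^\Lambda(\alpha))^{d_{\Lambda,\alpha}-d}$ and $\Tr(R^\Lambda(\alpha))^d$ induced by the symmetrizing form (Proposition \ref{prop:symmetrizing-form}), which interchanges positivity of $\Tr$ and the upper bound $d \leq d_{\Lambda,\alpha}$ via positivity of $\Z$. Part (a) therefore reduces to showing that $\Tr(R^\Lambda(\alpha))$ and $\Z(R^\Lambda(\alpha))$ are both positively graded.

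To prove this positivity I would induct on $\height(\alpha)$. The base case $\alpha=0$ is immediate. For the inductive step, fix $i \in I$ and apply the $i$-th isotypic filtration of $\calV^\Lambda$ (Proposition \ref{prop:isotypic}); by Proposition \ref{prop:trace}(f) it suffices to establish positivity on each associated graded piece $\Tr(\calC_k/\bbZ)$. The identification \eqref{trck} expresses this piece as $\bigoplus_n \Tr(\Hu^{k,[d_i]}_n) \otimes_{\Z^{k,[d_i]}} \Tr(B_k)$. Proposition \ref{prop:sl2}(b) gives the $\fraks\frakl_2$-bound $[0, d_i d_{k,n}]$ on the degrees of $\Tr(\Hu^{k,[d_i]}_n)$, while $B_k$ parametrizes highest-weight objects of strictly smaller height, to which the induction hypothesis applies. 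An analogous argument yields positivity of $\Z(R^\Lambda(\alpha))$; the upper bound in (a) is then automatic from the pairing.

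For part (c), I would first produce a copy of $V(\Lambda)$ inside $\Tr(R^\Lambda)^0$. The $L\frakg$-action from Theorem \ref{thm:action} restricts in degree zero to the action of the degree-zero part $\frakg \subset L\frakg$, and the generator $1 \in \Tr(R^\Lambda(0))^0 = \k$ is a highest weight vector of weight $\Lambda$. Integrability of the minimal categorical representation forces $\Tr(R^\Lambda)^0$ to be an integrable $\frakg$-module, so $U(\frakn^-)\cdot 1 \cong V(\Lambda)$, giving $\dim \Tr(R^\Lambda(\alpha))^0 \geq \dim V(\Lambda)_\lambda$. For the reverse inequality I would iterate the isotypic filtration: in each $\fraks\frakl_2$-piece, Proposition \ref{prop:sl2}(b) makes $\Tr(\H^k_n)^0$ one-dimensional for $n \in [0,k]$ and zero otherwise, exactly matching the weight multiplicities $\dim V(k\omega_1)_{k-2n}$. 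Inducting on $\calM_k$ and applying the classical branching formula for the restriction of $V(\Lambda)$ to the $\fraks\frakl_2$-triple attached to $i$ then assembles these contributions into the required total $\dim V(\Lambda)_\lambda$.

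Part (b) follows from (c) and the pairing: if $V(\Lambda)_\lambda \neq 0$, then the categorification theorem (Proposition \ref{prop:projective} and \cite{KK}) ensures $R^\Lambda(\alpha) \neq 0$, hence $e(\alpha) \in \Z(R^\Lambda(\alpha))^0$ is nonzero and must pair non-trivially with some class in $\Tr(R^\Lambda(\alpha))^{d_{\Lambda,\alpha}}$. The principal difficulty is the reverse inequality in (c): the integrability argument furnishes only a submodule of $\Tr^0$, so pinning down the exact dimension requires extracting from the isotypic filtration a precise matching of $\fraks\frakl_2$-branching with the classical weight multiplicities of $V(\Lambda)$, something that does not follow formally from positivity alone.
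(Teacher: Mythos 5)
Your argument for part (a) is in spirit the same as the paper's: reduce along the $i$-th isotypic filtration, apply Proposition \ref{prop:sl2}(b) to the $\fraks\frakl_2$-factor, and induct on the highest-weight piece. Two points, however, are worth flagging. First, your framing introduces an unnecessary and in fact problematic detour: you propose to prove positivity of $\Z(R^\Lambda(\alpha))$ ``by an analogous argument,'' but the center does not behave well under Serre quotients of categories (there is no analogue of Proposition \ref{prop:trace}(f) for $\Z$), so this step is not easy to justify. It is also unnecessary: Proposition \ref{prop:sl2}(b) gives the two-sided bound $[0,\ell\,d_{k,n}]$ on $\deg\Tr(\Hu^{k,[d_i]}_n)$, and combined with the key numerical identity $d_i\,d_{k_0,m}+d_{\Lambda,\alpha-m\alpha_i}=d_{\Lambda,\alpha}$ this directly forces $\deg\Tr(R^\Lambda(\alpha))\subset[0,d_{\Lambda,\alpha}]$ without any input about the center. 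Second, the paper runs a double induction (on $\alpha$, and descending on the Kashiwara value $m=\epsilon_i(\top(P))$), because for fixed $\alpha$ one must sweep through the pieces $\Tr(R^\Lambda(\alpha))_{k_0}$ top-down; a crude induction ``on $\height(\alpha)$'' does not by itself handle the surjection \eqref{quotient}, which is not an isomorphism, nor the pieces at maximal $k_0$.

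Your part (b) matches the paper's.

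The genuine gap is in the upper bound for part (c), and you yourself acknowledge it. Producing a copy of $V(\Lambda)\subseteq\Tr(R^\Lambda)^0$ via the highest-weight vector $1$ and integrability gives the lower bound, but dimension-counting through the isotypic filtration does not give the reverse inequality, because the maps \eqref{quotient} are only surjections and $M_{k_0,\mu}^0$ is an a priori arbitrary subquotient of $\Tr(R^\Lambda(\alpha-m\alpha_i))^0$ rather than the whole thing; the $\fraks\frakl_2$-branching formula cannot be assembled without knowing these inclusions are equalities. The paper sidesteps this entirely: the same surjection \eqref{map} used in the proof of (a) shows that the $\frakg$-module $\Tr(R^\Lambda)^0$ is \emph{cyclic}, generated by the degree-zero highest-weight vector $1\in\Tr(R^\Lambda(0))^0$. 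Together with integrability this forces $\Tr(R^\Lambda)^0\simeq V(\Lambda)$ outright, which is strictly stronger than either inequality and avoids the dimension-matching problem. If you want to complete your proof along your lines, the missing ingredient is precisely this cyclicity statement, not a more refined branching argument.
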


\begin{proof} 
For each $i\in I$ and $k\geqslant 0$
the $i$-th isotypic filtration of $\calV^\Lambda$ yields
$P\times\bbZ$-graded categorical $\frake$-representations
$\calV^\Lambda_{\geqslant k}$, $\calV^\Lambda_k$ of degree $d_i$ such that
$\calV^\Lambda_{\geqslant 0}=\calV^\Lambda$,
$\calV^\Lambda_{\lambda,\geqslant k}=0$ if $k$ is large enough (depending on $\lambda$) and
$\calV^\Lambda_k=(\calV^\Lambda_{\geqslant k}/\calV^\Lambda_{>k})^c$.
Further, taking the trace we get
$\bbZ$-graded $L\frake^{[d_i]}_{\Z^k}$-submodules
$\Tr(R^\Lambda)_{\geqslant k}\subset\Tr(R^\Lambda)$ such that
$\Tr(R^\Lambda)_{\geqslant 0}=\Tr(R^\Lambda)$ and
$\Tr(R^\Lambda(\alpha))_{\geqslant k}=0$ if $k$ is large enough.

Now, fix an indecomposable object $P\in \calV^\Lambda_{\lambda}$ and an element
$v_P\in\Tr(\End_{\calV^\Lambda/\bbZ}(P))$ which is homogeneous of degree $d$.
Let $v$ be the image of $v_P$ in $\Tr(R^\Lambda(\alpha)).$
We must prove that either $d\in [0,d_{\Lambda,\alpha}]$ or $v=0$.
Since  $R^\Lambda(0)\simeq\k,$ we may assume that $\alpha\neq 0$.
The Grothendieck group of $\calV^\Lambda_{\lambda}/\bbZ$ is isomorphic to a 
$\bbZ$-lattice in $V(\Lambda)_{\lambda}$ by \cite{KK}.
Since $\alpha\neq 0$, this weight subspace 
does not contain any highest weight vector for the $\frakg$-action on $V(\Lambda)$.

Let $\epsilon_i$ be the Kashiwara function on the set of simple objects in 
$\calA^{\Lambda}/\bbZ$, which is defined as in \cite[sec.~3.2]{KL09}. 
For each indecomposable objects $P,Q\in\calV^\Lambda$ we have
$\epsilon_i(\top(P))>\epsilon_i(\top(Q))$ if and only if there is an integer $k$ such that
$P\in\calV^\Lambda_{\geqslant k}$ and $Q\notin\calV^\Lambda_{\geqslant k}$.

Now, we may choose $i\in I$ such that the integer $m=\epsilon_i(\top(P))$ is positive. 
Let $k_0$ be maximal such that $P\in\calV^\Lambda_{\geqslant k_0}$ and set $m=\epsilon_i(\top(P))$. 
Note that $\epsilon_i(\top(Q))$ is bounded above as $Q$ runs over the set of all indecomposable
objects in $\calV^\Lambda_{\lambda}$ and that
$\epsilon_i(\top(Q))>m$ if and only if $Q\in \calV^\Lambda_{>k_0}$.

The proof is an induction on $\alpha$ and a descending induction on $m$.
We'll assume that 

\begin{itemize}

\item  $\deg(\Tr(R^\Lambda(\beta))\subset[0,d_{\Lambda,\beta}]$ for each 
$\beta\in Q_+$ such that $\beta<\alpha,$

\smallskip

\item $\deg(\Tr(R^\Lambda(\alpha))_{k})\subset[0,d_{\Lambda,\alpha}]$ for each $k>k_0$,

\end{itemize}

\smallskip

\noindent and we must check that
$\deg(\Tr(R^\Lambda(\alpha)_{k_0})\subset[0,d_{\Lambda,\alpha}]$.

By \eqref{quotient}, \eqref{trck} there is a finitely generated $P\times\bbZ$-graded
$\Z^{k_0,[d_i]}$-algebra $B_{k_0}$ such that $B_{k_0,\mu}=0$ if $\mu_i\neq k_0$
and a surjective $\bbZ$-graded $L\frake^{[d_i]}_{\Z^{k_0}}$-module 
homomorphism
$$\Tr(\calV^\Lambda_{k_0}/\bbZ)\simeq
\bigoplus_{n\in\bbN}\Tr(\Hu^{k_0,[d_i]}_n)\otimes_{\Z^{k_0,[d_i]}}\Tr(B_{k_0})
\to\Tr(R^\Lambda)_{k_0}.$$
Let $M_{k_0,\mu}$ be the image of 
$\Tr(\Hu^{k_0,[d_i]}_0)\otimes_{\Z^{k_0,[d_i]}}\Tr(B_{k_0,\mu})$.
Set $M_{k_0}=\bigoplus_\mu M_{k_0,\mu}$.
Recall that $\Tr(\Hu^{k_0,[d_i]}_0)=\Z^{k_0,[d_i]}$ and that
$\sum_{\mu\in\Lambda^+(n)}x_{i\mu}^-(\Tr(\Hu^{k_0,[d_i]}_0))=\Tr(\Hu^{k_0,[d_i]}_n)$
by \eqref{surjectivity}.
Thus, there is a (unique) surjective $\bbZ$-graded $\k$-vector space homomorphism
\begin{equation}\label{map}
\bigoplus_{n\in\bbN}\Tr(\Hu^{k_0,[d_i]}_n)\otimes_{\Z^{k_0,[d_i]}}M_{k_0}\to\Tr(R^\Lambda)_{k_0}
\end{equation}
such that $x_{i\mu}^-(f)\otimes u\mapsto f\,x_{i\mu}^-(u)$
for each $f\in \k$, $\mu\in\Lambda^+(n)$, $u\in M_{k_0}$.
Further, by definition of $k_0$ we have $k_0=\lambda_i+2m$ and \eqref{map} yields a surjective map
$$\Tr(\Hu^{k_0,[d_i]}_n)\otimes_{\Z^{k_0,[d_i]}}M_{k_0,\lambda+m\alpha_i}
\to\Tr(R^\Lambda(\alpha+(n-m)\alpha_i))_{k_0}.$$
Now, a short computation yields
\begin{align*}\label{form2}
d_i\,d_{k_0,m}+d_{\Lambda,\alpha-m\alpha_i}
&=d_{\Lambda,\alpha}.
\end{align*}
Thus by Proposition \ref{prop:sl2}, to prove part $(a)$ of the theorem we must check that 
$$\deg(M_{k_0,\lambda+m\alpha_i})\subset[0,d_{\Lambda,\alpha-m\alpha_i}].$$
Since $B_{k_0,\lambda+m\alpha_i}$ is the endomorphism ring of an object of 
$\calV^\Lambda_{k_0,\lambda+m\alpha_i}$ and $\Tr(B_{k_0,\lambda+m\alpha_i})$ maps onto 
$M_{k_0,\lambda+m\alpha_i},$
we have 
$$\deg(M_{k_0,\lambda+m\alpha_i})\subset\deg(\Tr(R^\Lambda(\alpha-m\alpha_i)).$$
Since $m>0$, we have
$\deg(\Tr(R^\Lambda(\alpha-m\alpha_i)))
\subset[0,d_{\Lambda,\alpha-m\alpha_i}]$ by the inductive hypothesis.
This finishes the proof of $(a)$.

To prove part $(b)$, note that by Proposition \ref{prop:symmetrizing-form} the symmetrizing form on 
$R^\Lambda(\alpha)$ yields a non-degenerate bilinear form
$\Z(R^\Lambda(\alpha))^0\times\Tr(R^\Lambda(\alpha))^{d_{\Lambda,\alpha}}\to\k.$
We deduce that
$$V(\Lambda)_\lambda\neq 0\Rightarrow R^\Lambda(\alpha)\neq 0
\Rightarrow\Z(R^\Lambda(\alpha))^{0}\neq 0
\Rightarrow\Tr(R^\Lambda(\alpha))^{d_{\Lambda,\alpha}}\neq 0.
$$

Finally, let us prove $(c)$. We identify $\frakg$ with its image in 
$L\frakg$.
The operators $x_{i0}^\pm$ with $i\in I$ define a representation of $\frakg$ on $\Tr(R^\Lambda)^d$
for each $d$.
Since $\Tr(\Hu^{k}_n)^0=\k$ for each $n\in[0,k]$,
the map \eqref{map} yields a surjective $\frake$-module homomorphism
$V(k)\otimes_{\k}(M_{k_0})^0\to\Tr(R^\Lambda)_{k_0}^0.$
Here $V(k)$ is the $k+1$-dimensional representation of $\frake$.
See the proof of Proposition \ref{prop:sl2} for more details.
Thus, the proof of $(a)$ above implies that
the representation of $\frakg$ on $\Tr(R^\Lambda)^0$ is cyclic.
Hence $\Tr(R^\Lambda)^0\simeq V(\Lambda)$ as a $\frakg$-module.

\end{proof}

\medskip

\subsubsection{The grading of the center}
We use the same notation as in Section \ref{sec:grading}.
The pairing \eqref{pairing1} gives a non-degenerate bilinear form
\begin{equation*}\label{pairing}
\Z(R^\Lambda(\alpha))^d\times\Tr(R^\Lambda(\alpha))^{d_{\Lambda,\alpha}-d}\to\k.
\end{equation*}
From Theorem \ref{thm:grading-trace} we deduce that

\medskip

\begin{corollary}\label{cor:grading-center}
If $\bfk=\k$ then we have $\Z(R^\Lambda(\alpha))^d=0$ for any $d\notin[0, d_{\Lambda,\alpha}]$.

\qed
\end{corollary}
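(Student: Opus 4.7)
The plan is to derive this directly from the non-degenerate pairing displayed just before the corollary, combined with Theorem \ref{thm:grading-trace}(a). Recall that by Proposition \ref{prop:symmetrizing-form} the $\k$-algebra $R^\Lambda(\alpha)$ carries a symmetrizing form $t_{\Lambda,\alpha}$ that is homogeneous of degree $-d_{\Lambda,\alpha}$. This form induces, via $(a,b)\mapsto t_{\Lambda,\alpha}(ab)$, a non-degenerate bilinear pairing between $\Z(R^\Lambda(\alpha))$ and $\Tr(R^\Lambda(\alpha))$, and the homogeneity of $t_{\Lambda,\alpha}$ forces this pairing to restrict, for each $d \in \bbZ$, to a non-degenerate bilinear form
\[
\Z(R^\Lambda(\alpha))^{d} \times \Tr(R^\Lambda(\alpha))^{d_{\Lambda,\alpha}-d} \to \k,
\]
exactly as displayed in the excerpt.

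Non-degeneracy means in particular that if $\Z(R^\Lambda(\alpha))^{d}\neq 0$ then its pairing partner $\Tr(R^\Lambda(\alpha))^{d_{\Lambda,\alpha}-d}$ must also be non-zero. The contrapositive is what I want: vanishing of the trace component forces vanishing of the central component.

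Now I invoke Theorem \ref{thm:grading-trace}(a), which says that $\Tr(R^\Lambda(\alpha))^{e} \neq 0$ only when $e \in [0,d_{\Lambda,\alpha}]$. Applying this with $e = d_{\Lambda,\alpha}-d$, the condition $e \in [0,d_{\Lambda,\alpha}]$ is equivalent to $d \in [0,d_{\Lambda,\alpha}]$. Consequently, whenever $d \notin [0,d_{\Lambda,\alpha}]$, the trace piece $\Tr(R^\Lambda(\alpha))^{d_{\Lambda,\alpha}-d}$ vanishes, and the non-degenerate pairing above forces $\Z(R^\Lambda(\alpha))^{d}=0$.

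Since the main work has already been done in Theorem \ref{thm:grading-trace} and Proposition \ref{prop:symmetrizing-form}, there is no real obstacle here; the proof is a one-line dualization. The only thing to be careful about is that the symmetrizing form is graded of the correct degree $-d_{\Lambda,\alpha}$ so that the induced pairing truly restricts to graded pieces in complementary degrees summing to $d_{\Lambda,\alpha}$ — but this is precisely the content of Proposition \ref{prop:symmetrizing-form} and is already used to write the displayed pairing in the excerpt.
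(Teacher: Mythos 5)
Your argument is exactly the paper's: it passes the statement through the non-degenerate pairing $\Z(R^\Lambda(\alpha))^d\times\Tr(R^\Lambda(\alpha))^{d_{\Lambda,\alpha}-d}\to\k$ induced by the degree $-d_{\Lambda,\alpha}$ symmetrizing form of Proposition \ref{prop:symmetrizing-form}, and then invokes Theorem \ref{thm:grading-trace}(a). Correct, and identical in route to the paper.
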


\smallskip

We have $\Z(R^\Lambda(\alpha))^0\neq\{0\}$ whenever $V(\Lambda)_{\lambda}\neq 0$.

\smallskip

\begin{conj}\label{conj:multiplicity-one}
If $\bfk=\k$ then we have  
$\Z(R^\Lambda(\alpha))^0\simeq\Tr(R^\Lambda(\alpha))^{d_{\Lambda,\alpha}}\simeq\k$
whenever $V(\Lambda)_{\lambda}\neq 0$.
\end{conj}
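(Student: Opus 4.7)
The plan is to reduce the conjecture to an indecomposability statement and then attack the latter via known results.

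First, by Proposition \ref{prop:symmetrizing-form}, the symmetrizing form $t_\alpha$ induces a non-degenerate pairing $\Z(R^\Lambda(\alpha))^d \times \Tr(R^\Lambda(\alpha))^{d_{\Lambda,\alpha}-d} \to \k$ for each $d$; setting $d=0$ gives $\dim_\k \Z(R^\Lambda(\alpha))^0 = \dim_\k \Tr(R^\Lambda(\alpha))^{d_{\Lambda,\alpha}}$, so it suffices to prove that one of these dimensions is one. Next, by Theorem \ref{thm:grading-trace}(a) and Corollary \ref{cor:grading-center}, the algebra $R^\Lambda(\alpha)$ is non-negatively graded, so every idempotent of $R^\Lambda(\alpha)$ sits in degree $0$. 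In particular all primitive central idempotents lie in $\Z(R^\Lambda(\alpha))^0$, and the claim $\Z(R^\Lambda(\alpha))^0 = \k$ is equivalent to the indecomposability of the $\k$-algebra $R^\Lambda(\alpha)$, i.e.\ to the assertion that the weight subcategory $\calV^\Lambda_\lambda$ has a single block.

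I would then prove this indecomposability in two families of cases. In (possibly affine) type $A$, I would invoke the Brundan-Kleshchev isomorphism between $R^\Lambda(\alpha)$ and a block of a cyclotomic (degenerate or non-degenerate) Hecke algebra of type $A$, and then apply Brundan's block classification for degenerate cyclotomic Hecke algebras and the Lyle-Mathas result in the non-degenerate case, both of which state that the blocks are labelled by weights in the root lattice, so that $R^\Lambda(\alpha)$ is already a single block. In more general symmetric Kac-Moody types where a quiver realization is available, I would use the Varagnolo-Vasserot geometric realization of $R^\Lambda(\alpha)$ as a convolution algebra in the equivariant Borel-Moore homology of a Steinberg-type variety attached to the Nakajima quiver variety $\frakM(\Lambda,\alpha)$. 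Under this identification, $\Z(R^\Lambda(\alpha))^0$ is computed by the top-degree equivariant Borel-Moore homology of the zero fibre, equivalently by $H^0_G$ of the quiver variety itself. Crawley-Boevey's theorem asserts that $\frakM(\Lambda,\alpha)$ is connected whenever it is non-empty, which in the symmetric case is equivalent to $V(\Lambda)_\lambda \neq 0$; hence this degree-zero piece is one-dimensional, yielding the conjecture.

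The main obstacle is that no uniform approach is in sight for general symmetrizable Kac-Moody $\frakg$: the $A$-type argument relies heavily on the classical Hecke algebra block theory, and the geometric argument requires both a quiver model and a connectedness result analogous to Crawley-Boevey's, neither of which is available outside the symmetric case. A purely algebraic proof would, by the Cautis-Lauda criterion recalled in the introduction, upgrade every minimal categorical representation from Rouquier's to Khovanov-Lauda's notion of a $2$-Kac-Moody representation, which is precisely the question left open in this paper.
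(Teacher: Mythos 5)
Your geometric prong is in the same spirit as the paper's argument (Remark \ref{rem:4.6}$(a)$) but takes a noticeably different route, and your reduction step has a real gap.

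First, the reduction. You claim that $\Z(R^\Lambda(\alpha))^0\simeq\k$ is \emph{equivalent} to the $\k$-algebra $R^\Lambda(\alpha)$ being indecomposable. One direction is fine: since $\Z(R^\Lambda(\alpha))$ is non-negatively graded by Corollary \ref{cor:grading-center}, a standard induction on degree shows that every idempotent of $\Z(R^\Lambda(\alpha))$ is homogeneous of degree $0$, so $\Z^0=\k$ forces a single block. But the converse is not automatic: indecomposability only makes $\Z^0$ a local $\k$-algebra with residue field $\k$, and nothing you have said rules out a non-trivial nilpotent radical in $\Z^0$ (compare $\k[x]/(x^2)$, which is local but $2$-dimensional in degree $0$). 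The non-degenerate pairing $\Z^0\times\Tr(R^\Lambda(\alpha))^{d_{\Lambda,\alpha}}\to\k$ only gives $\dim\Z^0=\dim\Tr^{d_{\Lambda,\alpha}}$; it does not by itself collapse $\Z^0$ to $\k$. As a consequence your type~$A$ prong is incomplete: Brundan--Kleshchev together with the block classifications of Brundan and Lyle--Mathas indeed give indecomposability, but that falls short of $\Z^0\simeq\k$ without the extra step. (Also, your preliminary claim that the whole algebra $R^\Lambda(\alpha)$ is non-negatively graded is false --- the $\tau_k$ have negative degrees in general; only the center is, and that is what matters.)

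Your geometric prong does reach the statement, and for the same underlying reason as the paper --- connectedness of the quiver variety (Crawley-Boevey), hence of its Lagrangian subvariety $\frakL(\Lambda,\alpha)$. But the route differs. The paper does not invoke a Varagnolo--Vasserot convolution realization of $R^\Lambda(\alpha)$ and does not directly identify $\Z^0$ with $H^0_{G_\Lambda}(\frakM(\Lambda,\alpha),\k)$; it proves Theorem \ref{thm:4.5}, an isomorphism of $\bbZ$-graded $L\frakg_\bfk$-modules $\Tr(R^\Lambda)\simeq\bfk\otimes_\bfku H_{[*]}^{G_\Lambda}(\frakL(\Lambda),\k)$, built from the Weyl-module identification (Theorem \ref{cor:weyl}) and Nakajima's realization of Weyl modules. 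Setting $\bfk=\k$ and reading off the top degree gives $\Tr(R^\Lambda(\alpha))^{d_{\Lambda,\alpha}}\simeq H_0(\frakL(\Lambda,\alpha),\k)\simeq\k$, and only then does the pairing of Proposition \ref{prop:symmetrizing-form} yield $\Z^0\simeq\k$. The direct identification of $\Z^0$ with the degree-zero equivariant cohomology of the quiver variety that you posit would be a theorem in its own right and is not what Varagnolo--Vasserot prove, which concerns the non-cyclotomic KLR algebras. In short: same geometric input, different and less-justified path to it, plus a genuine gap in the algebraic reduction.
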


\smallskip

If $\frakg$ is symmetric of finite type then the conjecture holds, see Remark \ref{rem:4.6} below.

\medskip

\subsubsection{The cocenter is a cyclic module}

Consider a categorical representation of $\frakg_\bfk$ on a 
$\bbZ$-graded $\k$-category $\calC.$ 
Then, the $\bfk$-linear operator $x^\pm_{ir}$ on $\Tr(\calC/\bbZ)$
is well-defined for each $i\in I$, $r\in\bbN$.
Let $h:\Tr(\calC^\hw/\bbZ)\to\Tr(\calC/\bbZ)$
be the trace of the canonical embedding $\calC^\hw/\bbZ\subset\calC/\bbZ.$
Let $\Tr(\calC/\bbZ)^{\cyc}\subset\Tr(\calC/\bbZ)$
be the $\bfk$-submodule generated by the image of $h$
under the action of all operators $x^\pm_{ir}$.

\smallskip
 
\begin{proposition} \label{prop:cyclic}
If $\calC=\calV^\Lambda$ then we have $\Tr(\calC/\bbZ)=\Tr(\calC/\bbZ)^{\cyc}$.
\end{proposition}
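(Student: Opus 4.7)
My plan is induction on $\alpha\in Q_+$, combining the isotypic filtration of Section \ref{sec:loop-iso} with the $\fraksl_2$ cyclicity of Proposition \ref{prop:sl2}(a). Write $T:=\Tr(\calV^\Lambda/\bbZ)^{\cyc}$. The base case $\alpha=0$ is immediate: since $\calV^\Lambda_{\Lambda+\alpha_i}=0$ for every $i$, the functors $E_i$ vanish on $\calV^\Lambda_\Lambda$, hence $\calV^\Lambda_\Lambda\subseteq\calV^{\Lambda,\hw}$ and $\Tr(R^\Lambda(0))\subseteq T$.

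For the inductive step at $\alpha>0$ with $\lambda=\Lambda-\alpha$, I would first use the Krull--Schmidt decomposition
$$\Tr(R^\Lambda(\alpha))=\bigoplus_{b\in B(\Lambda)_\lambda}\Tr(\End(P_b)),$$
where the indecomposable projectives $P_b$ of $\calV^\Lambda_\lambda$ are indexed by the crystal $B(\Lambda)_\lambda$, and reduce to showing $\Tr(\End(P_b))\subseteq T$ for each $b$. Since $\lambda\neq\Lambda$ we have $b\neq b_\Lambda$, so by crystal connectivity there is some $i=i(b)\in I$ with $\epsilon_i(b)>0$, and then $P_b$ sits at level $k_i(b):=\lambda_i+2\epsilon_i(b)>\lambda_i$ of the $i$-th isotypic filtration of Proposition \ref{prop:isotypic}. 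Fix this $i$ and set $K:=\max_{b'\in B(\Lambda)_\lambda}k_i(b')$. I would prove by descending sub-induction on $K-k_i(b^*)\geqs 0$ that $\Tr(\End(P_{b^*}))\subseteq T$ for every $b^*$ with $k_i(b^*)>\lambda_i$: for $v\in\Tr(\End(P_{b^*}))$, the surjection \eqref{quotient} together with the relative form of Proposition \ref{prop:sl2}(a) applied to $\calC_{k_i(b^*)}=(\calV^{k_i(b^*),[d_i]}\otimes_{\Z^{k_i(b^*),[d_i]}}\calM_{k_i(b^*)})^c$ writes the image of $v$ in $\Tr(R^\Lambda(\alpha))^{(i)}_{k_i(b^*)}$ as $\sum_j x^-_{i,r_j}(\bar w_j)$ with each $\bar w_j$ coming from $\Tr(\calM_{k_i(b^*),\mu}^c)$ for $\mu=\lambda+\epsilon_i(b^*)\alpha_i$; hence $\alpha_\mu=\alpha-\epsilon_i(b^*)\alpha_i<\alpha$, and by the outer induction hypothesis the lifts $w_j\in\Tr(R^\Lambda(\alpha_\mu))$ lie in $T$. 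Proposition \ref{lem:functoriality} then yields $\sum_j x^-_{i,r_j}(w_j)\in T$, while the remainder $v-\sum_j x^-_{i,r_j}(w_j)\in\Tr(R^\Lambda(\alpha))^{(i)}_{>k_i(b^*)}=\bigoplus_{b':\,k_i(b')>k_i(b^*)}\Tr(\End(P_{b'}))$ vanishes at the sub-inductive base $k_i(b^*)=K$ and lies in $T$ otherwise (noting $k_i(b')>k_i(b^*)>\lambda_i$, so the sub-inductive hypothesis applies). Specialising the sub-claim to $b^*=b$ gives $\Tr(\End(P_b))\subseteq T$ for every $b$, completing the step.

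The hard part will be the relative form of Proposition \ref{prop:sl2}(a) used above, namely that $\Tr(\calC_k)$ is generated as an $L\frake^{[d_i]}_{\Z^k}$-module by $\Tr(\calM_k^c)$. Using the explicit description \eqref{trck} of $\Tr(\calC_k)$ as $\bigoplus_n\Tr(\Hu^{k,[d_i]}_n)\otimes_{\Z^{k,[d_i]}}\Tr(B_k)$ together with the identity $x^-_{i,\lambda_1}\cdots x^-_{i,\lambda_n}(f)=f\cdot h_\lambda(y_1,\ldots,y_n)$ that emerges in the proof of Proposition \ref{prop:sl2}(a), this reduces directly to the spanning statement $\sum_{\lambda\in\Lambda^+(n)}x^-_{i\lambda}(\Z^k)=\Z(\Hu^k_n)$ already established there, tensored over $\Z^{k,[d_i]}$ with $\Tr(B_k)$. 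A secondary subtlety is that the choice of $i=i(b)$ varies with $b$, and this is precisely why the Krull--Schmidt decomposition must be taken before any isotypic reduction is attempted: it allows a separate choice of $i(b)$ and hence a separate sub-induction for each indecomposable block.
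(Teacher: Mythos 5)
Your argument matches the paper's proof in all essential respects: both reduce to a single indecomposable $P$ (your Krull--Schmidt/crystal bookkeeping makes this explicit), both choose $i$ with $\epsilon_i(\top(P))>0$, and both run an outer induction on $\alpha$ together with a descending inner induction on the $i$-th isotypic filtration level, using the $\fraksl_2$ cyclicity of the subquotient (Lemma \ref{cyc2}) to reduce to a lift at smaller $\alpha$ and a remainder at strictly higher filtration level. The proposal is correct and takes essentially the same approach as the paper.
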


\smallskip

The $i$-th isotypic filtration of $\calC$ 
yields a categorical $\frake_{\Z^k}$-representation $\calC_k$ for each $k\geqslant 0$.
Let us quote the following consequence of \eqref{trck} and Proposition \ref{prop:sl2}.

\smallskip

\begin{lemma}\label{cyc2}  
For all $i\in I$, $k\geqslant 0$ we have $\Tr(\calC_k/\bbZ)=\Tr(\calC_k/\bbZ)^{\cyc}$.
\end{lemma}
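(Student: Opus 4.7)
The plan is to reduce directly to Proposition \ref{prop:sl2}(a) via the structural description of $\calC_k$ set up in Section \ref{sec:loop-iso}. First I would note that, by the paragraph following \eqref{iso}, the highest-weight subcategory $\calC_k^\hw$ coincides with $\calM_k^c$. Writing $\calM_{k,\mu}^c = B_{k,\mu}\grproj$ as in Section \ref{sec:loop-iso}, I get $\Tr(\calC_k^\hw/\bbZ) = \Tr(B_k)$, and under the isomorphism \eqref{trck} this identifies with the $n=0$ summand $\Tr(\Hu^{k,[d_i]}_0)\otimes_{\Z^{k,[d_i]}}\Tr(B_k) = \Z^{k,[d_i]}\otimes_{\Z^{k,[d_i]}}\Tr(B_k)$.

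Next I would invoke the compatibility noted at the end of Section \ref{sec:loop-iso}: under \eqref{trck} the operators $x^\pm_{i,r}$ on $\Tr(\calC_k/\bbZ)$ act only on the first tensor factor $\Tr(\Hu^{k,[d_i]}_n)$, by the explicit formulas of Section \ref{sec:sl2}. This is really a manifestation of Proposition \ref{lem:functoriality} applied to the projection $\calV^{k,[d_i]}\otimes_{\Z^{k,[d_i]}}\calM_k \to \calV^{k,[d_i]}$ onto the $\frake$-factor, combined with the fact that the $\frake$-action on $\calC_k$ lives entirely on the $\calV^{k,[d_i]}$-factor by construction.

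Now the claim reduces to showing that $\bigoplus_n \Tr(\Hu^{k,[d_i]}_n)\otimes_{\Z^{k,[d_i]}}\Tr(B_k)$ is generated as an $L\frake_{\Z^k}^{[d_i]}$-module by its $n=0$ summand. Since the $L\frake$-action is through the first factor and $\Tr(B_k)$ is just a module of coefficients, this in turn reduces to the statement that $\bigoplus_n \Tr(\Hu^{k,[d_i]}_n)$ is generated over $L\frake_{\Z^k}^{[d_i]}$ by $\Tr(\Hu^{k,[d_i]}_0) = \Z^{k,[d_i]}$. But this is exactly Proposition \ref{prop:sl2}(a), whose proof comes down to the explicit surjectivity identity \eqref{surjectivity} expressing each complete symmetric polynomial as $x^-_{i,\lambda}(1)$.

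There is no real obstacle: the machinery of Section \ref{sec:loop-iso} and Proposition \ref{prop:sl2} was designed precisely for this reduction. The one point that requires a moment of care is verifying that the identification \eqref{trck} intertwines the abstract $L\frake$-action on $\Tr(\calC_k/\bbZ)$ (coming from Theorem \ref{thm:action} applied to $\calC_k$, viewed as an $\frake_i$-representation of degree $d_i$) with the first-factor action used in Proposition \ref{prop:sl2}, but this follows from the functoriality of Proposition \ref{lem:functoriality} together with the definition of the $\frake_i$-action on the tensor product $\calV^{k,[d_i]}\otimes_{\Z^{k,[d_i]}}\calM_k$.
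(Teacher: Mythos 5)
Your argument is correct and follows the same route the paper intends: the paper itself offers no proof beyond citing \eqref{trck} and Proposition \ref{prop:sl2}, and what you have written is exactly the expansion of that citation — identify $\Tr(\calC_k^\hw/\bbZ)$ with the $n=0$ summand of $\bigoplus_n\Tr(\Hu^{k,[d_i]}_n)\otimes_{\Z^{k,[d_i]}}\Tr(B_k)$, observe that the loop operators act through the $\Tr(\Hu^{k,[d_i]}_n)$ factor, and conclude by Proposition \ref{prop:sl2}(a). One small remark: your appeal to Proposition \ref{lem:functoriality} via a ``projection onto the $\frake$-factor'' is not quite the right mechanism (there is no such morphism of categorical representations in general); the fact that the loop operators act on the first tensor factor is simply part of the definition of the $\frake$-action on $\calV^{k,[d_i]}\otimes_{\Z^{k,[d_i]}}\calM_k$ given just before Proposition \ref{prop:isotypic}, together with the explicit formulas of Section \ref{sec:sl2} — which you do also invoke, so the conclusion stands.
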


\smallskip
 
Now, we can prove Proposition \ref{prop:cyclic}. 
 
\smallskip

\begin{proof} 
Fix $\alpha\in Q_+$ and set $\lambda=\Lambda-\alpha$.
Fix an indecomposable object $P\in \calC^\Lambda_{\lambda}$ and an element
$v_P\in\Tr(\End_{\calV^\Lambda/\bbZ}(P))$.
Let $v$ be the image of $v_P$ in $\Tr(R^\Lambda(\alpha)).$
We must prove that $v\in\Tr(\calC_\lambda/\bbZ)^{\cyc}.$
Since $\calC=\calV^\Lambda$, we have $\Tr(\calC_\lambda/\bbZ)=\Tr(R^\Lambda(\alpha))$. 
Since  $\Tr(R^\Lambda(0))\subseteq \Tr(R^\Lambda)^{\cyc},$ we may assume that $\alpha\neq 0$.
Let $i$, $m$, $k_0$ be as in the proof of Theorem \ref{thm:grading-trace}.
The proof is an induction on $\alpha$ and a descending induction on $m$.

We have $v\in\Tr(R^\Lambda(\alpha))_{\geqslant k_0}$.
We'll assume that 

\begin{itemize}

\item  $\deg(\Tr(R^\Lambda(\beta))\subset\Tr(R^\Lambda)^{\cyc}$ for each 
$\beta\in Q_+$ such that $\beta<\alpha,$

\smallskip

\item $\Tr(R^\Lambda(\alpha))_{>k_0}\subset\Tr(R^\Lambda)^{\cyc}.$

\end{itemize}

\smallskip

\noindent We must check that
$\Tr(R^\Lambda(\alpha)_{\geqslant k_0}\subset\Tr(R^\Lambda)^{\cyc}$.

Fix $\beta$ as above
and fix an element $x\in U(L\frake_\bfk)$ of weight $\alpha-\beta$.
Consider the following commutative diagram whose rows are
exact sequences of $L\frake_\bfk$-modules
\begin{equation}\label{diag}
\xymatrix{
0\ar[r]&\Tr(R^\Lambda(\alpha))_{>k_0}\ar[r]^-{f}&
\Tr(R^\Lambda(\alpha))_{\geqslant k_0}\ar[r] ^-{g}&
\Tr(R^\Lambda(\alpha))_{k_0}\ar[r]&0\\
&& \Tr(R^\Lambda(\beta))_{\geqslant k_0}\ar[r]^-{g}\ar[u]^-{x}&
\Tr(R^\Lambda(\beta))_{k_0}\ar[r]\ar[u]^-{x}&0.
}
\end{equation}

Now, set $\bar v=g(v)\in\Tr(R^\Lambda(\alpha))_{k_0}$. 
By definition of $\Tr(R^\Lambda)_{k_0},$
there is a surjective $U(L\frake_\bfk)$-module homomorphism
$\Tr(\calV^\Lambda_{k_0}/\bbZ)\to\Tr(R^\Lambda)_{k_0}.$
Hence, by Lemma \ref{cyc2}, we can choose $\beta$, $x$
such that there is an element $\bar u\in \Tr(R^\Lambda(\beta))_{k_0}$ with $\bar v=x\cdot \bar u.$ 
Then, fix $u\in \Tr(R^\Lambda(\beta))_{\geqslant k_0}$ such that $g(u)=\bar u$.
Then, we have $v-x\cdot u=f(v')$ for some $v'\in \Tr(R^\Lambda(\alpha))_{>k_0}$. 

Finally, we can apply both recursive hypotheses. We deduce that
$u\in\Tr(R^\Lambda(\beta))\subset\Tr(R^\Lambda)^{\cyc}$ and
$v'\in\Tr(R^\Lambda(\alpha))_{>k_0}\subset\Tr(R^\Lambda)^{\cyc}$.
Hence, we have
$v=v-x\cdot u+x\cdot u=f(v')+x\cdot u\in \Tr(R^\Lambda)^{\cyc}.$

\end{proof}

\medskip

\subsection{The symmetric case}

Let $\bfk$ be a commutative $\bbN$-graded ring as in Section \ref{sec:QH}.
Let $\frakg$ be the symmetrizable Kac-Moody algebra over $\k$ associated with the Cartan datum
$(P,P^\vee,\Phi,\Phi^\vee)$.

\subsubsection{Weyl modules}\label{sec:Weyl}
Assume that $\frakg$ is of finite type. 
Fix a dominant weight $\Lambda$.
The \emph{local Weyl module} $\W(\Lambda)$ (over $\k$) is the $\bbZ$-graded $L\frakg$-module
generated by a nonzero element $|\Lambda\rangle$ of degree 0 with the following defining relations
\begin{itemize}
\item $\frakn^+[t]\cdot |\Lambda\rangle=0$,
\item $(f_i)^{\Lambda_i+1}\cdot |\Lambda\rangle=0$,
\item $h\cdot |\Lambda\rangle=\langle h,\Lambda\rangle\,|\Lambda\rangle$ for all $h\in\frakh$,
\item $t\frakh[t]\cdot |\Lambda\rangle=0$.
\end{itemize}

The \emph{global Weyl module} $\Wu(\Lambda)$  is the $\bbZ$-graded $L\frakg$-module
generated by a nonzero element $|\Lambda\rangle$ satisfying the first three relations above. 
Consider the formal series
$\Psi_i(z)=\sum_{r\geqslant 0}\Psi_{ir}\,z^r$, $i\in I$,
given by
$\Psi_i(z)=\exp\big(-\sum_{r\geqslant 1}h_{ir}\,z^r/r\big)$.
Then, there is a unique $\bfku$-module structure on $\Wu(\Lambda)$ such that the representation
of $L\frakg$ is $\bfku$-linear and we have
$\Psi_{ip}\cdot |\Lambda\rangle=c_{ip}\,|\Lambda\rangle$ for each $(i,p)\in I\times\bbN$.
For any $\k$-algebra homomorphism $\bfku\to\bfk$ 
we set $\W(\Lambda,\bfk)=\bfk\otimes_{\bfku}\Wu(\Lambda)$.

The Weyl modules are \emph{universal} in the following sense.
Let $M$ be a $\bbZ$-graded integrable $L\frakg_\bfk$-module
containing an element $m$ of weight $\Lambda$ 
which is annihilated by $\frakn^+[t]$. 
Then there is a unique 
$\bfku$-algebra structure on $\bfk$ and a unique 
$\bbZ$-graded $L\frakg_\bfk$-module homomorphism
$\W(\Lambda,\bfk)\to M$ such that $|\Lambda\rangle\mapsto m$.

Let $\Lambda_\min$ be the unique minimal element in the poset $\{\lambda\in P_+\,;\,\lambda\leqslant\Lambda\}.$
Let $\bfhu'$ be as in Section \ref{sec:factorization}. 
The following is well-known, see \cite{N10}, \cite[thm.~1.1]{KN12}.

\smallskip

\begin{proposition}\label{prop:weyl}
(a)
$\dim_\k(\W(\Lambda))<\infty$.

(b)
$\Wu(\Lambda)$ is a free $\bfku$-module of finite rank.

(c)
$\top(\W(\Lambda))=\W(\Lambda)^0\simeq V(\Lambda)$ as a $\bbZ$-graded $\frakg$-module.

(d) if
$\frakg$ is symmetric, then $$\soc(\W(\Lambda))=\W(\Lambda)^{d_{\Lambda,\Lambda_\min}}\simeq V(\Lambda_\min)[-d_{\Lambda,\Lambda_\min}]$$
as a $\bbZ$-graded $\frakg$-module.

(e)
$\W(\Lambda,\k)\simto\W(\Lambda)$ as a $\bbZ$-graded $L\frakg$-module.

(f) 
$\Lambda=\omega_i$ $\Rightarrow
\Wu(\Lambda)\simeq\bfku\otimes_\k\W(\Lambda)$ 
a $\bbZ$-graded $L\frakg_\bfku$-module.

(g)
$\Wu(\Lambda,\bfhu')\simeq\bfhu'\otimes_\bfhu\bigotimes_{i}\Wu(\omega_i)^{\otimes\Lambda_i}$
as $L\frakg_{\bfhu'}$-modules
such that $|\Lambda\rangle\mapsto 1\otimes\bigotimes_i(w_{\omega_i})^{\otimes\Lambda_i}$.

\qed
\end{proposition}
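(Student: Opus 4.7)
The plan is to deduce all seven parts from the well-established theory of local and global Weyl modules for current and loop algebras developed by Chari--Pressley, Chari--Loktev, Fourier--Littelmann, and Naoi (see \cite{N10}, \cite{KN12}), combined with the universal property built into the definitions of $\W(\Lambda)$ and $\Wu(\Lambda)$.

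Part (c) follows directly from the universal property. The defining relations of $\W(\Lambda)$ force the cyclic $\frakg$-submodule $U(\frakg)\cdot|\Lambda\rangle \subseteq \W(\Lambda)^0$ to be an integrable highest weight module, hence a quotient of $V(\Lambda)$. Conversely, $V(\Lambda)$, viewed as an $L\frakg$-module via the evaluation $x_{ir}^\pm, h_{ir} \mapsto 0$ for $r > 0$, satisfies all defining relations of $\W(\Lambda)$, yielding a surjection $\W(\Lambda) \to V(\Lambda)$ whose restriction to degree $0$ is the inverse. Since $|\Lambda\rangle$ generates $\W(\Lambda)$ in degree zero and the grading is non-negative, $\rad(\W(\Lambda))$ coincides with the strictly positive graded part, giving the claim about the top.

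For parts (a), (b), (e), (f), the plan is as follows. First, establish the upper bound $\dim_\k \W(\Lambda) \leqslant \prod_i (\dim \W(\omega_i))^{\Lambda_i}$ via a PBW-type argument using Garland identities that reduce the claim to the $\fraks\frakl_2$-case. Naoi's existence result then exhibits $\Wu(\Lambda)$ as a free $\bfku$-module whose rank attains this bound, giving (b); specializing along $\bfku \to \k$ gives (a); the universal surjection $\W(\Lambda,\k) \twoheadrightarrow \W(\Lambda)$ becomes an isomorphism by dimension count, giving (e); and (f) is the special case of a fundamental weight, where the rank is visibly $\dim \W(\omega_i)$. Part (g) uses the standard coproduct on $U(L\frakg_{\bfhu})$ to produce a canonical map $\bfhu' \otimes_{\bfhu} \Wu(\Lambda) \to \bfhu' \otimes_{\bfhu} \bigotimes_i \Wu(\omega_i)^{\otimes \Lambda_i}$ sending $1\otimes|\Lambda\rangle$ to $1\otimes\bigotimes_i (w_{\omega_i})^{\otimes\Lambda_i}$; generic semisimplicity over $\bfhu'$ together with a character comparison (both sides have the same $\bfhu'$-rank by (b) applied separately to each factor) show this map is an isomorphism.

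The main obstacle is part (d). The strategy is to invoke Naoi's fusion product decomposition of $\W(\Lambda)$ as a graded current algebra module \cite{N10}, which identifies $\W(\Lambda)$ with a tensor product of Demazure-type modules and allows one to read off its socle. One shows that $\W(\Lambda)$ contains a unique irreducible $\frakg$-submodule, concentrated in the top graded degree, isomorphic to $V(\Lambda_\min)$; its placement in degree $d_{\Lambda,\Lambda_\min} = (\Lambda|\Lambda) - (\Lambda_\min|\Lambda_\min)$ follows from tracking the grading through the fusion product, since $\Lambda_\min$ is the extremal weight reached by the longest chain of $f$-actions inside $\W(\Lambda)$. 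Both the uniqueness of this submodule and the determination of its degree rely essentially on the symmetric hypothesis, which guarantees the existence of a contravariant bilinear form on $\W(\Lambda)$ pairing the top with the socle compatibly with the grading shift by $d_{\Lambda,\Lambda_\min}$.
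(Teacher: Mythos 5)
Your proposal takes the same approach as the paper, which states Proposition~\ref{prop:weyl} as well-known and simply cites \cite{N10} and \cite[thm.~1.1]{KN12} in lieu of a proof; your sketch correctly reconstructs the arguments contained in those references (universal property for~(c), dimension and freeness for~(a),(b),(e),(f), a generic coproduct/character comparison for~(g), and the Loewy-series analysis for~(d)). One small clarification on~(d): the symmetric hypothesis is used there because the socle identification and degree computation in \cite{KN12} go through the geometry of quiver varieties, which are only available in the symmetric case, rather than because it is needed to produce a contravariant form (contravariant forms exist for any symmetrizable Kac--Moody algebra).
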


\smallskip

Now, assume that $\frakg$ is symmetric and \eqref{Q} is satisfied.
We consider the $\bbZ$-graded representation of
$L\frakg_\bfk$ on $\Tr(R^\Lambda)$ in Theorem \ref{thm:action}.

\smallskip

\begin{theorem}\label{cor:weyl}
If $\frakg$ is symmetric of finite type and \eqref{Q} is satisfied,
then there is a unique $\bbZ$-graded $L\frakg_\bfk$-module isomorphism
$\W(\Lambda,\bfk)\simto\Tr(R^\Lambda)$
such that $|\Lambda\rangle\mapsto|\Lambda\rangle$.
\end{theorem}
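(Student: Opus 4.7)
The plan is to construct a surjective $L\frakg_\bfk$-module homomorphism $\phi:\W(\Lambda,\bfk)\to\Tr(R^\Lambda)$ via the universal property of the global Weyl module, and then to prove injectivity by reducing to fundamental weights and invoking Nakajima's geometric realization of Weyl modules.

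For the construction, set $|\Lambda\rangle:=\Tr(1)\in\Tr(R^\Lambda(0))=\bfk$. By Theorem \ref{thm:action} this vector sits in an integrable $\bbZ$-graded $L\frakg_\bfk$-module; it has weight $\Lambda$ and is annihilated by every $x^+_{ir}$ since $\Tr(R^\Lambda(-\alpha_i))=0$, so $\frakn^+[t]\cdot|\Lambda\rangle=0$. Integrability of the minimal categorical representation (coming from $R^\Lambda(\alpha)=0$ whenever $\Lambda-\alpha$ fails to be a weight of $V(\Lambda)$) yields $(x^-_{i0})^{\Lambda_i+1}|\Lambda\rangle=0$. A direct calculation using Proposition \ref{prop:loop-minimal} together with the presentation $R^\Lambda(\alpha_i)\cong\bfk[x_1]/(a_i^\Lambda(x_1))$ identifies the eigenvalues of $\Psi_{ip}$ on $|\Lambda\rangle$ with the coefficients $c_{ip}$, so the $\bfku$-algebra structure on $\bfk$ forced by the universal property coincides with the given one. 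This yields the desired map $\phi$.

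Surjectivity is immediate from Proposition \ref{prop:cyclic}: $\Tr(R^\Lambda)$ is generated under all loop operators $x^\pm_{ir}$ by the image of $\Tr((\calV^\Lambda)^\hw/\bbZ)$, and the decomposition $1=\sum_i e(\alpha-\alpha_i,i)$ in $R^\Lambda(\alpha)$ for $\alpha\neq 0$ shows that $(\calV^\Lambda)^\hw=\calV^\Lambda_\Lambda=\bfk\proj$, whose trace is the cyclic $\bfk$-module generated by $|\Lambda\rangle$. Combined with $\frakn^+[t]\cdot|\Lambda\rangle=0$ and a PBW-type argument, this means $U(L\frakg_\bfk)\cdot|\Lambda\rangle$ already exhausts $\Tr(R^\Lambda)$.

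The heart of the argument is injectivity. I would first reduce to fundamental weights: by Theorem \ref{thm:factorization}, base change to the fraction field $\bfhu'$ gives a matrix decomposition of $R^\Lambda(n;\bfhu')$; combining Morita invariance of the cocenter with Proposition \ref{prop:trace}(d) yields an isomorphism $\bfhu'\otimes_\bfku\Tr(R^\Lambda)\simeq\bigotimes_{t\in I_\Lambda}\bfhu'\otimes_\bfhu\Tr(R^{\omega_{i_t}})$, matching the parallel factorization $\W(\Lambda,\bfhu')\simeq\bigotimes_t\W(\omega_{i_t},\bfhu')$ of Proposition \ref{prop:weyl}(g). The map $\phi$ respects these decompositions, so injectivity over $\bfhu'$ reduces to the fundamental weight case, and Example \ref{ex:base-change}(c) together with Proposition \ref{prop:weyl}(f) further reduces the problem to $\Lambda=\omega_i$, $\bfk=\k$. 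This last case is the main obstacle: proving that $\phi:\W(\omega_i)\to\Tr(R^{\omega_i})$ is an isomorphism requires identifying $\Tr(R^{\omega_i})$ with Nakajima's equivariant Borel-Moore homology of the quiver variety $\frakM(\omega_i)$, by matching the categorical $\frakg$-actions on both sides (the minimal categorical representation on $\calV^{\omega_i}$ on one side and convolution on quiver varieties on the other), which forces the two graded $L\frakg$-characters to agree. Once injectivity holds for fundamental weights, the freeness of $\Wu(\Lambda)$ over $\bfku$ (Proposition \ref{prop:weyl}(b)) propagates it back from $\bfhu'$ to an arbitrary $\bbN$-graded $\bfku$-algebra $\bfk$.
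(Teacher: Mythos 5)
The construction of $\phi$ via the universal property, the computation of the weight of $|\Lambda\rangle$, and the surjectivity argument via Proposition \ref{prop:cyclic} and the identity $1=\sum_i e(\alpha-\alpha_i,i)$ all match the paper. The factorization over $\bfhu'$ to reduce to fundamental weights, and the propagation from $\bfhu'$-injectivity back to $\bfku$-injectivity (using that $\ker\underline\phi^\Lambda$ is a torsion submodule of the free module $\Wu(\Lambda)$, hence zero), are also essentially what the paper does, with your torsion-free observation being a cleaner packaging of the paper's closing lemma.

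The genuine gap is the base case. You propose to prove injectivity for $\Lambda=\omega_i$, $\bfk=\k$ by ``identifying $\Tr(R^{\omega_i})$ with Nakajima's equivariant Borel-Moore homology of the quiver variety $\frakM(\omega_i)$, by matching the categorical $\frakg$-actions on both sides.'' This is circular relative to the paper's logic: the identification $\Tr(R^\Lambda)\simeq\bfk\otimes_\bfku H^{G_\Lambda}_{[*]}(\frakL(\Lambda),\k)$ is Theorem \ref{thm:4.5}, and the paper proves it \emph{from} Theorem \ref{cor:weyl} together with Nakajima's result $\Wu(\Lambda)\simeq H^{G_\Lambda}_{[*]}(\frakL(\Lambda),\k)$. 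There is no categorical $\frakg$-action on the quiver variety side in the paper to ``match'' against the minimal categorical representation; the geometric side carries only an $L\frakg$-action on homology via convolution. Establishing such a matching would itself require a substantial geometric categorification argument, not present here and not needed.

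The paper's actual argument for injectivity of $\phi^\Lambda:\W(\Lambda)\to\Tr(R^\Lambda(\k))$ is purely algebraic and works uniformly for all $\Lambda$, not just fundamental ones. By Proposition \ref{prop:weyl}(d), $\soc(\W(\Lambda))\simeq V(\Lambda_\min)[-d_{\Lambda,\Lambda_\min}]$ is the entire top graded piece of $\W(\Lambda)$ and is simple. Since $\phi^\Lambda$ is graded and surjective, and $\W(\Lambda)$ vanishes above degree $d_{\Lambda,\Lambda_\min}$, the degree-$d_{\Lambda,\Lambda_\min}$ part of $\Tr(R^\Lambda(\k))$ lies in the image of the socle. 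Theorem \ref{thm:grading-trace}(b), applied at $\alpha=\Lambda-\Lambda_\min$ (where $V(\Lambda)_{\Lambda_\min}\neq 0$), guarantees this degree is nonzero. Thus $\phi^\Lambda$ restricted to the simple socle is nonzero, hence injective; since $\W(\Lambda)$ is finite-dimensional, every nonzero submodule meets the socle, so $\ker\phi^\Lambda=0$. You should replace your geometric step with this socle argument; once you do, the rest of your outline (factorization over $\bfhu'$, torsion-freeness) closes the proof.
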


\begin{proof}
From Proposition \ref{prop:cyclic}, we deduce that the element
$|\Lambda\rangle=\Tr(1)$ of $\Tr(R^\Lambda(0))$ is a generator of the $L\frakg_\bfk$-module
$\Tr(R^\Lambda)$.
Thus, it is enough to prove that there is a $\bbZ$-graded $L\frakg_\bfku$-module isomorphism 
$\underline \W(\Lambda)\simto\Tr(\Ru^\Lambda)$
such that $|\Lambda\rangle\mapsto|\Lambda\rangle$.
To do this, note that, since $\Wu(\Lambda)$ is universal, there is a unique 
$\bbZ$-graded $L\frakg_\bfku$-module homomorphism
\begin{equation}\label{phi}\underline\phi^\Lambda:\Wu(\Lambda)\to\Tr(\Ru^\Lambda),\quad|\Lambda\rangle\mapsto |\Lambda\rangle.\end{equation}
By Proposition \ref{prop:cyclic}, we deduce that $\underline\phi^\Lambda$ is onto.

First, we consider the map 
$\phi^\Lambda:\W(\Lambda)\to\Tr(R^\Lambda(\k))$
given by $\phi^\Lambda=1\otimes\underline\phi^\Lambda.$ 
Since $\underline\phi^\Lambda$ is surjective, the map $\phi^\Lambda$ is also surjective.
To prove that it is injective, we must check that $\phi^\Lambda(\soc(\W(\Lambda)))\neq 0$.
Since $\phi^\Lambda$ is surjective and
$\soc(\W(\Lambda))\simeq V(\Lambda_\min)[-d_{\Lambda,\Lambda_\min}]$ as a 
$\bbZ$-graded $L\frakg$-module,
it is enough to prove that $\Tr(R^\Lambda(\k))^{d_{\Lambda,\Lambda_\min}}\neq 0.$
The weight subspace $V(\Lambda)_{\Lambda_\min}$ is non-zero.
Thus the injectivity of $\phi^\Lambda$ follows from Theorem \ref{thm:grading-trace}$(b)$.

Now, we prove that $\underline\phi^\Lambda$ is injective.
To do so, since $\Wu(\Lambda)$ is a free $\bfku$-module and since $\phi^\Lambda$ is invertible, it is enough to check that 
$\Tr(\Ru^\Lambda)$ is free as a $\bfku$-module.
To do so, note that by Theorem \ref{thm:factorization} and
Example \ref{ex:base-change}$(c),$ 
the $\bfhu'$-algebras $R^\Lambda(\bfhu')$
and $\bfhu'\otimes_\k\bigotimes_{i}R^{\omega_i}(\k)^{\otimes\Lambda_i}$
are Morita equivalent. We deduce that there is an $\bfhu'$-linear isomorphism
\begin{equation}\label{A}
\Tr(R^\Lambda(\bfhu'))\to
\bfhu'\otimes_\k\bigotimes_i\Tr(R^{\omega_i}(\k))^{\otimes\Lambda_i},
\quad
|\Lambda\rangle\mapsto 1\otimes\bigotimes_i|\omega_i\rangle^{\otimes\Lambda_i}.
\end{equation}
Further, by Theorem \ref{thm:factorization}
the map $\eqref{A}$ is $L\frakg_{\bfhu'}$-linear.
Next, by Proposition \ref{prop:weyl}$(g)$ we have an
$L\frakg_{\bfhu'}$-module isomorphism
\begin{equation}\label{B}
\W(\Lambda,\bfhu')\to\bfhu'\otimes_\k\bigotimes_{i}\W(\omega_i)^{\otimes\Lambda_i},\quad 
|\Lambda\rangle\mapsto 1\otimes\bigotimes_i|\omega_i\rangle^{\otimes\Lambda_i}.
\end{equation}
Since the maps \eqref{A}, \eqref{B} are $L\frakg_{\bfhu'}$-linear, from the unicity of the morphism $\underline\phi^\Lambda$ in \eqref{phi}
we deduce that the following square is commutative
$$\xymatrix{\W(\Lambda,\bfhu')\ar[r]^-{\eqref{B}}\ar[d]_-{1\otimes\underline\phi^\Lambda}&\bfhu'\otimes_\k\bigotimes_{i}\W(\omega_i)^{\otimes\Lambda_i}
\ar[d]^-{1\otimes\bigotimes_i(\phi^{\omega_i})^{\otimes\Lambda_i}}\\
\Tr(R^\Lambda(\bfhu'))\ar[r]^-{\eqref{A}}&\bfhu'\otimes_\k\bigotimes_i\Tr(R^{\omega_i}(\k))^{\otimes\Lambda_i}.}$$
Since $\phi^{\omega_i}$ is an isomorphism for each $i$, this implies that the map
$1\otimes\underline\phi^\Lambda$ is also an isomorphism.

Finally, we must check that $\underline\phi^\Lambda$ is an isomorphism.
To do so, note that by construction the map $\underline\phi^\Lambda$
preserves the weight decomposition of $\Wu(\Lambda)$, $\Tr(\Ru^\Lambda).$
Therefore, the claim follows from the following lemma

\smallskip

\begin{lemma} Let $\underline\psi:M\to N$ be a $\bbZ$-graded $\bfku$-module homomorphism
such that $M,N$ are both finitely generated. Assume that the maps 
$1\otimes\underline\psi:\k\otimes_\bfku M\to\k\otimes_\bfku N$ and
$1\otimes\underline\psi:\bfhu'\otimes_\bfku M\to\bfhu'\otimes_\bfku N$ 
are invertible. Then $\psi$ is also invertible.

\qed
\end{lemma}

\end{proof}

\medskip

\subsubsection{Equivariant homology}
For any complex algebraic variety $X$ and any commutative ring $\k$ let 
$H_*(X,\k)$ be the Borel-Moore homology with coefficients in $\k$.
Given an action of a complex linear algebraic group $G$ on $X$,  let $H^G_*(X,\k)$ be the 
$G$-equivariant Borel-Moore homology. 
We'll assume that $X$ admits a locally closed
$G$-equivariant embedding into a smooth projective $G$-variety.
We define it as in \cite[sec.~2.8]{Lu}, but we
assign the degree as in \cite{EG}, so that the fundamental class $[X]$ of $X$ has
degree $2\,\dim X$ if $X$ is pure dimensional. 

Alternatively,  let $D^G(X,\k)$
be the equivariant derived category of
constructible complexes on $X$ with  coefficients in $\k$, see \cite{BL}. Let $\k_X$
and $\k^D_X$ be the constant, and the dualizing sheaf on $X$ . These are objects of $D^G(X,\k)$.
If $M$ is in $D^G(X,\k)$ the $i$-th equivariant cohomology of Y with coefficients in $M$ is by definition
$H^i_G(X,M)=\Ext^i(\k_X,M)$.
In particular, the $G$-equivariant cohomology and Borel-Moore homology of $X$ are defined by
$$H^i_G(X,\k)=H^i_G(X,\k_X),\quad H_i^G(X,\k)=H^{-i}_G(X,\k^D_Y).$$ 
Note that with our conventions, one can have $H_i^G(X,\k)\neq 0$ for $i < 0$.
We'll abbreviate
$$H^i(X,\k)=H^i_{\{1\}}(X,\k),\quad H_i(X,\k)=H_i^{\{1\}}(X,\k).$$ 

The action of the cohomology on the Borel-Moore homology yields a map
$$\cap:H_G^i(X,\k)\otimes_\k H^G_j(X,\k)\to H^G_{j-i}(X,\k).$$
If $X$ is smooth and pure dimensional, the cap product with $[X]$
yields an isomorphism 
$$H_G^i(X,\k)\to H^G_{2\dim X-i}(X,\k).$$

\medskip

\subsubsection{Quiver varieties}
Assume that $\frakg$ is symmetric.
Following Nakajima, to each $\Lambda\in P_+$ and $\alpha\in Q_+$ we associate a quiver variety 
$\frakM(\Lambda,\alpha)$. It is a complex quasi-projective variety equipped with an action of the complex 
algebraic group
$G_\Lambda=\prod_{i\in I}\GL(\Lambda_i)$. Recall that 

\begin{itemize}
\item $\frakM(\Lambda,\alpha)$ is nonsingular, symplectic, possibly empty and is equipped with a 
$G_\Lambda$-equivariant projective morphism to an affine variety
$p:\frakM(\Lambda,\alpha)\to\frakM_0(\Lambda,\alpha),$
\item the variety $\frakM_0(\Lambda,\alpha)$ has a distinguished point denoted by 0 such that
the closed subvariety $\frakL(\Lambda,\alpha)=p^{-1}(0)$ of $\frakM(\Lambda,\alpha)$ is Lagrangian,
\item if $\frakM(\Lambda,\alpha)\neq\emptyset$ then its dimension is $d_{\Lambda,\alpha}$.
\end{itemize}

We put $\frakM(\Lambda)=\bigsqcup_\alpha\frakM(\Lambda,\alpha)$ and 
$\frakL(\Lambda)=\bigsqcup_\alpha\frakL(\Lambda,\alpha).$
We have a canonical
$\k$-algebra isomorphism $H^*_{G_\Lambda}(\bullet,\k)=\bfku$.
Under this isomorphism
the equivariant 
Euler class of the $p$-th fundamental representation of $\GL(\Lambda_i)$ maps to $c_{ip}$ for any
$p\geqslant 0$.
We define
$$H^*_{G_\Lambda}(\frakM(\Lambda),\k)=
\bigoplus_{d\geqslant 0}H^d_{G_\Lambda}(\frakM(\Lambda),\k),$$
$$H_{[*]}^{G_\Lambda}(\frakL(\Lambda),\k)=
\bigoplus_{d\geqslant 0}H_{[d]}^{G_\Lambda}(\frakL(\Lambda),\k)=
\bigoplus_{d\geqslant 0}\bigoplus_\alpha H_{d_{\Lambda,\alpha}-d}^{G_\Lambda}(\frakL(\Lambda,\alpha),\k).$$
Assume also that $\frakg$ is of finite type. The following is well-known.

\smallskip

\begin{proposition} We have

(a) $H_{[d]}^{}(\frakL(\Lambda),\k)=0$ if $d\notin (2\bbZ)\cap[0,d_{\Lambda,\alpha}]$,

(b) $H_{[*]}^{G_\Lambda}(\frakL(\Lambda),\k)$ is free over $\bfku$ and
$H_{[*]}(\frakL(\Lambda),\k)=\k\otimes_{\bfku}H_{[*]}^{G_\Lambda}(\frakL(\Lambda),\k)$,

(c) there is a perfect $\bfku$-bilinear pairing
$H^d_{G_\Lambda}(\frakM(\Lambda),\k)\times 
H_{[d_{\Lambda,\alpha}-d]}^{G_\Lambda}(\frakL(\Lambda),\k)
\to\bfku,$

(d) there is a $\bbZ$-graded $L\frakg_\bfku$-representation on 
$H_{[*]}^{G_\Lambda}(\frakL(\Lambda),\k)$ which is isomorphic to $\Wu(\Lambda)$.
Under this isomorphism
$H_{[*]}^{G_\Lambda}(\frakL(\Lambda,\alpha),\k)$ maps to $\Wu(\Lambda)_{\Lambda-\alpha}$.
\end{proposition}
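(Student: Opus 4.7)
The plan is to assemble four standard facts about Nakajima quiver varieties in finite $ADE$ type. The geometric cornerstone for parts $(a)$--$(c)$ is an affine paving of $\frakM(\Lambda,\alpha)$ that restricts to a paving of the Lagrangian $\frakL(\Lambda,\alpha)$. I would invoke the Bialynicki-Birula decomposition associated to a generic one-parameter subgroup of a torus acting on $\frakM(\Lambda,\alpha)$ with isolated fixed points, whose existence in finite $ADE$ type goes back to Nakajima. The attracting cells partition $\frakM(\Lambda,\alpha)$ into affine spaces, and the union of those whose fixed point lies in $p^{-1}(0)$ is exactly $\frakL(\Lambda,\alpha)$. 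Part $(a)$ is then immediate: an affine paving forces Borel-Moore homology to vanish in odd degrees, and the dimensions of the cells confine it to the range $[0,d_{\Lambda,\alpha}]$ in the shifted indexing.

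Part $(b)$ follows from $(a)$ by a parity argument. The Leray spectral sequence for the fibration $EG_\Lambda\times_{G_\Lambda}\frakL(\Lambda)\to BG_\Lambda$ has $E_2$-page $\bfku\otimes_\k H_*(\frakL(\Lambda),\k)$; both factors sit in even degrees, so the spectral sequence degenerates, the equivariant Borel-Moore homology is free over $\bfku$, and base change to $\k$ recovers ordinary Borel-Moore homology. For $(c)$, the pairing is the cap product $H^d_{G_\Lambda}(\frakM(\Lambda),\k)\otimes H^{G_\Lambda}_d(\frakL(\Lambda),\k)\to H^{G_\Lambda}_0(\frakL(\Lambda),\k)$ composed with pushforward to $\bfku$; perfectness reduces, via part $(b)$ and Nakayama's lemma, to the non-equivariant case, where it is the classical duality between the cells of $\frakM(\Lambda,\alpha)$ and the complementary attracting cells constituting $\frakL(\Lambda,\alpha)$.

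Part $(d)$ is the deepest statement and would be quoted from Nakajima. The $\frakg$-action on $H_*(\frakL(\Lambda),\k)$ was constructed in \cite{N02} via Hecke-type correspondences in $\frakM(\Lambda)\times_{\frakM_0(\Lambda)}\frakM(\Lambda)$. Working $G_\Lambda$-equivariantly and combining these correspondences with multiplication by equivariant Chern classes of the tautological bundles at each vertex yields the enriched operators $x^\pm_{ir}$ and $h_{ir}$ on $H^{G_\Lambda}_{[*]}(\frakL(\Lambda),\k)$ and, one verifies, the defining relations of $L\frakg_\bfku$. The fundamental class of the point $\frakM(\Lambda,0)$ is a weight-$\Lambda$ vector annihilated by $\frakn^+[t]$, so universality provides a surjective $L\frakg_\bfku$-module homomorphism $\Wu(\Lambda)\to H^{G_\Lambda}_{[*]}(\frakL(\Lambda),\k)$. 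Injectivity is obtained by comparing graded pieces: part $(c)$ together with Proposition \ref{prop:weyl} parts $(c)$ and $(d)$ force the top and bottom graded components to have the correct ranks $\dim V(\Lambda)$ and $\dim V(\Lambda_\min)$, and $\bfku$-freeness promotes this equality to an isomorphism.

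The main subtlety lies in the identification of the $\bfku$-action in $(d)$: one needs to check that the generators $c_{ip}$ of $\bfku$, acting on the highest weight vector of $H^{G_\Lambda}_{[*]}(\frakL(\Lambda),\k)$ through the equivariant Chern classes of the tautological bundles over $\frakM(\Lambda,0)$, produce the same scalars as the prescribed action on $|\Lambda\rangle\in\Wu(\Lambda)$ via the generating series $\Psi_i(z)$. This compatibility, which distinguishes $\Wu(\Lambda)$ from an arbitrary integrable cyclic highest-weight module, is the content of \cite[thm.~1.1]{KN12} and is the point at which the argument genuinely requires external input rather than a formal manipulation.
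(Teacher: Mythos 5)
Your proposal follows exactly the route the paper takes, which is simply to cite Nakajima: parts $(b)$ and $(c)$ are \cite[thm.~7.3.5]{N00} (stated there for equivariant $K$-theory, but, as the paper notes, the argument carries over to equivariant Borel--Moore homology), and part $(d)$ is \cite{N02} together with the graded refinement in \cite{KN12}. Your unpacking of the ingredients behind those citations — the affine paving of $\frakL(\Lambda,\alpha)$, even-parity degeneration of the approximation spectral sequence for $(b)$, Nakayama-type descent for perfectness in $(c)$, and the universal surjection $\Wu(\Lambda)\twoheadrightarrow H_{[*]}^{G_\Lambda}(\frakL(\Lambda),\k)$ promoted to an isomorphism by freeness and rank comparison for $(d)$ — is faithful to Nakajima's proofs, so the approach is the same, just spelled out.

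One technical slip worth fixing in your description of the paving: for the torus one must use (a generic cocharacter of $T_\Lambda\times\bbC^\times$, with the hyperbolic $\bbC^\times$ contracting $\frakM_0$ to $0$), \emph{every} isolated fixed point of $\frakM(\Lambda,\alpha)$ already lies over $0$, so ``the union of cells whose fixed point lies in $p^{-1}(0)$'' is not a restriction and would produce all of $\frakM(\Lambda,\alpha)$, not $\frakL(\Lambda,\alpha)$. The correct characterization is that $\frakL(\Lambda,\alpha)$ is the locus where $\lim_{t\to\infty}t\cdot x$ exists in $\frakM(\Lambda,\alpha)$: existence of this limit forces $\lim_{t\to\infty}t\cdot p(x)$ to exist in the cone $\frakM_0$, which has strictly positive weights and hence forces $p(x)=0$; conversely if $p(x)=0$ the orbit stays in the projective variety $\frakL$ so the limit exists. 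The Bialynicki--Birula cells of this attracting (in the $t\to\infty$ direction) set are the affine pieces that pave $\frakL(\Lambda,\alpha)$, and the opposite cells in $\frakM(\Lambda,\alpha)$ are the ones that produce the pairing in $(c)$.
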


\begin{proof}
Parts $(b)$, $(c)$ are proved in \cite[thm.~7.3.5]{N00} for equivariant $K$-theory, but the same proof
applies also to equivariant Borel-Moore homology. 
Part $(d)$ follows from \cite{N02}.

\end{proof}

\smallskip

From Theorem \ref{cor:weyl} we deduce that

\smallskip

\begin{theorem}\label{thm:4.5}
If $\frakg$ is symmetric of finite type and \eqref{Q} is satisfied,
then there are  $\bbZ$-graded $L\frakg_\bfk$-module isomorphisms
$\Tr(R^\Lambda)\simeq \bfk\otimes_\bfku H_{[*]}^{G_\Lambda}(\frakL(\Lambda),\k)$.

\qed
\end{theorem}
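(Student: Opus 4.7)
The plan is to simply chain together the two main earlier results: Theorem \ref{cor:weyl}, which identifies $\Tr(R^\Lambda)$ with the Weyl module $\W(\Lambda,\bfk)$, and part $(d)$ of the preceding Proposition (Nakajima's geometric realization), which identifies $\Wu(\Lambda)$ with the equivariant Borel--Moore homology $H_{[*]}^{G_\Lambda}(\frakL(\Lambda),\k)$. No further computation should be required; the statement is essentially a formal corollary.

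More precisely, first I would apply Theorem \ref{cor:weyl} to the $\bbN$-graded $\bfku$-algebra $\bfk$ (both hypotheses on $\frakg$ and on $Q$ are assumed, so the theorem applies). This yields a $\bbZ$-graded $L\frakg_\bfk$-module isomorphism
$$\phi^\Lambda_\bfk:\W(\Lambda,\bfk)\simto\Tr(R^\Lambda),\qquad |\Lambda\rangle\mapsto|\Lambda\rangle.$$
Next, I would invoke part $(d)$ of the preceding Proposition, which provides a $\bbZ$-graded $L\frakg_\bfku$-module isomorphism
$$\Wu(\Lambda)\simto H_{[*]}^{G_\Lambda}(\frakL(\Lambda),\k).$$
Applying the base change $\bfk\otimes_\bfku(-)$ and using that the $L\frakg_\bfku$-action is $\bfku$-linear (so the base-changed map is $L\frakg_\bfk$-linear), together with the defining identity $\W(\Lambda,\bfk)=\bfk\otimes_\bfku\Wu(\Lambda)$, yields a $\bbZ$-graded $L\frakg_\bfk$-module isomorphism
$$\W(\Lambda,\bfk)\simto\bfk\otimes_\bfku H_{[*]}^{G_\Lambda}(\frakL(\Lambda),\k).$$
Composing this with the inverse of $\phi^\Lambda_\bfk$ produces the desired isomorphism.

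There is no genuine obstacle; the only point to check is that each isomorphism is compatible with the $L\frakg$-action and the grading after base change, and this is immediate since both actions in play are $\bfku$-linear and preserve the grading by construction. The substantive content has already been carried out in the proofs of Theorem \ref{cor:weyl} (the hard direction, using the cyclicity result Proposition \ref{prop:cyclic}, the grading bounds of Theorem \ref{thm:grading-trace}, and the factorization Theorem \ref{thm:factorization}) and in Nakajima's geometric construction of $\Wu(\Lambda)$ on $H_{[*]}^{G_\Lambda}(\frakL(\Lambda),\k)$ quoted from \cite{N02}.
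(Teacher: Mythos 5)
Your proposal is correct and follows the same route the paper implicitly takes: the paper states the result immediately after the phrase ``From Theorem \ref{cor:weyl} we deduce that'' and gives no further proof, precisely because the deduction is the straightforward chain you describe — compose the isomorphism $\W(\Lambda,\bfk)\simeq\Tr(R^\Lambda)$ from Theorem \ref{cor:weyl} with the base-changed Nakajima isomorphism $\bfk\otimes_\bfku\Wu(\Lambda)\simeq\bfk\otimes_\bfku H_{[*]}^{G_\Lambda}(\frakL(\Lambda),\k)$ from part $(d)$ of the preceding proposition, using $\W(\Lambda,\bfk)=\bfk\otimes_\bfku\Wu(\Lambda)$.
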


\smallskip

\begin{remark}\label{rem:4.6} 
$(a)$ Since $\frakL(\Lambda,\alpha)$ is connected, we deduce from Theorem \ref{thm:4.5}
that $\Tr(R^\Lambda(\alpha))^{d_{\Lambda,\alpha}}\simeq\k$ whenever $V(\Lambda)_\lambda\neq 0$ if $\frakg$ is symmetric and of finite type. 

$(b)$ We equip $\Z(R^\Lambda)$ with the $L\frakg_\bfk$-representation dual to $\Tr(R^\Lambda)$
relatively to the pairing \eqref{pairing1} and the anti-involution $\varpi$ of $L\frakg_\bfk$.
The action of $x^\pm_{ir}$ on $\Z(R^\Lambda)$ is described in Proposition \ref{prop:transpose}.
Next, we equip $H^*_{G_\Lambda}(\frakM(\Lambda),\k)$ with the $L\frakg_\bfku$-representation dual to
$H_{[*]}^{G_\Lambda}(\frakL(\Lambda),\k)$ relatively to the pairing in $(c)$ above and the anti-involution $\varpi$.
Then, from Theorem \ref{thm:4.5} we deduce that there is  $\bbZ$-graded $L\frakg_\bfk$-module isomorphism
\begin{equation}\label{isom}
 \bfk\otimes_\bfku H^*_{G_\Lambda}(\frakM(\Lambda),\k)\simto\Z(R^\Lambda).\end{equation}
\end{remark}

\medskip

\subsubsection{The multiplicative structure  $H^*_{G_\Lambda}(\frakM(\Lambda),\k)$}
In this section we explain how to construct a $\bfk$-algebra isomorphism
$\Z(R^\Lambda)\simeq \bfk\otimes_\bfku H^*_{G_\Lambda}(\frakM(\Lambda),\k)$ 
from the $L\frakg_\bfk$-linear isomorphism \eqref{isom}. To simplify, we'll assume
that the condition \eqref{Q} holds.

We first introduce a $\bfk$-linear map
$$a:\Z(R)\to \bfk\otimes_\bfku H^*_{G_\Lambda}(\frakM(\Lambda),\k),$$
which is usually called the \emph{Kirwan map}. Recall that, for each $\alpha\in Q_+$, $i\in I$, the 
$G_\Lambda$-variety $\frakM(\Lambda,\alpha)$ is equipped with $G_\Lambda$-equivariant bundles
$\calV_i,$ $\calW_i$ of rank $a_i$, $\Lambda_i$ respectively
defined as in \cite[sec.~2.9]{N00}, where $a_i$ is the coordinate of $\alpha$
along the simple root $\alpha_i$. On the other hand, if $\height(\alpha)=n$ then the center of $R(\alpha)$ is given by
$$\Z(R(\alpha))=\Big(\bigoplus_{\nu\in I^\alpha}\bfk[x_1,\dots,x_n]e(\nu)\Big)^{\frakS_n},$$
where the symmetric group acts on $\bfk[x_1,\dots,x_n]$ and $e(\nu)$ in the obvious way.
Then, the Kirwan map is the unique $\k$-algebra homomorphism such that
\begin{align*}
&a(c_{ip})=(-1)^pc_p(\calW_i),\quad\forall (i,p)\in I_\Lambda,\\
&a\Big(\sum_{\nu\in I^\alpha}\prod_{k=1}^n(1-zx_k)^{-a_{i,\nu_k}}e(\nu)\Big)=\prod_{j\in I}c_z(\calV_j)^{-a_{i,j}}
\end{align*}
where $c_p$ and $c_z=\sum_p(-z)^pc_p$ are the $p$-th $G_\Lambda$-equivariant Chern class and the 
$G_\Lambda$-equivariant Chern polynomial. 

Next, let $b:\Z(R)\to\Z(R^\Lambda)$ be the canonical map,
induced by the quotient map $R\to R^\Lambda$.

Finally, let $\psi$ be the unique isomorphism as in \eqref{isom} which takes the unit of the ring 
$\bfk\otimes_\bfku H^*_{G_\Lambda}(\frakM(\Lambda,0),\k)$ to the unit of $\Z(R^\Lambda(0)).$

\smallskip

\begin{proposition} Assume that the Kirwan map $a$ is onto. 

(a) The element $\psi(1)\in\Z(R^\Lambda(\alpha))$ is invertible, 
and the map $\phi:\bfk\otimes_\bfku H^*_{G_\Lambda}(\frakM(\Lambda,\alpha),\k)\to\Z(R^\Lambda(\alpha))$  given by
$\phi=\psi(1)^{-1}\cdot\psi$ is a $\bfk$-algebra isomorphism.

(b) The map $b$ is surjective.

\end{proposition}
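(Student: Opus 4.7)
The strategy is to show that $\psi$ intertwines a natural $\Z(R)$-module structure on its source and target: $\Z(R)$ acts on $\bfk\otimes_\bfku H^*_{G_\Lambda}(\frakM(\Lambda),\k)$ through the Kirwan map $a$ and cup product, and on $\Z(R^\Lambda)$ through $b$ and multiplication. The key assertion to establish is
\begin{equation*}
\psi\big(a(z)\cdot\xi\big)=b(z)\cdot\psi(\xi),\qquad z\in\Z(R),\ \xi\in\bfk\otimes_\bfku H^*_{G_\Lambda}(\frakM(\Lambda),\k).
\end{equation*}
Granted this, both parts follow cleanly. Specializing $\xi=1$ gives $\psi(a(z))=b(z)\cdot\psi(1)$ for all $z\in\Z(R)$; since $\psi$ is an isomorphism and $a$ is surjective by hypothesis, there exists $z_0\in\Z(R)$ with $1=\psi(a(z_0))=b(z_0)\,\psi(1)$ in $\Z(R^\Lambda(\alpha))$. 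Hence $\psi(1)$ is invertible with inverse $b(z_0)$, and $\phi=\psi(1)^{-1}\psi$ is a well-defined $\bfk$-linear bijection. The computation $\phi(a(z))=\psi(1)^{-1}\,b(z)\,\psi(1)=b(z)$, valid by commutativity of $\Z(R^\Lambda)$, yields $\phi\circ a=b$; since $a$ is surjective and $b$ is a ring homomorphism, $\phi$ must be multiplicative on all of $\bfk\otimes_\bfku H^*_{G_\Lambda}(\frakM(\Lambda),\k)$. This proves (a). Part (b) is then immediate: $b=\phi\circ a$ is a composition of surjections.

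To obtain the intertwining relation, I would exploit the $L\frakg_\bfk$-linearity of $\psi$ together with an identification of the two multiplication-by-$a(z)$ and multiplication-by-$b(z)$ operators with common expressions in the loop Cartan operators $h_{ir}$. On the geometric side, Nakajima's construction of the $L\frakg_\bfku$-action on equivariant cohomology of quiver varieties implies that the Cartan generating series $\Psi_i(z)=\exp(-\sum_{r\geqs 1}h_{ir}z^r/r)$ acts on $H^*_{G_\Lambda}(\frakM(\Lambda,\alpha),\k)$ as cup product with an explicit rational expression in the equivariant Chern polynomials $c_z(\calV_j)$ and $c_z(\calW_i)$ of the tautological bundles. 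On the algebraic side, Proposition \ref{prop:loop-minimal} combined with the defining relation $h_{i,r+s}=[x^+_{ir},x^-_{is}]$ of $L\frakg$ identifies the action of $\Psi_i(z)$ on $\Z(R^\Lambda)$ with multiplication by the image under $b$ of the matching expression in the $x_k$'s, $e(\nu)$'s, and $c_{ip}$'s. The explicit formulas for $a$ recalled in the statement then show that these two multiplicands correspond under $a$ and $b$, so the intertwining $\psi(a(z)\xi)=b(z)\psi(\xi)$ holds whenever $a(z)$ is one of the canonical generators of the cohomology ring coming from the Kirwan map. Since $a$ and $b$ are ring homomorphisms and the Kirwan generators generate the full cohomology (by hypothesis), the intertwining extends to all $z\in\Z(R)$ and all $\xi$ by iteration.

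The main obstacle is the explicit Cartan computation: verifying that $\Psi_i(z)$ acts on both sides by multiplication by the same element, modulo the identification by $a$ on one side and by $b$ on the other. On the geometric side this is essentially Nakajima's identity for the action of the Cartan current on quiver-variety cohomology; on the algebraic side it requires a delicate generating-series manipulation of the formulas in Proposition \ref{prop:transpose}, carried out through the adjunction data of Theorem \ref{thm:Kbiadjoint}. Once this matching is in place, the rest of the argument is formal.
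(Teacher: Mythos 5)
Your proposal is correct and follows essentially the same route as the paper: the crux is the intertwining identity $\psi(a(z)\cup\xi)=b(z)\cdot\psi(\xi)$, established by matching the action of the Cartan series $\Psi_i(z)$ on both sides using the $L\frakg_\bfk$-linearity of $\psi$ (the paper passes through Lemma \ref{lem:psi-B} and Proposition \ref{prop:A10} where you invoke Proposition \ref{prop:loop-minimal}, but this is the same computation), and the formal deduction of invertibility of $\psi(1)$, multiplicativity of $\phi$, and surjectivity of $b$ then proceeds exactly as in the paper.
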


\begin{proof}
Consider formal series $\Psi_i(z)=\exp\big(-\sum_{r\geqslant 1}h_{ir}\,z^r/r\big)$
introduced in Section \ref{sec:Weyl}. Set $\lambda=\Lambda-\alpha$.
Since the condition \eqref{Q} holds, the formal power series $B_{-i,\lambda}(z)\in\Z(R^\Lambda(\alpha))[[z]]$
in Section \ref{sec:CLO} is given by the following formula, see Proposition \ref{prop:A10},
$$B_{-i,\lambda}(z)=
b\Big(\sum_{\nu\in I^\alpha}\prod_{p=1}^{\Lambda}(1+zy_{ip})\,\prod_{k=1}^n(1-zx_k)^{-a_{i,\nu_k}}\,e(\nu)\Big).$$
Here, the $y_{ip}$ are formal variables as in Section \ref{sec:factorization}.
Further, by Lemma \ref{lem:psi-B}, under the representation of $L\frakg_\bfk$ on $\Z(R^\Lambda)$,
the formal series $\Psi_i(z)$ acts on $\Z(R^\Lambda(\alpha))$ by multiplication
by $B_{-i,\lambda}(z)$.
Next, by \cite[sec.~9.2]{N00}, under the representation of $L\frakg_\bfk$ on 
$\bfk\otimes_\bfku H^*_{G_\Lambda}(\frakM(\Lambda),\k)$ the formal series $\Psi_i(z)$ acts by multiplication by
$$c_z(\calW_i)\cup\prod_jc_z(\calV_j)^{-a_{i,j}}.$$
Since, by definition,  the map $\psi$ is $L\frakg_\bfk$-linear, we deduce that
\begin{align*}
b
&\Big(\sum_{\nu\in I^\alpha}\prod_{p=1}^{\Lambda}(1+zy_{ip})\,\prod_{k=1}^n(1-zx_k)^{-a_{i,\nu_k}}\,e(\nu)\Big)
\cdot\psi(\bullet)
=\psi\Big(c_z(\calW_i)\cup\prod_jc_z(\calV_j)^{-a_{i,j}}\cup\bullet\Big)=\\
&=\psi\Big(a\Big(\sum_{\nu\in I^\alpha}
\prod_{p=1}^{\Lambda}(1+zy_{ip})\,\prod_{k=1}^n(1-zx_k)^{-a_{i,\nu_k}}\,e(\nu)\Big)
\cup\bullet\Big).
\end{align*}
We deduce that, for each $z\in\Z(R),$ we have $b(z)\cdot\psi(\bullet)=\psi(a(z)\cup\bullet)$.
Thus, we have $b(z)\cdot\psi(1)=\psi(a(z))$.
Since $\psi$ and $a$ are onto, there is an element $z$ such that $b(z)\cdot\psi(1)=1$, from which we deduce that 
$\psi(1)$ is invertible.
Furthermore, for each $z,z'\in\Z(R),$ we have $b(z)\cdot b(z')\cdot\psi(1)=\psi(a(z)\cup a(z'))$.
Hence, the map $\phi$ is an algebra isomorphism  and 
the diagram below is commutative
\begin{equation*}\label{triangle0}\begin{split}
\xymatrix{&\Z(R(\alpha))\ar[dr]^-{b}\ar[dl]_-{a}&\\
\bfk\otimes_\bfku H^*_{G_\Lambda}(\frakM(\Lambda,\alpha),\k)\ar[rr]^-{\phi}&&\Z(R^\Lambda(\alpha)).}
\end{split}\end{equation*}
The lemma is proved.

\end{proof}

\medskip

\section{The Jordan quiver}
Hopefully, the results above can  be generalized to quivers with loops using the generalized quiver-Hecke algebras introduced in \cite{KKP}.
In this section we consider the particular case of the Jordan quiver.
In this particular case the quiver-Hecke algebra in \cite{KKP} is the degenerate affine Hecke algebra of the symmetric group.

From now on, let $\k$ be a commutative domain,
$\bfk=\k[\hbar,y_1,\dots,y_r]$ and $\bfk'$ be the fraction field of $\bfk$.
Write $M'=\bfk'\otimes_\bfk M$ for any $\bfk$-module $M$.
The ring $\bfk$ is $\bbZ$-graded with 
$\deg(y_p)=\deg(\hbar)=2$.
If $M$ is free $\bbZ$-graded of finite rank, let $\grdim(M)$ be its 
\emph{graded rank}. It is the unique element in $\bbN[t,t^{-1}]$ such that
$\grdim(\bfk[d])=t^{-d}$ and $\grdim(M\oplus N)=\grdim(M)+\grdim(N)$.

\medskip

\subsection{The quiver-Hecke algebra}

For any integer $n>0$
the QHA of rank $n$ associated with the Jordan quiver is the degenerate affine Hecke $\bfk$-algebra $R(n)$, which is generated by
elements $\tau_1,\dots,\tau_{n-1},x_1,\dots,x_n$ with the defining relations
\begin{itemize}
\item[$(a)$] $x_kx_l=x_lx_k$,
\item[$(b)$] $\tau_k\,\tau_l=\tau_l\,\tau_k$ if $|k-l|>1$,
\item[$(c)$] $\tau_l^2=1$,
\item[$(d)$] $\tau_kx_l-x_{s_k(l)}\tau_k=\hbar\,(\delta_{l,k+1}-\delta_{l,k}),$
\item[$(e)$] $\tau_{k+1}\,\tau_k\,\tau_{k+1}=\tau_k\,\tau_{k+1}\,\tau_k$.
\end{itemize}

We write $R(0)=\bfk.$
For any integer $r\geqslant 0$, the CQHA of rank $n$ and level $r$ is the quotient $R^r(n)$ of $R(n)$ by the two-sided ideal generated by
the element $\prod_{p=1}^r(x_1-y_p)$.  The $\bfk$-algebras $R(n)$, $R^r(n)$ are $\bbZ$-graded, with $\deg(\tau_k)=0$ and 
$\deg(x_k)=2$.

The canonical map $R(n)\to R^r(n)$ yields a $\bbZ$-graded $\bfk$-algebra homomorphism
$\Z(R(n))\to \Z(R^r(n)).$ Let $\Z(R^r(n))^{\text{JM}}$ be its image.

We equip $\Z(R^r(n))$, $\Z(R^r(n))^{\text{JM}}$ with the grading such that 
$$\Z(R^r(n))^d=R^r(n)^d\cap \Z(R^r(n)),\quad (\Z(R^r(n))^{\text{JM}})^d=R^r(n)^d\cap \Z(R^r(n))^{\text{JM}}.$$

The canonical inclusion $R(n)\to R(n+1)$ factors to a $(R^r(n),R^r(n))$-bilinear map $\iota:R^r(n)\to R^r(n+1).$
We have the following.

\smallskip

\begin{proposition}\label{Brundan}
$(a)$
$R^r(n)=\bigoplus_{r_i,w} \bfk\,x_1^{r_1}\cdots x_n^{r_n}w$ where $r_1+\cdots + r_n< r$ and $w\in \frakS_n$, 

$(b)$
$R^r(n+1)=\bigoplus_{k=0}^{r-1}\bigoplus_{j=1}^{n+1} R^r(n)\,(j,n+1)\,x_j^{k}$,

$(c)$
for each $z\in R^r(n+1)$ there are unique elements  $\pi(z)\in R^r(n)\otimes_{R^r(n-1)}R^r(n)$ 
and $p_k(z)\in R(n)$ such that $z=\mu_{\tau_n}(\pi(z))+\sum_{k=0}^{r-1}p_k(z)\,x^k_{n+1},$
yielding a $(R^r(n),R^r(n))$-bimodule isomorphism
$$R^r(n+1)=R^r(n)\,\tau_n\,R^r(n)\oplus\bigoplus_{k=0}^{r-1}R^r(n)\,x_{n+1}^k,$$

$(d)$ the canonical map $\Z(R(n))'\to\Z(R^r(n))'$ is surjective and we have
$$\sum_{n\geqslant 0}\dim (\Z\! R^r(n)')\,q^{2n}=\prod_{j\geqslant 1}(1-q^{2j})^{-r},$$

$(e)$  $\Z(R^r(n))^{\text{JM}}$  is a free $\bfk$-module of finite rank such that
$$\sum_{n\geqslant 0}\grdim \big(\Z\! R^r(n)^{\text{JM}}\big)\,q^{2n}=\prod_{p=1}^r\prod_{i=1}^\infty
\big(1-q^{2i}t^{2(ri+p-1-r)}\big)^{-1}.$$
\end{proposition}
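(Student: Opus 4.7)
I would treat the five parts in order, using (a) as the foundation that drives (b) and (c) by counting and by specialising Theorem \ref{thm:KK}, then extend scalars to $\bfk'$ to obtain (d) via an analog of Theorem \ref{thm:factorization}, leaving (e) as the main technical obstacle.

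For (a), the plan is to begin from the classical PBW basis $\{x_1^{a_1}\cdots x_n^{a_n}w\,;\,a_i\geqs 0,\,w\in\frakS_n\}$ of the degenerate affine Hecke algebra $R(n)$ (Lusztig/Kato) and reduce modulo the cyclotomic ideal. The relation $\prod_{p=1}^r(x_1-y_p)=0$ makes $x_1$ satisfy a monic polynomial of degree $r$ over $\bfk$, and commuting through the $\tau_k$'s produces a monic degree-$r$ relation, with $\hbar$-dependent coefficients, for each $x_j$ in $R^r(n)$. This gives spanning by the asserted monomials (read as $r_i<r$ for each $i$, so that the expected rank $r^n\cdot n!$ over $\bfk$ is recovered); linear independence follows from a faithful polynomial-type action in the spirit of Kleshchev. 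Part (b) then follows at once: the elements $(j,n+1)\,x_j^k=\tau_j\tau_{j+1}\cdots\tau_n\,x_{n+1}^k$ plus lower-order corrections span $R^r(n+1)$ as a left $R^r(n)$-module, and a rank check against (a) yields freeness and the direct sum. Part (c) is then the present incarnation of Theorem \ref{thm:KK}(a): the intertwiner $\sigma'$ is $\mu_{\tau_n}$ and the coefficients $p_k(z)$ are the $x_{n+1}^k$-components in the bimodule decomposition of (b).

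For (d), I would extend scalars to $\bfk'$ and adapt the proof of Theorem \ref{thm:factorization}: at generic $y_p$ the Jucys-Murphy spectrum is separated, giving a refined idempotent decomposition of the identity of $R^r(n)'$ indexed by tuples $\tilde\nu\in\{1,\dots,r\}^n$. Grouping into $\frakS_n$-orbits yields an algebra isomorphism
$$R^r(n)'\simeq\bigoplus_{n_1+\cdots+n_r=n}\Mat_{\frakS_n/\prod_p\frakS_{n_p}}\Bigl(\bigotimes_{p=1}^r R^1(n_p)'\Bigr).$$
At level one, the cyclotomic relation pins $x_1$ to $y_1\in\bfk'$, and propagating through the $\tau_k$'s identifies $R^1(m)'$ with the group algebra $\bfk'\frakS_m$; its center has a basis of conjugacy classes. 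Taking $\Z(\bullet)'$ and summing over compositions yields $\sum_n\dim(\Z(R^r(n))')\,q^{2n}=\prod_{j\geqs 1}(1-q^{2j})^{-r}$.

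Part (e) is where the main difficulty lies. By definition, $\Z(R^r(n))^{\text{JM}}$ is the image of $\bfk[x_1,\dots,x_n]^{\frakS_n}$ under the canonical map $\Z(R(n))\to\Z(R^r(n))$, and the PBW basis of (a) exhibits it as a $\bfk$-submodule whose graded rank is controlled by the kernel of this map. I would identify this kernel with the ideal generated by the symmetric polynomials forced to vanish by the cyclotomic relations satisfied successively by $x_1,x_2,\dots$, and match the resulting graded quotient to the $r$-coloured partition combinatorics encoded by the generating function. The exponent $2(ri+p-1-r)$ should arise as the degree of an explicit family of $r$-coloured power-sum generators parametrising boxes in $r$-tuples of partitions. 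The obstacle I expect is to produce these generators explicitly and prove they are $\bfk$-free modulo all the cyclotomically imposed relations; freeness over $\bfk$ (not only over $\bfk'$) is the subtle point, and is what promotes the formula from a generic statement to the stated graded-rank identity.
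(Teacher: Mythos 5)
Your treatment of (a)--(c) tracks the standard picture: a PBW spanning set reduced modulo the cyclotomic ideal with each $x_i$-exponent strictly below $r$ (the constraint as printed is indeed ``each $r_i<r$''), faithfulness to get independence, and then (b), (c) as a Mackey-type decomposition of $R^r(n+1)$ over $R^r(n)$. This is essentially what the paper treats as well known, and the proof of (c) in the paper does exactly what you suggest (reduce to (b), then observe $R^r(n)\tau_n R^r(n)=\bigoplus_{k,j\le n}R^r(n)(j,n+1)x_j^k$ by pushing $\tau_n$ past $R^r(n-1)$), following Kleshchev's book. For (d) you propose a genuinely different route: instead of citing Brundan's theorem (as the paper does) you factor $R^r(n)'$ after generic localization into matrix algebras over tensor products of level-one pieces $R^1(n_p)'\simeq\bfk'\frakS_{n_p}$, in the spirit of Theorem~\ref{thm:factorization}, and then both the surjectivity of $\Z(R(n))'\to\Z(R^r(n))'$ and the generating function $\prod_{j\ge 1}(1-q^{2j})^{-r}$ drop out from the classical fact that Jucys--Murphy symmetric functions generate $\Z(\bfk'\frakS_m)$. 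That works and is arguably cleaner than invoking [B08] wholesale; the trade-off is that it only sees the generic fibre, which for (d) is all that is asked.

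The gap is in (e), and it is a real one. You correctly flag that integral freeness (over $\bfk$, not $\bfk'$) is the delicate point, but your plan of ``producing $r$-coloured power-sum generators and proving they survive modulo the cyclotomic relations'' has no concrete mechanism. What the paper actually uses, and what your argument needs, is Brundan's explicit integral description: the canonical map $\Z(R(n)_1)\to\Z(R^r(n)_1)$ (at $\hbar=1$, over $\bfk_1=\k[y_1,\dots,y_r]$) is surjective and the monomial symmetric functions $p_\mu(x_1,\dots,x_n)$ for $\mu\in\scrP_n(r)=\{\mu \text{ partition}:\ \ell(\mu)+\sum_i\lfloor\mu_i/r\rfloor\le n\}$ form a $\bfk_1$-basis of $\Z(R^r(n)_1)$ ([B08, Thm.~3.2]). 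Coupled with the observation that $\tau_k\mapsto\tau_k$, $x_k\mapsto x_k\otimes\hbar$, $y_p\mapsto y_p\otimes\hbar$ embeds $\Z(R^r(n))^{\text{JM}}$ into $\Z(R^r(n)_1)[\hbar]$ with image $\bigoplus_{\mu\in\scrP_n(r)}\bfk\cdot(p_\mu\otimes\hbar^{|\mu|})$, this yields freeness and the graded rank. The exponent $2(ri+p-1-r)$ is not produced by any ad hoc ``coloured power-sum'' computation but by the explicit bijection $\varphi:\Lambda^+_r(n)\to\scrP_n(r)$ from $r$-multipartitions of $n$, together with the degree identity $|\varphi(\lambda)|=r|\lambda|-(r+1)\ell(\lambda)+\sum_{p}p\,\ell(\lambda^{(p)})$; your plan would have to either reprove this or import it. Without these two inputs the argument does not close, since over $\bfk'$ you would only recover the $q$-dependence already obtained in (d), not the $t$-dependence, and no freeness over $\bfk$.
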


\begin{proof}
Parts $(a)$, $(b)$ are well-known.
The proof of part $(c)$ is similar to \cite[lem.~5.6.1]{Kl}, where the case $\hbar=1$ is done.
Indeed, by part $(b)$ it is enough to show that 
$$\bigoplus_{k=0}^{r-1}\bigoplus_{j=1}^{n} R^r(n)\,(j,n+1)\,x_j^{k}= R^r(n)\tau_n R^r(n).$$
By $(b)$ we have $R^r(n)=\bigoplus_{k=0}^{r-1}\bigoplus_{j=1}^{n} R^r(n-1)\,(j,n)\,x_j^{k}$.
Since $\tau_n$ commutes with $R^r(n-1)$ and $\tau_n(j,n)=(j,n)(j,n+1)$ 
we deduce $R^r(n)\tau_n R^r(n)=\bigoplus_{k=0}^{r-1}\bigoplus_{j=1}^{n} R^r(n)\,(j,n+1)\,x_j^{k}$.
Part $(d)$ is proved in \cite{B08}.

Let us concentrate on $(e)$.
First, we prove that $\Z(R^r(n))^{\text{JM}}$ is free of finite rank as a $\bfk$-module.
To do that, set $\bfk_1=\k[y_1,\dots,y_r]$ and consider the $\bfk_1$-algebras 
$$R(n)_1=R(n)/(\hbar-1),\qquad R^r(n)_1=R^r(n)/(\hbar-1).$$
Then, the assignment $\tau_k\mapsto\tau_k$, $x_k\mapsto x_k\otimes\hbar$, 
$y_p\mapsto y_p\otimes\hbar$ yields a $\bfk$-algebra homomorphism
$$R^r(n)\to R^r(n)_1\otimes_{\bfk_1}\bfk=R^r(n)_1[\hbar].$$ It restricts to an inclusion
\begin{equation}\label{inclusion}\Z(R^r(n))^{\text{JM}}\subset\Z(R^r(n))\subset \Z(R^r(n)_1)[\hbar].
\end{equation}

Now, for any $n$-tuple $\mu=(\mu_1,\dots,\mu_n)$ of non-negative integers, let
$$p_\mu(x_1,\dots,x_n)=\sum_\nu x_1^{\nu_1}\cdots x_n^{\nu_n}\in \Z(R^r(n)_1)$$
where $\nu$ runs over the set of all $n$-tuples which are obtained from $\mu$ by permuting its entries.
Let $\scrP_n(r)$ be the set of all partitions $\mu$ such that 
$\ell(\mu)+\sum_i\lfloor\mu_i/r\rfloor\leqslant n.$
By \cite[thm.~3.2]{B08}, the canonical map $R(n)_1\to R^r(n)_1$ yields a  surjection $\Z(R(n)_1)\to \Z(R^r(n)_1)$.
Further, the elements $p_\mu(x_1,\dots,x_n)$ where $\mu$ runs over the set $\scrP_n(r)$,
form a $\bfk_1$-basis of $\Z (R^r(n)_1)$. Therefore, under the inclusion \eqref{inclusion}, the elements 
$p_\mu(x_1,\dots,x_n)\otimes\hbar^{|\mu|}$ where $\mu\in\scrP_n(r)$ yield a 
$\bfk$-basis of $\Z(R^r(n))^{\text{JM}}$.
We deduce that $\Z(R^r(n))^{\text{JM}}$ is free of finite rank as a $\bfk$-module and that
$$\sum_{n\geqslant 0}\grdim (\Z\! R^r(n))^{\text{JM}}\,q^{2n}=\sum_{n\geqslant 0}\sum_{\mu\in\scrP_n(r)}t^{2|\mu|}q^{2n}.$$
To compute the right hand side, note that \cite[p.~243]{B08} yields a bijection
$$\varphi:\Lambda^+_r(n)\to\scrP_n(r)$$
where $\Lambda^+_r(n)$ is the set of $r$-partitions $\lambda=(\lambda^{(1)},\dots,\lambda^{(r)})$ of $n$.
Further, if $\varphi(\lambda)=\mu$ then
$$|\mu|=r|\lambda|-(r+1)\ell(\lambda)+\sum_{p=1}^rp\,\ell(\lambda^{(p)}).$$
We deduce that
\begin{align*}
\sum_{n\geqslant 0}\grdim (\Z\! R^r(n))^{\text{JM}}\,q^{2n}
&=\sum_{n\geqslant 0}\sum_{\lambda\in\Lambda^+_r(n)}t^{2r|\lambda|-2(r+1)\ell(\lambda)+2\sum_{p=1}^rp\,\ell(\lambda^{(p)})}q^{2n},\\
&=\prod_{p=1}^r\sum_{\lambda\in\Lambda^+_1(n)}t^{2r|\lambda|+2(p-r-1)\ell(\lambda)}q^{2|\lambda|},\\
&=\prod_{p=1}^r\prod_{i=1}^\infty\big(1-q^{2i}t^{2(ri+p-1-r)}\big)^{-1}.
\end{align*}

\end{proof}

\medskip

\subsection{The Lie algebra $\scrW$}\label{sec:W}
Let $\scrW$ be the Lie $\bfk'$-algebra generated by elements $C_k,$ 
$D_{-1,k}$, $D_{0,k+1},$ $D_{1,k}$ with $k\geqslant 0$, 
modulo the following definition relations
\begin{itemize}
\item[$(a)$] $[D_{0,l+1}, D_{0,k+1}]=0$,
\item[$(b)$] $[D_{0,l+1}, D_{1,k}]=\hbar\,D_{1,l+k}$ and $[D_{0,l+1}, D_{-1,k}]=-\hbar\,D_{-1,l+k},$
\item[$(c)$] $3[D_{12}, D_{11}]-[D_{13}, D_{10}]+\hbar^2[D_{11},D_{10}]=0$,
\item[$(d)$] $3[D_{-1,2}, D_{-1,1}]-[D_{-1,3}, D_{-1,0}]+\hbar^2[D_{-1,1},D_{-1,0}]=0$,
\item[$(e)$] $[D_{10},[D_{10}, D_{11}]]=[D_{-1,0},[D_{-1,0}, D_{-1,1}]]=0$,
\item[$(f)$] $[D_{-1,k},D_{1,l}]=E_{k+l},$
\item[$(g)$] $C_k$ is central,
\end{itemize}
where the series $E(z)=\sum_{k\geqslant 0}E_k\,z^k$ is given by the following formula
\begin{equation}\label{E(z)}
\begin{split}
\begin{aligned}
&E(z)=C_0+\sum_{k\geqslant 1}\big(\gamma_\hbar(D_{0,k+1})+\gamma_{-\hbar}(D_{0,k+1})+C_k\big)\,z^k,\\
&\gamma_t(D_{0,k+1})=\sum_{p=1}^{k}(\begin{smallmatrix}k\\p\end{smallmatrix})\, D_{0,k-p+1}\,t^{p-1}.
\end{aligned}
\end{split}
\end{equation}

Let $\scrW_{<0},\scrW_{\geqslant 0},\scrW_0\subset\scrW$ be the Lie subalgebras generated by
$\{D_{1,k}\,;\,k\geqslant 0\}$, $\{D_{-1,k}\,;\,k\geqslant 0\}$, and $\{D_{0,k+1},C_k\,;\,k\geqslant 0\}$ respectively.

There is a unique Lie $\bfk'$-algebra anti-involution $\varpi$ of $\scrW$ such that
\begin{equation}\label{varpiW}\varpi(C_k)=C_k,\qquad\varpi(D_{l,k})=D_{-l,k}.\end{equation}

The Lie $\bfk'$-algebra $\scrW$ is $\bbZ$-graded with $\deg(D_{l,k})=2l$ and $\deg(C_k)=0$.
A representation $V$  is \emph{diagonalizable}
if the operator $D_{0,1}/\hbar$ is diagonalizable with integral eigenvalues.
Then $V$ is $\bbZ$-graded and its degree $2n$ component $V_n$ is the eigenspace associated with the eigenvalue $n$.
We'll say that a diagonalizable representation is \emph{quasifinite} if the degree $2n$ component is finite dimensional for each $n$.
Finally, we define the \emph{character} of a quasifinite representation $V$ to be the formal series
in $\bbN[[q,q^{-1}]]$ given by
$$\ch(V)(q)=\sum_{n\in\bbZ}q^{2n}\dim (V_n).$$ 

Given a linear form $\Lambda:\scrW_0\to\bfk'$ and a module $V$, an element $v\in V$ is \emph{primitive} of weight $\Lambda$
if $\scrW_{0}$ acts on $v$ by $\Lambda$ and $\varpi(\scrW_{<0})$ by zero.
We call $\Lambda(C_0)$ the \emph{level} of $\Lambda$.

Let $M(\Lambda)$ be the Verma module with the lowest weight $\Lambda$.
It is the diagonalizable module induced from the one-dimensional $\scrW_{\geqslant 0}$-module spanned by a primitive vector $|\Lambda\rangle$.
Let $V(\Lambda)$ be the top of $M(\Lambda)$. It is an irreducible diagonalizable module.
We call $\Lambda$ the \emph{lowest weight} and $\Lambda(C_0)$ the \emph{level} of $M(\Lambda)$, $V(\Lambda)$.
We call $|\Lambda\rangle$ the highest weight vectors of $M(\Lambda)$, $V(\Lambda)$.

\medskip

\subsection{The loop operators on the center and the cocenter} 
For each $r,n\in\bbN$ with $r\neq 0$ we set $\calC_n^r=R^r(n)\grproj$.
Write $\calC^r=\bigoplus_n\calC^r_n$.
The restriction and induction functors form an adjoint pair $(F,E)$ with
$$\begin{aligned}
&E: R^r(n+1)\grmod\to R^r(n)\grmod,\quad N\mapsto N,\\
&F: R^r(n)\grmod\to R^r(n+1)\grmod,\quad 
M\mapsto R^r(n+1)\otimes_{R^r(n)}M.\\
\end{aligned}$$

Let $\vep:FE\to 1$ and $\eta:1\to EF$ be the counit and unit of the adjoint pair $(F,E)$.
They are represented respectively by the multiplication map $\mu$ and the canonical map $\iota$
$$\varepsilon :R^r(n)\otimes_{R^r(n-1)}R^r(n)\to R^r(n),$$
$$\eta :  R^r(n)\to R^r(n+1).$$

\begin{proposition} 
(a) The pair $(E,F)$ is adjoint with the counit $\hat\vep:EF\to 1$ and the unit $\hat\eta:1\to FE$ represented by the morphisms 
$\hat\vep: R^r(n+1)\to R^r(n)$, 
$\hat\eta: R^r(n)\to R^r(n)\otimes_{R^r(n-1)}R^r(n)$ 
such that
$\hat\vep(z)=p_{r-1}(z)$ and
$\hbar\,\hat\eta(1)=\pi(x^{r}_{n+1})$.

(b) The $\bfk$-algebra $R^r(n)$ is a symmetric algebra. The symmetrizing form $t_{r,n}:R^r(n)\to\bfk$ is the unique $\bfk$-linear map
sending the element $x_1^{r_1}\dots x_n^{r_n}w$ to 1 if $r_1=\cdots=r_n=r-1$  and $w=1$ and to 0 otherwise.
We have $t_{r,n}=\hat\vep\circ\cdots\circ\hat\vep$ ($n$ times).

\end{proposition}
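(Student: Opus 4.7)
My plan is to follow the model of Theorem \ref{thm:Kbiadjoint} adapted to the Jordan quiver setting, since Proposition \ref{Brundan}(c) is the exact analog of the decomposition used there.

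For part (a), the decomposition in Proposition \ref{Brundan}(c) provides the maps $\pi$ and $p_k$ used to define $\hat\vep(z)=p_{r-1}(z)$ and $\hat\eta$ on $1$ by $\hbar\,\hat\eta(1)=\pi(x_{n+1}^r)$. The fact that $\pi(x_{n+1}^r)$ is divisible by $\hbar$ inside the bimodule $R^r(n)\otimes_{R^r(n-1)}R^r(n)$ follows by induction on $r$, using the commutation relation $\tau_n x_{n+1}=x_n\tau_n+\hbar$ to rewrite $x_{n+1}^r$ modulo the free summand $\bigoplus_{k<r}R^r(n)\,x_{n+1}^k$. Once $\hat\eta$ is defined, verification of the zigzag identities $(\hat\vep E)\circ (E\hat\eta)=\id_E$ and $(F\hat\vep)\circ (\hat\eta F)=\id_F$ reduces to applying the uniqueness of the decomposition in Proposition \ref{Brundan}(c) to specific elements, following line-by-line the argument of Kang-Kashiwara used in the cyclotomic QHA case.

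For part (b), the strategy is to invoke the general fact (from the earlier section on symmetric algebras) that compositions of symmetrizing forms are symmetrizing forms. The key technical step is to show that $\hat\vep:R^r(n+1)\to R^r(n)$ is a relative symmetrizing form: the Frobenius property follows formally from the biadjunction established in part (a), and the symmetry $\hat\vep(aa')=\hat\vep(a'a)$ for $a\in R^r(n+1)$ and $a'$ in the centralizer $(R^r(n+1))^{R^r(n)}$ reduces, by the structure of the centralizer which is generated by $x_{n+1}$ over the center of $R^r(n)$, to checking $\hat\vep(z\,x_{n+1}^k)=\hat\vep(x_{n+1}^k\,z)$ for all $z\in R^r(n+1)$ and all $k\in[0,r-1]$. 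Since $R^r(0)=\bfk$ is trivially symmetric over itself with the identity form, iterating yields that $t_{r,n}=\hat\vep\circ\cdots\circ\hat\vep$ ($n$ times) is a symmetrizing form on $R^r(n)$.

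The explicit formula for $t_{r,n}$ is then verified by computing its value on the $\bfk$-basis of Proposition \ref{Brundan}(a) via repeated application of $\hat\vep$. At each step, applying $\hat\vep:R^r(m)\to R^r(m-1)$ to an element of the form $x_1^{r_1}\cdots x_m^{r_m}w$ kills it unless the element sits in the free summand $R^r(m-1)\cdot x_m^{r-1}$ of the decomposition of Proposition \ref{Brundan}(c); this happens precisely when $r_m=r-1$ and $w\in\frakS_m$ does not involve the simple reflection $s_{m-1}$, in which case the result is $x_1^{r_1}\cdots x_{m-1}^{r_{m-1}}w$ with $w\in\frakS_{m-1}$. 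Iterating downward forces every $r_i=r-1$ and $w=1$, giving the stated formula. The main obstacle will be the verification of the symmetry property of $\hat\vep$ at the level $R^r(n+1)\to R^r(n)$, which requires a careful manipulation of the commutation relations to move $x_{n+1}^k$ past an arbitrary element while tracking the projection onto the $x_{n+1}^{r-1}$-component modulo $R^r(n)\tau_n R^r(n)$; this is essentially a calculation internal to the decomposition of Proposition \ref{Brundan}(c) and is the only part of the argument that is not a formal consequence of part (a).
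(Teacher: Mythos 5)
Your proposal is correct in spirit, but it takes a genuinely different route from the paper, which proves nothing here at all: part (a) is disposed of with a citation to Kleshchev's book \cite[lem.~5.7.2]{Kl}, and part (b) with a citation to Brundan--Kleshchev \cite[thm.~A.2]{BK08}. You are essentially offering to re-derive what those references establish, using the Jordan-quiver analog of the machinery the paper itself sets up in Appendix A for the cyclotomic quiver-Hecke case. This is a legitimate and more self-contained path, and the structural parallel you draw (Proposition \ref{Brundan}(c) playing the role of Theorem \ref{thm:KK}, and the iterated-$\hat\vep$ construction mirroring Definition \ref{def:symmetrizing}) is exactly the right one.

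Two points in your sketch are thinner than the rest and are where the actual work would be. First, the divisibility of $\pi(x_{n+1}^r)$ by $\hbar$ in the free $\bfk$-module $R^r(n)\otimes_{R^r(n-1)}R^r(n)$ is a real claim, not automatic from degree bookkeeping; it is true (one can check $r=1$ directly: $\pi(x_{n+1})=\hbar\sum_{j\leq n}(j,n)\otimes(j,n)$), but the ``induction on $r$'' you invoke would more naturally be an induction on $n$ pushed through the chain $\iota\colon R^r(n)\to R^r(n+1)$, as in Lemma \ref{lem:recrec}(a) of the appendix. Second, your reduction of the symmetry of $\hat\vep$ to the case $a'=x_{n+1}^k z$ with $z\in\Z(R^r(n))$ presupposes that the centralizer of $R^r(n)$ in $R^r(n+1)$ is generated by $x_{n+1}$ over $\Z(R^r(n))$; that is a standard fact in the cyclotomic degenerate setting but deserves its own reference or proof. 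Neither of these is a gap in the sense of a wrong turn, but both are exactly the computations the cited references carry out, so a complete write-up would not save much over simply citing \cite{Kl} and \cite{BK08} as the paper does.
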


\begin{proof} See \cite[lem.~5.7.2]{Kl} for $(a)$ and
\cite[thm.~A.2]{BK08} for $(b)$.

\end{proof}

\smallskip

We have $\Tr(\calC^r/\bbZ)=\bigoplus_n\Tr(R^r(n))$.
We equip $\Tr(\calC^r/\bbZ)$ with the $\bbZ^2$-grading such that
$\Tr(R^r(n))$ has the weight $2n$ and the order given by the degree of elements of $R^r(n)$.
For each $k\in\bbN$, we define $\bfk$-linear operators 
$x_{k}^\pm$ on $\Tr(\calC^r/\bbZ)=\bigoplus_n\Tr(R^r(n))$ of weights $\pm 2$ and order $2k$ such that the maps
$$x_{k}^+:  \Tr(R^r(n))\to\Tr(R^r(n-1)),\qquad
x_{k}^-:  \Tr(R^r(n))\to\Tr(R^r(n+1)), $$
are given, for each $f\in\End_{\calC^r}(a)$ and $a\in \calC^r$, by
$$x_{k}^+(\Tr(f))=\Tr(x^k(a)\circ(E(f))),\quad
x_{k}^-(\Tr(f))=\Tr(x^k(a)\circ(F(f))).$$ 
Here $x\in\End(F)$ is represented by the right multiplication by $x_{n+1}$ on $R^r(n+1)$, and $x\in\End(E)$ is the left transposed endomorphism.

\smallskip

\begin{theorem} \label{thm:rep-cat}
The assignment $C_k\mapsto p_k(y_1,\dots,y_r)$,
$D_{\mp 1,k}\mapsto x^\pm_k$ defines a representation of level $r$ of $\scrW$ on $\Tr(\calC^r/\bbZ)'$.
\end{theorem}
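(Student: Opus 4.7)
The plan is to follow the template used in Theorem \ref{thm:action} for the $L\frakg$--action, adapted from the Kac--Moody to the Heisenberg setting where $\scrW$ replaces $L\frakg$. Throughout the proof one works after inverting the denominators needed to define $\Tr(\calC^r/\bbZ)'$, which allows one to simultaneously diagonalize the Jucys--Murphy elements and to treat $D_{0,k+1}$ as a well--defined operator.

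\textbf{Step 1: Constructing the $D_{0,k+1}$ and $C_k$ operators.} The elements $C_k$ act by multiplication by the scalar $p_k(y_1,\dots,y_r)\in\bfk$; since $y_p\in\bfk$ is central, they automatically lie in the centre of the image and satisfy relation $(g)$. In particular $C_0=r$, yielding the claimed level. For $D_{0,k+1}$ I would let it act on $\Tr(R^r(n))'$ by multiplication by (a suitable normalisation of) the power sum $p_{k+1}(x_1,\dots,x_n)$, viewed as an element of $\Z(R^r(n))^{\text{JM}}$ via Proposition \ref{Brundan}$(e)$. Relation $(a)$ is then immediate from commutativity of the Jucys--Murphy centre, and relation $(b)$ follows from a one--step commutation of $p_{k+1}(x)$ with $x_{n+1}^l$ through $\tau_n$, using $\tau_n x_{n+1}-x_n\tau_n=\hbar$.

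\textbf{Step 2: The Serre--type relations $(c),(d),(e)$.} These I would verify at the categorical level, by evaluating the corresponding commutators of morphisms of functors $F^{\otimes m}\to F^{\otimes m}$ (resp.\ $E^{\otimes m}$) directly in the QHA $R(m)$, as in Appendix B of the paper for the Kac--Moody case. The key input is that multiplication by $x_{n+1}$ on $R^r(n+1)$ represents $x\in\End(F)$, and the nilpotent braid--type identities $(c)$--$(e)$ reduce, via Lemma \ref{lem:traceop}, to polynomial identities in $x_1,\dots,x_n$ and the involutive $\tau_j$'s in $R(n)$, which is the degenerate affine Hecke algebra of $\frakS_n$. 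The Serre--like annihilation $(e)$ corresponds to $\tau_k^2=1$ together with the relation $\tau_k x_k \tau_k = x_{k+1}-\hbar\tau_k$.

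\textbf{Step 3: The key relation $(f)$ and identification of $E(z)$.} This is the main obstacle and is where the cyclotomic ideal $\prod_p(x_1-y_p)$ enters. I would compute $[x^-_k,x^+_l]$ on $\Tr(R^r(n))'$ using the two adjunctions $(E,F)$ and $(F,E)$, i.e.\ the unit/counits $\vep,\eta,\hat\vep,\hat\eta$ given in the first proposition of Section 4.3. By the Mackey decomposition of Proposition \ref{Brundan}$(c)$,
\[
R^r(n+1)=R^r(n)\tau_n R^r(n)\oplus\bigoplus_{j=0}^{r-1}R^r(n)x_{n+1}^j,
\]
so the natural transformation $\sigma:FE\to EF$ arising from $\tau_n$ accounts for the ``Mackey bulk''. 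When one takes the trace of the commutator, the $\sigma$--contribution cancels by cyclicity (this is the content of Lemma \ref{lem:traceop}$(b)$); what remains is a finite sum over the polynomial cokernel $\bigoplus_{j=0}^{r-1}R^r(n)x_{n+1}^j$. Expanding the generating series $\sum_{k,l}[x^-_k,x^+_l]z^kw^l$ and collecting powers produces a rational expression in $z,w$ whose non--regular part, after specialising $w=z$, yields precisely the series $\gamma_\hbar(D_{0,k+1})+\gamma_{-\hbar}(D_{0,k+1})$ appearing in \eqref{E(z)}: the two shifts by $\pm\hbar$ come from the two occurrences of the Hecke--type relation $\tau_n x_{n+1}-x_n\tau_n=\hbar$ in the expansion of $\mu_{\tau_n}\circ\hat\eta$ resp.\ $\hat\vep\circ(\text{ind})$. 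The residual constant term is controlled by the cyclotomic condition $\prod_{p=1}^r(x_1-y_p)=0$, whose logarithmic derivative outputs exactly $\sum_k C_k z^k=\sum_k p_k(y)z^k$.

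\textbf{Step 4: Assembly and checking the level.} Once $(a)$--$(g)$ are verified on generators, the assignment extends to a Lie algebra homomorphism $\scrW\to\End_{\bfk'}(\Tr(\calC^r/\bbZ)')$, giving the desired representation. Because $C_0\mapsto p_0(y_1,\dots,y_r)=r$, the level equals $r$. The main technical obstacle is Step 3: matching the commutator $[x^-_k,x^+_l]$ with the specific polynomial expression for $E(z)$ in $(f)$, which fixes both the normalisation of $D_{0,k+1}$ and the appearance of $p_k(y)$ as the constant correction.
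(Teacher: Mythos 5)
Your overall plan mirrors the paper's proof: check the Serre-type relations $(c),(d),(e)$ by hand in the degenerate affine Hecke algebra, define $D_{0,k+1}$ via multiplication by power-sums of Jucys--Murphy elements, and use the Mackey decomposition $R^r(n+1)=R^r(n)\tau_nR^r(n)\oplus\bigoplus_{j=0}^{r-1}R^r(n)x_{n+1}^j$ to compute $[x^+_k,x^-_l]$. That much is correct and is exactly the paper's strategy.

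However, Step 3 contains a genuine error and a genuine gap. The error: you assert that the $\sigma$-contribution (the Mackey bulk $R^r(n)\tau_nR^r(n)\simeq FE$) ``cancels by cyclicity,'' leaving only the polynomial cokernel. That is not what happens. When the isomorphism $\rho:EF\simeq FE\oplus 1^{\oplus r}$ is used to transport $x^kx^l\in\End(EF)$, the restriction to the $FE$-summand is \emph{not} $x^lx^k$; it equals $x^lx^k$ plus a correction term involving $\hat\eta$ and $\vep$ (concretely, $\hbar\sum_{a=0}^{k+l-1}\pi(x_{n+1}^{k+l-1-a})\vep(1\otimes x_n^a)$). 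After taking traces the naive part cancels against $-x^-_lx^+_k$, but the correction term survives and combines with the cokernel contribution to give $E_{k+l}=\sum_{s=0}^{k+l}(r-s)B_+^{k+l-s}B_-^s$. Without this you cannot even see that $[x^+_k,x^-_l]$ depends only on $k+l$. The gap: you do not establish the identity
$$B_-(z)=\prod_{k=1}^n\Bigl(1-\frac{\hbar^2z^2}{(1-zx_k)^2}\Bigr)\prod_{p=1}^r(1-zy_p),$$
which is what converts $E(z)=r-z\frac{\d}{\d z}\log B_-(z)$ into the required expression $r+\sum_{k\geqslant1}(\gamma_\hbar(D_{0,k+1})+\gamma_{-\hbar}(D_{0,k+1})+p_k(y))z^k$. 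Proving it needs the recursion $B_{-,n}(z)=(1-\hbar^2z^2(1-zx_n)^{-2})B_{-,n-1}(z)$, which in turn rests on the non-obvious invariance $\hat\vep(\tau_na\tau_n)=\hat\vep(a)$ of the Frobenius form. None of this is in your sketch; it is precisely the content of the paper's Lemma \ref{lem:5.4}.
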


\begin{proof}
First, we check that the operators $x_0^-,$ $x_1^-,$ $x_2^-,$ $x_3^-$ satisfy the relations $(c)$, $(e)$ of $D_{10},$ $ D_{11},$ $ D_{12},$ $ D_{13}$
in the definition of $\scrW$.
For each $k\in\bbN$ and $f\in R^r(n)$ we have
$x_k^-(\Tr(f))=\Tr(fx_{n+1}^k)$, where the element $f$ in the right hand side is identified with its image by the map $$\iota:R^r(n)\to R^r(n+1).$$

Thus, using the relations
$\tau_{n+1}x_{n+2}\tau_{n+1}=x_{n+1}+\hbar\tau_{n+1}$ and $\tau_{n+1}^2=1$ we get
$$\begin{aligned}
\ [x_1^-,x_0^-](\Tr(f))&=\Tr(fx_{n+2}-fx_{n+1})\\
&=\Tr(fx_{n+2}\tau_{n+1}^2-fx_{n+1})\\
&=\Tr(f\tau_{n+1}x_{n+2}\tau_{n+1}-fx_{n+1})\\
&=\hbar\,\Tr(f\tau_{n+1})
\end{aligned}$$
Similarly, using the relations 
$$\begin{aligned}
\tau_{n+1}x_{n+1}x_{n+2}^2\tau_{n+1}=x_{n+1}^2x_{n+2}+\hbar x_{n+1}x_{n+2}\tau_{n+1},\\
\tau_{n+1}x_{n+2}^3\tau_{n+1}=x_{n+1}^3+\hbar (x_{n+2}^2+x_{n+1}x_{n+2}+x_{n+1}^2)\tau_{n+1}
\end{aligned}$$
we deduce that
$$\begin{aligned}
&[x_2^-,x_1^-](\Tr(f))=\hbar\,\Tr(fx_{n+1}x_{n+2}\tau_{n+1}),\\
&[x_3^-,x_0^-](\Tr(f))=\hbar\,\Tr(f(x_{n+2}^2+x_{n+1}x_{n+2}+x_{n+1}^2)\tau_{n+1}).
\end{aligned}$$
The relation $(e)$ follows from
$$\begin{aligned}
\ [x_0^-,[x_0^-,x_1^-]](\Tr(f))&=\hbar\,\Tr(f(\tau_{n+2}-\tau_{n+1})),\\
&=\hbar\,\Tr(f(\tau_{n+2}\tau_{n+1}^2-\tau_{n+1}\tau_{n+2}^2)),\\
&=\hbar\,\Tr(f(\tau_{n+1}\tau_{n+2}\tau_{n+1}-\tau_{n+2}\tau_{n+1}\tau_{n+2})),\\
&=0.
\end{aligned}$$
To prove $(c)$ we introduce the element $\varphi_l=(x_l-x_{l+1})\tau_l+\hbar$.
We have
$$\varphi_l^2=\hbar^2-(x_l-x_{l+1})^2,\quad
\varphi_l x_k=x_{s_l(k)}\varphi_l,\quad
\varphi_l \tau_l=-\tau_l\varphi_l.$$
We deduce that
$$\begin{aligned}
\ &(3[x_2^-,x_1^-]-[x_3^-,x_0^-]+\hbar^2[x_1^-,x_0^-])(\Tr(f))=\\
&=\hbar\,\Tr(f(3x_{n+1}x_{n+2}-x_{n+2}^2-x_{n+1}x_{n+2}-x_{n+1}^2-\hbar^2)\tau_{n+1})\\
&=\hbar\,\Tr(f((x_{n+1}-x_{n+2})^2-\hbar^2)\tau_{n+1})\\
&=-\hbar\,\Tr(f\varphi_{n+1}^2\tau_{n+1})\\
&=-\hbar\,\Tr(f\varphi_{n+1}\tau_{n+1}\varphi_{n+1})\\
&=\hbar\,\Tr(f\varphi_{n+1}^2\tau_{n+1})\\
&=0.
\end{aligned}$$
We prove that the operators $x_0^+,$ $x_1^+,$ $x_2^+,$ $x_3^+$ satisfy the relations $(d)$, $(e)$ of $D_{-1,0},$ $ D_{-1,1},$ $ D_{-1,2},$ $ D_{-1,3}$ in a similar way.

Next, we prove the relations $(a)$, $(b)$ and $(f)$. To do so, for each $l\geqslant 0$, consider the element $p_l(x_1,\dots,x_n)\in \Z(R^r(n))$ given by
$p_l(x_1,\dots,x_n)=\sum_{i=1}^nx_i^l$ if $n>0$ and
$0$ if $n=0.$ 
Let $x^0_{l+1}$ be the $\bfk$-linear operator on $\Tr(\calC^r/\bbZ)$ given by
\begin{equation}\label{x0}
x_{l+1}^0(\Tr(f))=\hbar\,\Tr(fp_l(x_1,\dots,x_n)),\quad\forall f\in R^r(n).
\end{equation}
Then, under the assignment $D_{0,k+1}\mapsto x^0_{k+1},$ the defining relation $(a)$ of
$\scrW$ is obviously satisfied. Let us concentrate on $(b)$.
We have
\begin{align*}
[x^0_{l+1}, x^-_{k}](\Tr(f))&=\hbar\,\Tr(f(x_{n+1}^kp_l(x_1,\dots,x_{n+1})-x_{n+1}^kp_l(x_1,\dots,x_{n})))\\
&=\hbar\,x^-_{l+k}(\Tr(f)).
\end{align*}
The relation
$[x^0_{l+1}, x^+_{k}]=\hbar\,x^+_{l+k}$ is proved in a similar way.

Finally, let us prove the relation $(f)$ in the definition of the Lie algebra $\scrW$. 
Let $$\hat\vep: R^r(n+1)\to R^r(n),\qquad 
\hat\eta: R^r(n)\to R^r(n)\otimes_{R^r(n-1)}R^r(n)$$ be as above.
For each $k\in\bbN$ we consider the following elements in $\Z(R^r(n))$
$$\begin{aligned}
B_{+,n}^k&=\hat\vep(x^{r-1+k}_{n+1}),\\
B_{-,n}^k&=\begin{cases} 
-\hbar^2\mu_{x_n^{-r-1+k}}(\hat\eta(1)) &\text{ if } k\geqslant r+1,\\
-p_{r-k}(x_{n+1}^r) &\text{ if } 1\leqslant k\leqslant r,\\
1 &\text{ if } k=0.
\end{cases}
\end{aligned}$$
We may abbreviate $B_\pm^k=B_{\pm,n}^k$.
Consider the formal series $B_\pm(z)=\sum_{k\in\bbN}B_\pm^k\,z^k$.
For each $k,l\in\bbN$ we define the operator $E_{k,l}=[x^+_{k},x^-_{l}]$
on $\Tr(\calC^r/\bbZ)'$.

\smallskip

\begin{lemma}\label{lem:5.4} The following hold

(a) $\hat\vep(\tau_n\,a\,\tau_n)=\hat\vep(a)$ for each $a\in R^r(n)$,

(b) $B_+(z)\,B_-(z)=1$,

(c) $E_{k+l}:=E_{k,l}$ depends only on $k+l$ and we have
$E_l=\sum_{k=0}^l(r-k)\,B_+^{l-k}B_-^k,$

(d) $E(z)=r-z\,\d/\d z\log B_-(z)$,

(e) $B_-(z)=\prod_{k=1}^n(1-\hbar^2\,z^2/(1-zx_k)^{2})\prod_{p=1}^r(1-zy_p)$.
\end{lemma}

\begin{proof}
For part $(a)$, note that $a=\mu_{\tau_{n-1}}(\pi(a))+\sum_{k=0}^{r-1}p_k(a)\,x^k_{n}.$
Thus, a direct computation yields
\begin{align*}
\tau_na\tau_n
&=\mu_{\tau_n\tau_{n-1}\tau_n}(\pi(a))-\sum_{k=0}^{r-1}p_k(a)\hbar\sum_{s+t=k-1}x_n^s\tau_nx_n^t+
\sum_{k=0}^{r-1}p_k(a)x_{n+1}^k-\\
&\qquad
-\hbar^2\sum_{k=0}^{r-1}p_k(a)\sum_{t=0}^{k-1}\sum_{a=0}^{t-1}x_n^{k-1-t+a}x_{n+1}^{t-1-a}.
\end{align*}
We deduce that $p_{r-1}(\tau_n\,a\,\tau_n)=p_{r-1}(a)$.

\smallskip

Next, a similar computation as in the proof of Lemma \ref{lem:recrec} $(a)$, $(c)$ yields
\begin{align}
\pi(x_{n+1}^k)&=\sum_{s=0}^{k-r}\big(\hat\vep(x_{n+1}^{k-s})\otimes x^s_n\otimes 1\otimes 1\big)\pi(x_{n+1}^r),\label{rr:pipi}\\
p_a(x_{n+1}^k)&=\sum_{s=0}^{r-a-1}B_{+,n}^{k-a-s}B_{-,n}^s,\label{rr:pp}
\end{align}
and the equality in part $(b)$. 

The proof of part $(c)$ is similar to Proposition \ref{prop:B5}(b), we briefly indicate the key steps. First, the isomorphism in Proposition \ref{Brundan}$(c)$ represents an isomorphism of functors $\rho: EF1_n\to G$ with $G=FE1_n\oplus 1_n^{\oplus r}$. By Lemma \ref{lem:traceop} we have 
$$x^+_{k}x^-_{l}=\Tr_{EF}(x_i^kx_i^l)=\Tr_{G}(\rho^{-1}(x^kx^l)\rho$$
and it is equal to the sum of the trace of $\rho^{-1}(x^kx^l)\rho$ restricted to each direct factor of $G$. The restriction of $\rho^{-1}(x^kx^l)\rho$ to $FE1_n$ is represented by
\begin{align*}
R^r(n)\otimes_{R^r(n-1)}R^r(n)\to R^r(n)\otimes_{R^r(n-1)}R^r(n),\quad
z\mapsto \pi(x_{n+1}^k\mu_{\tau_n}(z)x_{n+1}^l).
\end{align*}
We have
 \begin{align*}
 \pi(x_{n+1}^k\mu_{\tau_n}(z)x_{n+1}^l)=\mu_{x_n^l\tau_nx_n^k}(z)+\hbar\sum_{a=0}^{k+l-1}\mu_{x_n^a}(z)\pi(x_{n+1}^{k+l-1-a})
\end{align*}
Hence the restriction of $\rho^{-1}(x^kx^l)\rho$ to $FE1_n$ is the endomorphism $$x^lx^k+\hbar\sum_{a=0}^{k+l-1}\pi(x_{n+1}^{k+l-1-a})\vep(1\otimes x_n^a),$$
and by \eqref{rr:pipi} its trace is equal to
\begin{align*}
x^-_{l}x^+_{k}+\sum_{s=r+1}^{k+l}(r-s)B^{l+k-s}_{+,n}B^{s}_{-,n}.
\end{align*}
The restriction of $\rho^{-1}(x^kx^l)\rho$ to the $a$-th copy of $1_n$ is represented by the map
$R^r(n)\to R^r(n),$ $z\mapsto zp_a(x_{n+1}^{k+l+a}).$
By \eqref{rr:pp} it is equal to
$\sum_{s=0}^{r-1-a}B^{k+l-s}_{+,n}B^{s}_{-,n}.
$
We conclude that
\begin{align*}
x^+_{k}x^-_{l}&=x^-_{l}x^+_{k}+\sum_{s=r+1}^{k+l}(r-s)B^{l+k-s}_{+,n}B^{s}_{-,n}+\sum_{a=0}^{r-1}\sum_{s=0}^{r-1-a}B^{k+l-s}_{+,n}B^{s}_{-,n}\\
&=x^-_{l}x^+_{k}+\sum_{s=0}^{k+l}(r-s)B^{l+k-s}_{+,n}B^{s}_{-,n}.\end{align*}

\smallskip

To prove $(d)$, note that, using $(b)$, we get
\begin{align*}
-z\,\d/\d z\log B_-(z)
=-\big(\sum_{k\geqslant 0}kB_-^kz^k\big)\big(\sum_{k\geqslant 0}B_+^kz^k\big)
=\sum_{l\geqslant 0}\sum_{k=0}^l\big(-kB^k_-B^{l-k}_+\big)z^l=-r+E(z).
\end{align*}
Finally we concentrate on $(e)$. We have
\begin{align*}
\tau_nx_n^k\tau_n=x_{n+1}^k-\hbar\sum_{p+q=k-1}x_n^p\tau_nx_n^q-\hbar^2\sum_{a=0}^{k-2}(a+1)x_n^ax_{n+1}^{k-2-a}.
\end{align*}
Using $(a)$, we deduce that
$$\hat\vep(x_n^k)=\hat\vep(\tau_nx_n^k\tau_n)=\hat\vep(x_{n+1}^k)-\hbar^2\sum_{a=0}^{k-2}(a+1)x_n^a\hat\vep(x_{n+1}^{k-2-a}).$$
Thus, we have
$$B_{+,n-1}^{k-r+1}=B_{+,n}^{k-r+1}-\hbar^2\sum_{a=0}^{k-2}(a+1)x_n^aB_{+,n}^{k-2-a-r+1}.$$
This yields $$B_{+,n-1}(z)=(1-\hbar^2z^2(1-zx_n)^{-2})B_{+,n}(z).$$
Hence by $(b)$ we get
$$B_{-,n}(z)=(1-\hbar^2z^2(1-zx_n)^{-2})B_{-,n-1}(z).$$
for $n\geqslant 1$. By induction it remains to compute $B_{-,0}(z)$. Since 
$$x_1^r=-\sum_{a=1}^{r}(-1)^ae_a(y_1,...,y_r)x_1^{r-a},$$ 
we have 
$$\pi(x_1^r)=0,\qquad B_{-,0}^a=(-1)^ae_a(y_1,...,y_r)\quad\forall a\in[1,r],\qquad B_{-,0}^a=0\quad\forall a>r.$$ 
Therefore, we have $B_{-,0}(z)=\prod_{a=1}^r(1-zy_a)$.

\end{proof}

\smallskip

We can now finish the proof of the relation $(f)$ of $\scrW$.
According to Lemma \ref{lem:5.4}, the formal series $E(z)=\sum_{l\geqslant 0}E_l\,z^l$ is given by
\begin{align*}
E(z)=r-z\,\text{d}/\text{d}z\log\big( \prod_{p=1}^r(1-zy_p)\,\prod_{k=1}^n\big(1-z(x_k-\hbar)\big)\big(1-z(x_k+\hbar)\big)\big(1-zx_k\big)^{-2}\big).
\end{align*}
Comparing this with formula \eqref{x0} and the identity
\begin{equation}\label{formule1}\sum_{k\geqslant 1}(za)^k=-z(\text{d}/\text{d}z)\log(1-za),
\end{equation}
we deduce that 
$$E(z)=r-\sum_{k\geqslant 1}\sum_{p=1}^n\Big(2x_p^k-(x_p-\hbar)^k-(x_p+\hbar)^k\Big)\,z^k+\sum_{k\geqslant 1}\sum_{p=1}^ry_p^k\,z^k.$$
This implies that
\begin{equation}\label{E(z)1}
\begin{split}
\begin{aligned}
&E(z)=r+\sum_{k\geqslant 1}\big(\gamma_\hbar(D_{0,k+1})+\gamma_{-\hbar}(D_{0,k+1})+p_k(y_1,\dots,y_r)\big)\,z^k,\\
&\gamma_t(D_{0,k+1})=\sum_{p=1}^{k}(\begin{smallmatrix}k\\p\end{smallmatrix})\, D_{0,k-p+1}\,t^{p-1}.
\end{aligned}
\end{split}
\end{equation}
This finishes the proof of the theorem.

\end{proof}

\smallskip

Set $\Z(\calC^r/\bbZ)=\bigoplus_{n}\Z(R^r(n))$.
The symmetrizing form $t_r=\bigoplus_n t_{r,n}$ on $\bigoplus_nR^r(n)'$ yields a non-degenerate $\bfk'$-bilinear form
$$\Z(\calC^r/\bbZ)'\times\Tr(\calC^r/\bbZ)'\to\bfk',\quad(a,b)\mapsto t_r(ab).$$
Taking the transpose with respect to this bilinear form and twisting the action by the anti-involution $\varpi$ in \eqref{varpiW}, we get a representation of 
$\scrW$ on $\Z(\calC^r/\bbZ)'$ of level $r$.
Let $|r\rangle\in \Z(\calC^r/\bbZ)'$ denote the unit of $\Z(R^r(0))'=\bfk'$.
We define a weight $\Lambda_r$ of level $r$ of $\scrW$ by the formula 
\begin{equation}\label{weight1}
\begin{split}
\begin{aligned}
\Lambda_r(C_k)=p_k(y_1,\dots,y_r),\quad
\Lambda_r(D_{0,k+1})=0,\qquad\forall k\geqslant 0.
\end{aligned}
\end{split}
\end{equation}

\begin{proposition}\label{prop:cyclic-Jordan} The following hold

(a) $|r\rangle$ is a primitive vector of $\Z(\calC^r/\bbZ)'$ of weight $\Lambda_r$,

(b) $\Z(\calC^r/\bbZ)'$ is quasifinite of character $\prod_{j\geqslant 1}(1-q^{2j})^{-r}$.

\end{proposition}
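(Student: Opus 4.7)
The plan is to read off both claims directly from Theorem \ref{thm:rep-cat} and Proposition \ref{Brundan}, using that the $\scrW$-action on $\Z(\calC^r/\bbZ)'$ is, by definition, the transpose of the $\varpi$-twist of the action on $\Tr(\calC^r/\bbZ)'$. Concretely, for $X \in \scrW$ and $a \in \Z(\calC^r/\bbZ)'$, $b \in \Tr(\calC^r/\bbZ)'$ one has $t_r((X\cdot a)\,b) = t_r(a\,(\varpi(X)\cdot b))$. Combined with the assignments of Theorem \ref{thm:rep-cat}, this means $C_k$ acts on $\Z$ as multiplication by $p_k(y_1,\dots,y_r)$, $D_{0,k+1}$ acts as the transpose of $x^0_{k+1}$, and $D_{\mp 1,k}$ acts as the transpose of $x^\pm_k$. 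Under the grading $\Z(\calC^r/\bbZ)' = \bigoplus_n \Z(R^r(n))'$, the operators $C_k$ and $D_{0,k+1}$ preserve $n$, $D_{-1,k}$ lowers $n$ by $1$, and $D_{1,k}$ raises it by $1$.

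For part $(a)$ I verify the three primitivity conditions in turn. The vector $|r\rangle$ lies in the $n=0$ component, so for any $k \geqs 0$ the element $D_{-1,k}\cdot|r\rangle$ lies in $\Z(R^r(-1))' = 0$; since the $D_{-1,k}$ generate $\varpi(\scrW_{<0})$ as a Lie algebra, an immediate induction on bracket depth shows that all of $\varpi(\scrW_{<0})$ annihilates $|r\rangle$. The operator $C_k$ acts on the one-dimensional space $\Z(R^r(0))' = \bfk'$ as multiplication by the scalar $p_k(y_1,\dots,y_r) = \Lambda_r(C_k)$, giving the correct eigenvalue. Finally, specializing formula \eqref{x0} to $n=0$ shows $x^0_{k+1}$ vanishes on $\Tr(R^r(0))'$ (because $p_k(x_1,\dots,x_n)$ evaluated on the empty variable list is $0$ for every $k \geqs 0$); hence its transpose $D_{0,k+1}$ kills $|r\rangle$, matching $\Lambda_r(D_{0,k+1}) = 0$. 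This establishes $(a)$.

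For part $(b)$ the key point is to identify the grading $n$ with the eigenspace decomposition of $D_{0,1}/\hbar$. Applying \eqref{x0} with $l=0$ gives $x^0_1(\Tr(f)) = \hbar\,n\,\Tr(f)$ for $f \in R^r(n)$, because $p_0(x_1,\dots,x_n) = n$. Taking transposes, $D_{0,1}$ acts on $\Z(R^r(n))'$ as the scalar $\hbar n$, so $D_{0,1}/\hbar$ is diagonalizable with integer eigenvalues and the eigenspace $V_n$ coincides with $\Z(R^r(n))'$. By Proposition \ref{Brundan}$(d)$, each $\Z(R^r(n))'$ is finite-dimensional over $\bfk'$ with
$$\sum_{n \geqs 0} \dim_{\bfk'} \Z(R^r(n))'\,q^{2n} = \prod_{j \geqs 1}(1-q^{2j})^{-r},$$
so the representation is quasifinite with the stated character.

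There is no serious obstacle: everything reduces to a careful bookkeeping of the transpose-plus-$\varpi$ convention, combined with the explicit formulas for $x^\pm_k$, $x^0_{k+1}$ from the proof of Theorem \ref{thm:rep-cat} and the dimension count of Proposition \ref{Brundan}$(d)$. The one mild subtlety is verifying primitivity against $D_{0,k+1}$, which reduces to the vanishing of the power sum $p_k$ on the empty variable set.
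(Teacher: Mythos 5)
Your proof is correct and takes essentially the same route as the paper's: both parts rest on formula \eqref{x0} (whose specialization to $n=0$ gives the $D_{0,k+1}$-annihilation of $|r\rangle$, since the power sums $p_l$ vanish on the empty variable set) and on the dimension count of Proposition \ref{Brundan}$(d)$. You spell out the transpose-plus-$\varpi$ bookkeeping and the degree shifts more explicitly than the paper's terse two-line proof, and you additionally verify the $C_k$ eigenvalues and the $\varpi(\scrW_{<0})$-annihilation (which the paper leaves implicit), but the content is the same.
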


\begin{proof}
Part $(a)$ follows from the formula \eqref{x0}, which implies that
$x_{l+1}^0(|r\rangle)=0$ for all $l\geqslant 0.$
Part $(b)$ follows from Proposition \ref{Brundan}$(d)$.

\end{proof}

\medskip

\subsection{The cohomology ring of the moduli space of framed instantons}\label{sec:instantons}

Let $\frakM(r,n)$ be the moduli space of framed rank $r$ torsion free sheaves on $\bbP^2$ with fixed
second Chern class $n$. Set $\frakM(r)=\bigsqcup_n\frakM(r,n)$.  First, let us review a few basic facts on $\frakM(r)$.
See \cite[sec.~3]{NY} for more details.

The group $\GL(r)\times\GL(2)$ acts on $\frakM(r)$ in the obvious way :
$\GL(r)$ acts by changing the framing and $\GL(2)$ via the tautological action on $\bbP^2$ which preserves the line at infinity.
Let $T\subset\GL(r)$, $A\subset\GL(2)$ be the maximal tori, and let  $\bbC^\times\subset A$ be the \emph{hyperbolic} torus $\{\diag(t,t^{-1})\,;\,t\in\bbC^\times\}$.
Set $G=T\times \bbC^\times$ and $G_A=T\times A$.

We identify the $\bbZ$-graded $\k$-algebra $H^*_G(\bullet,\k)$ with $\bfk$ in the obvious way. 
Let $\bfh=H^*_{G_A}(\bullet,\k)$ and write $\bfh'$ for the fraction field of $\bfh$.
From now on, let $\k$ be a field of characteristic zero.

The $G_A$-variety $\frakM(r,n)$ is equivariantly formal and smooth of dimension $d_{r,n}=2rn$.
Thus $H^*_G(\frakM(r),\k)$ is a free $\bbZ$-graded $\bfk$-module isomorphic to
$H^*(\frakM(r),\k)\otimes\bfk.$
The $G_A$-action yields an $\alpha$-partition of $\frakM(r)$ into affine spaces in the sense of
DeConcini-Lusztig-Procesi.
We deduce that
\begin{equation}\label{dim-coh}
\begin{split}
\begin{aligned}
\sum_{d,n\geqslant 0}\dim H^d(\frakM(r,n),\k)\,q^{2n}t^d
&=\sum_{n\geqslant 0}\grdim H^*_G(\frakM(r,n),\k)\,q^{2n}\\
&=\prod_{p=1}^r\prod_{i=1}^\infty\big(1-q^{2i}t^{2(ri+p-1-r)}\big)^{-1}.
\end{aligned}
\end{split}
\end{equation}
In particular, note that the odd cohomology of $\frakM(r)$ vanishes, hence
the $\bfk$-algebra $H^*_G(\frakM(r),\k)$ is commutative.
Let $|r\rangle$ be the unit of $H^*_G(\frakM(r,0),\k)\simeq\bfk$.
First, we prove the following.

\smallskip

\begin{proposition}
(a) There is a representation of the Lie $\bfk'$-algebra $\scrW$ on $H^{*}_G(\frakM(r),\k)'$
which is isomorphic to $V(\Lambda_r)$. This representation is quasifinite of character 
$\prod_{j\geqslant 1}(1-q^{2j})^{-r}$.

(b) 
There is a unique isomorphism 
$\psi: \Z(\calC^r/\bbZ)' \to H^{*}_G(\frakM(r),\k)'$ 
of representations of $\scrW'$ which takes the element $|r\rangle$ to $|r\rangle.$

(c)
For each $n\in\bbN,$ the element $\psi(1)\in H^{*}_G(\frakM(r,n),\k)'$ is invertible and the map 
$\phi':\Z(R^r(n))'\to H^{*}_G(\frakM(r,n),\k)'$ given by
$\phi'(\bullet)=\psi(1)^{-1}\cup\psi(\bullet)$ is a $\bfk'$-algebra isomorphism.
\end{proposition}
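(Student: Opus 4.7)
The plan is to follow the strategy that succeeded for the Kac--Moody case in the proof after Theorem \ref{thm:4.5}, adapting it to the Heisenberg/$\scrW$--setting via the Schiffmann--Vasserot construction.

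For part $(a)$, I would import from \cite{SV} the action of $\scrW$ on $H^*_G(\frakM(r),\k)'$ built from Hecke-type correspondences on $\bigsqcup_n\frakM(r,n)$, where $C_k$ acts by cup product with $p_k(y_1,\dots,y_r)$ and $D_{0,k+1}$ acts by cup product with a specific Chern character of the tautological bundle on $\frakM(r,n)$. The point class $|r\rangle\in H^*_G(\frakM(r,0),\k)=\bfk$ is primitive of weight $\Lambda_r$: the operators $D_{1,k}$ lower $n$ and thus kill $|r\rangle$ by dimension reasons ($\frakM(r,-1)=\emptyset$); the central $C_k$ acts by $p_k(y_1,\dots,y_r)$ by construction; and $D_{0,k+1}$ annihilates $|r\rangle$ since the tautological bundle on $\frakM(r,0)$ is zero. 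Quasifiniteness with character $\prod_{j\geqslant 1}(1-q^{2j})^{-r}$ is immediate from \eqref{dim-coh}. It remains to identify the module with $V(\Lambda_r)$. By Proposition \ref{prop:cyclic-Jordan} (and an analogous cyclicity argument for $\Z(\calC^r/\bbZ)'$ which parallels Proposition \ref{prop:cyclic}), the center $\Z(\calC^r/\bbZ)'$ is a cyclic primitive $\scrW$-module of weight $\Lambda_r$ and identical character, hence a quotient of $M(\Lambda_r)$ of the minimal possible character, i.e.\ $V(\Lambda_r)$. The cohomology module, also cyclic with primitive vector of weight $\Lambda_r$ and the same character (by \eqref{dim-coh} and Proposition~\ref{Brundan}$(d)$), must therefore be the same irreducible quotient $V(\Lambda_r)$.

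For part $(b)$, once both modules are identified with the irreducible $V(\Lambda_r)$, existence and uniqueness of a $\scrW'$-linear isomorphism $\psi$ sending $|r\rangle$ to $|r\rangle$ are automatic: the lowest weight space of $V(\Lambda_r)$ is one-dimensional, and $\mathrm{End}_{\scrW'}V(\Lambda_r)=\bfk'$ by Schur's lemma.

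For part $(c)$ I would mimic the proof given after Theorem \ref{thm:4.5}. Introduce the ``Kirwan map'' $a:\Z(R)'\to H^*_G(\frakM(r),\k)'$ sending power sums $\sum_j x_j^k\in \Z(R(n))'$ to the corresponding equivariant Chern characters of the tautological rank-$n$ bundle on $\frakM(r,n)$; let $b:\Z(R)'\to \Z(R^r)'$ be the canonical surjection. Lemma \ref{lem:5.4}$(d),(e)$ rewrites the generating series $E(z)$ as a polynomial in power sums of the Jucys--Murphy elements and the $y_p$, while the formulas in \cite[sec.~9.2]{SV} express the same $E(z)$ on $H^*_G(\frakM(r),\k)'$ as a cup product with exactly the image under $a$ of that polynomial. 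Combining these with $\scrW'$--equivariance of $\psi$ yields the identity
\begin{equation*}
\psi\bigl(b(z)\cdot m\bigr)=a(z)\cup\psi(m),\qquad \forall z\in\Z(R)',\ m\in\Z(\calC^r/\bbZ)'.
\end{equation*}
Setting $m=|r\rangle$ and using surjectivity of $a$ (to be verified using that tautological Chern classes together with the $y_p$ generate $H^*_G(\frakM(r,n),\k)'$), one produces an element mapping to $1$, so $\psi(1)$ is invertible; the same identity then gives $\psi(fg)=a(z_f)\cup\psi(g)$ for $z_f\in\Z(R)'$ with $b(z_f)=f$, and a short bookkeeping shows $\phi'(fg)=\phi'(f)\phi'(g)$.

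The main obstacle will be the surjectivity of the Kirwan map $a$ onto $H^*_G(\frakM(r,n),\k)'$ combined with the precise matching of the $\scrW$--action formulas of \cite{SV} on the cohomology side with the explicit generating function in Lemma \ref{lem:5.4}$(e)$ on the centre side; once this compatibility of $E(z)$ is established, the algebra statement in $(c)$ follows by formal manipulation as in the Kac--Moody case.
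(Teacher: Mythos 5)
Your plan for part $(c)$ is essentially the paper's proof (Kirwan map, compatibility of the $D_{0,k+1}$-action through $\psi$, then deducing invertibility of $\psi(1)$), with a small wrinkle: you announce that you will verify surjectivity of the Kirwan map $a$ and use it to produce an element mapping to $1$, but knowing $a$ is surjective alone does not yield invertibility of $\psi(1)$. The paper instead uses surjectivity of $b$ (Proposition \ref{Brundan}$(d)$) together with bijectivity of $\psi$: since every $m$ in the target is $\psi(b(z))=a(z)\cup\psi(1)$ for some $z$, taking $m=1$ gives an explicit inverse of $\psi(1)$, and surjectivity of $a$ is then a consequence, not a hypothesis.

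The real gap is in your identification of the two modules with $V(\Lambda_r)$ in part $(a)$. You argue: both $\Z(\calC^r/\bbZ)'$ and $H^*_G(\frakM(r),\k)'$ are cyclic primitive $\scrW$-modules of weight $\Lambda_r$ with character $\prod_{j\geqslant 1}(1-q^{2j})^{-r}$, hence both are $V(\Lambda_r)$. This does not follow: two quotients of $M(\Lambda_r)$ with the same character need not be isomorphic, and having character equal to $\prod(1-q^{2j})^{-r}$ is only "minimal" if you already know this is the character of $V(\Lambda_r)$, which you never establish. What you need, and what the paper supplies, is an independent proof that one of the two modules is \emph{irreducible}. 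The paper does this by quoting \cite[thm.~8.33]{SV}: $H^*_{G_A}(\frakM(r),\k)'$ carries an action of $W(\widehat{\frakg\frakl}_r)$, and as such it is a Verma module of generic highest weight \eqref{highest-weight}, which is known to be irreducible; this irreducibility descends to the $\scrW$-module via \cite[thm.~8.22]{SV}. Once $H^*_G(\frakM(r),\k)'\simeq V(\Lambda_r)$ is established, the character of $V(\Lambda_r)$ is known, and the paper's argument for $(b)$ (the submodule $M\subseteq\Z(\calC^r/\bbZ)'$ generated by $|r\rangle$ surjects onto $V(\Lambda_r)$, and the character comparison forces $M=\Z(\calC^r/\bbZ)'\simeq V(\Lambda_r)$) closes the loop without needing to prove cyclicity of $\Z(\calC^r/\bbZ)'$ separately. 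You also gloss over the fact that the \cite{SV} construction lives over the larger ring $\bfh'$ (with independent parameters $x,y$) and must be specialized at $x=-y=\hbar$: the paper explicitly checks that the torus-fixed-point formulas \cite[(3.17), (D.1)--(D.3)]{SV} have no poles on the hyperplane $x=y$, so that the specialization is well-defined, and re-derives the formula \eqref{E(z)2} for $E(z)$ in the specialized setting to confirm it matches the defining relations of $\scrW$.
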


\begin{proof} For each $n,k\in\bbN$, we consider the locus
$$\frakB(r,n+k,n)\subset\frakM(r,n+k)\times\bbC^2\times\frakM(r,n)$$
 of triples $(\calE,x,\calF)$ such that
$\calE\subset\calF$ and $\calF/\calE$ is a length $k$ sheaf supported at $x$.
For each $\gamma\in H^*_G(\frakB(r,n+k,n),\k)$ the correspondence $\frakB(r,n+k,n)$ defines two maps
$$\Theta_+(\gamma):H_{G}^{*}(\frakM(r,n),\k)\to H_{G}^{*}(\frakM(r,n+k),\k),$$
$$\Theta_-(\gamma):H_{G}^{*}(\frakM(r,n+k),\k)\to H_{G}^{*}(\frakM(r,n),\k)'.$$
The map $\Theta_-$ uses localized equivariant cohomology, because the projection
$\frakB(r,n+k,n)\to\frakM(r,n)$ is not proper.

Let $\tau_{n+k,n}$, $\tau_n$ be the \emph{tautological bundles} on $\frakM(r,n+k)\times\frakM(r,n)$, $\frakM(r,n)$.
Write $c_i$ for the $i$-th equivariant Chern class. The obvious map
$$H_{G}^*(\frakM(r,n+k)\times\frakM(r,n),\k)\times H^*_{G}(\bbC^2,\k)\to
H^*_{G}(\frakB(r,n+k,n),\k)$$
is denoted by $(a,b)\mapsto a\otimes b$.

Now, we define the action of $C_k$, $D_{1,k}$, $D_{-1,k}$, $D_{0,k+1}$ on an element of $H_{G}^{*}(\frakM(r,n),\k)'$ by
\begin{equation}\label{rep-geom}
\begin{aligned}
&D_{0,1}=\hbar \,n,\\
&C_k=p_k(y_1,\dots,y_r),\\
&D_{1,k}=-\hbar^{2}\,\Theta_+(c_1(\tau_{n+1,n})^k\otimes 1),\\
&D_{-1,k}=(-1)^{r-1}\,\Theta_-(c_1(\tau_{n+1,n})^k\otimes 1),\\
&\sum_{k\geqslant 0}D_{0,k+2}\,z^k=-\hbar\,(\text{d}/\text{d}z)\log\Big(1+\sum_{k\geqslant 1}c_k(\tau_n)\,(-z)^k\Big)\cup \bullet.
\end{aligned}
\end{equation}
The operators $D_{-1,k}$, $D_{1,k}$, $D_{0,k+1}$ in above are equal to the operators
$\hbar^k\,D_{-1,k}$, $\hbar^k\,D_{1,k}$, $\hbar^{k+1}\,D_{0,k+1}$ in \cite[(3.17)]{SV} respectively.
The reason for this normalization by powers of $\hbar$
is to give the term $\hbar$ in the relations $(b)$, $(c)$, $(d)$ of $\scrW$ in Section \ref{sec:W}, which does not appear in the corresponding relations in \cite{AS}.

By \cite[cor.~3.3]{SV} and \cite{AS}, the formulas \eqref{rep-geom} yield a representation of $\bfh'\otimes_{\bfk'}\scrW$ on 
$$H^{*}_{G_A}(\frakM(r),\k)'=\bfh'\otimes_\bfh H^{*}_{G_A}(\frakM(r),\k).$$
Note that \cite{SV} uses equivariant homology rather than equivariant cohomology, but since $\frakM(r)$ is smooth its equivariant homology and 
cohomology are isomorphic by Poincar\' e duality.
We must check that the formulas \eqref{rep-geom} give indeed a representation of $\scrW$ on $H^{*}_{G}(\frakM(r),\k)'$.

The representation of $\bfh'\otimes_{\bfk'}\scrW$ in \cite{SV} depends on parameters $y_1,\dots,y_r,x,y$ which are generators of the field extension $\bfh'$ of $\k$.
Note that $y_p$ is denoted by the symbol $e_p$ in \cite{SV}.
The representation of $\scrW$ we consider here is a specialization along the hyperplane $x=-y=\hbar$ of some integral form of the representation in \cite{SV}.
We must check that the representation in \cite{SV} specializes effectively.

To prove this, note that the main results of \cite{SV} are obtained by explicit computations in the  $\bfh'$-basis of 
$H^{*}_{G_A}(\frakM(r),\k)'$ formed by the fundamental classes of the fixed points of $\frakM(r)$ under the action of the torus $G_A$.
For these computations it is essential that the fixed points are isolated.
Now, it is well-known, and easy to prove, that the fixed points sets $\frakM(r)^G$ and $\frakM(r)^{G_A}$ are the same.
Indeed,  one can easily check that the explicit formulas for the representation of $\bfh'\otimes_{\bfk'}\scrW$
in \cite[cor.~3.3]{SV} in the basis of $H^{*}_{G_A}(\frakM(r),\k)'$ 
have no poles along the hyperplane $x=y$. This follows from the formulas \cite[(3.17), (D.1)-(D.3)]{SV}.

Note however that the series $E(z)$ in \eqref{E(z)} differs from the corresponding one in \cite[(1.70)]{SV}.
We must check that these formulas are compatible.
To do that, let us review quickly the proof in \cite[p.~326-327]{SV}. Our setting
differs from \cite{SV} because in loc.~cit.~we assumed that both parameters $x$, $y$ are generic, while here we have 
$x=\hbar=-y$ with $\hbar$ generic.

Let $\{a_i\,;\,i\in I\}$ and $\{b_j\,;\,j\in J\}$ be as in \cite[p.~327]{SV}. Then, the computation there 
implies that the element $[D_{-1,k},D_{1,l}]=E_{k+l}$ depends only on $k+l$ and yields the following identity
$$E(z)=\sum_{k\geqslant 0}\Big(\sum_{i\in I}a_i-\sum_{j\in J}b_j\Big)z^k.$$
Fix some formal variables $x_1,x_2,\dots,x_n$ such that $c_i(\tau_n)=e_i(x_1,x_2,\dots,x_n)$ for each $i\in[1,n]$.
Then, the same argument as in \cite[p.~327]{SV} using \cite[lem.~D.1]{SV} implies that
$$E(z)=r-\sum_{k\geqslant 1}\sum_{p=1}^n\Big(2x_p^k-(x_p-\hbar)^k-(x_p+\hbar)^k\Big)\,z^k+\sum_{k\geqslant 1}\sum_{p=1}^ry_p^k\,z^k.$$
We deduce that, see \eqref{E(z)1},
\begin{equation}\label{E(z)2}
\begin{split}
\begin{aligned}
&E(z)=r+\sum_{k\geqslant 1}\big(\gamma_\hbar(D_{0,k+1})+\gamma_{-\hbar}(D_{0,k+1})+p_k(y_1,\dots,y_r)\big)\,z^k,\\
&\gamma_t(D_{0,k+1})=\sum_{p=1}^{k}(\begin{smallmatrix}k\\p\end{smallmatrix})\, D_{0,k-p+1}\,t^{p-1}.
\end{aligned}
\end{split}
\end{equation}

We have proved that the formulas in
\eqref{rep-geom} define a representation of $\scrW$ on $H^{*}_G(\frakM(r),\k)'$.
Now, we must check that this representation is irreducible and is isomorphic to $V(\Lambda_r)$.

The irreducibility follows from the main result of \cite{SV}. More precisely, it is proved in \cite[thm.~8.33]{SV} that the representation of
$\scrW$ on $H^{*}_{G_A}(\frakM(r),\k)'$ gives rise to a representation of the $W$-algebra of the affine Kac-Moody algebra $\widehat{\frakg\frakl}_r$
on $H^{*}_{G_A}(\frakM(r),\k)'$. See, e.g., \cite{A} and \cite{FKRW} for some background on $W(\widehat{\frakg\frakl}_r)$.
It is also proved there that the $W(\widehat{\frakg\frakl}_r)$-module $H^{*}_{G_A}(\frakM(r),\k)'$ is isomorphic to the Verma module
with highest weight and level given respectively by
\begin{equation}\label{highest-weight}
a/x-\rho\,(1+y/x)\quad\text{and}\quad -y/x-r.
\end{equation}
Here we have set $a=(y_1,y_2,\dots,y_r)$ and $\rho=(0,-1,\dots,1-r)$.
Then, the irreducibility of $H^{*}_{G_A}(\frakM(r),\k)'$ as a $W(\widehat{\frakg\frakl}_r)$-module is well-known, because a Verma module with a generic highest weight is irreducible.
The same argument proves that $H^{*}_{G}(\frakM(r),\k)'$ is irreducible as a $W(\widehat{\frakg\frakl}_r)$-module.
To prove that it is also irreducible as a $\scrW$-module use \cite[thm.~8.22]{SV} as in \cite[cor.~8.29]{SV}.

Next, we must identify the representation of $\scrW$ on $H^{*}_G(\frakM(r),\k)'$ with $V(\Lambda_r)$.
To do that, it is enough to prove that 
the element $|r\rangle$ of $H^{*}_G(\frakM(r),\k)'$ is primitive of weight $\Lambda_r$. The equality \cite[(3.9)]{SV} yields
$$c_k(\tau_n)\cup|r\rangle=0,\qquad\forall k\geqslant 1.$$
Thus, from \eqref{rep-geom} we deduce that
$D_{0,k+1}(|r\rangle)=0$ for each $k\geqslant 0.$

Finally, we must check the character formula in $(a)$. It is well-known, and follows easily by counting the (isolated) fixed points in $\frakM(r,n)$.
This finishes the proof of $(a)$.

Now, let us concentrate on $(b)$.
Since the element $|r\rangle$ of $\Z(\calC^r/\bbZ)'$ is primitive of weight $\Lambda_r$ and since
$M(\Lambda_r)$ has a simple top isomorphic to $V(\Lambda_r),$ there is a unique surjective
$\scrW$-module homomorphism from the submodule $M\subseteq\Z(\calC^r/\bbZ)'$
generated by $|r\rangle$ to $H^{*}_G(\frakM(r),\k)'$ such that $|r\rangle\mapsto|r\rangle.$
Since $H^{*}_G(\frakM(r),\k)'$ and $\Z(\calC^r/\bbZ)'$ have the same character, we deduce that
$$\Z(\calC^r/\bbZ)'=M=H^{*}_G(\frakM(r),\k)'.$$
Let 
$\psi$ 
be the unique $\scrW'$-module isomorphism 
$$\psi: \,\Z(\calC^r/\bbZ)' \to H^{*}_G(\frakM(r),\k)',\quad |r\rangle\mapsto |r\rangle.$$

Finally, let us prove part $(c)$.
By restriction, the map $\psi$ yields a $\bfk'$-linear isomorphism 
$$\psi:\, \Z(R^r(n))'\to H^{*}_G(\frakM(r,n),\k)'$$
for each $n\in\bbN$.
We must prove that $\psi(1)$ is invertible in $H^{*}_G(\frakM(r,n),\k)'$ and the map 
$$\phi':\Z(R^r(n))'\to H^{*}_G(\frakM(r,n),\k)',\quad\phi'(\bullet)=\psi(1)^{-1}\cup\psi(\bullet)$$
 is a $\bfk'$-algebra isomorphism.
To do so, we consider the diagram
\begin{equation*}\begin{split}
\xymatrix{&\Z(R(n))'\ar[dr]^-{b'}\ar[dl]_-{a'}&\\H^*_G(\frakM(r,n),\k)'&&\Z(R^r(n))'.}
\end{split}\end{equation*}
The map $b'$ is the $\bfk'$-algebra homomorphism induced by the canonical map $R(n)\to R^r(n)$.
It is surjective by Proposition \ref{Brundan}. The map $a'$ is the $\bfk'$-algebra homomorphism given by 
$$a'(e_i(x_1,\dots,x_n))=c_i(\tau_n),\qquad\forall i\in[1,n].$$
Note that
by definition of the representation of $\scrW$ on $H^{*}_G(\frakM(r,n),\k)'$, the formula \eqref{rep-geom} yields
\begin{equation}\label{eqA}
\hbar^{-1}\,D_{0,k+1}=a'(p_k(x_1,\dots,x_n))\cup \bullet\ \text{on}\ H^{*}_G(\frakM(r,n),\k)'.
\end{equation}
Next, by definition of the representation of $\scrW$ on $\Z(\calC^r/\bbZ)'$, the formula \eqref{x0} in the proof of 
Theorem \ref{thm:rep-cat} yields
\begin{equation}\label{eqB}
\hbar^{-1}\,D_{0,k+1}=b'(p_k(x_1,\dots,x_n))\cdot\bullet\ \text{on}\ \Z(R^r(n))'.
\end{equation}
From \eqref{eqA}, \eqref{eqB}, since $\psi$ is $\scrW$-linear, we deduce that
\begin{equation}\label{eqC}
\psi(b'(p_k(x_1,\dots,x_n))\cdot\bullet)=a'(p_k(x_1,\dots,x_n))\cup\psi(\bullet).
\end{equation}
Now, an easy induction using \eqref{eqC} yields
\begin{equation}\label{eqD}
\psi\, b'(z)=a'(z)\cup\psi(1),\quad\forall z\in \Z(R(n))'.
\end{equation}
We also deduce that
\begin{equation}\label{eqE}
\psi(zz')\cup\psi(1)=\psi(z)\cup\psi(z'),\quad\forall z,z'\in \Z(R^r(n))'.
\end{equation}
Now, since $b'$ and $\psi$ are surjective, 
the equality \eqref{eqD} implies that the element $\psi(1)$ is invertible in the 
(commutative) $\bfk'$-algebra $H^{*}_G(\frakM(r,n),\k)'$.
Thus, the map $\phi'$ above is well-defined and it is a $\bfk'$-algebra homomorphism by \eqref{eqE}.
It is clearly bijective because it is injective and both sides are finite dimensional of the same dimension over $\bfk'$.
Further, we have a commutative diagram
\begin{equation}\label{triangle'}\begin{split}
\xymatrix{&\Z(R(n))'\ar[dr]^-{b'}\ar[dl]_-{a'}&\\H^*_G(\frakM(r,n),\k)'&&\ar[ll]_-{\phi'}\Z(R^r(n))'.}
\end{split}\end{equation}
Part $(c)$ of the proposition is proved.

\end{proof}

\smallskip

We can now prove the following, which is one of the main results of this paper.

\smallskip

\begin{theorem}
The canonical map $\Z(R(n))\to H^*_G(\frakM(r,n),\k)$ is a surjective $\bfk$-algebra homomorphism. It factors to
a $\bfk$-algebra isomorphism $\Z(R^r(n))^{\text{JM}}\to H^{*}_G(\frakM(r,n),\k).$
\end{theorem}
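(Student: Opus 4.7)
\medskip

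The plan is to combine the localized $\bfk'$-algebra isomorphism $\phi'$ of the preceding proposition with a graded-rank comparison to descend everything to the integral level. First I would identify $\Z(R(n))$ with the symmetric polynomial ring $\bfk[x_1,\dots,x_n]^{\frakS_n}$ and recognize that the canonical map $a:\Z(R(n))\to H^*_G(\frakM(r,n),\k)$ is the Kirwan-style $\bfk$-algebra homomorphism sending $e_i(x_1,\dots,x_n)$ to $c_i(\tau_n)$, i.e.\ the integral analog of the map $a'$ appearing in diagram \eqref{triangle'}.

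Next I would show that $a$ factors through the canonical map $b:\Z(R(n))\to\Z(R^r(n))$ (whose image is $\Z(R^r(n))^{\text{JM}}$ by definition). Since $R(n)$ is a free $\bfk$-module (PBW basis) and $\bfk=\k[\hbar,y_1,\dots,y_r]$ is an integral domain, $\Z(R(n))=\bfk[x_1,\dots,x_n]^{\frakS_n}$ is torsion-free over $\bfk$ and embeds into $\Z(R(n))'$. By the commutative triangle \eqref{triangle'} we have $a'=\phi'\circ b'$ with $\phi'$ an isomorphism, whence $\ker a=\Z(R(n))\cap\ker a'=\Z(R(n))\cap\ker b'=\ker b$. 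So $a$ factors uniquely as $a=\phi\circ b$ for a graded $\bfk$-algebra homomorphism $\phi:\Z(R^r(n))^{\text{JM}}\to H^*_G(\frakM(r,n),\k)$, and $\phi$ is the restriction of $\phi'$ and hence injective.

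I would then compare graded ranks. By Proposition \ref{Brundan}(e), $\Z(R^r(n))^{\text{JM}}$ is a free $\bfk$-module of finite rank with generating series
\[
\sum_{n\geqs 0}\grdim\bigl(\Z(R^r(n))^{\text{JM}}\bigr)\,q^{2n}=\prod_{p=1}^r\prod_{i=1}^\infty\bigl(1-q^{2i}t^{2(ri+p-1-r)}\bigr)^{-1},
\]
and by equivariant formality together with \eqref{dim-coh}, $H^*_G(\frakM(r,n),\k)$ is a free $\bfk$-module with the identical graded generating series. Thus in each fixed $n$, the two sides are free graded $\bfk$-modules of the same finite graded rank.

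To close the argument, choose homogeneous $\bfk$-bases $(e_i)$ of $\Z(R^r(n))^{\text{JM}}$ and $(f_j)$ of $H^*_G(\frakM(r,n),\k)$ with matching degree sequences, and represent $\phi$ by a square matrix $M$ over $\bfk$. Its determinant $\det M$ is homogeneous of degree $0$ in $\bfk=\k[\hbar,y_1,\dots,y_r]$, hence lies in $\k$. Invertibility of $\phi\otimes_\bfk\bfk'=\phi'$ forces $\det M\neq 0$; since $\k$ is a field, $\det M$ is a unit in $\bfk$, so $M$ is invertible and $\phi$ is a $\bfk$-algebra isomorphism. Consequently $a=\phi\circ b$ is surjective, proving both assertions. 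The main obstacle is the descent from the localized diagram \eqref{triangle'} to the integral level; once torsion-freeness of $\Z(R(n))$ over $\bfk$ and the graded-rank coincidence are in hand, the rest is a clean graded-Nakayama/determinant argument.
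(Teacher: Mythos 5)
Your proposal is correct and follows essentially the same route as the paper: identify $a,b$ as the integral versions of $a',b'$, deduce $\ker b\subseteq\ker a$ from the localized triangle and freeness/torsion-freeness, restrict $\phi'$ to get an injective $\phi$, and then invoke the graded-rank identity from Proposition \ref{Brundan}(e) and \eqref{dim-coh}. The only difference is that you make the final ``injective plus equal graded ranks implies isomorphism'' step fully explicit via the degree-zero determinant argument, which the paper leaves implicit; this is a welcome clarification but not a different method.
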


\begin{proof} We define the maps $a:\Z(R(n))\to H^*_G(\frakM(r,n),\k)$ and $b:\Z(R(n))\to \Z(R^r(n))$ as in the triangle \eqref{triangle'} above.
Thus, we have $a'=\bfk'\otimes a$ and $b'=\bfk'\otimes b$.
We claim that there is a $\bfk$-linear map $\phi$ making the following triangle to commute
\begin{equation}\label{triangle}\begin{split}
\xymatrix{&\Z(R(n))\ar[dr]^-{b}\ar[dl]_-{a}&\\H^*_G(\frakM(r,n),\k)&&\ar[ll]_-{\phi}\Z(R^r(n))^{\text{JM}}.}
\end{split}\end{equation}
To prove this, it is enough to check that $\Ker(b)\subset\Ker(a)$.
Since the triangle \eqref{triangle'} commutes, we have $\Ker(b')\subset\Ker(a').$
Thus, since $\Z(R(n))$ is free as a $\bfk$-module, the map $x\mapsto 1\otimes x$ yields an inclusion 
$$\Ker(b)\subset(1\otimes\Z(R(n)))\cap\Ker(a').$$
Finally, since $\Z(R(n))$ is free as a $\bfk$-module, the map $x\mapsto 1\otimes x$ yields an isomorphism
$$\begin{aligned}\Ker(a)&\simeq 1\otimes\Ker(a)\\
&\simeq(1\otimes\Z(R(n)))\cap\Ker(a').
\end{aligned}$$
The claim is proved.
Note that, since the map $b'$ is surjective and the triangles \eqref{triangle'}, \eqref{triangle} commute, we have $\phi'=\bfk'\otimes\phi$.
Thus, since $\phi'$ is injective, we deduce that $\phi$ is also injective.
Now, recall that Proposition \ref{Brundan} and \eqref{dim-coh} yield
$$\sum_{n\geqslant 0}\grdim (\Z\! R^r(n))^{\text{JM}}\,q^{2n}=\sum_{n\geqslant 0}\grdim H^*_G(\frakM(r,n),\k)\,q^{2n}.$$
We deduce that the map $\phi$ is an isomorphism $\Z(R^r)^{\text{JM}}\to H^{*}_G(\frakM(r),\k).$

\end{proof}

\smallskip

The theorem above can be reformulated in the following way. Set $\bfk_1=\k[y_1,\dots,y_r]$ and consider the $\bfk_1$-algebras 
$$R(n)_1=R(n)/(\hbar-1),\quad R^r(n)_1=R^r(n)/(\hbar-1).$$ Recall the inclusion 
$\Z(R^r(n))^{\text{JM}}\subset \Z(R^r(n)_1)[\hbar]$ in \eqref{inclusion}.
By \cite{B08} the canonical map $R(n)_1\to R^r(n)_1$ yields a surjection $\Z(R(n)_1)\to \Z(R^r(n)_1)$.
Since $\Z(R(n)_1)$ is $\bbN$-graded, this yields an increasing separated and exhaustive $\bbN$-filtration $F_\bullet$ of 
$\Z(R^r(n)_1)$.
Let Rees($\Z(R^r(n)_1)$) be the corresponding Rees algebra, i.e., 
$$\text{Rees}(\Z(R^r(n)_1))=\sum_{d\geqslant 0} F_{2d}(\Z(R^r(n)_1))\otimes\hbar^d\subset \Z(R^r(n)_1)[\hbar].$$
By construction, the map \eqref{inclusion} identifies the $\bfk$-algebras $\Z(R^r(n))^{\text{JM}}$ and $\text{Rees}(\Z(R^r(n)_1))$.
We deduce the following

\smallskip

\begin{corollary}
There is a $\bfk$-algebra isomorphism $\operatorname{Rees}(\Z(R^r(n)_1))\simeq H^{*}_G(\frakM(r,n),\k).$

\qed
\end{corollary}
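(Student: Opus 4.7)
The plan is to combine the isomorphism $\Z(R^r(n))^{\text{JM}}\simeq H^{*}_G(\frakM(r,n),\k)$ from the preceding theorem with an identification of $\Z(R^r(n))^{\text{JM}}$ with the Rees algebra $\operatorname{Rees}(\Z(R^r(n)_1))$. Once both identifications are in place, the corollary is obtained by composition, so essentially the work is purely algebraic.

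First I would recall the inclusion \eqref{inclusion}, i.e., the $\bfk$-algebra homomorphism $\Z(R^r(n))\hookrightarrow \Z(R^r(n)_1)[\hbar]$ obtained from $\tau_k\mapsto\tau_k$, $x_k\mapsto x_k\otimes\hbar$, $y_p\mapsto y_p\otimes\hbar$. Under this inclusion, a homogeneous element of $\Z(R^r(n))$ of degree $2d$ (with respect to the grading where $\deg x_k = \deg y_p = \deg\hbar = 2$) lands in the subspace $\Z(R^r(n)_1)\otimes\hbar^d$, and moreover its image in $\Z(R^r(n)_1)$ (after setting $\hbar=1$) is precisely the reduction of the original element. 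This gives a compatible picture relating the $\hbar$-degree in $\Z(R^r(n))$ with the natural $\bbN$-filtration on $\Z(R^r(n)_1)$.

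Next I would analyze $\Z(R^r(n))^{\text{JM}}$ as the image of the canonical homogeneous $\bfk$-algebra map $\Z(R(n))\to\Z(R^r(n))$. Under the inclusion \eqref{inclusion}, this image coincides with the image of the composition $\Z(R(n))\to\Z(R^r(n))\hookrightarrow \Z(R^r(n)_1)[\hbar]$. But by Brundan's surjectivity result (cited in the text from \cite{B08}), the canonical map $\Z(R(n)_1)\to\Z(R^r(n)_1)$ is surjective; hence the image of $\Z(R(n))$ inside $\Z(R^r(n)_1)[\hbar]$ is generated in each $\hbar$-degree $d$ by the elements of $F_{2d}(\Z(R^r(n)_1))\otimes\hbar^d$ coming from homogeneous degree-$2d$ elements of $\Z(R(n)_1)$. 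This is exactly the Rees algebra
\[
\operatorname{Rees}(\Z(R^r(n)_1))=\sum_{d\geqslant 0} F_{2d}(\Z(R^r(n)_1))\otimes\hbar^d.
\]
Thus the inclusion \eqref{inclusion} restricts to a $\bfk$-algebra isomorphism $\Z(R^r(n))^{\text{JM}}\simeq \operatorname{Rees}(\Z(R^r(n)_1))$.

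Finally, composing this with the $\bfk$-algebra isomorphism $\Z(R^r(n))^{\text{JM}}\simeq H^{*}_G(\frakM(r,n),\k)$ established in the preceding theorem yields the desired $\bfk$-algebra isomorphism $\operatorname{Rees}(\Z(R^r(n)_1))\simeq H^{*}_G(\frakM(r,n),\k)$. The only subtle point is verifying that the two $\bfk$-module structures match correctly on both sides (i.e., that the inclusion \eqref{inclusion} is compatible with the $\bbN$-grading/filtration), but this is essentially tautological from the definition of the filtration as being induced from the $\bbN$-grading on $\Z(R(n)_1)$ via the Brundan surjection. Since there are no hard steps beyond unpacking definitions, I expect no real obstacle.
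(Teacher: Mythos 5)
Your proposal is correct and matches the paper's own argument: the paper introduces $\bfk_1$, $R(n)_1$, $R^r(n)_1$, recalls the inclusion \eqref{inclusion}, invokes Brundan's surjectivity of $\Z(R(n)_1)\to\Z(R^r(n)_1)$ to define an exhaustive filtration, observes that \eqref{inclusion} identifies $\Z(R^r(n))^{\text{JM}}$ with $\operatorname{Rees}(\Z(R^r(n)_1))$ as $\bfk$-algebras, and then composes with the isomorphism $\Z(R^r(n))^{\text{JM}}\simeq H^{*}_G(\frakM(r,n),\k)$ from the preceding theorem. Your unwinding of why a homogeneous degree-$2d$ element lands in $\Z(R^r(n)_1)\otimes\hbar^d$ and why its image is exactly $F_{2d}$ is precisely the content of the paper's phrase ``by construction.''
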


\smallskip

\begin{remark}
In the particular case $r=1$ the corollary was already known and follows from \cite{V01}.
\end{remark}

\bigskip
\clearpage


\appendix

\section{The symmetrizing form}\label{app:symmetrizing}

Fix a dominant weight $\Lambda\in P_+$.
Let $\alpha\in Q_+$ and $i,j\in I$. 
Set $\lambda=\Lambda-\alpha$ and $\lambda_i=\langle\alpha^\vee_i,\lambda\rangle$.

\subsection{Bubbles}
Assume that $\alpha$ has the height $n$.

\begin{definition}\label{df:bubbles}
For each $k\in\bbN$ the \emph{bubble} $B_{\pm i,\lambda}^k$ is the element of $\RL(\alpha)$ given by

\begin{itemize}
\item if $\lambda_i\geqslant 0$ we set
$$\begin{aligned}
B_{+i,\lambda}^k&=\begin{cases}
\hat\vep'_{i,\lambda}(x^{\lambda_i-1+k}_{n+1}\,e(\alpha,i))&\text{ if } k\geqslant -\lambda_i+1,\\
1 &\text{ if } k=\lambda_i=0,
\end{cases}\\
B_{-i,\lambda}^k&=\begin{cases} 
\mu_{x_n^{-\lambda_i-1+k}}(\hat\eta'_{i,\lambda}(1)) &\text{ if } k\geqslant \lambda_i+1,\\
-p_{\lambda_i-k}(x_{n+1}^{\lambda_i}\,e(\alpha,i)) &\text{ if } 1\leqslant k\leqslant \lambda_i,\\
1 &\text{ if } k=0,
\end{cases}
\end{aligned}$$

\item if $\lambda_i\leqslant 0$ we set
$$\begin{aligned}
B_{+i,\lambda}^k&=\begin{cases} 
\hat\vep'_{i,\lambda}(x^{\lambda_i-1+k}_{n+1}\,e(\alpha,i)) &\text{ if } k\geqslant -\lambda_i+1,\\
-\mu_{x_n^{-\lambda_i}}(\tilde \pi_{-\lambda_i-k}) &\text{ if } 1\leqslant k\leqslant -\lambda_i,\\
1 &\text{ if } k=0,
\end{cases}\\
B_{-i,\lambda}^k&=\begin{cases}
\mu_{x_n^{-\lambda_i-1+k}}(\hat\eta'_{i,\lambda}(1))&\text{ if } k\geqslant \lambda_i+1,\\
1 &\text{ if } k=\lambda_i=0.
\end{cases}
\end{aligned}$$

\end{itemize}
\end{definition}

Note that $B_{\pm i,\lambda}^0=1$ in all cases. We set by convention $B^k_{\pm,\lambda}=0$ if $k<0$.

\smallskip

\begin{lemma}
The elements $B_{\pm i,\lambda}^k$ are homogenous central element in $\RL(\alpha)$ of degree $2k$. 
\end{lemma}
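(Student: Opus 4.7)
The proof I have in mind splits into an easy grading computation and a bimodule-theoretic verification of centrality. For the grading, I will simply sum the known homogeneities: by Corollary \ref{cor:vepdeg} the map $\hat\vep'_{i,\lambda}$ is homogeneous of degree $2d_i(1-\lambda_i)$ and $\hat\eta'_{i,\lambda}$ of degree $2d_i(1+\lambda_i)$, while a dot $x_{n+1}$ or $x_n$ on an $i$-coloured strand contributes $2d_i$ and $\mu_{\bullet}$ is degree-zero. In the ``large $k$'' cases this immediately yields $\deg B^k_{\pm i,\lambda}=2kd_i$. For the intermediate cases, where $B^k_{\pm i,\lambda}$ is defined via $p_{\lambda_i-k}$ or $\tilde\pi_{-\lambda_i-k}$, I will read the degree off from the homogeneity of the isomorphisms $\rho'_{i,\lambda}$ of Theorem \ref{thm:KK}: since each summand on both sides of $\rho'_{i,\lambda}$ carries a prescribed shift, the coefficient maps in the decompositions \eqref{eq:lambdapos-bis} and \eqref{eq:adj-bim-neg} are themselves homogeneous, and a short bookkeeping gives the claim.

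Centrality is the substantive part and rests on one simple observation: the image of $\iota_i\colon R^\Lambda(\alpha)\hookrightarrow R^\Lambda(\alpha+\alpha_i)$ involves only the first $n$ strands, so it commutes with both $x_{n+1}$ and the idempotent $e(\alpha,i)$; symmetrically, $x_n$ viewed in $R^\Lambda(\alpha)$ centralizes the subalgebra $R^\Lambda(\alpha-\alpha_i)$ over which the relevant tensor product is taken. Consequently $x_{n+1}^{\lambda_i-1+k}e(\alpha,i)$ is ``bi-central'' for the $(R^\Lambda(\alpha),R^\Lambda(\alpha))$-bimodule structure on $e(\alpha,i)R^\Lambda(\alpha+\alpha_i)e(\alpha,i)$, and applying the bimodule map $\hat\vep'_{i,\lambda}$ produces a central element of $R^\Lambda(\alpha)$. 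This handles the ``large $k$'' case of $B^k_{+i,\lambda}$. The entirely parallel argument using $\hat\eta'_{i,\lambda}$, combined with the fact that $\mu_{x_n^m}$ is a bimodule map (which holds precisely because $x_n^m$ centralizes $R^\Lambda(\alpha-\alpha_i)$ in the middle of the tensor product), takes care of the ``large $k$'' case of $B^k_{-i,\lambda}$.

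For the remaining mixed cases, I exploit that $\rho'_{i,\lambda}$ is an isomorphism of $(R^\Lambda(\alpha),R^\Lambda(\alpha))$-bimodules, and hence so is its inverse; the maps $z\mapsto p_k(z)$, $z\mapsto \tilde z$, and the section producing $\tilde\pi_\ell$ are therefore bimodule maps, and the element $1\in R^\Lambda(\alpha)$ representing the copy of $1_\lambda$ in the direct-sum target is central. This yields $r\,p_{\lambda_i-k}(z)=p_{\lambda_i-k}(rz)$ and $r\tilde\pi_\ell=\tilde\pi_\ell r$ for all $r\in R^\Lambda(\alpha)$, and centrality of $B^k_{\pm i,\lambda}$ then follows by combining this with the bi-centrality of $x_{n+1}^{\lambda_i}e(\alpha,i)$ (respectively bi-centrality of $\tilde\pi_\ell$ passed through the bimodule map $\mu_{x_n^{-\lambda_i}}$). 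The only real obstacle I anticipate is notational: keeping straight the three heights $n-1$, $n$, $n+1$ and the distinction between $x_n$ viewed inside $R^\Lambda(\alpha)$ versus inside $R^\Lambda(\alpha+\alpha_i)$; once those conventions are fixed, every step reduces to a routine bimodule manipulation.
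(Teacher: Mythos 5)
Your proof is correct and follows essentially the same route as the paper's: identify the defining maps ($\hat\vep'_{i,\lambda}$, $\hat\eta'_{i,\lambda}$, $p_k$, $\mu_{x_n^m}$, and the pieces of $\rho'^{-1}_{i,\lambda}$ giving $\tilde\pi_\ell$) as homogeneous $(\RL(\alpha),\RL(\alpha))$-bimodule morphisms, feed them the bi-central inputs $x_{n+1}^m e(\alpha,i)$ or $1\in\RL(\alpha)$, and read off both centrality and the degree. The paper's proof is a one-line appeal to exactly these facts; your write-up just makes the bimodule bookkeeping explicit. Your degree computation yielding $2kd_i$ is actually the correct general value — the lemma's stated ``$2k$'' should be read as the specialization $d_i=1$, which holds in the symmetric setting where the bubbles are put to use (Theorem \ref{thm:action} and its consequences). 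One small remark on wording: when you say ``the image of $\iota_i$ involves only the first $n$ strands,'' it does also touch the $(n{+}1)$-th slot by fixing its color via $e(\alpha,i)$, but that does not affect the commutation with $x_{n+1}$ and so the argument goes through unchanged.
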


\begin{proof}
The central and homogenous property follows from the fact that $\vep'_i$, $p_k$, $\eta'_i$ are homogenous 
$\RL(\alpha)$-bilinear morphisms and that the element
$x_{n+1}$ centralizes $\RL(\alpha)$ in $\RL(\alpha+\alpha_i)$.
The degree is given by an explicit computation.

\end{proof}

\smallskip

\begin{remark}
In Khovanov-Lauda's diagrammatic categorification, the element $B_{+i,\lambda}^k$ corresponds to a 
clockwise \emph{bubble} with a \emph{dot} of multiplicity $\lambda_i-1+k$ and $B_{-i,\lambda}^k$ 
corresponds to a clockwise bubble with a dot of multiplicity $-\lambda_i-1+k$.
\end{remark}

\medskip

\subsection{A useful lemma}
Assume that $\alpha$ has the height $n-1$. Let $\lambda'=\lambda-\alpha_j$ and
$\lambda_i'=\langle\alpha_i^\vee,\lambda'\rangle=\lambda_i-a_{ij}$.
Consider the morphisms
$$\begin{aligned}
&\xymatrix{\bbX_{i,j,\lambda}:E'_iF'_i1_\lambda\ar[rr]^{E'_i\eta'_jF'_i}
&&E'_iE'_jF'_jF'_i1_\lambda\ar[r]^{\tau\tau}
&E'_jE'_iF'_iF'_j1_\lambda\ar[rr]^{E'_j{\hat\vep}'_{i,\lambda'}F'_j}
&&E'_jF'_j1_\lambda,}\\
&\xymatrix{\bbI_{i,j,\lambda}:E'_iF'_i1_\lambda\ar[r]^{\qquad{\hat\vep}'_{i,\lambda}}
&1_\lambda\ar[r]^{\eta_j'\quad}
&E'_jF'_j1_\lambda.}\\
\end{aligned}$$
The morphism $\bbX_{i,j,\lambda}$ is represented by the composition
\[\xymatrix{e(\alpha,i)\RL(\alpha+\alpha_i)e(\alpha,i)\ar[r]^{\iota_j\quad}
&e(\alpha,ij)\RL(\alpha+\alpha_i+\alpha_j)e(\alpha,ij)\ar[d]^{\tau_n(\bullet)\tau_n} \ar@{}[r] &\\
&e(\alpha,ji)\RL(\alpha+\alpha_i+\alpha_j)e(\alpha,ji)\ar[r]^{\quad{\hat\vep}'_{i,\lambda'}}
&e(\alpha,j)\RL(\alpha+\alpha_j)e(\alpha,j),}\]
and $\bbI_{i,j,\lambda}$ is represented by
\[\xymatrix{e(\alpha,i)\RL(\alpha+\alpha_i)e(\alpha,i)\ar[r]^{\qquad\qquad{\hat\vep}'_{i,\lambda}} 
&\RL(\alpha)\ar[r]^{\iota_j\qquad\qquad}
&e(\alpha,i)\RL(\alpha+\alpha_i)e(\alpha,i).
}\]
In other words, given $a\in e(\alpha,i)\RL(\alpha+\alpha_i)e(\alpha,i)$ we have
$$\bbX_{i,j,\lambda}(a)=\hat\vep'_{i,\lambda'}(\tau_n\iota_j(a)\tau_n),\quad 
\bbI_{i,j,\lambda}(a)=\iota_j\hat\vep'_{i,\lambda}(a).$$

Note that, since $\iota_j:\RL(\beta)\to\RL(\beta+\alpha_j)$ is the canonical embedding for any $\beta\in Q_+$, 
we write $\iota_j(b)=b\, e(\beta,j)$ or simply  $\iota_j(b)=b$ for any $b\in  R^\Lambda(\beta)$.
Note also that,since $B_{\pm i,\lambda}^k\in\Z(\RL(\alpha)),$ 
it can be viewed as an element in $\End(1_\lambda)$. 
Thus $x^rB_{\pm i,\lambda}^sx^t$ defines an endomorphism of $E_i'1_\lambda F_i'=E_i'F_i'$
for each $r$, $s$, $t\in\bbN$.

\smallskip

\begin{lemma} \label{lem:relationsI} The following hold

(a) if $i\neq j$ then $\bbX_{i,j,\lambda}=c_{i,j,-a_{ij},0}\,\bbI_{i,j,\lambda}$,

(b) $\bbX_{i,i,\lambda}=-\bbI_{i,i,\lambda}+\sum_{g_1+g_2+g_3=
-\lambda_i'-1}x^{g_1}B^{g_2}_{+i,\lambda'}x^{g_3}.$
 
\end{lemma}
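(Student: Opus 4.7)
The plan is to verify both identities by direct bimodule computation. Both $\bbX_{i,j,\lambda}$ and $\bbI_{i,j,\lambda}$ are $R^\Lambda(\alpha)$-bilinear maps from $e(\alpha,i)R^\Lambda(\alpha+\alpha_i)e(\alpha,i)$ to $e(\alpha,j)R^\Lambda(\alpha+\alpha_j)e(\alpha,j)$, so by Theorem~\ref{thm:KK} (applied to $\alpha$) it suffices to check the identities on an $R^\Lambda(\alpha)$-bimodule generating set of the source, namely $\{e(\alpha,i),\,x_n^k\,e(\alpha,i)\}$ together with the image of $\mu_{\tau_{n-1}}$.

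For part~(a), with $i\neq j$, the key step is the evaluation on $a=e(\alpha,i)$. Using $\tau_n\,e(\alpha,i,j)=e(\alpha,j,i)\,\tau_n$ and the defining relation $\tau_n^2\,e(\nu)=Q_{\nu_n,\nu_{n+1}}(x_n,x_{n+1})\,e(\nu)$, we obtain
$$\tau_n\,\iota_j(e(\alpha,i))\,\tau_n\,=\,Q_{j,i}(x_n,x_{n+1})\,e(\alpha,j,i)\,=\,\sum_{p,q}c_{i,j,p,q}\,x_{n+1}^p\,x_n^q\,e(\alpha,j,i).$$
Applying $\hat\vep'_{i,\lambda'}$ and the bubble formula $\hat\vep'_{i,\lambda'}(x_{n+1}^p\,e(\alpha+\alpha_j,i))=B^{p-\lambda'_i+1}_{+i,\lambda'}\,e(\alpha,j)$, which is a direct consequence of Definition~\ref{df:bubbles}, the identity reduces to a bubble-slide relation in $\Z(R^\Lambda(\alpha+\alpha_j))$ between bubbles at weights $\lambda$ and $\lambda'$. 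The coefficient $c_{i,j,-a_{ij},0}$ emerges because it is the leading coefficient of $Q_{j,i}$ in the topmost power of $x_{n+1}$, the remaining contributions being absorbed through the infinite-Grassmannian type relations among central bubbles. Analogous computations on the remaining bimodule generators complete the verification.

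For part~(b), with $i=j$, the relation $Q_{ii}=0$ gives $\tau_n^2\,e(\alpha,i,i)=0$, so the direct $\tau^2$-reduction of part~(a) degenerates. Instead one uses the intertwiner $\varphi_n=(x_n-x_{n+1})\tau_n+1$ satisfying $\varphi_n^2\,e(\alpha,i,i)=e(\alpha,i,i)$, together with the intertwiner commutation relations and the braid identities for $\{\varphi_k\}$. Rewriting $\tau_n\,\iota_i(a)\,\tau_n$ in terms of $\varphi_n$ and invoking the explicit decompositions of $\hat\vep'_{i,\lambda'}$ and $\hat\eta'_{i,\lambda'}$ from Theorem~\ref{thm:Kbiadjoint} yields two contributions: the term $-\bbI_{i,i,\lambda}(a)$, whose sign reflects $\varphi_n^2=+1$ rather than the $Q_{i,j}$-quadratic of part~(a), and the bubble correction $\sum_{g_1+g_2+g_3=-\lambda'_i-1}x^{g_1}B^{g_2}_{+i,\lambda'}x^{g_3}$, whose index range matches the number of $x_{n+1}^k$ terms in the bimodule decomposition at weight $\lambda'$. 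The most delicate step throughout is the bubble-slide identity of part~(a); both parts additionally require case analysis for $\lambda'_i\gtrless 0$ because the formulas of $\hat\vep'_{i,\lambda'}$ and $\hat\eta'_{i,\lambda'}$ take different forms in the two regimes, though the computations are formally parallel modulo the appropriate bubble conventions.
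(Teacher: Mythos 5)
Your plan to reduce the verification to a bimodule generating set of $e(\alpha,i)\RL(\alpha+\alpha_i)e(\alpha,i)$ is reasonable in spirit, but the execution has several substantive gaps.

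First, the case you call the ``key step,'' namely $a=e(\alpha,i)$, is actually the \emph{easy} part. Your reduction $\hat\vep'_{i,\lambda'}(\tau_n\iota_j(e(\alpha,i))\tau_n)=\sum_{p,q}c_{i,j,p,q}\,x_n^q\,B^{p-\lambda'_i+1}_{+i,\lambda'}\,e(\alpha,j)$ is correct, but what you describe as ``a bubble-slide relation in $\Z(R^\Lambda(\alpha+\alpha_j))$'' and ``infinite-Grassmannian type relations'' is a misdiagnosis. When $\lambda_i>0$ one has $\lambda'_i>0$, and since $B^k_{+i,\lambda'}=0$ for $k<0$ and the exponent of $u$ in $Q_{ij}$ is bounded by $-a_{ij}$, the range of $p$ contributing to the sum is $[\lambda'_i-1,\,-a_{ij}]=[\lambda_i-1-a_{ij},-a_{ij}]$; this is empty unless $\lambda_i=1$, in which case only $p=-a_{ij}$, $q=0$ survives, giving exactly $c_{i,j,-a_{ij},0}$. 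So the equality follows from a degree count, not from any identity among bubbles. Invoking the bubble identities here would in fact risk circularity, since Lemma~\ref{lem:A10}(b) (the bubble-slide) is itself proved \emph{using} Lemma~\ref{lem:relationsI}.

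Second, and more seriously, your sketch never addresses the hard part, namely the contribution of $a$ in the image of $\mu_{\tau_{n-1}}$. The core of the paper's argument is the rewriting
$$\mu_{\tau_n\tau_{n-1}\tau_ne(\alpha-\alpha_i,iji)}(\pi(a))=\mu_{\big(\tau_{n-1}\tau_n\tau_{n-1}+\frac{Q_{i,j}(x_{n-1},x_n)-Q_{i,j}(x_{n+1},x_n)}{x_{n-1}-x_{n+1}}\big)e(\alpha-\alpha_i,iji)}(\pi(a))$$
from QHA relation~(f), followed by a degree-bound argument in $x_{n+1}$ (respectively in $x_{n-1}$ for the $\lambda_i\leqslant 0$ case) showing that the Demazure-correction term is annihilated by $p_{\lambda'_i-1}$ (respectively $\mu_{x_n^{-\lambda'_i}}$). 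Nothing in your proposal touches this, and dismissing it as ``analogous computations on the remaining bimodule generators'' hides where the entire content of part~(a) lies. Note also that when $\lambda_i\leqslant 0$ the summands $\bfk\,x^k\otimes 1_\lambda$ are absent from the Kang--Kashiwara decomposition (they instead appear on the target side of $\rho'_{i,\lambda}$), so the generating set $\{e(\alpha,i),x_n^k e(\alpha,i)\}$ you list does not exist in that regime; one must instead choose a lift $\tilde a$ with $\mu_{\tau_{n-1}}(\tilde a)=a$ subject to the vanishing constraints $\mu_{x_{n-1}^k}(\tilde a)=0$.

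Third, your route for part~(b) via the intertwiner $\varphi_n$ and $\varphi_n^2 e(\alpha,i,i)=e(\alpha,i,i)$ is not developed, and the heuristic that ``the sign reflects $\varphi_n^2=+1$'' does not survive scrutiny. In the actual computation the sign $-1$ in $\bbX_{i,i,\lambda}=-\bbI_{i,i,\lambda}+\cdots$ comes from the shift term produced when commuting $\tau_n$ past $x_n^k$ (QHA relation~(e)), specifically from the identity
$\tau_nx_n^k\tau_ne(\alpha,i^2)=-\sum_{r_1+r_2=k-1}x_n^{r_1}\tau_n x_n^{r_2}-\sum_{g_1+g_2=k-2}(g_1+1)x_n^{g_1}x_{n+1}^{g_2}$
once $\tau_n^2e(\alpha,i^2)=0$ is used. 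Attempting to push everything through $\varphi_n=(x_n-x_{n+1})\tau_n+1$ would require inverting $(x_n-x_{n+1})$, which is not available. You should carry out the direct commutator computation, and separately identify the bubble correction term $\sum_{g_1+g_2+g_3=-\lambda'_i-1}x^{g_1}B^{g_2}_{+i,\lambda'}x^{g_3}$ from the elements $\tilde\pi_\ell$ of \eqref{rk:centralpi}; this requires a genuine case analysis in $\lambda_i$ (namely $\lambda_i\geqslant 2$, $\lambda_i=1$, and $\lambda_i\leqslant 0$) because the formulas for $\hat\vep'_{i,\lambda}$, $\hat\vep'_{i,\lambda'}$ differ across these regimes, and the bubbles involved come from different branches of Definition~\ref{df:bubbles}.
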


Note that if $\lambda'_i\geqslant 0$ the sum over $g_1,g_2,g_3$ is empty, hence $\bbX_{i,i,\lambda}=-\bbI_{i,i,\lambda}$.

\begin{proof}
Let us prove part $(a)$.
First, assume $\lambda_i>0$. Then $$a=\mu_{\tau_{n-1}}(\pi(a))+\sum_{k=0}^{\lambda_i-1}p_k(a)x^k_n$$ 
with $\pi(a)\in\RL(\alpha)e(\alpha-\alpha_i,i)\otimes_{\RL(\alpha-\alpha_i)}e(\alpha-\alpha_i,i)\RL(\alpha)$ and 
$p_k(a)\in\RL(\alpha)$. We have
$$\tau_n\iota_j(a)\tau_n=
\mu_{\tau_n\tau_{n-1}\tau_ne(\alpha-\alpha_i, iji)}(\pi(a))+\sum_{k=0}^{\lambda_i-1}p_k(a)
\tau_nx_n^ke(\alpha,ij)\tau_n.$$
The relations $(d)$, $(e)$, $(f)$ in the definition of QHA yields
$$
\begin{aligned}
&\mu_{\tau_n\tau_{n-1}\tau_ne(\alpha-\alpha_i, iji)}(\pi(a))=\mu_{\big(\tau_{n-1}\tau_n\tau_{n-1}+
\frac{Q_{i,j}(x_{n-1},x_n)-Q_{i,j}(x_{n+1},x_{n})}{x_{n-1}-x_{n+1}}\big)e(\alpha-\alpha_i, iji)}(\pi(a))\ ,\\
&\tau_nx_n^ke(\alpha,ij)\tau_n=x^k_{n+1}\tau_n^2e(\alpha,ji)=
x_{n+1}^kQ_{j,i}(x_n,x_{n+1})e(\alpha,ji).
\end{aligned}
$$
Since $\lambda_i'=\lambda_i-a_{ij}\geqslant \lambda_i>0$, we have $\bbX_{i,j,\lambda}(a)=
p_{\lambda'_i-1}(\tau_n\iota_j(a)\tau_n)$, the coefficient of $x_{n+1}^{\lambda_i-a_{ij}-1}$.
Since the degree of $x_{n+1}$ in $\frac{Q_{i,j}(x_{n-1},x_n)-Q_{i,j}(x_{n+1},x_{n})}{x_{n-1}-x_{n+1}}$ is at 
most $-a_{ij}-1$, which is less than $\lambda'_i-1=\lambda_i-a_{ij}-1$, and since 
$p_{\lambda'_i-1}(\mu_{\tau_{n-1}\tau_n\tau_{n-1}}(\pi(a)))=0$, we deduce $p_{\lambda'_i-1}$ kills 
$\mu_{\tau_n\tau_{n-1}\tau_ne(\alpha-\alpha_i, iji)}(\pi(a))$.

Next, the degree of $x_{n+1}$ in $x^k_{n+1}Q_{j,i}(x_n,x_{n+1})=x^k_{n+1}Q_{i,j}(x_{n+1},x_n)$ is less or 
equal to $k-a_{ij}$ with the coefficient of $x_{n+1}^{k-a_{ij}}$ given by $c_{i,j,-a_{ij},0}$, therefore
$$p_{\lambda'_i-1}\big(\sum_{k=0}^{\lambda_i-1}p_k(a)\tau_nx_n^ke(\alpha,ij)\tau_n\big)=
\iota_j(p_{\lambda_i-1}(a))c_{i,j,-a_{ij},0}=c_{i,j,-a_{ij},0}(\iota_j\circ\hat\vep_i(a)).$$
It follows that $\bbX_{i,j,\lambda}=c_{i,j,-a_{ij},0}\bbI_{i,j,\lambda}$.

\smallskip

Now, we consider the case $\lambda_i\leqslant 0$. 
Let $\tilde a \in\RL(\alpha)e(\alpha-\alpha_i,i)\otimes_{\RL(\alpha-\alpha_i)}e(\alpha-\alpha_i,i)\RL(\alpha)$ 
such that $\mu_{\tau_{n-1}}(\tilde a)=a$, 
$\mu_{x_{n-1}^k}(\tilde a)=0$ for $k\in [0, -\lambda_i-1]$. We have
$$
\begin{aligned} 
\tau_n\iota_j(a)\tau_n&=\mu_{\tau_n\tau_{n-1}\tau_ne(\alpha-\alpha_i,iji)}(\tilde a)\\
&=\mu_{\big(\tau_{n-1}\tau_n\tau_{n-1}+
\frac{Q_{i,j}(x_{n-1},x_n)-Q_{i,j}(x_{n+1},x_{n})}{x_{n-1}-x_{n+1}}\big)e(\alpha-\alpha_i, iji)}(\tilde a).
\end{aligned}
$$

 Assume that $\lambda_i,\lambda'_i\leqslant 0$. 
Then $\bbX_{i,j,\lambda}(a)=\mu_{x_n^{-\lambda'_i}}(\widetilde{\tau_n\iota_j(a)\tau_n})$.
We claim that
\begin{equation}\label{claim1}
\widetilde{\tau_n\iota_j(a)\tau_n}=
(1\otimes\tau_{n-1}\otimes\tau_{n-1}\otimes 1)(\iota_j\otimes\iota_j)\tilde a.
\end{equation}
Denote the right hand side by $b$. Concretely write $\tilde a=\sum_r\tilde a'_r\otimes\tilde a''_r$ with 
$\tilde a'_r\in\RL(\alpha)e(\alpha-\alpha_i,i)$, $\tilde a''_r\in(\alpha-\alpha_i,i)\RL(\alpha)$, then
$b=\sum_r\tilde a'_re(\alpha-\alpha_i,ij)\tau_{n-1}\otimes\tau_{n-1}e(\alpha-\alpha_i,ij)\tilde a''_r$.

To prove \eqref{claim1}, note that the degree of $x_{n-1}$ in
$\frac{Q_{i,j}(x_{n-1},x_n)-Q_{i,j}(x_{n+1},x_{n})}{x_{n-1}-x_{n+1}}$ is less than or equal to 
$-a_{ij}-1\leqslant-\lambda_i-1$, hence
$$
\mu_{\frac{Q_{i,j}(x_{n-1},x_n)-Q_{i,j}(x_{n+1},x_{n})}{x_{n-1}-x_{n+1}}e(\alpha-\alpha_i, iji)}(\tilde a)=0.$$
We deduce
\begin{equation*}
\begin{aligned}
\mu_{\tau_n}(b)&=\mu_{\tau_{n-1}\tau_n\tau_{n-1}e(\alpha-\alpha_i,iji)}(\tilde a)=\tau_n\iota_j(a)\tau_n,\\
\mu_{x_n^k}(b)&=\mu_{\tau_{n-1}x_n^k\tau_{n-1}e(\alpha-\alpha_i,ij)}(\tilde a)
=\mu_{x_{n-1}^kQ_{ij}(x_{n-1},x_n)e(\alpha-\alpha_i,ij)}(\tilde a).
\end{aligned}
\end{equation*}
Next, using the fact $\mu_{x_{n-1}^k}(\tilde a)=0$ for $k\in [0,-\lambda_i-1]$, $\hat\vep'_{i,\lambda}(a)=\mu_{x_{n-1}^{-\lambda_i}}(\tilde a)$ and the degree of $x_{n-1}$ in $Q_{ij}(x_{n-1},x_n)$ is at most $-a_{ij}$ with the coefficient of $x_{n-1}^{-a_{ij}}$ equals $c_{i,j,-a_{ij},0}$, we get $\mu_{x_{n-1}^kQ_{ij}(x_{n-1},x_n)e(\alpha-\alpha_i,ij)}(\tilde a)=0$ if $k\in [0,-\lambda_i-1+a_{ij}]$ and
$$\mu_{x_{n-1}^{-\lambda'_i}Q_{ij}(x_{n-1},x_n)e(\alpha-\alpha_i,ij)}(\tilde a)=c_{i,j,-a_{ij},0}\,\iota_j(\mu_{x_{n-1}^{-\lambda_i}}(\tilde a))=c_{i,j,-a_{ij},0}\,\iota_j\circ\hat\vep'_{i,\lambda}(a).
$$
Formula \eqref{claim1}
follows and we have $\bbX_{i,j,\lambda}(a)=\mu_{x_n^{-\lambda_i'}}(b)=c_{i,j,-a_{ij},0}\bbI_{i,j,\lambda}(a)$.

Now, assume that $\lambda_i\leqslant 0$ and $\lambda'_i>0$. 
Then $\bbX_{i,j,\lambda}(a)=p_{\lambda'_i-1}(\tau_n\iota_j(a)\tau_n)$. We have $p_{\lambda'_i-1}(\mu_{\tau_{n-1}\tau_n\tau_{n-1}}(\tilde a))=0$. Recall $Q_{i,j}(u,v)=\sum_{p,q\geqslant 0}c_{i,j,p,q}u^pv^q$, with $p\leqslant -a_{ij}$,
$$\begin{aligned}
\frac{Q_{i,j}(x_{n-1},x_n)-Q_{i,j}(x_{n+1},x_{n})}{x_{n-1}-x_{n+1}}&=\sum_{p,q\geqslant 0}c_{i,j,p,q}\frac{x_{n-1}^p-x_{n+1}^p}{x_{n-1}-x_{n+1}}x^q_n\\
&=\sum_{p,q\geqslant 0}c_{i,j,p,q}\big(\sum_{r_1+r_2=p-1}x_{n-1}^{r_1}x_{n+1}^{r_2}\big)x_n^q.
\end{aligned}$$
The height of $\alpha$ is $n-1$, hence $x_n$, $x_{n+1}$ centralize $\RL(\alpha)$. 
We deduce
$$\begin{aligned}
\mu_{\frac{Q_{i,j}(x_{n-1},x_n)-Q_{i,j}(x_{n+1},x_{n})}{x_{n-1}-x_{n+1}}e(\alpha-\alpha_i, iji)}(\tilde a)&=&\sum_{p,q\geqslant 0}c_{i,j,p,q}\big(\sum_{r_1+r_2=p-1}\mu_{x_{n-1}^{r_1}}(\tilde a)x_{n+1}^{r_2}\big)x_n^qe(\alpha,ji).
\end{aligned}$$
Now $\mu_{x_{n-1}^{r_1}}(\tilde a)\neq 0$ only if $r_1\geqslant -\lambda_i$,and $p_{\lambda_i'-1}(\mu_{x_{n-1}^{r_1}}(\tilde a)x_{n+1}^{r_2}x_n^q)\neq 0$ only if $r_2\geqslant \lambda'_i-1=\lambda_i-a_{ij}-1$. Hence $r_1+r_2=p-1\leqslant -a_{ij}-1$ implies that 
$$\begin{aligned}
p_{\lambda'_i-1}\big(\mu_{\frac{Q_{i,j}(x_{n-1},x_n)-Q_{i,j}(x_{n+1},x_{n})}{x_{n-1}-x_{n+1}}e(\alpha-\alpha_i, iji)}(\tilde a)\big)&=c_{i,j,-a_{ij},0}\,\mu_{x_{n-1}^{-\lambda_i}}(\tilde a)e(\alpha,j)\\
&=c_{i,j,-a_{ij},0}\,\iota_j\circ\hat\vep'_{i,\lambda}(a).
\end{aligned}$$
We get $\bbX_{i,j,\lambda}(a)=c_{i,j,-a_{ij},0}\bbI_{i,j,\lambda}$.

\medskip

Now, we concentrate on part $(b)$ in the case $\lambda'_i\geqslant 0$.
Since $\lambda'_i=\lambda_i-2$, we have $\lambda_i\geqslant 2$. Thus 
\begin{equation}\label{eq:vep-pos}
a=\mu_{\tau_{n-1}}(\pi(a))+\sum_{k=0}^{\lambda_i-1}p_k(a)x^k_n
\end{equation}
with $\pi(a)\in\RL(\alpha)e(\alpha-\alpha_i,i)\otimes_{\RL(\alpha-\alpha_i)}e(\alpha-\alpha_i,i)\RL(\alpha)$ and $p_k(a)\in\RL(\alpha)$. We have
$$\begin{aligned}
&\tau_n\iota_i(a)\tau_n=\mu_{\tau_n\tau_{n-1}\tau_ne(\alpha-\alpha_i, i^3)}(\pi(a))+\sum_{k=0}^{\lambda_i-1}p_k(a)\tau_nx_n^k\tau_ne(\alpha,i^2)\,,\\
&\bbX_{i,i,\lambda}(a)=
p_{\lambda'_i-1}(\tau_n\iota_i(a)\tau_n).
\end{aligned}$$
Since $Q_{ii}=0$, the relation $(f)$ in the definition of QHA yields
$$\mu_{\tau_n\tau_{n-1}\tau_ne(\alpha-\alpha_i, i^3)}(\pi(a))=\mu_{\tau_{n-1}\tau_n\tau_{n-1}e(\alpha-\alpha_i, i^3)}(\pi(a)),\\
$$
Hence it is killed by $p_{\lambda_i'-1}$ when $\lambda'_i>0$. The relation (d), (e) for QHA implies
\begin{equation}\label{eq:tau-power-x}
\begin{split}
\begin{aligned}
\tau_nx_n^k\tau_ne(\alpha,i^2)&=x^k_{n+1}\tau_n^2e(\alpha,i^2)-\frac{x_n^k-x_{n+1}^k}{x_n-x_{n+1}}\tau_ne(\alpha,i^2)\\
&=-\sum_{r_1+r_2=k-1}x_n^{r_1}x_{n+1}^{r_2}\tau_ne(\alpha,i^2)\\
&=-\sum_{r_1+r_2=k-1}x_n^{r_1}(\tau_n x_n^{r_2}+\sum_{g_1+g_2=r_2-1}x_n^{g_1}x_{n+1}^{g_2})\\
&=-\sum_{r_1+r_2=k-1}x_n^{r_1}\tau_n x_n^{r_2}-\sum_{g_1+g_2=k-2}(g_1+1)x_n^{g_1}x_{n+1}^{g_2}.
\end{aligned}
\end{split}
\end{equation}

If $\lambda_i'>0$, we deduce that
$$\begin{aligned}
p_{\lambda'_i-1}\big(\tau_nx_n^k\tau_ne(\alpha,i^2)\big)&=-p_{\lambda_i-3}\big(\sum_{g_1+g_2=k-2}(g_1+1)x_n^{g_1}x_{n+1}^{g_2}\big)=\begin{cases}
0 &\text{ if }k<\lambda_i-1,\\
-1 &\text{ if }k=\lambda_i-1.
\end{cases}
\end{aligned}$$
By consequence $\bbX_{i,i,\lambda}(a)=-p_{\lambda_i-1}(a)e(\alpha,i)=-\iota_i\circ\hat\vep'_{i,\lambda}(a)=-\bbI_{i,i,\lambda}(a).$

If $\lambda'_i=0$, we deduce that
$$\begin{aligned}
\tau_n\iota_i(a)\tau_n&=\mu_{\tau_{n-1}\tau_n\tau_{n-1}e(\alpha-\alpha_i, i^3)}(\pi(a))-p_1(a)\tau_ne(\alpha,i^2)\\
&=\mu_{\tau_n}\big((1\otimes\tau_{n-1}\otimes\tau_{n-1}\otimes 1)(\iota_i\otimes\iota_i)\pi(a)-\iota_i (p_1(a))\otimes e(\alpha,i^2)\big).
\end{aligned}$$
Hence
$$\begin{aligned}\hat\vep'_{i,\lambda'}(\tau_n\iota_i(a)\tau_n)&=\mu_{1}\big((1\otimes\tau_{n-1}\otimes\tau_{n-1}\otimes 1)(\iota_i\otimes\iota_i)\pi(a)-\iota_i (p_1(a))\otimes e(\alpha,i^2)\big)\\
&=-\iota_i (p_1(a)).
\end{aligned}$$
Here in the second equality we have used the fact $\tau_{n-1}^2e(\alpha-\alpha_i,i^3)=0$. Since $\lambda_i=2$, we have $\hat\vep'_{i,\lambda}(a)=p_1(a)$. So
we get again $\bbX_{i,i,\lambda}(a)=-\bbI_{i,i,\lambda}(a)$.

\medskip

Finally, we prove on part $(b)$ for $\lambda_i'<0$.
By assumption we have $\lambda'_i=\lambda_i-2< 0$. 
First, if $\lambda_i=1$, then $a=\mu_{\tau_{n-1}}(\pi(a))+p_0(a)$ and $\hat\vep'_{i,\lambda}(a)=p_0(a)$ 
as in \eqref{eq:vep-pos}. The same computation as in the previous lemma yields
$$\tau_n\iota_i(a)\tau_n=\mu_{\tau_{n-1}\tau_n\tau_{n-1}e(\alpha-\alpha_i, i^3)}(\pi(a)).$$
Let $b=(1\otimes\tau_{n-1}\otimes\tau_{n-1}\otimes 1)(\iota_i\otimes\iota_i)\pi(a)$. 
Then $\tau_n\iota_i(a)\tau_n=\mu_{\tau_n}(b)$ and $\mu_{x_n^0}(b)=0$. Since $\lambda'_i=-1$, 
we deduce $b=\widetilde{\tau_n\iota_i(a)\tau_n}$ and
$$\begin{aligned}
\hat\vep'_{i,\lambda'}(\tau_n\iota_i(a)\tau_n)&=\mu_{x_n}(b)\\
&=\mu_{\tau_{n-1}x_n\tau_{n-1}e(\alpha-\alpha_i,i^2)}(\pi (a))\\
&=\mu_{\tau_{n-1}e(\alpha-\alpha_i,i^2)}(\pi(a))\\
&=a-\iota_i(p_0(a)).
\end{aligned}$$
We conclude $\bbX_{i,i,\lambda}(a)=-\bbI_{i,i,\lambda}(a)+a=-\bbI_{i,i,\lambda}(a)+B^0_{+i,\lambda'}a$.

It remains to consider the case $\lambda_i\leqslant 0$. Let $\tilde a \in\RL(\alpha)e(\alpha-\alpha_i,i)\otimes_{\RL(\alpha-\alpha_i)}e(\alpha-\alpha_i,i)\RL(\alpha)$ such that $\mu_{\tau_{n-1}}(\tilde a)=a$, $\mu_{x_{n-1}^k}(\tilde a)=0$ for $k\in [0, -\lambda_i-1]$. We have
$$
\begin{aligned} 
\tau_n\iota_i(a)\tau_n=\mu_{\tau_n\tau_{n-1}\tau_ne(\alpha-\alpha_i,i^3)}(\tilde a)=\mu_{\tau_{n-1}\tau_n\tau_{n-1}e(\alpha-\alpha_i, i^3)}(\tilde a).
\end{aligned}
$$
Let $b=(1\otimes\tau_{n-1}\otimes\tau_{n-1}\otimes 1)(\iota_i\otimes\iota_i)(\tilde a)\in\RL(\alpha+\alpha_i)e(\alpha,i)\otimes_{\RL(\alpha)}e(\alpha,i)\RL(\alpha+\alpha_i)$. Then $\mu_{\tau_n}(b)=\tau_n\iota_i(a)\tau_n$, and $\mu_{x_n^k}(b)=\mu_{\tau_{n-1}x_n^k\tau_{n-1}e(\alpha-\alpha_i,i^2)}(\tilde a)$. A computation similar to \eqref{eq:tau-power-x} yields
$$\tau_{n-1}x_n^k\tau_{n-1}e(\alpha-\alpha_i,i^2)=\big(\sum_{g_1+g_2=k-1}x_n^{g_1}\tau_{n-1}x_n^{g_2}-\sum_{g_1+g_2=k-2}(g_2+1)x_{n-1}^{g_1}x_n^{g_2}\big)e(\alpha-\alpha_i,i^2).$$
Therefore for $0\leqslant k\leqslant -\lambda'_i$, we have
$$\begin{aligned}
\mu_{x_n^k}(b)&=\sum_{g_1+g_2=k-1}x_n^{g_1}\mu_{\tau_{n-1}}(\tilde a)x_n^{g_2}-\sum_{g_1+g_2=k-2}(g_2+1)\mu_{x_{n-1}^{g_1}}(\tilde a)x_n^{g_2}\\
&=\sum_{g_1+g_2=k-1}x_n^{g_1}ax_n^{g_2}-\delta_{k=-\lambda'_i}\mu_{x_{n-1}^{-\lambda_i}}(\tilde a).
\end{aligned}
$$
Here we have used the fact $\mu_{\tau_{n-1}}(\tilde a)=a$ and $\mu_{x_{n-1}^k}(\tilde a)=0$ for 
$0\leqslant k\leqslant -\lambda_i-1$ in the second equality.
Finally, recall from \eqref{eq:adj-bim-neg} that there are elements 
$\tilde\pi_\ell\in\RL(\alpha+\alpha_i)e(\alpha,i)\otimes_{\RL(\alpha)}e(\alpha,i)\RL(\alpha+\alpha_i)$
 for $0\leqslant \ell\leqslant-\lambda_i'-1$ such that $\mu_{\tau_n}(\tilde\pi_\ell)=0$ and 
 $\mu_{x_n^k}(\tilde\pi_\ell)=\delta_{k,l}$.
Set $$c=b-\sum_{k=0}^{-\lambda'_i-1}\big(\sum_{g_1+g_2=k-1}x_n^{g_1}\tilde\pi_kx_n^{g_2}\big).$$
Then $\mu_{\tau_n}(c)=\mu_{\tau_n}(b)=\tau_n\iota_i(a)\tau_n$ and $\mu_{x_n^k}(c)=0$ for 
$0\leqslant k\leqslant -\lambda'_i-1$.
Hence $c=\widetilde{\tau_n\iota_i(a)\tau_n}$, and $\bbX_{i,i,\lambda}(a)$ is equal to
$$\begin{aligned}
\mu_{x_n^{-\lambda'_i}}(c)
&=\mu_{x_n^{-\lambda'_i}}(b)-\sum_{k=0}^{-\lambda'_i-1}\big(\sum_{g_1+g_2=k-1}x_n^{g_1}a\mu_{x_n^{-\lambda'_i}}(\tilde\pi_k)x_n^{g_2}\big)\\
&=-\mu_{x_{n-1}^{-\lambda_i}}(\tilde a)e(\alpha,i)+\sum_{g_1+g_2=-\lambda'_i-1}x_n^{g_1}ax_n^{g_2}-\sum_{g_3=1}^{-\lambda'_i}\sum_{g_1+g_2=-\lambda'_i-1-g_3}x_n^{g_1}a\mu_{x_n^{-\lambda'_i}}(\tilde\pi_{-\lambda'_i-g_3})x_n^{g_2}\\
&=-\bbI_{i,i,\lambda}(a)+\sum_{g_1+g_2+g_3=-\lambda'_i-1}x_n^{g_1}B^{g_3}_{+i,\lambda'}x_n^{g_2}(a).
\end{aligned}$$
In the second equality, we have substituted $g_3=-\lambda'_i-k$. 
In the third equality, we have used the definition of $B^{g_3}_{+i,\lambda'}$ for 
$0\leqslant g_3\leqslant -\lambda'_i$ in Definition \ref{df:bubbles}.
\end{proof}

\smallskip

\begin{remark}\label{rk:KL action}
Assume that the \emph{$Q$-cyclicity} condition in \cite[(2.4),(2.5)]{CL} holds for $\calV^\Lambda$, i.e., the
endomorphisms $x_i\in \End(E_i')$, $\tau_{ij}\in\End(E_i'E_j')$ are such that
$$x_i^\vee={}^\vee x_i,\qquad\tau_{ij}^\vee={}^\vee\tau_{ij}.$$ 
See the notation in Section \ref{sss:adjunction}. 
Then, under the adjunction isomorphism $\Hom(E_i'F_i', E_j'F_j')=\Hom(F_jE_i, F_jE_i)$, part $(a)$ of the previous 
lemma gives the second equality in the mixed relation \cite[(2.16)]{CL}.
For $i=j$, under the same adjunction, part $(b)$ gives the second equalities in \cite[(2.22), (2.24), (2.26)]{CL}. The other 
relations in \cite[sec.~2.6.3]{CL} can be checked similarly. Since the computations are quite lengthy and will not be 
needed, we omit the details here. Finally, the \emph{fake bubbles} relations \cite[(2.20)]{CL} is proved in Lemma 
\ref{lem:A10}$(a)$ below. Therefore, assuming the $Q$-cyclicity condition, we have proved that $\calV^\Lambda$ carries 
a representation of the Khovanov-Lauda's $2$-Kac-Moody algebra. 
The $Q$-cyclicity condition can probably be proved by similar computations as in \cite{K}. We have not checked this.
\end{remark}

\medskip

\subsection{Proof of Proposition \ref{prop:symmetrizing-form}}\label{sec:A1}
Assume that $\height(\alpha)=n$.
For $\nu=(\nu_1,\dots ,\nu_n)\in I^\alpha$ and $k\in[1,n]$ we set $\nu^{(k)}=(\nu_1,\cdots,\nu_k)$.
Consider the map 
$$\hat\vep_\nu=\hat\vep_{\nu_1}\circ\cdots\circ\hat\vep_{\nu_n}: e(\nu)\,\RL(\alpha)\,e(\nu)\to\bfk.$$
Recall that $\hat\vep_{\nu_k}$ is a map
$$\hat\vep_{\nu_k}: e(\nu^{(k)})R^\Lambda(\alpha-\sum_{j=k+1}^n\alpha_{\nu_j})e(\nu^{(k)})\to 
e(\nu^{(k-1)})R^\Lambda(\alpha-\sum_{j=k}^n\alpha_{\nu_j})e(\nu^{(k-1)}).$$
Next, we define an invertible element $r_\nu\in\bfk^\times_0$ by
\begin{equation}\label{rij}
r_\nu=\prod_{k<l}r_{\nu_k,\nu_l} \ \text{where }r_{ij}=\begin{cases} c_{i,j,-a_{ij},0} &\text{ if }j\neq i,\\
1 &\text{ if }j=i.
\end{cases}
\end{equation}

\begin{definition}\label{def:symmetrizing} We
define a $\bfk$-linear map $t_{\Lambda,\alpha}: \RL(\alpha)\to\bfk$ by setting
for all $\nu,\nu'\in I^\alpha$
$$t_{\Lambda,\alpha}\big(e(\nu)\,\bullet\,e(\nu')\big)=\begin{cases} 0 &\text{ if }  \nu\neq \nu',\\
r_\nu\,\hat\vep_\nu\big(e(\nu)\,\bullet\,e(\nu')\big) &\text{ if } \nu=\nu'.\end{cases}$$
\end{definition}

\smallskip

For $\nu\in I^\alpha$ we'll abbreviate 
\begin{equation}\label{tr} 
r(\alpha,\nu_n)=\prod_{k=1}^{n-1}r_{\nu_k,\nu_n}.
\end{equation}
By Corollary \ref{cor:vepdeg} the map $t_\alpha$ is homogenous of degree $-d_{\Lambda,\alpha}$.
Note that $r_\nu=r(\alpha,\nu_n)\,r_{\nu^{(n-1)}}$.
Therefore we have
\begin{equation}\label{eq:trec}
t_\alpha(a)=r(\alpha-\alpha_{\nu_n},\nu_n)\,t_{\alpha-\alpha_{\nu_n}}\big(\hat\vep_{\nu_n}(a)\big),\quad\forall a\in e(\nu)\RL(\alpha)e(\nu).
\end{equation}

We will prove that $t_\alpha$ is a symmetric form. 
By Theorem \ref{thm:Kbiadjoint}, the form $t_\alpha$ is nondegenerate. 
We must prove that for each $w,z\in \RL(\alpha)$ we have
\begin{equation}\label{eq:symmetric}
t_\alpha(zw)=t_\alpha(wz).
\end{equation} 
Without loss of generality, we may assume 
\begin{equation}\label{eq:zwnunu}
z\in e(\nu)\RL(\alpha)e(\mu),\quad w\in e(\mu)\RL(\alpha)e(\nu),
\end{equation}
the other cases being trivial.
We will prove \eqref{eq:symmetric} by induction on the height of $\alpha$. Assuming it holds for all $\alpha$ 
of height $n-1$, let us prove it for $\alpha$ of height $n$. We will write $\nu_n=i$, $\mu_n=j$, 
$\beta=\alpha-\alpha_i-\alpha_j$ and $\lambda=\Lambda-(\alpha-\alpha_i)$.

First, consider the case when $z$ belongs to the image of the map
$$\sigma_{ij}=\mu_{\tau_{n-1}}: \RL(\alpha-\alpha_i)e(\beta,j)\otimes_{\RL(\beta)} e(\beta,i)\RL(\alpha-\alpha_j)\to e(\alpha-\alpha_i,i)\RL(\alpha)e(\alpha-\alpha_j,j)$$
for $\beta=\alpha-\alpha_i-\alpha_j$ and $w$ belongs to the image of $\sigma_{ji}$.
Note that this is always the case if $i\neq j$ or if $i=j$ and $\lambda_i\leqslant 0$. 
In this situation, up to taking a linear combination, we may write
\begin{equation}\label{eq:zwtt}
z=\iota_i(z')\tau_{n-1}\iota_j(z''),\quad w=\iota_j(w')\tau_{n-1}\iota_i(w''),
\end{equation}
where $\iota_s$ is the canonical embedding $\RL(\alpha-\alpha_s)\to e(\alpha-\alpha_s,s)\RL(\alpha)e(\alpha-\alpha_s,s)$ for $s=i,j$ and 
$$\begin{aligned}
z'\in e(\nu^{(n-1)})\RL(\alpha-\alpha_i)e(\xi,j),\quad &z''\in e(\xi,i)\RL(\alpha-\alpha_j)e(\mu^{(n-1)})\\
w'\in e(\mu^{(n-1)})\RL(\alpha-\alpha_j)e(\eta,i),\quad &w''\in e(\eta,j)\RL(\alpha-\alpha_i)e(\nu^{(n-1)})
\end{aligned}$$
for some $\xi, \eta\in I^{\beta}$. 
By \eqref{eq:trec} and $\RL(\alpha-\alpha_i)$-bilinearity of $\hat\vep_i$ we have
$$\begin{aligned}
t_\alpha(zw)&=r_\nu\hat\vep_\nu(\iota_i(z')\tau_{n-1}\iota_j(z''w')\tau_{n-1}\iota_i(w''))\\
&=r(\alpha-\alpha_i,i)\,t_{\alpha-\alpha_i}\big(z'\hat\vep_i(\tau_{n-1}\iota_j(z''w')\tau_{n-1})w''\big).
\end{aligned}$$
Next, Lemma \ref{lem:relationsI} yields
$$\hat\vep_i(\tau_{n-1}\iota_j(z''w')\tau_{n-1})=r_{ij}\iota_j\hat\vep_i(z''w')+\delta_{ij}\sum_{g_1+g_2+g_3=
-\lambda_i-1}x_{n-1}^{g_1}B^{g_2}_{+i,\lambda}z''w'x_{n-1}^{g_3}.$$ Therefore
$t_\alpha(zw)=A(z,w)+B(z,w),$
where
$$\begin{aligned}
A(z,w)&=r(\alpha-\alpha_i,i)r_{ij}t_{\alpha-\alpha_i}\big(z'\iota_j\circ\hat\vep_i(z''w')w''\big)\\
B(z,w)&=\delta_{ij}r(\alpha-\alpha_i,i)t_{\alpha-\alpha_i}\big(\sum_{g_1+g_2+g_3=
-\lambda_i-1}z'x_{n-1}^{g_1}B^{g_2}_{+i,\lambda}z''w'x_{n-1}^{g_3}w''\big).
\end{aligned}$$
Thus, the formula \eqref{eq:symmetric} follows from the identities 
\begin{equation}\label{AB}
A(z,w)=A(w,z),\qquad B(z,w)=B(w,z). 
\end{equation}

Let us first prove \eqref{AB} for $A(z,w)$. We have
$$\begin{aligned}
t_{\alpha-\alpha_i}\big(z'\iota_j\circ\hat\vep_i(z''w')w''\big)
&=t_{\alpha-\alpha_i}(\iota_j\circ\hat\vep_i(z''w')w''z')\\
&=(\prod_{p=1}^{n-2}r_{\xi_p,j})t_{\beta}\big(\hat\vep_j(\iota_j\circ\hat\vep_i(z''w')w''z')\big)\\
&=(\prod_{p=1}^{n-2}r_{\xi_p,j})t_{\beta}\big(\hat\vep_i(z''w')\hat\vep_j(w''z')\big).
\end{aligned}$$
Here, the first equality is because $t_{\alpha-\alpha_i}$ symmetric by induction, the second one is given by \eqref{eq:trec}
since $\hat\vep_i(z''w')w''z'\in e(\xi,j)\RL(\alpha-\alpha_i)e(\xi,j)$, the third one
is the $\RL(\beta)$-bilinearity of $\hat\vep_j$.
Now, observe that
$r_{ij}(\prod_{p=1}^{n-2}r_{\xi_p,j})=\prod_{p=1}^{n-1}r_{\nu'_p,j}$.
We deduce
$$\begin{aligned}
A(z,w)=r(\alpha-\alpha_i,i)r(\alpha-\alpha_j,j)t_\beta\big(\hat\vep_i(z''w')\hat\vep_j(w''z')\big)
\end{aligned}$$
Exchanging $z$ and $w$ means also exchanging $i$ and $j$, $\nu$ and $\mu$. So the right hand side is symmetric with respect to $z$ and $w$. We deduce $A(z,w)=A(w,z)$.

Next, since $t_{\alpha-\alpha_i}$ is symmetric by the inductive hypothesis and $since B^{g_2}_{+i,\lambda}$ belongs to the center of $\RL(\alpha-\alpha_i)$, we have
$$B(z,w)=\delta_{ij}r(\alpha-\alpha_i,i)t_{\alpha-\alpha_i}\big(\sum_{g_1+g_2+g_3=-\lambda_i-1}B^{g_2}_{+i,\lambda}x_{n-1}^{g_1}z''w'x_{n-1}^{g_3}w''z'\big).$$
Exchanging $z$ and $w$ means also exchanging $i$ and $j$, $\nu$ and $\mu$. But here $B(z,w)\neq 0$ only when $i=j$. In this situation $r(\alpha-\alpha_i,i)=r(\alpha-\alpha_j,j)$.
We conclude that $B(z,w)=B(w,z)$. Hence, we have proven \eqref{eq:symmetric} when $z$, $w$ are both of the form \eqref{eq:zwtt}.

If $z$ and $w$ are not of the form \eqref{eq:zwtt}, then we must have $i=j$ and $\lambda_i>0$ as discussed 
in the paragraph before \eqref{eq:zwtt}. In this situation 
$z\in e(\alpha-\alpha_i,i)\RL(\alpha)e(\alpha-\alpha_i,i)$ can be uniquely written as 
$z=\mu_{\tau_{n-1}}(\pi(z))+\sum_{k=0}^{\lambda_i-1}p_k(z)x_n^k$, see Theorem \ref{thm:KK}. 
Similar for $w$. 
By linearity of $t_\alpha$ the remaining cases to be considered are 
\begin{itemize}
\item[(1)] $z=z_kx_n^k,\quad w=w'\tau_{n-1}w''$,
\item[(2)] $z=z_kx_n^k,\quad w=w_lx_n^l$,
\end{itemize}
for $z_k, w_l\in \RL(\alpha-\alpha_i)$, $k,l\in [0,\lambda_i-1]$, 
and $w'\in \RL(\alpha-\alpha_i)e(\beta,i), w''\in e(\beta,i)\RL(\alpha-\alpha_i)$.
Note that in both cases we have
$$\begin{aligned}
t_\alpha(zw-wz)&=r_\nu\hat\vep_\nu(zw)-r_\mu\hat\vep_\mu(wz)\\
&=r(\alpha,i)t_{\alpha-\alpha_i}(\hat\vep_i(zw-wz)).
\end{aligned}$$
Here, in the last equality, we used \eqref{eq:trec} and the fact that $i=j$. By induction, to prove $t_\alpha$ 
symmetric, it is enough to prove $\hat\vep_i(zw-wz)$ belongs to the commutator of $\RL(\alpha-\alpha_i)$. xx
We do this case by case

\begin{itemize}
\item We have $zw=z_kw'(x^k_n\tau_{n-1})e(\beta,i^2)w''$. Now
$$x^k_n\tau_{n-1}e(\beta,i^2)=\big(\tau_{n-1}x_{n-1}+\sum_{p+q=k-1}x_{n-1}^px_n^q\big)e(\beta,i^2).$$
Hence $\hat\vep_i(zw)=p_{\lambda_i-1}(zw)=0$. Similarly $\hat\vep_i(wz)=0$. 
\item We have $zw-wz=(z_kw_l-w_lz_k)x^{k+l}_n$. 
Hence $\hat\vep_i(zw-wz)=[z_k,w_l]\hat\vep_i(x_n^{k+l})$. Note that $\hat\vep_i(x_n^{k+l})$ belongs to the 
center of $\RL(\alpha-\alpha_i)$. So $\hat\vep_i(zw-wz)$ belongs to the commutator of 
$\RL(\alpha-\alpha_i)$.
\end{itemize}
The proof of Proposition \ref{prop:symmetrizing-form} is now complete.

\qed

\medskip

\section{Relations}\label{app:relations}

In this section we prove Theorem \ref{thm:action}.

\subsection{A useful lemma}
Let $z\in e(\alpha,i)\RL(\alpha+\alpha_i)e(\alpha,i)$.
Recall that
\begin{itemize}
\item
if $\lambda_i\geqslant 0$,  then $z$   can 
be uniquely written as
$$z=\mu_{\tau_n}(\pi(z))+\sum_{k=0}^{\lambda_i-1}p_k(z)x_{n+1}^k,$$
\item
if $\lambda_i\leqslant 0$, there are $\tilde z$, $\tilde \pi_k\in\RL(\alpha)e(\alpha-\alpha_i,i)\otimes_{\RL(\alpha-\alpha_i)}e(\alpha-\alpha_i,i)\RL(\alpha)$ 
such that $\mu_{\tau_n}(\tilde z)=z$, $\mu_{x_n^p}(\tilde z)=\mu_{\tau_n}(\tilde\pi_k)=0$ and $\mu_{x_n^p}(\tilde\pi_k)=\delta_{k,p}$ for $k,p\in[0,-\lambda_i-1]$.
\end{itemize}

\smallskip

To prove Theorem \ref{thm:action} we'll need the following technical result.

\smallskip

\begin{lemma}\label{lem:recrec}
For each $r\in\bbN,$ we have

(a) if $\lambda_i>0$ then 
 \begin{align*}
 &\pi\big(x^{r}_{n+1}e(\alpha,i)\big)=\sum_{a=0}^{r-\lambda_i}(B^{r-\lambda_i-a}_{+i,\lambda}\otimes x_{n}^a\otimes 1\otimes 1)(-\hat\eta'_i(1)),\\
 &p_{k}(x_{n+1}^{r}e(\alpha,i))=\sum_{a=0}^{\lambda_i-k-1}B^{r-k-a}_{+i,\lambda}B^{a}_{-i,\lambda},\quad \forall k\in [0,\lambda_i-1],
  \end{align*}

 (b) if $\lambda_i\leqslant 0$ then
  \begin{align*}
 &\mu_{x_n^r}(\tilde z)=\sum_{p=0}^{r+\lambda_i}\hat\vep'_{i,\lambda}(zx_{n+1}^p)B_{-i,\lambda}^{r+\lambda_i-p},\quad \forall z\in e(\alpha,i)\RL(\alpha+\alpha_i)e(\alpha,i),\\
 &\mu_{x_n^r}(\tilde\pi_k)=\sum_{a=k}^{-\lambda_i-1}B^{a-k}_{+i,\lambda}B^{r-a}_{-i,\lambda},\quad \forall k\in [0,-\lambda_i-1],
 \end{align*}
 
 (c) $\sum_{a=0}^{r}B_{+i,\lambda}^{r-a}B_{-i,\lambda}^{a}=\delta_{r,0}.$

\end{lemma}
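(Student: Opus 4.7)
The plan is to establish the three formulas by a coupled induction on $r$, exploiting the multiplicative recursion $x_{n+1}^{r+1} = x_{n+1} \cdot x_{n+1}^r$ together with the defining characterizations of $\pi$, $p_k$, $\tilde z$ and $\tilde \pi_k$ from Theorem \ref{thm:KK}, and the explicit definitions of the bubbles $B_{\pm i,\lambda}^k$ in Definition \ref{df:bubbles}.

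I would start with part (a) under the assumption $\lambda_i \geqslant 0$. The base cases are immediate: for $r \in [0, \lambda_i-1]$ the element $x_{n+1}^r e(\alpha,i)$ is already in canonical form with $\pi = 0$ and $p_k = \delta_{k,r}$, matching the claim with the conventions $B_{\pm i,\lambda}^0 = 1$ and $B_{-i,\lambda}^k = 0$ for $k < 0$. For $r = \lambda_i$ the formula reads $\pi(x_{n+1}^{\lambda_i} e(\alpha,i)) = -\hat\eta'_{i,\lambda}(1)$ and $p_k(x_{n+1}^{\lambda_i} e(\alpha,i)) = -B_{-i,\lambda}^{\lambda_i-k}$, which is precisely the content of Theorem \ref{thm:Kbiadjoint} combined with the sign convention in Definition \ref{df:bubbles}. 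For the inductive step $r \to r+1$, I would multiply the canonical decomposition of $x_{n+1}^r e(\alpha,i)$ on the left by $x_{n+1}$, commute $x_{n+1}$ past $\tau_n$ using the identity $x_{n+1}\tau_n = \tau_n x_n + 1$ on $e(\alpha,i^2)$ (relation (e) of Definition \ref{relKLR}), and then re-extract the canonical decomposition. The shift $x_{n+1}^k \to x_{n+1}^{k+1}$ produces an overflow term at $k = \lambda_i$, which must be reduced back via the $r = \lambda_i$ case; this feedback loop introduces precisely one new factor of $B_{+i,\lambda}^{\,\cdot}$ at each stage, matching the sum in the claimed formula. The commutator $x_{n+1}\tau_n - \tau_n x_n = 1$ accounts for the shift $x_n^a \to x_n^{a+1}$ in the $\pi$-formula.

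Part (b) for $\lambda_i \leqslant 0$ is proved by a dual argument. The element $\tilde z$ is uniquely characterized by $\mu_{\tau_n}(\tilde z) = z$ and $\mu_{x_n^p}(\tilde z) = 0$ for $p \in [0, -\lambda_i-1]$, while the definition of $B_{-i,\lambda}^k$ for $k \geqslant -\lambda_i + 1$ is exactly $\mu_{x_n^{k+\lambda_i-1}}(\hat\eta'_{i,\lambda}(1))$. Starting from the base case $r = -\lambda_i$ where $\mu_{x_n^{-\lambda_i}}(\tilde z) = \hat\vep'_{i,\lambda}(z)$ by Theorem \ref{thm:Kbiadjoint}, I would use the same commutation relation as above, applied in the opposite direction: multiplying $\tilde z$ by $x_n$ on the appropriate side and re-extracting, one obtains a recursion that expresses $\mu_{x_n^{r+1}}(\tilde z)$ in terms of $\mu_{x_n^r}(\tilde z)$, $\hat\vep'_{i,\lambda}(z x_{n+1}^{r+\lambda_i+1})$ and a single bubble factor. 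The second formula, for $\mu_{x_n^r}(\tilde\pi_k)$, is obtained by specializing this procedure to the distinguished elements $\tilde\pi_k$ satisfying $\mu_{\tau_n}(\tilde\pi_k) = 0$, which isolates the pure bubble contribution.

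Part (c) is the key combinatorial identity and encodes the invertibility of the formal power series $B_+(z) = \sum_k B_{+i,\lambda}^k\, z^k$ and $B_-(z) = \sum_k B_{-i,\lambda}^k\, z^k$ as mutual inverses. My plan is to derive it from the biadjunction identity $(\hat\vep'_{i,\lambda} E_i)\circ(E_i \hat\eta'_{i,\lambda}) = E_i$ (or its dual for $\lambda_i \leqslant 0$), applied term-by-term to $x_n^r$. Writing $\hat\eta'_{i,\lambda}(1) = \sum_k v_k^\vee \otimes v_k$, this identity reads $\sum_k v_k\, \hat\vep'_{i,\lambda}(x_{n+1}^r v_k^\vee) = x_n^r$; expanding both sides in bubbles via parts (a) and (b) and matching coefficients yields (c). The main obstacle will be the bookkeeping of indices and signs across the three regimes $r < \lambda_i$, $r = \lambda_i$ and $r > \lambda_i$ (and dually for $\lambda_i \leqslant 0$), and ensuring that the induction closes without circularity; this is handled by establishing (c) for all $r \leqslant r_0$ as part of the inductive hypothesis for (a) and (b) at level $r_0$, so that the overflow term at $k = \lambda_i$ in the inductive step of (a) is absorbed exactly by (c) at the previous level.
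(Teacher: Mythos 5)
Your recursion mechanism for parts (a) and (b) — multiply by $x_{n+1}$ (resp.\ $x_n$), push past $\tau_n$ via $x_{n+1}\tau_n - \tau_n x_n = 1$ on $e(\alpha-\alpha_i,i^2)$, and re-extract the canonical decomposition — is exactly the engine the paper uses, producing the same three scalar recursions for $\pi(x_{n+1}^{r+1})$, $p_0(x_{n+1}^{r+1})$, $p_k(x_{n+1}^{r+1})$ and the dual identity $\widetilde{zx_{n+1}} = (1\otimes x_n\otimes 1\otimes 1)\tilde z - \hat\vep'_{i,\lambda}(z)\hat\eta'_i(1)$ for $\lambda_i\le0$. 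On these two parts the overlap with the paper is essentially complete.

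The place where you diverge is (c) and how the induction is closed, and this is where I'd push back. First, a small but consequential point: your stated base case ``$p_k(x_{n+1}^{\lambda_i}) = -B_{-i,\lambda}^{\lambda_i-k}$'' is indeed what Theorem~\ref{thm:Kbiadjoint} gives, but it is \emph{not} the formula asserted in (a). The formula in (a) at $r=\lambda_i$ reads $p_k = \sum_{a=0}^{\lambda_i-k-1}B_{+i,\lambda}^{\lambda_i-k-a}B_{-i,\lambda}^a$, and equating the two is \emph{exactly} an instance of (c). So (c) is needed already at the base case, not just at the overflow step, and your proposal to fold (c) into the inductive hypothesis needs to account for this. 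Second, and more seriously, your route to (c) through the zigzag identity $\sum_k v_k\,\hat\vep'_{i,\lambda}(\cdot\, v_k^\vee) = \mathrm{id}$ is not what the paper does, and I do not see how it closes without substantial extra work: the elements $v_k,v_k^\vee$ coming from $\hat\eta'_{i,\lambda}(1) = -\pi(x_{n+1}^{\lambda_i})$ are not monomials in $x_n$, so ``expanding both sides in bubbles and matching coefficients'' is not a mechanical step; also you have mixed $x_{n+1}^r$ and $x_n^r$ in a way that does not typecheck against the bimodule realizing $E_i'$. The paper instead gets (c) by comparing two independent evaluations of $\vep_i'(\pi(x_{n+1}^r))$ — one by feeding the recursion for $p_k$ back into the $p_0$-recursion and specializing at $k=\lambda_i-1$ (producing $\sum_{a=0}^{\lambda_i}B_+^{r+1-a}B_-^a$), the other by directly applying the multiplication counit $\vep_i'=\mu_1$ to the already-established first formula in (a) (producing $-\sum_{a=\lambda_i+1}^{r+1}B_+^{r+1-a}B_-^a$). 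That derivation is non-circular because it only uses the first formula in (a), which requires no knowledge of (c). You would do well to adopt this ordering rather than trying to derive (c) from the biadjunction zigzag.
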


\begin{proof}
First, assume $\lambda_i>0$.
To simplify notation, we write $x^r_{n+1}=x^r_{n+1}e(\alpha,i)$. We have
\begin{align*}
x^{r+1}_{n+1}&=x_{n+1}(\mu_{\tau_n}\big(\pi(x^r_{n+1}))+\sum_{k=0}^{\lambda_i-1}p_k(x_{n+1}^r)x_{n+1}^k\big)\\
&=\mu_{x_{n+1}\tau_n}(\pi(x^r_{n+1}))+\sum_{k=0}^{\lambda_i-1}p_k(x_{n+1}^r)x_{n+1}^{k+1}\\
&=\mu_{\tau_nx_n}(\pi(x^r_{n+1}))+\vep'_i(\pi(x^r_{n+1}))+\sum_{k=0}^{\lambda_i-1}p_k(x_{n+1}^r)x_{n+1}^{k+1}.
\end{align*}

Here in the last equality we have used $(x_{n+1}\tau-\tau_nx_n-1)e(\alpha-\alpha_i,i^2)=0$ and $\mu_1=\vep'_i$. It follows that
\begin{align}
\pi(x^{r+1}_{n+1})&=(1\otimes x_n\otimes 1\otimes 1)\pi(x_{n+1}^r)+p_{\lambda_i-1}(x^r_{n+1})\pi(x_{n+1}^{\lambda_i}),\label{eq:pi}\\
p_0(x^{r+1}_{n+1})&=\vep'_i(\pi(x^r_{n+1}))+p_{\lambda_i-1}(x^r_{n+1})p_0(x_{n+1}^{\lambda_i}),\label{eq:p0}\\
p_k(x^{r+1}_{n+1})&=p_{k-1}(x^r_{n+1})+p_{\lambda_i-1}(x^r_{n+1})p_k(x_{n+1}^{\lambda_i}),\quad \forall\,k\in [1,\lambda_i-1]\label{eq:pk}.
\end{align}
Now, recall that $p_{\lambda_i-1}(x_{n+1}^{r+\lambda_i-1})=\hat\vep'_{i,\lambda}(x_{n+1}^{r+\lambda_i-1})=B_{+i,\lambda}^r$, $p_k(x^{\lambda_i}_{n+1})=B_{-i,\lambda}^{\lambda_i-k}$ and $\hat\eta'_{i,\lambda}=-\pi(x^{\lambda_i}_{n+1})$. 

\smallskip

If $r<\lambda_i$ the first equality in part $(a)$  is trivial with both sides being zero. If $r\geqslant\lambda_i$, it follows recursively from \eqref{eq:pi}.
\smallskip

Next, by applying recursively \eqref{eq:pk} we obtain
\begin{align*}
p_k(x^{r+\lambda_i}_{n+1})=p_0(x^{r+\lambda_i-k}_{n+1})-\sum_{a=0}^{k-1}B_{+i,\lambda}^{r-a}B_{-i,\lambda}^{\lambda_i-k+a}.
\end{align*}
Substitute $p_0(x^{r+\lambda_i-k}_{n+1})$ using \eqref{eq:p0} gives 
$$p_{k}(x_{n+1}^{r+\lambda_i})=\vep'_i(\pi(x^{r+\lambda_i-k-1}_{n+1}))-\sum_{a=0}^{k}B_{+i,\lambda}^{r-a}B_{-i,\lambda}^{\lambda_i-k+a}.$$
Apply this to the special case $k=\lambda_i-1$ we get
\begin{align}\label{eq:vep'}
\vep'_i(\pi(x^{r}_{n+1}))=\sum_{a=0}^{\lambda_i}B_{+i,\lambda}^{r+1-a}B_{-i,\lambda}^{a}.
\end{align}
Therefore we deduce
\begin{align*}
p_{k}(x_{n+1}^{r+\lambda_i})&=\sum_{a=0}^{\lambda_i}B_{+i,\lambda}^{r+\lambda_i-k-a}B_{-i,\lambda}^{a}-\sum_{a=\lambda_i-k}^{\lambda_i}B_{+i,\lambda}^{r+\lambda_i-k-a}B_{-i,\lambda}^{a}\\
&=\sum_{a=0}^{\lambda_i-1-k}B_{+i,\lambda}^{r+\lambda_i-k-a}B_{-i,\lambda}^{a}.
\end{align*}
This proves the second equality in part $(a)$ for $r\geqslant \lambda_i$. 

\smallskip

On the other hand, we have
\begin{align*}
\vep'_i(\pi(x^{r}_{n+1}))=\sum_{a=0}^{r-\lambda_i}B^{r-\lambda_i-a}_{+i,\lambda}\mu_{x_n^a}(-\hat\eta'_i(1))
=-\sum_{a=\lambda_i+1}^{r+1}B^{r+1-a}_{+i,\lambda}B^{a}_{-i,\lambda}.
\end{align*}
by the first equality in part $(a)$. Combined with \eqref{eq:vep'} it gives part (c) for $r>0$. The case $r=0$ is obvious.

Finally, for $r\leqslant\lambda_i-1$ we have $p_{k}(x_{n+1}^{r}e(\alpha,i))=\delta_{k,r}$ and
$$\sum_{a=0}^{\lambda_i-k-1}B^{r-k-a}_{+i,\lambda}B^{a}_{-i,\lambda}=\sum_{a=0}^{r-k}B^{r-k-a}_{+i,\lambda}B^{a}_{-i,\lambda}=\delta_{r,k}$$
by part $(c)$. We deduce the second equality in part (a) for $r\leqslant\lambda_i-1$.

\smallskip

The case $\lambda_i\leqslant 0$ is proved by a computation of similar style. We only indicate some key steps. First, one checks by a direct computation that
$$\widetilde{zx_{n+1}}=(1\otimes x_n\otimes 1\otimes 1)\tilde z-\hat\vep'_{i,\lambda}(z)\hat\eta'_i(1).$$ Applying it recursively we get
$$\widetilde{zx^r_{n+1}}=(1\otimes x^r_n\otimes 1\otimes 1)\tilde z-\sum_{p=0}^{r-1}\hat\vep'_{i,\lambda}(zx_{n+1}^p)(1\otimes x^{r-1-p}_n\otimes 1\otimes 1)\hat\eta'_i(1).$$
The first equality in $(b)$ is obtained by applying $\mu_{x_n^{-\lambda_i}}$ to both sides of the above equality with $r$ replaced by $r+\lambda_i$.
To prove the second equality, observe that for $k,p\in [0,-\lambda_i-2]$, write $A=(1\otimes x_n\otimes 1\otimes 1)\tilde\pi_{k+1}$ we have $\mu_{\tau_n}(A)=0$,
$\mu_{x^p_n}(A)=\delta_{k,p}$, and 
$\mu_{x^{-\lambda_i-1}_n}(A)=-B_{-i,\lambda}^{-\lambda_i-k-1}$.
We deduce that 
$$\tilde\pi_k=\sum_{p=0}^{-\lambda_i-1-k}B_{-i,\lambda}^{-\lambda_i-k-1-p}(1\otimes x_n^{p}\otimes 1\otimes 1)\hat\eta'_{i,\lambda}(1).$$
Now, apply $\mu_{x_n^r}$ to both sides we get the second equality in $(b)$.
Finally, to get $(c)$, observe that the first equality applied to $z=1$ yields
$$\mu_{x_n^r}(\tilde 1)=\sum_{p=-\lambda_i+1}^{r+1}B^p_{+i,\lambda}B^{r+1-p}_{-i,\lambda}.$$ On the other hand, it is easy to check that
$-\tilde 1=(1\otimes x_n\otimes 1\otimes 1)\tilde\pi_0+B^{-\lambda_i}_{+i,\lambda}\hat\eta'_{i,\lambda}(1)$.
Hence 
$$-\mu_{x^r_n}(\tilde 1)=\mu_{x_n^{r+1}}(\tilde\pi_0)+B^{-\lambda_i}_{+i,\lambda}B^{r+1+\lambda_i}_{-i,\lambda}=\sum_{p=0}^{-\lambda_i}B^p_{+i,\lambda}B^{r+1-p}_{-i,\lambda}$$
by the second equality in $(b)$. Combining the two equalities gives $(c)$.
\end{proof}

\medskip

\subsection{The Cartan loop operators}\label{sec:CLO}

Consider the following formal power series
$$B_{\pm i,\lambda}(z)=\sum_{k\geqslant 0}B^k_{\pm i,\lambda}\,z^{k}\in \Z(\RL(\alpha))[[z]].$$
The aim of this section is to express the coefficients of the formal series $B_{\pm i,\lambda}(z)$ as elements in the image of the canonical map 
$Z(R(\alpha))\to Z(R^\Lambda(\alpha))$, see Proposition \ref{prop:A10} for details.
To do that, fix formal variables $y_{i,1},\dots,y_{i,\Lambda_i}$ of degree 2 such that 
\begin{equation}\label{a2}a^\Lambda_i(u)=\prod_{p=1}^{\Lambda_i}(u+y_{ip})
\quad \text{with}\quad
c_{ip}=e_p(y_{i,1},\dots,y_{i,\Lambda_i}).
\end{equation}
Consider the following formal series in $\bfk[[u,v]]$
\begin{equation}q_{ij}(u, v)=\begin{cases} u^{-a_{ij}}\,Q_{ij}(u^{-1}, v)/c_{i,j,-a_{ij},0} &\text{ if } i\neq j,\\
(1-uv)^{-2}&\text{ else.}
\end{cases}
\end{equation}

\smallskip

\begin{lemma}\label{lem:A10} 
For each $\alpha\in Q_+$ of height $n$, we have

(a) $B_{+i,\lambda}(z)\,B_{-i,\lambda}(z)=1$,

(b) $\big(
B_{\pm i,\lambda}(z)
-q_{ij}(z, x_n)^{\mp 1}\,B_{\pm i,\lambda+\alpha_j}(z)\big)\,
e(\alpha-\alpha_j,j)=0$ for each $i,j$.
\end{lemma}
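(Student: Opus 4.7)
The plan is to dispatch (a) immediately from Lemma~\ref{lem:recrec}, reduce the two halves of (b) to a single one by means of (a), and then prove that single half by sliding a bubble across a $j$-strand via Lemma~\ref{lem:relationsI}. For (a), the identity $\sum_{a=0}^{r}B^{r-a}_{+i,\lambda}\,B^{a}_{-i,\lambda}=\delta_{r,0}$ of Lemma~\ref{lem:recrec}(c) is precisely the vanishing of the coefficient of $z^r$ (for $r>0$) in $B_{+i,\lambda}(z)\,B_{-i,\lambda}(z)$, so $B_{+i,\lambda}(z)\,B_{-i,\lambda}(z)=1$ in $\Z(R^\Lambda(\alpha))[[z]]$ follows at once. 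For (b), I observe that by (a) applied at both weights $\lambda$ and $\lambda+\alpha_j$, inversion inside the power-series ring $\Z(R^\Lambda(\alpha))[[z]]\cdot e$, with $e=e(\alpha-\alpha_j,j)$, exchanges the $B_+$ and $B_-$ identities by flipping the sign of the exponent on $q_{ij}$; this inversion is legal because $B^{0}_{\pm i,\lambda}=1$ and because the constant term in $z$ of $q_{ij}(z,x_n)$ equals $1$ (the grading on $\bfk$ forces $c_{i,j,-a_{ij},q}=0$ for $q>0$). It therefore suffices to establish the $B_+$ version.

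For the $B_+$ version with $i\neq j$, I would apply Lemma~\ref{lem:relationsI}(a) to $a_k = x_n^{(\lambda+\alpha_j)_i-1+k}\,e(\alpha-\alpha_j,i)$, whose image under $\hat\vep'_{i,\lambda+\alpha_j}$ is $B^{k}_{+i,\lambda+\alpha_j}$ (for $k$ in the range where this is the defining formula; the remaining small-$k$ cases can be handled using part (a) and Lemma~\ref{lem:recrec}(a)). On the left, the relations $\tau_n x_n = x_{n+1}\tau_n$ on the idempotent $e(-,i,j)$ and $\tau_n^2\,e(-,j,i) = Q_{ij}(x_{n+1},x_n)\,e(-,j,i)$ rewrite $\tau_n\iota_j(a_k)\tau_n$ as $x_{n+1}^{(\lambda+\alpha_j)_i-1+k}\,Q_{ij}(x_{n+1},x_n)\,e(\alpha-\alpha_j,j,i)$. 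Expanding $Q_{ij}(x_{n+1},x_n) = \sum_{p,q}c_{i,j,p,q}\,x_{n+1}^p\,x_n^q$ and using the $R^\Lambda(\alpha)$-bilinearity of $\hat\vep'_{i,\lambda}$ produces the identity $c_{i,j,-a_{ij},0}\,B^{k}_{+i,\lambda+\alpha_j}\,e = \sum_{p,q}c_{i,j,p,q}\,x_n^q\,B^{a_{ij}+p+k}_{+i,\lambda}\,e$. Assembling the $k$-family into generating series in $z$ (the reindexing is clean because the grading forces $a_{ij}+p\leqslant 0$ on the support of $c_{i,j,p,q}$), the sum over $p,q$ packages as $z^{-a_{ij}}\,Q_{ij}(z^{-1},x_n)$, yielding $c_{i,j,-a_{ij},0}\,B_{+i,\lambda+\alpha_j}(z)\,e = z^{-a_{ij}}\,Q_{ij}(z^{-1},x_n)\,B_{+i,\lambda}(z)\,e$, which upon recognising $q_{ij}(z,x_n) = z^{-a_{ij}}Q_{ij}(z^{-1},x_n)/c_{i,j,-a_{ij},0}$ is exactly the claim.

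The main obstacle will be the case $i=j$. Since $Q_{ii}=0$, the previous slide trivialises, and Lemma~\ref{lem:relationsI}(b) must be used instead; it contributes to $-\bbI_{i,i,\lambda+\alpha_i}$ the triple-convolution correction $\sum_{g_1+g_2+g_3=-\lambda_i-1}x^{g_1}\,B^{g_2}_{+i,\lambda}\,x^{g_3}$ (when $\lambda_i\leqslant 0$; a parallel formula handles $\lambda_i\geqslant 0$). The delicate task is to verify that this correction, evaluated on $a_k = x_n^{(\lambda+\alpha_i)_i-1+k}\,e(\alpha-\alpha_i,i)$ and assembled into a generating series in $z$, telescopes to precisely $q_{ii}(z,x_n)^{-1} = (1-zx_n)^{2}$. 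I expect this to follow by substituting the closed-form expressions of Lemma~\ref{lem:recrec}(a),(b) for $p_k(x_{n+1}^r)$ and $\mu_{x_n^r}(\tilde z)$ to rewrite the correction, then invoking part (a) of the present lemma to collapse the resulting products of $B_+$- and $B_-$-series into the expansion of $(1-zx_n)^{\pm 2}$; uniform treatment of the sign cases of $\lambda_i$ versus $(\lambda+\alpha_i)_i=\lambda_i-2$ in the piecewise definition of $B^k_{\pm i,-}$ will be the main source of technical work.
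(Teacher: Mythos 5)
The global strategy — get (a) from Lemma~\ref{lem:recrec}(c), use (a) to reduce (b) to the $B_+$ statement (your observation that the constant term of $q_{ij}(z,x_n)$ is $1$ by the $\bbN$-grading on $\bfk$ is a correct justification for inverting in $\Z(\RL(\alpha))[[z]]$), and slide a dotted strand through a $j$-strand via Lemma~\ref{lem:relationsI}(a) for $i\neq j$ — agrees with the paper, and your generating-series bookkeeping for the $i\neq j$, large-$k$ range is right.

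The $i=j$ case, however, rests on a misconception. You write that the correction term $\sum_{g_1+g_2+g_3=-\lambda_i-1}x^{g_1}B^{g_2}_{+i,\lambda}x^{g_3}$ in Lemma~\ref{lem:relationsI}(b), once assembled, should telescope to $q_{ii}(z,x_n)^{-1}=(1-zx_n)^2$. It does not: in the actual argument this correction \emph{cancels completely}. Indeed, writing $C(\ell)=\sum_{g_1+g_2+g_3=-\lambda_i-1}x_n^{\ell+g_1}B_{+i,\lambda}^{g_2}x_n^{g_3}$, centrality of the bubbles gives $x_nC(\ell)=C(\ell)x_n=C(\ell+1)$, so the alternating sum of corrections vanishes. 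The factor $(1-zx_n)^2$ arises from a step your argument lacks: one must first rewrite
$$x_{n+1}^k\,e(\beta,i^2)=\varphi_n\,x_n^k\,\varphi_n\,e(\beta,i^2),\qquad \varphi_n=x_n\tau_n-\tau_nx_n,$$
using the nilHecke relation $\varphi_n^2\,e(\beta,i^2)=e(\beta,i^2)$, expand into the four terms $x_n(\tau_nx_n^{k+1}\tau_n)$, $-\tau_nx_n^{k+2}\tau_n$, $-x_n(\tau_nx_n^k\tau_n)x_n$, $(\tau_nx_n^{k+1}\tau_n)x_n$, and only then apply Lemma~\ref{lem:relationsI}(b) to each $\tau_nx_n^\ell\tau_n$. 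The quadratic pattern $B^k_{+i,\lambda'}-2x_nB^{k-1}_{+i,\lambda'}+x_n^2B^{k-2}_{+i,\lambda'}$ is precisely the $(1-zx_n)^2$ coefficient. By contrast, your plan of applying Lemma~\ref{lem:relationsI}(b) directly to $a_k=x_n^{\lambda'_i-1+k}e(\beta,i)$ produces $\bbX_{i,i,\lambda'}(a_k)=\hat\vep'_{i,\lambda}(\tau_n x_n^{\lambda'_i-1+k}\tau_n\,e(\beta,i^2))$, and since $Q_{ii}=0$ the element $\tau_nx_n^m\tau_ne(\beta,i^2)=\partial_n(x_n^m)\tau_n\,e(\beta,i^2)$ still contains a crossing $\tau_n$; it is \emph{not} a power of $x_{n+1}$, so $\bbX_{i,i,\lambda'}(a_k)$ is not a bubble $B^?_{+i,\lambda}$. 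Without the intertwiner rewrite you have no handle on the left-hand side.

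A smaller worry: for the small-$k$ range (where the defining formula for $B^k_{+i,\lambda'}$ involves $\tilde\pi$ rather than $\hat\vep'$), you propose to dispatch the verification ``using part (a) and Lemma~\ref{lem:recrec}(a)''. But $B_+B_-=1$ together with the relation in high degree does not determine the low-degree coefficients of a formal power series, so this cannot substitute for a direct check. The paper constructs explicit elements $A_k=(1\otimes\tau_{n-1}\otimes\tau_{n-1}\otimes 1)(\iota_j\otimes\iota_j)(\tilde\pi_{-\lambda_i-k})$, shows $A_k=\tilde\pi_{-\lambda_i-k}e(\beta,j)$ (resp.\ $e(\beta,i)$) by computing $\mu_{\tau_n}(A_k)$ and $\mu_{x_n^a}(A_k)$ for $a\in[0,-\lambda_i-1]$, and then reads off $\mu_{x_n^{-\lambda_i}}(A_k)$; this is a genuinely different and necessary argument for $k\leqslant -\lambda_i$.
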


\begin{proof}
Part $(a)$ will be proved in Lemma \ref{lem:recrec}$(c)$ below.
Now, we concentrate on $(b)$.
By $(a)$ it is enough to prove it
for $B_{+i,\lambda}(z)$. Write $\beta=\alpha-\alpha_j$ and $\lambda'=\Lambda-\beta=\lambda+\alpha_j$.

\smallskip

First, assume that $i=j$. 
Recall that
$$\varphi_ne(\beta,i^2)=(x_n\tau_n-\tau_nx_n)e(\beta,i^2)\in \RL(\alpha+\alpha_i).$$
We have 
$$\begin{aligned}
x^k_{n+1}e(\beta,i^2)&=\varphi_nx_n^k\varphi_ne(\beta,i^2)\\
&=(x_n\tau_n-\tau_nx_n)x_n^k(x_n\tau_n-\tau_nx_n)e(\beta,i^2)\\
&=\big(x_n(\tau_nx_n^{k+1}\tau_n)-\tau_nx_n^{k+2}\tau_n-x_n(\tau_nx_n^k\tau_n)x_n+
(\tau_nx_n^{k+1}\tau_n)x_n\big)e(\beta,i^2).
\end{aligned}$$
By Lemma \ref{lem:relationsI}$(b)$, for all $\ell\in\bbN$ we have
$$\begin{aligned}
\hat\vep'_{i,\lambda}(\tau_nx_n^\ell\tau_n\,e(\beta,i^2))
&=\bbX_{i,i,\lambda'}(x^\ell_n\,e(\beta,i))\\
&=\big(-\bbI_{i,i,\lambda'}+\sum_{g_1+g_2+g_3=
-\lambda_i-1}x^{g_1}B^{g_2}_{+i,\lambda}x^{g_3}\big)(x^\ell_n\,e(\beta,i))\\
&=-\hat\vep'_{i,\lambda'}(x^\ell_n\,e(\beta,i))+\sum_{g_1+g_2+g_3=
-\lambda_i-1}x_n^{\ell+g_1}B^{g_2}_{+i,\lambda}x_n^{g_3}.
\end{aligned}$$
If $k\geqslant -\lambda_i+1$ then $k-2\geqslant -\lambda_i'+1$, we have 
$$B^k_{+i,\lambda}=\hat\vep'_{i,\lambda}(x_{n+1}^{\lambda_i-1+k}\,e(\alpha,i)),\quad
B^{l}_{+i,\lambda'}=\hat\vep'_{i,\lambda'}(x_{n}^{\lambda_i-3+l}\,e(\beta,i)),\quad\forall\ l\geqslant k-2.$$
In this situation
\begin{align}
B^k_{+i,\lambda}e(\beta,i)&
=\hat\vep'_{i,\lambda}(x_{n+1}^{\lambda_i-1+k}\,e(\alpha,i))\,e(\beta,i)\nonumber\\
&=\hat\vep'_{i,\lambda}(x_{n+1}^{\lambda_i-1+k}\,e(\beta,i^2))\nonumber\\
&=-2x_n\hat\vep'_{i,\lambda'}(x^{\lambda_i+k}_n\,e(\beta,i))
+\hat\vep'_{i,\lambda'}(x^{\lambda_i+k+1}_n\,e(\beta,i))+\nonumber\\
&\quad+x_n^2\hat\vep'_{i,\lambda'}(x^{\lambda_i+k-1}_n\,e(\beta,i))\nonumber\\
&=\big(B^k_{+i,\lambda'}-2x_nB^{k-1}_{+i,\lambda'}+x^2_nB^{k-2}_{+i,\lambda'}\big)e(\beta,i)\label{eq:bete}.
\end{align}
In particular, this yields
$$B_{+i,\lambda}(z)\,e(\beta,i)=
(1-x_nz)^{2}B_{+i,\lambda'}(z)\,e(\beta,i)\ \text{if}\ \lambda_i\geqslant 0.$$
If $\lambda_i<0$ we must also check \eqref{eq:bete} for $k\leqslant -\lambda_i$.
Consider the element
$$A_k=(1\otimes\tau_{n-1}\otimes\tau_{n-1}\otimes 1)(\iota_i\otimes\iota_i)\big(\tilde\pi_{-\lambda'_i-k}-2x_n\tilde\pi_{-\lambda_i'-(k-1)}+x^2_n\tilde\pi_{-\lambda_i'-(k-2)}\big)$$
in $\RL(\alpha)e(\beta,i)\otimes_{\RL(\beta)}e(\beta,i)\RL(\alpha)$.
Here $\tilde\pi_{-\lambda'_i-l}\in\RL(\beta)e(\beta-\alpha_i,i)\otimes_{\RL(\beta-\alpha_i)}e(\beta-\alpha_i,i)\RL(\beta)$ is the element defined in \eqref{rk:centralpi} 
for $l\in [1,-\lambda'_i]$, and we have set $\tilde\pi_{-1}=-\widetilde{e(\beta,i)}$ and $\tilde\pi_{-2}=-\widetilde {x_{n}e(\beta,i)}$, where $e(\beta,i)$ 
and $x_{n}e(\beta,i)$ are viewed as elements in $e(\beta,i)R(\alpha)e(\beta,i).$
One can check by direct computation that $\mu_{\tau_n}(A_k)=0$, $\mu_{x_n^a}(A_k)=\delta_{a,-\lambda_i-k}e(\beta,i)$ for $a\in [0,-\lambda_i-1]$,
 and that $$\mu_{x_n^{-\lambda_i}}(A_k)=\big(B^k_{+i,\lambda'}-2x_nB^{k-1}_{+i,\lambda'}+x^2_nB^{k-2}_{+i,\lambda'}\big)e(\beta,i).$$
It follows that
 $A_k=\tilde\pi_{-\lambda_i-k}e(\beta,i)$ and 
$$B^k_{+i,\lambda}e(\beta,i)=\mu_{x_n^{-\lambda_i}}(A_k)=\big(B^k_{+i,\lambda'}-2x_nB^{k-1}_{+i,\lambda'}+x^2_nB^{k-2}_{+i,\lambda'}\big)e(\beta,i).$$
The proof for part $(b)$ in the case $i=j$ is complete.

\smallskip

Finally, assume that $i\neq j$. By relation $(d)$ in QHA, we have
\begin{align*}
\tau_nx_n^k\tau_ne(\beta,ji)=x_{n+1}^k\tau_n^2e(\beta,ji)
=\sum_{p,q}c_{i,j;p,q}x_{n+1}^{p+k}x_n^qe(\beta,ji).
\end{align*}
Applying $\hat\vep'_{i,\lambda}$ to both sides of the equality, we get 
$$c_{i,j,-a_{ij},0}\ \hat\vep'_{i,\lambda'}(x^k_n)e(\beta,j)=\sum_{p,q}c_{i,j,p,q}\hat\vep'_{i,\lambda}(x^{p+k}_{n+1}e(\alpha,i))x^q_ne(\beta,j)$$
by Lemma \ref{lem:relationsI}(a).
Hence if $k\geqslant -\lambda_i+1$ we get
$$c_{i,j,-a_{ij},0}\ B^{k-a_{ij}}_{+i,\lambda'}e(\beta,j)=\sum_{p,q}c_{i,j,p,q}B^{k+p}_{+i,\lambda}x^q_ne(\beta,j).$$
Now, assume $k\leqslant -\lambda_i$. Then $k-a_{ij}\leqslant -\lambda_i'$. In this case 
$B_{+i,\lambda'}^{k-a_{ij}}=-\mu_{x_{n-1}^{-\lambda_i'}}(\tilde\pi_{-\lambda_i-k})$ with $\tilde\pi_{-\lambda_i-k}\in\RL(\beta)e(\beta-\alpha_i,i)\otimes_{\RL(\beta-\alpha_i)}e(\beta-\alpha_i,i)\RL(\beta)$. Set $$A_k=(1\otimes\tau_{n-1}\otimes\tau_{n-1}\otimes 1)(\iota_j\otimes\iota_j)(\tilde\pi_{-\lambda_i-k}).$$
Then we have the following equalities in $e(\alpha,i)\RL(\alpha+\alpha_i)e(\alpha,i)$,
\begin{align*}
\mu_{\tau_n}(A_k)&=\mu_{\tau_{n-1}\tau_n\tau_{n-1}e(\beta-\alpha_i,iji)}(\tilde\pi_{-\lambda_i-k})\\
&=\mu_{(\tau_{n}\tau_{n-1}\tau_{n}-\sum_{p,q}c_{i,j,p,q}(\sum_{a=0}^{p-1}x_{n-1}^{a}x_{n+1}^{p-1-a})x_n^q)e(\beta-\alpha_i,iji)}(\tilde\pi_{-\lambda_i-k})\\
&=-\sum_{p,q}c_{i,j,p,q}\big(\sum_{a=0}^{p-1}\mu_{x_{n-1}^a}(\tilde\pi_{-\lambda_i-k})x_{n+1}^{p-1-a}\big)x_n^qe(\beta,ji),
\end{align*}
because $\mu_{\tau_{n}\tau_{n-1}\tau_{n}}(\tilde\pi_{-\lambda_i-k})=\tau_n\mu_{\tau_{n-1}}(\tilde\pi_{-\lambda_i-k})\tau_n=0$.
Since $\mu_{x_{n-1}^a}(\tilde\pi_{-\lambda_i-k})=\delta_{a,-\lambda_i-k}$ for $a\in [0,-\lambda_i'-1]$, for any $p\in [0,-a_{ij}]$ we have
$$\sum_{a=0}^{p-1}\mu_{x_{n-1}^a}(\tilde\pi_{-\lambda_i-k})x_{n+1}^{p-1-a}=\begin{cases}
-\sum_qc_{i,j,p,q}x_n^qx_{n+1}^{p-1+\lambda_i+k}e(\beta,ji),&\text{ if } p>-\lambda_i-k+1\\
0,&\text{ otherwise. }
\end{cases}$$
Next, for any positive integer $l$ we have
\begin{align*}
\mu_{x^l_n}(A_k)&=\mu_{\tau_{n-1}x^l_n\tau_{n-1}e(\beta-\alpha_i,ij)}(\tilde\pi_{-\lambda_i-k})\\
&=\mu_{(x_{n-1}^l\sum_{p,q}c_{i,j,p,q}x_{n-1}^px_n^q)e(\beta-\alpha_i,ij)}(\tilde\pi_{-\lambda_i-k})\\
&=\sum_{p,q}c_{i,j,p,q}\,\mu_{x_{n-1}^{l+p}}(\tilde\pi_{-\lambda_i-k})x_n^qe(\beta,j).
\end{align*}
In particular, since $\mu_{x_{n-1}^a}(\tilde\pi_{-\lambda_i-k})=\delta_{a,-\lambda_i-k}$ for $a\in [0,-\lambda_i'-1]$, we get 
\begin{equation}\label{eq:eqeqeq}
\mu_{x_n^{-\lambda_i}}(A_k)=c_{i,j,-a_{ij},0}\,\mu_{x_{n-1}^{-\lambda'_i}}(\tilde\pi_{-\lambda_i-k})e(\beta,j)
=-c_{i,j,-a_{ij},0}\ B^{k-a_{ij}}_{+i,\lambda'}e(\beta,j)
\end{equation}
and for $l\in [0,-\lambda_i-1]$ we have
\begin{align*}
\mu_{x_n^l}(A_k)=\begin{cases} \sum_qc_{i,j,(-\lambda_i-k-l),q}x_n^qe(\beta,j), &\text {if } l\in [\min\{0,-\lambda_i-k+a_{ij}\},-\lambda_i-k],\\
0, &\text{ otherwise. }
\end{cases}
\end{align*} 
It follows that 
$$A_k=-\sum_{p=-\lambda_i-k+1}^{-a_{ij}}\sum_qc_{i,j,p,q}x_n^q(\widetilde{x_{n+1}^{p-1+\lambda_i+k}})e(\beta,ji)+\sum_{p=0}^{-\lambda_i-k}\sum_qc_{i,j,p,q}x_n^qe(\beta,j)\tilde\pi_{-\lambda_i-p-k}$$
in $\RL(\alpha)e(\alpha-\alpha_i,i)\otimes_{\RL(\alpha-\alpha_i)}e(\alpha-\alpha_i,i)\RL(\alpha)$.
Apply $\mu_{x_n^{-\lambda_i}}$ to both sides of the equation, by \eqref{eq:eqeqeq} and Definition \ref{df:bubbles}
we get 
$$c_{i,j,-a_{ij},0}\ B^{k-a_{ij}}_{+i,\lambda'}e(\beta,j)=\sum_{p,q}c_{i,j,p,q}B^{k+p}_{+i,\lambda}x^q_ne(\beta,j).$$
The proof for part (b) is now complete.
\end{proof}

\medskip

We can now prove the main result of this section.

\smallskip

\begin{proposition}\label{prop:A10} 
For each $\alpha\in Q_+$ of height $n$ we have
$$B_{\pm i,\lambda}(z)=z^{\mp\Lambda_i} a_i^\Lambda(z^{-1})^{\mp 1}
\sum_{\nu\in I^\alpha}\prod_{k=1}^nq_{i\nu_k}(z,x_k)^{\mp 1}\,e(\nu).$$
\end{proposition}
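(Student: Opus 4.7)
The plan is to reduce to the $+$ case, then induct on $n=\height(\alpha)$, using Lemma~\ref{lem:A10}(b) to handle the recursion and the cyclotomic relation $a_i^\Lambda(x_1)=0$ to handle the base case.

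By Lemma~\ref{lem:A10}(a) we have $B_{+i,\lambda}(z)\,B_{-i,\lambda}(z)=1$, and the proposed right-hand sides for the two signs are formally inverse to each other: since the $e(\nu)$ ($\nu\in I^\alpha$) are orthogonal idempotents summing to $1$, their product collapses to $\sum_\nu e(\nu)=1$. Hence it suffices to establish the formula for $B_{+i,\lambda}(z)$. Because both sides are central and the $e(\nu)$ form a complete system of orthogonal idempotents, this in turn reduces to proving, for each $\nu=(\nu_1,\dots,\nu_n)\in I^\alpha$, the identity
\[
B_{+i,\lambda}(z)\,e(\nu)\;=\;z^{-\Lambda_i}\,a_i^\Lambda(z^{-1})^{-1}\,\prod_{k=1}^n q_{i\nu_k}(z,x_k)^{-1}\,e(\nu). \qquad (\ast)
\]

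I will prove $(\ast)$ by induction on $n=\height(\alpha)$. The inductive step is a direct application of Lemma~\ref{lem:A10}(b): for $n\geq 1$, setting $j=\nu_n$ and $\beta=\alpha-\alpha_j$, that lemma gives
\[
B_{+i,\lambda}(z)\,e(\beta,j)\;=\;q_{ij}(z,x_n)^{-1}\,B_{+i,\lambda+\alpha_j}(z)\,e(\beta,j),
\]
and multiplying by $e(\nu)$ (which refines $e(\beta,j)$) and invoking the inductive hypothesis for $\beta$ on the tuple $(\nu_1,\dots,\nu_{n-1})\in I^\beta$, transported to $\RL(\alpha)$ via $\iota_j$, yields $(\ast)$.

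The base case $n=0$ requires $B_{+i,\Lambda}(z)=z^{-\Lambda_i}a_i^\Lambda(z^{-1})^{-1}$ in $\RL(0)=\bfk$, i.e.\ $B_{+i,\Lambda}(z)\,\tilde a(z)=1$ in $\bfk[[z]]$, where $\tilde a(z)=z^{\Lambda_i}a_i^\Lambda(z^{-1})=\sum_{p=0}^{\Lambda_i}c_{ip}z^p$. Using $\RL(\alpha_i)\cong\bfk[x_1]/a_i^\Lambda(x_1)$ and $\hat\vep'_{i,\Lambda}=p_{\Lambda_i-1}$, the coefficient of $z^m$ in $B_{+i,\Lambda}(z)\tilde a(z)$ equals $\sum_{j=0}^{\min(m,\Lambda_i)}c_{ij}\,p_{\Lambda_i-1}(x_1^{\Lambda_i-1+m-j})$. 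For $m=0$ this is $c_{i0}=1$; for $m\geq 1$ the relation $x_1^{m-1}a_i^\Lambda(x_1)=0$ in $\RL(\alpha_i)$ lets me replace $\sum_{j=0}^{m} c_{ij}\,x_1^{\Lambda_i-1+m-j}$ by $-\sum_{j=m+1}^{\Lambda_i}c_{ij}\,x_1^{\Lambda_i-1+m-j}$, a polynomial in $x_1$ of degree at most $\Lambda_i-2$, whose $p_{\Lambda_i-1}$ therefore vanishes.

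The real work of the argument is carried by Lemma~\ref{lem:A10}(b), whose proof depends on the delicate bubble-exchange identities of Lemma~\ref{lem:relationsI}; given that lemma, Proposition~\ref{prop:A10} is a short induction whose only new ingredient is the elementary polynomial calculation above, so no serious obstacle remains once Lemma~\ref{lem:A10} is in hand.
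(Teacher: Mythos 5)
Your proof is correct and follows the paper's argument: iterate Lemma~\ref{lem:A10}(b) to reduce to $\alpha=0$, and use the cyclotomic relation $a_i^\Lambda(x_1)=0$ for the base case. The only difference is cosmetic: the paper reads off $B^k_{-i,\Lambda}=e_k(y_{i,1},\dots,y_{i,\Lambda_i})$ directly from the expansion of $x_1^{\Lambda_i}$ in $R^\Lambda(\alpha_i)$ to get $B_{-i,\Lambda}(z)=\prod_{p}(1+y_{ip}z)$ at once, whereas you verify the equivalent convolution identity $B_{+i,\Lambda}(z)\,\tilde a(z)=1$ coefficient by coefficient; both are equally valid.
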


\begin{proof}
The relation $a^\Lambda_i(x_1)=0$ yields 
$x^{\Lambda_i}_1=-\sum_{k=0}^{\Lambda_i-1}e_k(y_{i,1},\dots ,y_{i,\Lambda_i})x^{\Lambda_i-k}_1$.
Therefore, for $k\in [1,\Lambda_i]$ we have 
$B^k_{-i,\Lambda}=-p_{\Lambda_i-k}(x^{\Lambda_i}_1)=e_k(y_{i,1},\dots ,y_{i,\Lambda_i})$,
and for $k>\Lambda_i$, since $\eta'_{i,\Lambda}=-\pi(x_1^{\Lambda_i})=0,$ 
we deduce $B^k_{-i,\Lambda}=0$.
So we have 
$$\begin{aligned}
B_{-i,\Lambda}(z)=\sum_{p=0}^{\Lambda_i}B_{-i,\Lambda}^pz^{p}
=\sum_{p=0}^{\Lambda_i}e_p(y_{i,1},\dots ,y_{i,\Lambda_i})z^{p}
=\prod_{p=1}^{\Lambda_i}(1+y_{ip}\,z).
\end{aligned}$$
We deduce that
$B_{\pm i,\Lambda}(z)=\prod_{p=1}^{\Lambda_i}(1+y_{ip}\,z)^{\mp 1}.$
Now, by Lemma \ref{lem:A10}, we have
$$\begin{aligned}
B_{\pm i,\lambda}(z)
&=B_{\pm i,\lambda'}(z)\,\sum_jq_{ij}(z, x_n)^{\mp 1}\,
e(\alpha-\alpha_j,j),\\
&=B_{\pm i,\Lambda}(z)\,
\sum_{\nu\in I^\alpha}\prod_{k=1}^nq_{i\nu_k}(z,x_k)^{\mp 1}\,e(\nu).
\end{aligned}$$

\end{proof}

\medskip

\subsection{Proof of Theorem \ref{thm:action}}
Consider the operator $h_{ir}\in\End_\bfk(\Tr(\calC/\bbZ))$ which acts on 
$\Tr(R^\Lambda(\alpha))$ by multiplication by the central element
$$h_{ir,\lambda}=\sum_{k=0}^r(\lambda_i-k)\, B_{+i,\lambda}^{r-k}\,B_{-i,\lambda}^k$$
in $\RL(\alpha)$. Then, define the following formal series
\begin{equation}\label{formal series}
\Psi_i(z)=\sum_{r\geqslant 0}\psi_{ir}\,z^r=\exp\big(-\sum_{r\geqslant 1}h_{ir}\,z^r/r\big),
\qquad
H_i(z)=\sum_{r\geqslant 0}h_{ir}\,z^r.
\end{equation}
The following holds.

\smallskip

\begin{lemma}\label{lem:psi-B}
The operator $\psi_{ir}$ acts on $\Tr(R^\Lambda(\alpha))$ 
by multiplication by $B^r_{-i,\lambda}$.
\end{lemma}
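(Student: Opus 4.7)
The plan is to compute the generating function $H_i(z)$ in closed form in terms of the power series $B_{-i,\lambda}(z)$, and then recognize $\Psi_i(z)$ as precisely $B_{-i,\lambda}(z)$. The main input will be the product identity $B_{+i,\lambda}(z)\,B_{-i,\lambda}(z)=1$ of Lemma \ref{lem:A10}(a).

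First I would split the definition of $h_{ir,\lambda}$ as
$$h_{ir,\lambda}=\lambda_i\sum_{k=0}^r B_{+i,\lambda}^{r-k}B_{-i,\lambda}^k-\sum_{k=0}^r k\,B_{+i,\lambda}^{r-k}B_{-i,\lambda}^k.$$
Summing against $z^r$ and recognizing the two Cauchy products gives
$$H_i(z)=\lambda_i\,B_{+i,\lambda}(z)\,B_{-i,\lambda}(z)-B_{+i,\lambda}(z)\cdot z\tfrac{d}{dz}B_{-i,\lambda}(z).$$
Applying Lemma \ref{lem:A10}(a) to the first term and rewriting $B_{+i,\lambda}(z)=B_{-i,\lambda}(z)^{-1}$ in the second, I obtain the closed form
$$H_i(z)=\lambda_i-z\tfrac{d}{dz}\log B_{-i,\lambda}(z).$$

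Since $h_{i0,\lambda}=\lambda_i$ (from $B_{\pm i,\lambda}^0=1$), the tail satisfies
$$\sum_{r\geqslant 1}h_{ir}\,z^r=-z\tfrac{d}{dz}\log B_{-i,\lambda}(z).$$
Dividing by $z$ and integrating formally from $0$ to $z$, using $B_{-i,\lambda}(0)=1$ so that $\log B_{-i,\lambda}(0)=0$, yields
$$\sum_{r\geqslant 1}\frac{h_{ir}}{r}z^r=-\log B_{-i,\lambda}(z).$$
Exponentiating and comparing with \eqref{formal series} gives $\Psi_i(z)=B_{-i,\lambda}(z)$, hence $\psi_{ir}=B_{-i,\lambda}^r$. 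Since by definition $h_{ir}$ acts on $\Tr(R^\Lambda(\alpha))$ by multiplication by the central element $h_{ir,\lambda}\in\Z(R^\Lambda(\alpha))$, the operator $\psi_{ir}$ acts by multiplication by $B_{-i,\lambda}^r$.

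There is no real obstacle here beyond keeping the bookkeeping straight: all computations take place in $\Z(R^\Lambda(\alpha))[[z]]$, where formal differentiation, integration and exponentiation of series with constant term $1$ are well defined, and the only nontrivial algebraic input is $B_{+i,\lambda}(z)\,B_{-i,\lambda}(z)=1$.
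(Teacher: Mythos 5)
Your proof is correct and follows essentially the same route as the paper: you compute $H_i(z)=\lambda_i-z\frac{d}{dz}\log B_{-i,\lambda}(z)$ via the Cauchy-product interpretation of $h_{ir,\lambda}$ and the identity $B_{+i,\lambda}(z)B_{-i,\lambda}(z)=1$, then recover $\Psi_i(z)=B_{-i,\lambda}(z)$; the paper does the same computation, simply comparing log-derivative expressions for $H_i(z)$ rather than explicitly integrating and exponentiating. Your version spells out the Cauchy products and the formal integration that the paper leaves implicit, but the argument is the same.
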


\begin{proof}
By definition, the operator $h_{ir}$ acts by multiplication by the element 
$$\begin{aligned}
h_{ir,\lambda}&=\sum_{k=0}^r(\lambda_i-k)\, B_{+i,\lambda}^{r-k}\,B_{-i,\lambda}^k.
\end{aligned}$$
Since $B_{\pm i,\lambda}(z)=\sum_{k\geqslant 0}B^k_{\pm i,\lambda}\,z^{k}$ and since
$B_{+i,\lambda}(z)\,B_{-i,\lambda}(z)=1$ by Lemma \ref{lem:A10},
we deduce that 
$$\begin{aligned}
H_{i,\lambda}(z)
&=\lambda_i-z\,\text{d}/\text{d}z\log B_{-i,\lambda}(z).\\
\end{aligned}$$
Now, from \eqref{formal series} we get $$H_{i}(z)=h_{i0}-z\,\text{d}/\text{d}z\log \Psi_{i}(z).$$
Hence, the formal series $\Psi_i(z)$ acts on $\Tr(R^\Lambda(\alpha))$ by multiplication
by $B_{-i,\lambda}(z)$.

\end{proof}

\smallskip

Finally, let $a_{i,j,p,q}\in\bfk$ be such that
$$q_{ij}(u,v)=\sum_{p,q\geqslant 0} a_{i,j,p,q}\,u^pv^q.$$
We can now prove the following.

\smallskip

\begin{proposition}\label{prop:B5}
For each $i,j\in I$, $r,s\in\bbN$, we have

\begin{itemize}
\item[$(a)$] $[h_{ir},h_{js}]=0$,

\item[$(b)$] $[x_{ir}^+,x_{js}^-]=\delta_{ij}\,h_{i,r+s}$,

\item[$(c)$] $\psi_{ir}\,x^-_{js}=\sum_{p,q\geqslant 0}a_{i,j,p,q}\,x^-_{j,s+q}\,\psi_{i,r-p}$ and
$x^+_{js}\,\psi_{ir}=\sum_{p,q\geqslant 0}a_{i,j,p,q}\,\psi_{i,r-p}\,x^+_{j,s+q}$,

\item[$(d)$] $\sum_{p,q\geqslant 0}c_{i,j,p,q}\,[x^\pm_{i,r+p},x^\pm_{j, s+q}]=0$ if $i\neq j$,

\item[$(e)$] $[x^\pm_{i,r},x^\pm_{i,s}]=0,$

\item[$(f)$] $[x^\pm_{i,r_{1}},[x^\pm_{i,r_{2}},\dots [x^\pm_{i,r_{m}},x^\pm_{j,s}]\dots]]=0$\ with $i\neq j$, $r_p\in\bbN$, $m=1-a_{ij}$.
\end{itemize}

\end{proposition}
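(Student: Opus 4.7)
The plan is to translate each relation into a statement about trace maps of endofunctors on $\calV^\Lambda$ via Lemma~\ref{lem:traceop}, and then verify it using the categorical structure combined with the bubble identities established in Section~\ref{sec:CLO}. Part $(a)$ is immediate, since $h_{ir}$ and $h_{js}$ both act by multiplication by central elements of the commutative ring $\Z(R^\Lambda(\alpha))$.

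For part $(b)$, by Lemma~\ref{lem:traceop}(c) the two compositions can be written as
\begin{align*}
x^+_{ir}\circ x^-_{js}=\Tr_{E_iF_j}(x^r_ix^s_j),\qquad x^-_{js}\circ x^+_{ir}=\Tr_{F_jE_i}(x^s_jx^r_i).
\end{align*}
If $i\neq j$, the morphism $\sigma_{ij}:F_jE_i\simto E_iF_j$ coming from the minimal categorical representation intertwines these two endomorphisms, and Lemma~\ref{lem:traceop}(b) yields $[x^+_{ir},x^-_{js}]=0$. If $i=j$, I would invoke the Kashiwara--Kang isomorphism $\rho'_{i,\lambda}$ of Theorem~\ref{thm:KK}, which gives $F_iE_i1_\lambda\oplus\bigoplus_{k=0}^{\lambda_i-1}\bfk x^k\otimes 1_\lambda\simto E_iF_i1_\lambda$ when $\lambda_i\geqslant 0$ (and the dual decomposition when $\lambda_i\leqslant 0$). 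By Lemma~\ref{lem:traceop}(a),(b), the commutator then reduces to a finite sum of trace contributions coming from the $\bfk x^k\otimes 1_\lambda$ summands; the explicit description of $\hat\eta'_{i,\lambda}$, $\hat\vep'_{i,\lambda}$ in Theorem~\ref{thm:Kbiadjoint} together with the recursive bubble formulas of Lemma~\ref{lem:recrec} should then identify this sum with multiplication by $h_{i,r+s,\lambda}=\sum_{k=0}^{r+s}(\lambda_i-k)B^{r+s-k}_{+i,\lambda}B^k_{-i,\lambda}$.

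For part $(c)$, note that $\psi_{ir}$ acts on $\Tr(R^\Lambda(\alpha))$ by multiplication by the central bubble $B^r_{-i,\lambda}$ (Lemma~\ref{lem:psi-B}), while $x^-_{js}$ shifts the weight from $\lambda$ to $\lambda-\alpha_j$. The required commutation thus amounts to expressing $B^r_{-i,\lambda-\alpha_j}e(\alpha,j)$ in terms of the $B^{r-p}_{-i,\lambda}$, which is exactly Lemma~\ref{lem:A10}(b); expanding $q_{ij}(z,x_{n+1})=\sum a_{i,j,p,q}z^px_{n+1}^q$ and using that $B^{r-p}_{-i,\lambda}\in\iota_j(R^\Lambda(\alpha))$ commutes with $x_{n+1}$ yields the first identity, and the second follows by transposition via Proposition~\ref{prop:transpose}. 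Parts $(d)$, $(e)$, $(f)$ reduce to manipulations inside $R^\Lambda(\alpha+2\alpha_i)$ or $R^\Lambda(\alpha+\alpha_i+\alpha_j)$ together with cyclic invariance of $\Tr$. For $(e)$, the intertwiner $\varphi_{n+1}$ satisfies $\varphi_{n+1}^2e(\alpha,i^2)=e(\alpha,i^2)$ and conjugates $x_{n+1}$ to $x_{n+2}$; since every $f\in R^\Lambda(\alpha)$ commutes with $\varphi_{n+1}$, cyclicity of the trace transforms the swap $x_{n+2}^rx_{n+1}^s\mapsto x_{n+1}^rx_{n+2}^s$ into $[x^-_{i,r},x^-_{i,s}]=0$. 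For $(d)$, relation $(d)$ of Definition~\ref{relKLR} gives $\tau_{n+1}^2e(\alpha,ij)=Q_{ij}(x_{n+1},x_{n+2})e(\alpha,ij)$ and $\tau_{n+1}^2e(\alpha,ji)=Q_{ij}(x_{n+2},x_{n+1})e(\alpha,ji)$; cycling a factor $\tau_{n+1}$ and using $\tau_{n+1}x_{n+1}e(\alpha,ji)=x_{n+2}\tau_{n+1}e(\alpha,ji)$ (valid since $i\neq j$) converts the $e(\alpha,ji)$-contribution into the $e(\alpha,ij)$-contribution, matching the other term in the bracket. Part $(f)$ is handled in the same spirit via the braid relation $(f)$ of Definition~\ref{relKLR}.

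The hard part will be the case $i=j$ of $(b)$: one must match the closed formula for $h_{i,r+s,\lambda}$ in terms of the bubbles $B^k_{\pm i,\lambda}$ with the sum of trace contributions from the identity summands in the Kashiwara--Kang decomposition. This requires the explicit formulas of Lemma~\ref{lem:recrec} for $\pi(x^r_{n+1})$, $p_k(x^r_{n+1})$, $\mu_{x^r_n}(\tilde z)$ and $\mu_{x^r_n}(\tilde\pi_k)$, combined with careful book-keeping of the various bubble indices and degree shifts. Once this identification is established, the remaining relations, and the $i\neq j$ case of $(b)$, reduce to formal manipulations in trace maps that follow directly from the categorical structure.
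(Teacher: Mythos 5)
Your treatment of parts $(a)$ through $(e)$ matches the paper's proof in essence: the central-element observation for $(a)$, the $\sigma_{ij}$-intertwiner for $i\neq j$ in $(b)$ and the Kang--Kashiwara decomposition $\rho'_{i,\lambda}$ plus Lemma~\ref{lem:recrec} for $i=j$, the bubble identity $B_{-i,\lambda-\alpha_j}(z)\,e(\alpha,j)=q_{ij}(z,x_{n+1})\,B_{-i,\lambda}(z)\,e(\alpha,j)$ for $(c)$, the QHA relation $(d)$ and cyclicity of the trace for $(d)$, and the intertwiner $\varphi$ for $(e)$. These are the same ingredients the paper uses.

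Where your proposal has a genuine gap is part $(f)$. You assert that it is ``handled in the same spirit via the braid relation $(f)$ of Definition~\ref{relKLR}'', but the braid relation $\tau_{k+1}\tau_k\tau_{k+1}-\tau_k\tau_{k+1}\tau_k=\delta_{\nu_k,\nu_{k+2}}\partial_{k,k+2}Q_{\nu_k,\nu_{k+1}}$ together with cyclicity of the trace does not yield the Serre relation by a direct manipulation parallel to $(d)$ and $(e)$. The paper's actual argument is of a different nature: it first uses $(e)$ to reduce $(f)$ to its symmetrized version $\sum_{w\in\frakS_m}[x^\pm_{i,r_{w(1)}},[\dots,[x^\pm_{i,r_{w(m)}},x^\pm_{j,s}]\dots]]=0$, then rewrites this sum as an alternating sum $m!\sum_{a+b=m}(-1)^b\Tr_{E_i^{(a)}E_jE_i^{(b)}}(\Xi_a)$ over divided-power functors $E_i^{(a)}$, and finally shows that these traces cancel because the Khovanov--Lauda isomorphism $\alpha':\bigoplus_{a}E_i^{(2a)}E_jE_i^{(m-2a)}\simto\bigoplus_aE_i^{(2a+1)}E_jE_i^{(m-2a-1)}$ intertwines the symmetric endomorphisms $\Xi_a$. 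The crucial ingredients here — the divided-power idempotents $e_{i,a}$, the isomorphism $\alpha'$ from \cite[prop.~6]{KL2}, and the intertwining claim $\alpha^\pm_{a,b}\Xi_a=\Xi_{a\pm1}\alpha^\pm_{a,b}$ — are not present in your outline, and without them the cyclicity-plus-braid-relation strategy you suggest does not close.
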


\begin{proof} The first relation is obvious. Let us concentrate on part $(b)$.
If $i\neq j$ we have an isomorphism $\sigma_{ji,\lambda}: F_jE_i1_\lambda\simeq E_iF_j1_\lambda$ such that $\sigma_{ji,\lambda}(x_j^sx_i^r)\sigma^{-1}_{ji,\lambda}=x_i^rx_j^s$. Hence by Lemma \ref{lem:traceop} we have $\Tr_{F_jE_i1_\lambda}(x_j^sx_i^r)=\Tr_{E_iF_j1_\lambda}(x_i^rx_j^s)$, which is $[x^+_{ir},x^-_{js}]=0$.

Now consider the case $i=j$.
First, assume $\lambda_i> 0$. Let $G=F_iE_i1_\lambda\oplus 1_\lambda^{\oplus \lambda_i}$. Recall the isomorphism of functors $\rho_{i,\lambda}:G\to E_iF_i1_\lambda$. By Lemma \ref{lem:traceop} we have 
$$x^+_{ir}x^-_{is}=\Tr_{E_iF_i}(x_i^rx_i^s)=\Tr_G(\rho_{i,\lambda}^{-1}(x_i^rx_i^s)\rho_{i,\lambda})$$
and it is equal to the sum of the trace of $\rho_{i,\lambda}^{-1}(x_i^rx_i^s)\rho_{i,\lambda}$ restricted to each direct factor of $G$. The restriction of $\rho_{i,\lambda}^{-1}(x_i^rx_i^s)\rho_{i,\lambda}$ to $F_iE_i1_\lambda$ is represented by
\begin{align*}
\RL(\alpha)e(\alpha-\alpha_i,i)\otimes_{\RL(\alpha-\alpha_i)}e(\alpha-\alpha_i,i)\RL(\alpha)&\to\RL(\alpha)e(\alpha-\alpha_i,i)\otimes_{\RL(\alpha-\alpha_i)}e(\alpha-\alpha_i,i)\RL(\alpha)\\
z&\mapsto \pi(x_{n+1}^r\mu_{\tau_n}(z)x_{n+1}^s).
\end{align*}
Now, we have
 \begin{align*}
 \pi(x_{n+1}^r\mu_{\tau_n}(z)x_{n+1}^s)&=\pi(\mu_{x_{n+1}^r\tau_nx_{n+1}^s}(z))\\
 &=\pi\big(\mu_{x_n^s\tau_nx_n^r}(z)+\sum_{p=0}^{r+s-1}\mu_{x_n^{r+s-1}}(z)x_{n+1}^p\big)\\
 &=(1\otimes x_n^s\otimes x_n^r\otimes 1)(z)+\sum_{p=\lambda_i}^{r+s-1}\mu_{x_n^{r+s-1-p}}(z)\pi(x_{n+1}^p)\\
 &=(1\otimes x_n^s\otimes x_n^r\otimes 1)(z)-\sum_{p=\lambda_i}^{r+s-1}\sum_{a=0}^{\ p-\lambda_i}\mu_{x_n^{r+s-1-p}}(z)(B^{p-\lambda_i-a}_{+i,\lambda}\otimes x_n^a\otimes 1\otimes 1)\hat\eta'_i(1).
\end{align*}
Here we used the relation $(e)$ of QHA to get the second equality, and Lemma \ref{lem:recrec} for the last equality. It yields that the restriction of $\rho_{i,\lambda}^{-1}(x_i^rx_i^s)\rho_{i,\lambda}$ to $F_iE_i1_\lambda$ is the endomorphism $$x_i^sx_i^r-\sum_{p=\lambda_i}^{r+s-1}\sum_{a=0}^{\ p-\lambda_i}(B^{p-\lambda_i-a}_{+i,\lambda}F_ix_i^a)\circ\hat\eta'_i\circ\vep'_i\circ(F_ix_i^{r+s-1-p}),$$
and its trace is equal to
\begin{align}\label{eq:resfe}
&\Tr_{F_iE_i1_\lambda}(x_i^sx_i^r)-\Tr_{1_\lambda}\big(\sum_{p=\lambda_i}^{r+s-1}\sum_{a=0}^{\ p-\lambda_i}B^{p-\lambda_i-a}_{+i,\lambda}\vep'_i\circ(F_ix_i^{r+s-1-p+a})\circ\hat\eta'_i\big)=\nonumber\\
=&x^-_{is}x^+_{ir}-\sum_{p=\lambda_i}^{r+s-1}\sum_{a=0}^{\ p-\lambda_i}B^{p-\lambda_i-a}_{+i,\lambda}B^{r+s-p+a+\lambda_i}_{-i,\lambda}\nonumber\\
=&x^-_{is}x^+_{ir}+\sum_{a=\lambda_i+1}^{r+s}(\lambda_i-a)B^{r+s-a}_{+i,\lambda}B^{a}_{-i,\lambda}.
\end{align}
The restriction of $\rho_{i,\lambda}^{-1}(x_i^rx_i^s)\rho_{i,\lambda}$ to the $k$-th copy of $1_\lambda$ is represented by the map
$\RL(\alpha)\to\RL(\alpha),$ $z\mapsto p_k(x_{n+1}^rzx_{n+1}^{s+k})=zp_k(x_{n+1}^{r+s+k}).$
By  the second equality in Lemma \ref{lem:recrec}$(a)$ it is equal to
$\sum_{a=0}^{\lambda_i-k-1}B^{r+s-a}_{+i,\lambda}B^{a}_{-i,\lambda}.
$
Combined with \eqref{eq:resfe} we obtain
\begin{align*}
x^+_{ir}x^-_{is}&=\Tr_G(\rho_{i,\lambda}^{-1}(x_i^rx_i^s)\rho_{i,\lambda})\\
&=x^-_{is}x^+_{ir}+\sum_{a=\lambda_i+1}^{r+s}(\lambda_i-a)B^{r+s-a}_{+i,\lambda}B^{a}_{-i,\lambda}+\sum_{k=0}^{\lambda_i-1}\sum_{a=0}^{\lambda_i-k-1}B^{r+s-a}_{+i,\lambda}B^{a}_{-i,\lambda}\\
&=x^-_{is}x^+_{ir}+\sum_{a=\lambda_i+1}^{r+s}(\lambda_i-a)B^{r+s-a}_{+i,\lambda}B^{a}_{-i,\lambda}
+\sum_{a=0}^{\lambda_i-1}(\lambda_i-a)B^{r+s-a}_{+i,\lambda}B^{a}_{-i,\lambda}\\
&=x^-_{is}x^+_{ir}+h_{i,r+s}.
\end{align*}

Now, assume $\lambda_i< 0$. Let $G=E_iF_i1_\lambda\oplus 1_\lambda^{\oplus (-\lambda_i)}$ and consider
the isomorphism $\rho_{i,\lambda}:F_iE_i1_\lambda\to G$.
By Lemma \ref{lem:traceop} we have 
$$x^-_{is}x^+_{ir}=\Tr_{F_iE_i}(x_i^sx_i^r)=\Tr_G(\rho_{i,\lambda}(x_i^sx_i^r)\rho_{i,\lambda}^{-1})$$
and it is equal to the sum of the trace of $\rho_{i,\lambda}(x_i^sx_i^r)\rho_{i,\lambda}^{-1}$ restricted to each direct factor of $G$. The restriction of $\rho_{i,\lambda}(x_i^sx_i^r)\rho_{i,\lambda}^{-1}$ to $E_iF_i1_\lambda$ is represented by
\begin{align*}
e(\alpha,i)\RL(\alpha+\alpha_i)e(\alpha,i)&\to e(\alpha,i)\RL(\alpha+\alpha_i)e(\alpha,i)\\
z&\mapsto \mu_{x_n^s\tau_nx_n^r}(\tilde z).
\end{align*}
By the relation $(e)$ of QHA and the definition of $\tilde z$ we have
\begin{align*}
\mu_{x_n^s\tau_nx_n^re(\alpha-\alpha_i,i^2)}(\tilde z)
&=x_{n+1}^rzx_{n+1}^s-\sum_{p=-\lambda_i}^{r+s-1}\mu_{x^p_n}(\tilde z)x^{r+s-1-p}_{n+1}.
\end{align*}
The restriction of $\rho_{i,\lambda}(x_i^sx_i^r)\rho_{i,\lambda}^{-1}$ to the $k$-th copy of $1_\lambda$ is represented by
\begin{align*}
\RL(\alpha)&\to \RL(\alpha),\quad a\mapsto a\mu_{x_n^{r+k+s}}(\tilde\pi_k).
\end{align*}
Now, relation $(b)$ follows from Lemma \ref{lem:recrec}$(b)$ and the fact that 
$$\Tr_{E_iF_i1_\lambda}\big((E_ix_i^{a})\circ\eta_{i,\lambda}'\circ\hat\vep'_{i,\lambda}\circ(E_ix_i^b)\big)=
\Tr_{1_\lambda}\big(\hat\vep'_{i,\lambda}(E_ix_i^{a+b})\circ\eta_{i,\lambda}'\big)$$
using similar computation as in the previous case. We leave the details to the reader.

Next, we prove $(c)$. Consider the formal series $X_j^-(w)=\sum_{s\geqslant 0}x_{js}^-w^s$.
From Proposition \ref{prop:A10} and Lemma \ref{lem:psi-B} we deduce that
for each $f\in R^\Lambda(\alpha)$ we have
\begin{align*}
&X_j^-(w)(\Tr(f))=
\Tr\Big(\sum_{s\geqslant 0}x_{n+1}^sw^sf\,e(\alpha,j)\Big),\\
&\Psi_i(z) X_j^-(w)\Psi_i(z)^{-1}(\Tr(f))=
\Tr\Big(q_{ij}(z,x_{n+1})\sum_{s\geqslant 0}x_{n+1}^sw^sf\,e(\alpha,j)\Big).
\end{align*}
This yields the first equation of $(c)$.
The second one is obtained in a similar way.

Let us now prove the relation $(d)$. Consider the endomorphism $x^r_ix^s_j\tau_{ji}\tau_{ij}$ on $E_iE_j$. We have
$x^r_ix^s_j\tau_{ji}\tau_{ij}=\tau_{ji}x^s_jx^r_i\tau_{ij}.$
Therefore 
$$\Tr_{E_iE_j}(x^r_ix^s_j\tau_{ji}\tau_{ij})=\Tr_{E_iE_j}(\tau_{ji}x^s_jx^r_i\tau_{ij})=\Tr_{E_jE_i}(x^s_jx^r_i\tau_{ij}\tau_{ji}).$$
Next, by relation $(d)$ in QHA we have
$$\tau_{ji}\tau_{ij}=Q_{ij}(x_i,x_j)=\sum_{p,q}c_{i,j,p,q}x_i^px_j^q=Q_{ji}(x_j,x_i)=\tau_{ij}\tau_{ji}.$$
Put this back into the equation above we get
$$\sum_{p,q}c_{i,j,p,q}\big(\Tr_{E_iE_j}(x^{r+p}_ix^{s+q}_j)-\Tr_{E_jE_i}(x^{s+q}_jx^{r+s}_i)\big)=0,$$
which is the relation $(d)$ .

Next, let us prove $(e)$. The functor $E_i^2$ acting on $\RL(\alpha)$ is represented by the bimodule $e(\alpha-2\alpha_i,i^2)\RL(\alpha)$. 
A morphism $x_i^rx_i^s$ on $E_i^2$ is represented by the left multiplication by $x_{n-1}^rx_n^s$ if $\alpha$ has height $n$. It is enough to consider the case $n=2$. 
The intertwiner $\varphi_{1}=(x_1-x_2)\tau_1+1$ is such that
$\varphi_1(x_1^rx_2^s)e(i^2)=(x_2^rx_1^s)\varphi_1e(i^2)$ and $\varphi_1^2e(i^2)=e(i^2)$.
It follows that $\Tr_{E_i^2}(x_i^rx_i^s)=\Tr_{E_i^2}(x_i^sx_i^r)$. Hence $x^+_{ir}x^+_{is}=x^+_{is}x^+_{ir}$.
The proof for $x^-$ is similar.

Finally let us prove the relation $(f)$. By $(e),$ it is equivalent to prove the following relation
$$\sum_{w\in S_m}[x^\pm_{i,r_{w(1)}},[x^\pm_{i,r_{w(2)}},\dots [x^\pm_{i,r_{w(m)}},x^\pm_{j,r_0}]\dots]]=0\ \text{with}\ i\neq j,\,r_p\in\bbN\ \text{and}\  m=1-a_{ij}.$$
We will prove it for $x^+$, the proof for $x^-$ is similar. First, recall that the functor $E^{(a)}_i$ is the image of the divided power operator 
$$e_{i,a}=x_1^{a-1}\cdots x_{a-1}\tau_{w_0} \in R(a\alpha_i)$$ by the canonical morphism $R(a\alpha_i)\to\End(E_i^a)$. 
The element $e_{i,a}$ is an idempotent, and we have $E_i^a\simeq (E_i^{(a)})^{\oplus a!}$. See, e.g., \cite{KL2} for details.

Given $i,j\in I$ with $i\neq j$, we write $m=1-a_{ij}$. 
In \cite[prop.~6]{KL2} an isomorphism of functors 
$$\alpha':\  \bigoplus_{a=0}^{[\frac{m}{2}]}E_i^{(2a)}E_jE_i^{(m-2a)}\simto  \bigoplus_{a=0}^{[\frac{m-1}{2}]}E_i^{(2a+1)}E_jE_i^{(m-2a-1)}$$
and a quasi-inverse $\alpha''$ are constructed.

If $a_{ij}=0$, we have $\alpha'=\tau_{ji}: E_jE_i\to E_iE_j$ and $\alpha''=c_{ij,0,0}^{-1}\tau_{ij}$. Since $i\neq j$, we have $\alpha'(x_j^sx^r_i) =(x_i^rx_j^s)\alpha'$.
Hence
$$x^+_{j,s}x^+_{i,r}=\Tr_{E_jE_i}(x_j^sx_i^s)=\Tr_{E_iE_j}(\alpha''(x^r_ix_j^s)\alpha')=\Tr_{E_iE_j}(x^r_ix_j^s)=x^+_{i,r}x^+_{j,s},$$
yielding relation $(f)$ in this case.

Assume now $a_{ij}<0$. Then the maps $\alpha'$, $\alpha''$ are given by
\begin{align*}
\alpha'&=\sum_{a=0}^{[\frac{m-1}{2}]}\alpha^+_{(2a,m-2a)}+\sum_{a=0}^{[\frac{m}{2}]}\alpha^-_{(2a,m-2a)},\\
\alpha''&=\sum_{a=0}^{[\frac{m}{2}]}\alpha^-_{(2a+1,m-1-2a)}-\sum_{a=0}^{[\frac{m-1}{2}]}\alpha^-_{(2a+1,m-1-2a)},
\end{align*}
where for $a\in [0,m]$ and $b=m-a$, we have
\begin{align*}
\alpha^+_{a,b}&=(e_{i,a+1}E_je_{i,b-1})\circ\tau_{a+1}\circ\tau_{a+2}\cdots\circ\tau_{a+b}\circ\iota_a:\  E_i^{(a)}E_jE_i^{(b)}\to E_i^{(a+1)}E_jE_i^{(b-1)},\\
\alpha^-_{a,b}&=(e_{i,a-1}E_je_{i,b+1})\circ\tau_{a}\circ\tau_{a-1}\cdots\circ\tau_{1}\circ\iota_a:\quad\quad  E_i^{(a)}E_jE_i^{(b)}\to E_i^{(a-1)}E_jE_i^{(b+1)}.
\end{align*}
Here $\iota_a: E_i^{(a)}E_jE_i^{(b)}\to E_i^{a}E_jE_i^{b}$ is the canonical embedding, and $\tau_{k}$ acts by $\tau$ on the $k$-th and $k+1$-th copy of $E$ in the sequence $E_i^{a}E_jE_i^{b}$.

\smallskip

Given any integers $r_1,\dots,r_m,s\in\bbN$, we define for each $a\in [0,m]$ a morphism
\begin{equation}
\Xi_a=\sum_{w\in\frakS_m}x_{i}^{r_{w(1)}}\cdots x_{i}^{r_{w(a)}}x_{j}^{s}x_{i}^{r_{w(a+1)}}\cdots x_{i}^{r_{w(m)}}\in\End(E_i^aE_jE_i^b)
\end{equation}
Note that $\Xi_a$ is symmetric in the first $a$-tuple of $x_i$'s and also in the last $b$-tuple of $x_i$'s, hence it commutes with the divided power operator $e_{i,a}E_je_{i,b}$. 
By consequence $\Xi_a$ restricts to a well defined endomorphism of $E_i^{(a)}E_jE_i^{(b)}$, which we denote again by $\Xi_a$. 
Further, since $E_i^aE_jE_i^b\simeq (E_i^{(a)}E_jE_i^{(b)})^{\oplus a!b!}$, we have
$\Tr_{E_i^aE_jE_i^b}(\Xi_a)=(a!b!)\Tr_{E_i^{(a)}E_jE_i^{(b)}}(\Xi_a).$

\begin{claim} We have
\begin{itemize}
\item[$(a)$] $\sum_{w\in\frakS_m}[x^+_{i,r_{w(1)}},[\dots,[x^+_{i,r_{w(m)}},x^+_{j,s}]\dots]]=m!\sum_{a+b=m}(-1)^b\Tr_{E_i^{(a)}E_jE_i^{(b)}}(\Xi_a),$

\item[$(b)$] $\alpha^+_{a,b}\Xi_a=\Xi_{a+1}\alpha^+_{a,b}$,  $\alpha^-_{a,b}\Xi_a=\Xi_{a-1}\alpha^-_{a,b}$.
\end{itemize}
\end{claim}

\smallskip

Let us show how to deduce the relation $(f)$ from this claim. Part $(b)$ implies that the following diagram commute
\[\xymatrix{\bigoplus_{a=0}^{[\frac{m}{2}]}E_i^{(2a)}E_jE_i^{(m-2a)}\ar[r]^{\alpha'\quad}_{\sim\quad} \ar[d]_{\bigoplus_a\Xi_{2a}} &\bigoplus_{a=0}^{[\frac{m-1}{2}]}E_i^{(2a+1)}E_jE_i^{(m-2a-1)}\ar[d]^{\bigoplus_a\Xi_{2a+1}}\\
\bigoplus_{a=0}^{[\frac{m}{2}]}E_i^{(2a)}E_jE_i^{(m-2a)}\ar[r]^{\alpha'\quad}_{\sim\quad}  &\bigoplus_{a=0}^{[\frac{m-1}{2}]}E_i^{(2a+1)}E_jE_i^{(m-2a-1)}.}\]
Therefore
$$\sum_{a=0}^{[\frac{m}{2}]}\Tr_{E_i^{(2a)}E_jE_i^{(m-2a)}}(\Xi_{2a})=
\sum_{a=0}^{[\frac{m-1}{2}]}\Tr_{E_i^{(2a+1)}E_jE_i^{(m-2a-1)}}(\Xi_{2a+1}).$$
Hence by part $(a)$ of the claim we get $\sum_{w\in\frakS_m}[x^+_{i,r_{w(1)}},[\dots,[x^+_{i,r_{w(m)}},x^+_{j,s}]\dots]]=0$ as desired.

\smallskip

It remains to prove the claim. Let us introduce some more notation. For $a\in [0,m]$ let 
$\Gamma^a=\{\underline{k}=(k_1,\dots,k_a)\in\bbN^a\,|\,1\leqslant k_1<k_2<\dots <k_a\leqslant m\}$, and for $\underline{k}\in\Gamma^a$ we set $\underline{k}^\circ=(l_1,\dots,l_b)\in\Gamma^b$ such that $\{k_1,\dots,k_a\}\cup\{l_1,\dots,l_b\}=\{r_1,\dots,r_m\}$.
Let $\frakS^{(a,b)}$ be the set of minimal representatives of the left cosets $\frakS_m/\frakS_a\times\frakS_b$. Then the map $w\mapsto w(1,2,\dots,m)$ yields a bijection 
$\frakS^{(a,b)}\simeq \Gamma^a$. Let $w_b$ be the longest element in $\frakS_b$. Given any sequence $r_1,\dots,r_m$ and $s\in\bbN$ we have
\begin{align*}
[x^+_{i,r_1},[x^+_{i,r_2},\dots,[x^+_{i,r_m}, x^+_{j,s}]\dots]]&=\sum_{a=0}^m(-1)^b\sum_{\underline{k}\in\Gamma^a,\,\underline{l}=
\underline{k}^\circ}x^+_{i,r_{k_1}}\dots x^+_{i,r_{k_a}}x^+_{j,s}x^+_{i,r_{l_b}}\dots x^+_{i,r_{l_1}}\\
&=\sum_{a=0}^{m}(-1)^b\sum_{y\in\frakS^{(a,b)}w_b}x^+_{i,r_{y(1)}}\cdots x^+_{i,r_{y(a)}}x^+_{j,s}x^+_{i,r_{y(a+1)}}\cdots x^+_{i,r_{y(m)}}.
\end{align*}
It follows that
\begin{align*}
&\sum_{w\in\frakS_m}[x^+_{i,r_{w(1)}},[x^+_{i,r_{w(2)}},\dots,[x^+_{i,r_{w(m)}}, x^+_{j,s}]\dots]]=\\
=&
\sum_{a=0}^{m}(-1)^b\sum_{y\in\frakS^{(a,b)}w_b}\sum_{w\in\frakS_m}x^+_{i,r_{wy(1)}}\cdots x^+_{i,r_{wy(a)}}x^+_{j,s}x^+_{i,r_{wy(a+1)}}\cdots x^+_{i,r_{wy(m)}}\\
=&
\sum_{a=0}^{m}(-1)^b\frac{m!}{a!b!}\sum_{w\in\frakS_m}x^+_{i,r_{w(1)}}\cdots x^+_{i,r_{w(a)}}x^+_{j,s}x^+_{i,r_{w(a+1)}}\cdots x^+_{i,r_{w(m)}}\\
=&
\sum_{a=0}^{m}(-1)^b\frac{m!}{a!b!}\Tr_{E_i^aE_jE_i^b}(\Xi_a)\\
=&
m!\sum_{a=0}^{m}(-1)^b\Tr_{E_i^{(a)}E_jE_i^{(b)}}(\Xi_a).\end{align*}
This proves part $(a)$ of the claim.

Part $(b)$ is a direct computation. On each $\RL(\alpha)$ the functor $E^{(a)}_iE_jE^{(b)}_i$ is represented by the $(\RL(\alpha-m\alpha_i-\alpha_j), \RL(\alpha))$-bimodule
$${}_{i^{(a)}ji^{(m-a)}}P=(1\otimes e_{i,a}\otimes 1\otimes e_{i,m-a})e(\alpha-m\alpha_i-\alpha_j,i^a,j,i^{m-a})\RL(\alpha).$$
To check the relation in $(b)$, without loss of generality we may assume $\alpha=m\alpha_i+\alpha_j$. Then $\Xi_a$ is represented by the left multiplication on ${}_{i^{(a)}ji^{(b)}}P$ by
$$\sum_{w\in\frakS_m}x_{1}^{r_{w(1)}}\cdots x_{a}^{r_{w(a)}}x_{a+1}^{s}x_{a+2}^{r_{w(a+1)}}\cdots x_{m+1}^{r_{w(m)}},$$
and $\alpha^+_{a,b}$ represented by $(1\otimes e_{i,a+1}\otimes 1\otimes e_{i,b-1})e(i^{a+1}ji^{b-1})\tau_{a+1}\tau_{a+2}\cdots \tau_{m}$.
Since $\Xi_a$ is symmetric in $\{x_{a+2},\dots ,x_{m+1}\}$, we have $e(i^aji^b)\tau_k\Xi_a=e(i^aji^b)\Xi_a\tau_k$ for any $k\in [a+2, m]$. 
Next,
\begin{align*}
e(i^{a+1}ji^{b-1})\tau_{a+1}\Xi_a&=\tau_{a+1}e(i^{a}ji^{b})\Xi_a\\
&=\sum_{w\in\frakS_m}x_{1}^{r_{w(1)}}\cdots x_{a}^{r_{w(a)}}\tau_{a+1}(x_{a+1}^{s}x_{a+2}^{r_{w(a+1)}})\cdots x_{m+1}^{r_{w(m)}}e(i^{a}ji^{b})\\
&=\sum_{w\in\frakS_m}x_{1}^{r_{w(1)}}\cdots x_{a}^{r_{w(a)}}(x_{a+2}^{s}x_{a+1}^{r_{w(a+1)}})\tau_{a+1}\cdots x_{m+1}^{r_{w(m)}}e(i^{a}ji^{b})\\
&=\Xi_{a+1}\tau_{a+1}e(i^{a}ji^{b})\\
&=\Xi_{a+1}e(i^{a+1}ji^{b-1})\tau_{a+1}.
\end{align*}
Finally, since $\Xi_{a+1}$ is symmetric in $\{x_1,\dots ,x_{a+1}\}$ and in $\{x_{a+3},\dots ,x_{a+b+1}\}$, 
the divided power operator $1\otimes e_{i,a+1}\otimes 1\otimes e_{i,b-1}$ commute with $\Xi_{a+1}$. To summarize, we have
\begin{align*}
\alpha^+_{a,b}\Xi_a&=(1\otimes e_{i,a+1}\otimes 1\otimes e_{i,b-1})e(i^{a+1}ji^{b-1})\tau_{a+1}\tau_{a+2}\cdots \tau_{a+b}\Xi_a\\
&=(1\otimes e_{i,a+1}\otimes 1\otimes e_{i,b-1})e(i^{a+1}ji^{b-1})\tau_{a+1}\Xi_a\tau_{a+2}\cdots \tau_{a+b}\\
&=(1\otimes e_{i,a+1}\otimes 1\otimes e_{i,b-1})\Xi_{a+1}e(i^{a+1}ji^{b-1})\tau_{a+1}\tau_{a+2}\cdots \tau_{a+b}\\
&=\Xi_{a+1}\alpha^+_{a,b},
\end{align*}
which is the first equality in part $(b)$, the proof for the second one is similar.

\end{proof}

\smallskip

Finally, we can deduce Theorem \ref{thm:action} for $\ell=1$ as a special case of Proposition \ref{prop:B5}.
Indeed, assume $\frakg$ is symmetric and \eqref{Q} holds.
Then, we have 
$$q_{ij}(u,v)=(1-uv)^{-a_{ij}},\qquad\forall i,j.$$
We must check that the relations $(c)$, $(d)$ in Proposition \ref{prop:B5} can be re-written as
\begin{itemize}
\item[$(c)$] $[h_{ir},x_{js}^\pm]=\pm a_{ij}\,x^\pm_{j,r+s}$,
\item[$(d)$] $\sum_{p=0}^{m}(-1)^p(\begin{smallmatrix}m\cr p
\end{smallmatrix})[x_{i,r+p}^\pm,x_{j,s+m-p}^\pm]=0$ with $i\neq j$ and $m=-a_{ij}$.
\end{itemize}
The relation $(d)$ is obvious, because by \eqref{Q} we have $c_{i,j,p,q}=\delta_{q,-a_{ij}-p}r_{ij}(-1)^{p}(\begin{smallmatrix}-a_{ij}\cr p
\end{smallmatrix})$ for all $p$, $q$ and $r_{ij}$ is invertible.
Let us prove $(c)$. Given $\alpha$ of height $n$ and $\lambda=\Lambda-\alpha$,
by Proposition \ref{prop:A10}  we have
$$B_{-i,\lambda}(z)=
\sum_{\nu\in I^\alpha}\prod_{p=1}^{\Lambda_i}(1+y_{ip}\,z)
\prod_{k=1}^n(1-zx_k)^{-a_{i\nu_k}}\,e(\nu).$$
By Lemma \ref{lem:psi-B} we have
\begin{align*}
\exp\Big(-\sum_{r\geqslant 1}h_{ir,\lambda}z^r/r\Big)=B_{-i,\lambda}(z).
\end{align*}
Since the $e(\nu)$'s are orthogonal idempotents in $\RL(\alpha)$, we have 
$$\exp\Big(-\sum_{r\geqslant 1}h_{ir,\lambda}z^r/r\Big)=\sum_{\nu\in I^\alpha}\exp\Big(-\sum_{r\geqslant 1}h_{ir,\lambda}e(\nu)z^r/r\Big)e(\nu).$$
Therefore, we have
\begin{align*}
\sum_{r\geqslant 1}h_{ir,\lambda}\,e(\nu)\,z^r&=z(\d/\d z)\log\Big(\prod_{p=1}^{\Lambda_i}(1+y_{ip}\,z)
\prod_{k=1}^n(1-zx_k)^{-a_{i\nu_k}}\Big)\,e(\nu)\\
h_{ir,\lambda}&=\Big(\sum_{p=1}^{\Lambda_i}(-y_{ip})^r-\sum_{k=1}^na_{i,\nu_k}x_k^r\Big)e(\nu),\quad\forall r\geqslant 1.
\end{align*}
In particular, we deduce that
$$h_{ir,\lambda-\alpha_j}e(\alpha,j)=(h_{ir,\lambda}-a_{ij}x_{n+1}^r)e(\alpha,j).$$
Hence for any $f\in\RL(\alpha)$ we have
\begin{align*}
h_{ir}x^-_{js}\Tr(f)&=\Tr(h_{ir,\lambda-\alpha_j}x_{n+1}^sfe(\alpha,j))\\
&=\Tr(h_{ir,\lambda}x_{n+1}^sfe(\alpha,j))-a_{ij}\Tr(x_{n+1}^{r+s}fe(\alpha,j)\\
&=x^-_{js}h_{ir}-a_{ij}x^-_{j,r+s}.
\end{align*}
The proof is complete.

\qed

\medskip

\bigskip

\end{document}